\DeclareMathOperator{\const}{const}
\def\?[#1]{\textbf{[#1]}\marginpar{\Large{\textbf{??}}}}
\let\epsilon=\varepsilon 
\newcommand{\RR}{{\mathbb R}}
\newcommand{\TT}{{\mathbb T}}
\newcommand{\NN}{{\mathbb N}}
\newcommand{\CC}{{\mathbb C}}
\newcommand{\HH}{{\mathbb H}}
\newcommand{\ZZ}{{\mathbb Z}}
\newcommand{\SP}{{\mathbb S}}
\newcommand{\CI}{{{\mathcal C}^\infty}}
\newcommand{\bCI}{{\bar{\mathcal C}^\infty}}
\newcommand{\dCI}{{\dot{\mathcal C}^\infty}}
\newcommand{\CIc}{{{\mathcal C}^\infty_{\rm{c}}}}
\newtheorem{thm}{Theorem}
\newtheorem{prop}{Proposition}
\newtheorem{defi}[prop]{Definition}
\newtheorem{conj}{Conjecture}
\numberwithin{equation}{section}
\DeclareMathOperator{\Res}{Res}
\DeclareMathOperator{\Spec}{Spec}
\DeclareMathOperator{\comp}{comp}
\let\Im=\Imag
\DeclareMathOperator{\loc}{loc}
\DeclareMathOperator{\PSL}{PSL}
\DeclareMathOperator{\rank}{rank}
\let\Re=\Real
\DeclareMathOperator{\supp}{supp}
\DeclareMathOperator{\spt}{supp}
\DeclareMathOperator{\vol}{vol}
\DeclareMathOperator{\WF}{WF}
\DeclareMathOperator{\tr}{tr}
\def\indic{\operatorname{1\hskip-2.75pt\relax l}}
\title[Scattering resonances]{Mathematical study of scattering resonances}
\author{Maciej Zworski}
\email{zworski@math.berkeley.edu}
\address{Department of Mathematics, University of California,
Berkeley, CA 94720, USA}
\begin{document}


\maketitle

\tableofcontents

\section{Introduction}

\begin{figure}
\begin{center}
\includegraphics[width=4.1in]{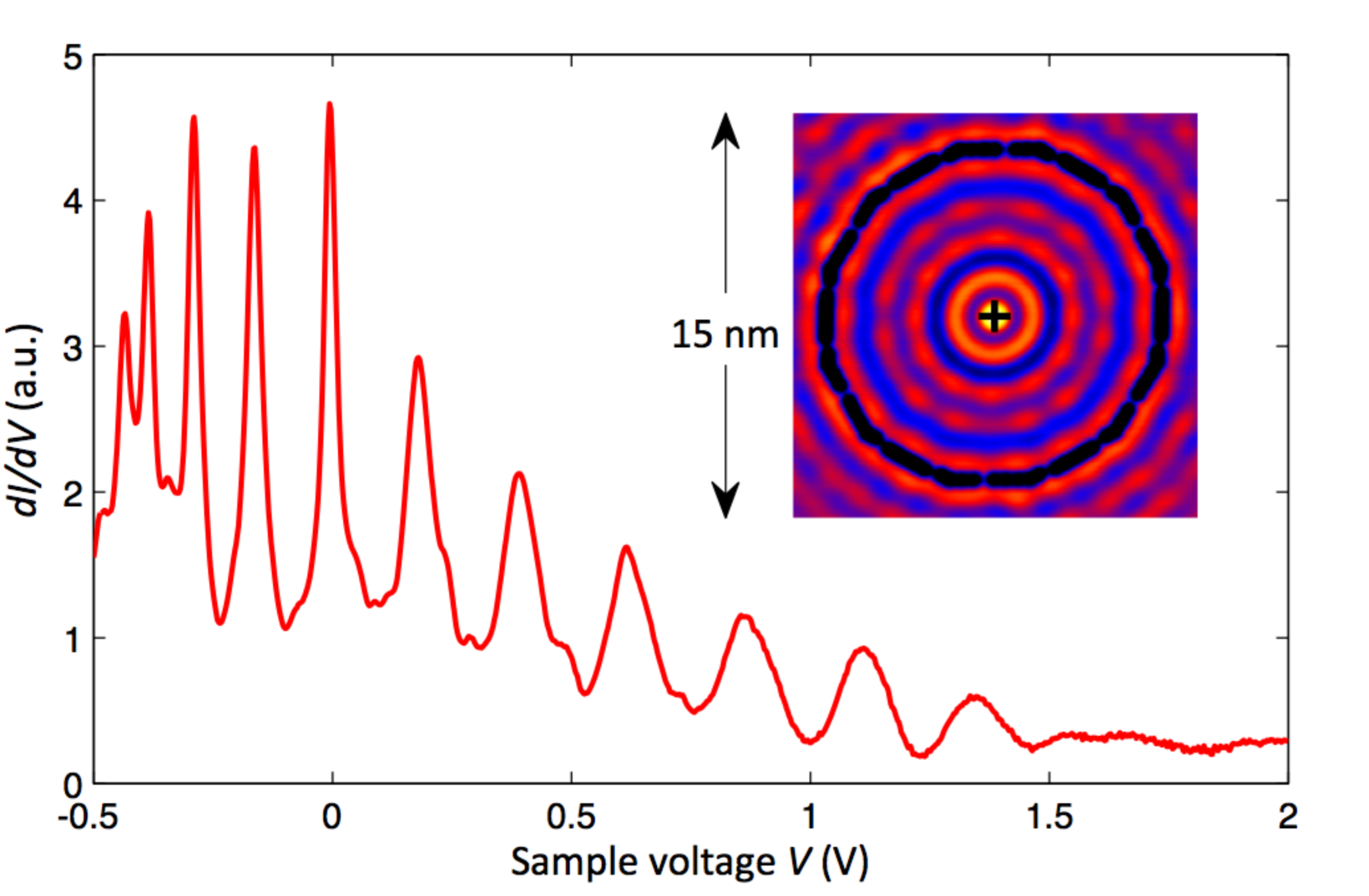}
\caption{\label{f:hm} A scanning tunneling microscope (STM) 
spectrum is a plot of $ dI/dV $ ($ I $
  being the current)  as a function of
bias voltage $ V$. According to the basic theory of STM, this reflects the sample density of
states as a function of energy with respect to the Fermi energy (at $ V = 0$). This spectrum
shows the series of surface state electron {\em resonances} in inside a circular quantum
corral on Cu(111) -- see Figure \ref{f:core} for a visualization of the relation 
between the peaks and the complex resonance poles. The bulk bands contribute to a gradually varying background in this
spectrum. The setpoint was $ V_0 = 1 {\rm V} $ and $ I_0 = 10 {\rm nA}
$  and the modulation voltage was
$ V_{\rm rms} = 4 {\rm mV} $. Inset: a low-bias topograph of the corral
studied ($ 17 \times 17 {\rm nm}^2$, $ V = 10 \rm{mV} $,
$ I = 1 {\rm nA}$). The corral is made from 84 CO molecules adsorbed to Cu(111) and has an 
average radius of $ 69.28 $ {\AA} -- the date and images are from 
the Manoharan Lab at Stanford \cite{Moo}. The large amplitude in the center of the 
top graph is a reflection of the sharp peak seen in the spectrum at $ V = 0$.
For another quantum corall, in the shape of the Bunimovich stadium and made out 
of iron atoms, see Figure~\ref{f:galk}.}
\end{center}
\end{figure}

Scattering resonances appear in many branches of mathematics,
physics and engineering. They generalize eigenvalues or bound
states to systems in which energy can scatter to infinity.
A typical state has then a rate of oscillation (just as a bound
state does) and a rate of decay. Although this notion is 
intrinsically dynamical, an elegant mathematical formulation 
comes from considering meromorphic continuations of Green's functions
or scattering matrices. The poles of these meromorphic continuations
capture the physical information by identifying the rate of 
oscillations with the real part of a pole and the rate of decay
with its imaginary part. The resonant state, which is the corresponding
wave function, then appears in the residue of the meromorphically 
continued operator. An example from pure mathematics is given 
by the zeros of the Riemann zeta function: they are  
the resonances of the Laplacian on the modular surface. 
The Riemann hypothesis then states that the decay rates for the 
modular surface are all either $ 0 $ or $ \frac 14$.
A standard example from physics is given by shape resonances
created when the interaction region is separated from free space
by a potential barrier. The decay rate is then exponentially 
small in a way depending on the width of the barrier.

In this article we survey some foundational and some recent 
aspects of the subject selected using the perspective and experience
of the author. Proofs of many results can be found in the
monograph written in collaboration with Semyon Dyatlov 
\cite{res} to which we provide frequent references.

What we call scattering resonances appear under different names in different
fields: in quantum scattering theory they are called {\em quantum resonances}
or {\em resonance poles}. In obstacle scattering, or more generally 
wave scattering, they go by {\em scattering poles}. In general relativity
the corresponding complex modes of gravitational waves are known 
as {\em quasi-normal modes}. 
The closely related poles of 
power spectra of correlations in chaotic dynamics are called 
{\em Pollicott--Ruelle} resonances. We will discuss mathematical results
related to each of these settings stressing unifying features. 

The survey is organized as follows:
\begin{itemize}

\vspace{-0.15in}

\item in the introduction we provide a basic physical motivation 
from quantum mechanics, discuss the case of the wave equation in 
one dimension, intuitions behind semiclassical study of resonances
and some examples from modern science and engineering;

\item in \S \ref{pot3} we present scattering by bounded compactly supported
potentials in three dimensions and prove meromorphic continuation of the resolvent (Green function), an upper
bound on the counting function, existence of resonance free regions
and expansions of waves; we also explain the method 
of complex scaling in the elementary
setting of one dimension; finally we discuss recent progress and open problems
in potential scattering;

\item we devote \S \ref{srr} to a survey of recent results organized
around topics introduced in the special setting of \S \ref{pot3}: meromorphic
continuation for asymptotically hyperbolic spaces, fractal upper bounds
in physical and geometric settings, resonance free strips in 
chaotic scattering and resonance expansions; we also provide some
references to recent progress in some of the topics not covered in this survey;

\item \S \ref{dsPR} surveys the use of microlocal/scattering theory methods 
in the study of chaotic dynamical systems. Their introduction by 
Faure--Sj\"ostrand \cite{fa-sj} and Tsujii \cite{Ts}
led to rapid progress which included a microlocal proof of Smale's conjecture
about dynamical zeta functions \cite{zeta}, first proved
shortly before by Giulietti--Liverani--Pollicott \cite{glp}. We review
this and other
results, again related to upper bounds, resonance free strips and
resonances expansions.

\end{itemize}

\subsection{Motivation from quantum mechanics}

\begin{figure}[htbp]
\centering
\includegraphics[width=12.5cm]{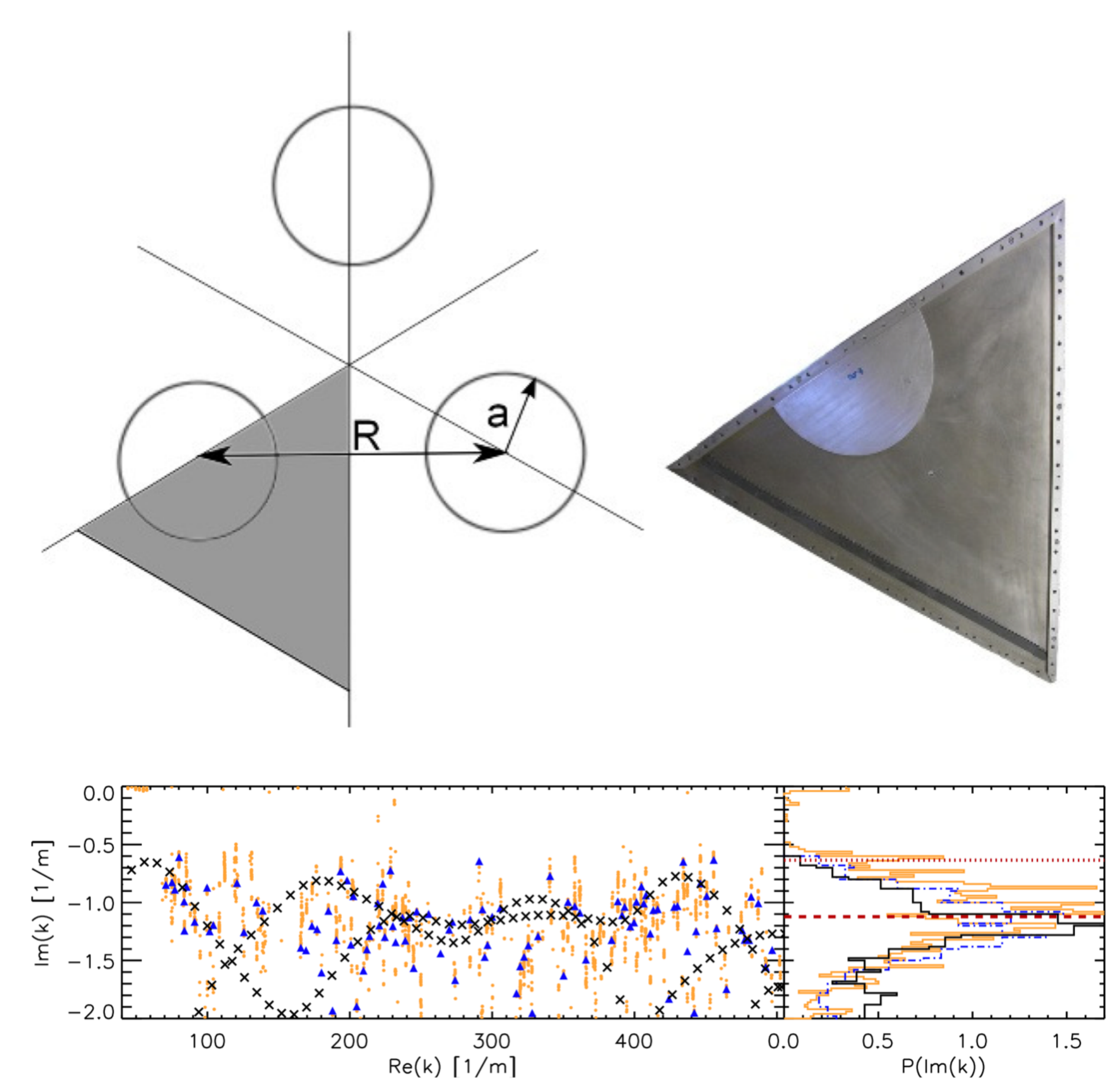} 
\caption{The experimental set-up for the study of the fractal Weyl law \cite{Potz}
and of  the 
distribution of resonance widths \cite{Bark}. On the left panel 
below the resonances for the aspect ration $5.5$ in the complex $k$-plane are shown as well as the distribution of the imaginary parts of $k$ in the right panel. The orange clouds correspond to all resonances resulting in a good reconstruction as well as the orange histogram. Note that many orange dots might overlay each other. Blue triangles and blue dashed-dotted histogram describe the set belonging to the best reconstruction. Black crosses are the numerically calculated poles and the solid black histogram the corresponding distribution. The dotted red line in the right panel is $ P(1/2)$ , the red dashed line half of the classical decay rate $ P(1)/2$. Here $ P ( s ) $ is
the topological pressure associated to the unstable Jacobian.}
\label{f:kuhl}
\end{figure}

In quantum mechanics a particle is described by a wave function 
$ \psi $ which is normalized in $ L^2 $, $ \| \psi \|_{L^2 } = 1 $.
The probability of finding the particle in a region $ \Omega $ is
the given by the integral of $ |\psi( x ) |^2 $ over $ \Omega $. 
A pure state is typically an eigenstate of a Hamiltonian $ P$ 
and hence the evolved state is given by $ \psi ( t) := e^{- i t P} \psi 
= e^{ - it E_0 } \psi $ where $ P \psi = E_0 \psi $. In 
particular the probability density does not change when the state
is propagated.

\begin{figure}[htbp]
\centering
\includegraphics[width=4.2in]{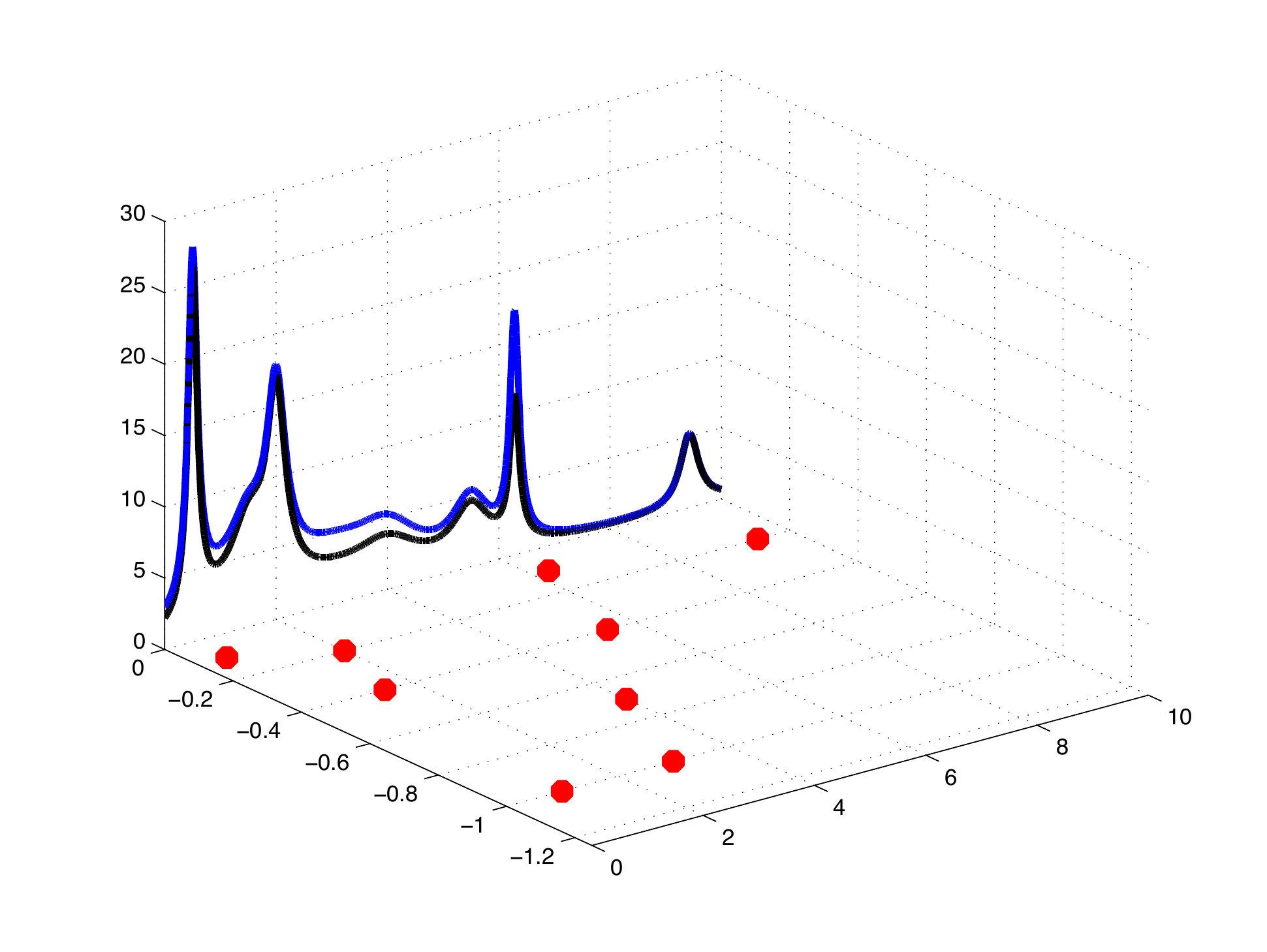}
\caption{If $ U ( t ) $ is a propagator and  $ f,  g$ are states then
the correlation (the evolution of $ f $ measured by $ g $) is given by
$ \rho_{f,g} ( t) = \langle U ( t )  f , g \rangle $.
For instance the evolution
could come from a flow $ \varphi_t : M \to M $ on a compact manifold,
$ U (t ) f ( x ) =  f (\varphi_{-t} (x) ) $.
The power spectrum of correlations is given by
$ \widehat \rho_{ f ,g } ( \lambda ) := \int_0^\infty \rho_{f , g} ( t )      
e^{ i \lambda t } dt  $.
Resonances are the poles of $ \widehat \rho_{ f , g  } ( \lambda ) $
and are 
independent of $ f $ and  $g $.
The figure shows a schematic correspondence between the power
spectrum for different states and these poles:
the real part corresponds to the location of
a peak in the power spectrum and the imaginary to its width;
the $ x $ axis is the
real part (frequency $\lambda $), $ y $ axis the imaginary (rate of decay),
$ z $ axis $ | \hat \rho_{f,g} ( \lambda )|$. Unlike the
power spectrum which depends on $ f $ and $ g $,
the poles depend only on the system. }
\label{f:core}
\end{figure}

An example could be given by the Bloch electron in a molecular corral shown in
Figure \ref{f:hm}. However the same 
figure also shows that the measured states have non-zero ``widths'' -- 
the peak is not a delta function at $ E_0 $ -- 
and hence can be more accurately modeled by resonant states.
Scattering or quantum resonances are given as complex numbers $ E_0                      - i \Gamma/2$
and the following standard argument of the physics literature explains the meaning
of the real and imaginary parts: a time dependent pure resonant
state propagates according to
$  \psi(t) = e^{- itE_0 -t \Gamma  /2 }  \psi (0) $
so that the probability of survival beyond time $t$ is
$ p(t) = { |\psi(t)|^ 2}/{ | \psi(0)|^2 } = e^{- \Gamma t } $.
This explains why the convention for the imaginary part of a resonance
is $ \Gamma/2$.  Here we neglected the issue that $  \psi(0) \notin                     L^2 $ which is remedied
by taking the probabilities over a bounded interaction region.
In the energy representation the wave function is given
\[ \begin{split} \varphi (E) & := {\mathcal F}^{-1} \psi(E) 
=
\frac 1
{ \sqrt{2 \pi }} \int_0^\infty
e^{-itE_0 - t\Gamma/2 } \psi ( 0 ) dt
=
\frac{1}  {\sqrt{ 2\pi} i}
\frac{ \psi ( 0) }
{ E_0 - i \Gamma/2 - E} , \end{split} \]
which means that the probability density of the resonance with energy
E is proportional to the square of the absolute value of the right hand
side.
Consequently the probability density is
\begin{equation}
\label{eq:BW}
\frac{ 1 } { 2 \pi } \frac{ \Gamma }
{ (E -  E_0)^2 + (\Gamma /2)^2} dE , \end{equation}
and this Lorentzian is the famous Breit-Wigner distribution.
This derivation is of course non-rigorous and there are many 
mathematical issues:

\begin{itemize}

\item the state $ \psi ( 0 ) $ is not in $ L^2 $ -- it has physical 
meaning only in the ``interaction region"; that is justified 
differently in Euclidean and non-Euclidean scattering --  see \S\S  \ref{merc3},\ref{rae} 
and \S \ref{vasy} respectively.

\item it is not clear why the evolution of a physical state should have an 
expansion in terms of resonant states; that is justified using 
meromorphic continuation and asymptotic control of Green's function and
-- see \S \ref{expan};

\item one needs to justify the passage from the Fourier transform
$ t \mapsto E $ to the probability distribution \eqref{eq:BW};
that is done using the {\em scattering matrix} or {\em spectral measure}
-- see the end of \S \ref{expan}.

\end{itemize}

Returning to physical motivation, one should stress that in
practice there are many deviations from the simple formula \eqref{eq:BW},
especially at high energies and in the presence of overlapping
resonances. In Figure \ref{f:hm} we see clear Lorentzian peaks
and individual resonances can
be recovered. In the experiment shown in Figure \ref{f:kuhl} the resonances overlap
and the peaks in scattering data do not have the simple
interpretation using \eqref{eq:BW}. The density laws (Weyl laws) for
counting of resonant states are more complicated and richer
as they involve both energy
and rates of decay.
Even the leading term can be affected
by dynamical properties of the system -- see \S\S \ref{upper} and \ref{Weyl}.

\begin{figure}[htbp]
\begin{center}
\includegraphics[scale=0.3]{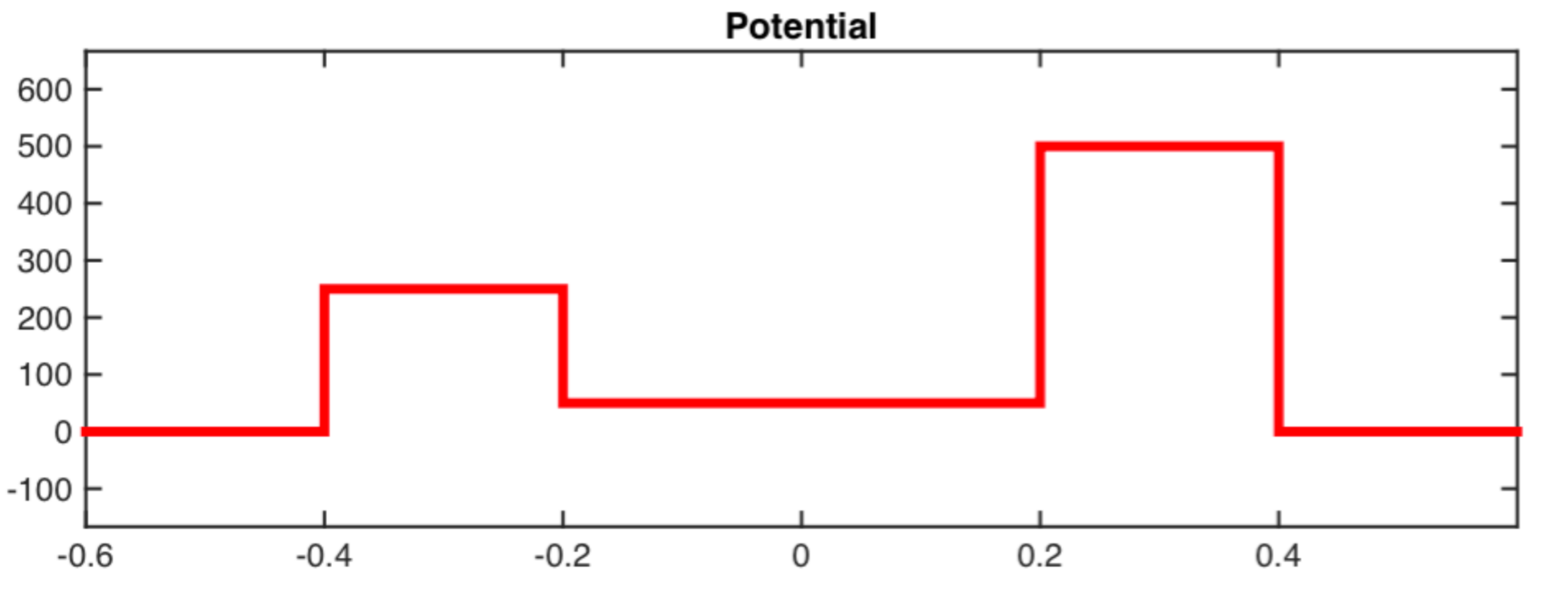}
\end{center}
\caption{\label{f:potin} A simple one dimensional potential with 
resonances shown in Figure \ref{f:polin}. Same potential is used
to see propagation, trapping and tunneling of a wave in Figure \ref{f:wavin}.}
\end{figure}

A more abstract version of Figure \ref{f:hm} is shown in 
Figure \ref{f:core} where correlations of evolved states
and unevolved states are shown with peaks corresponding to 
poles of their Fourier transforms (power spectra).

\subsection{Scattering of waves in one dimension}

\begin{figure}[htbp]
\begin{center}
\includegraphics[scale=0.3]{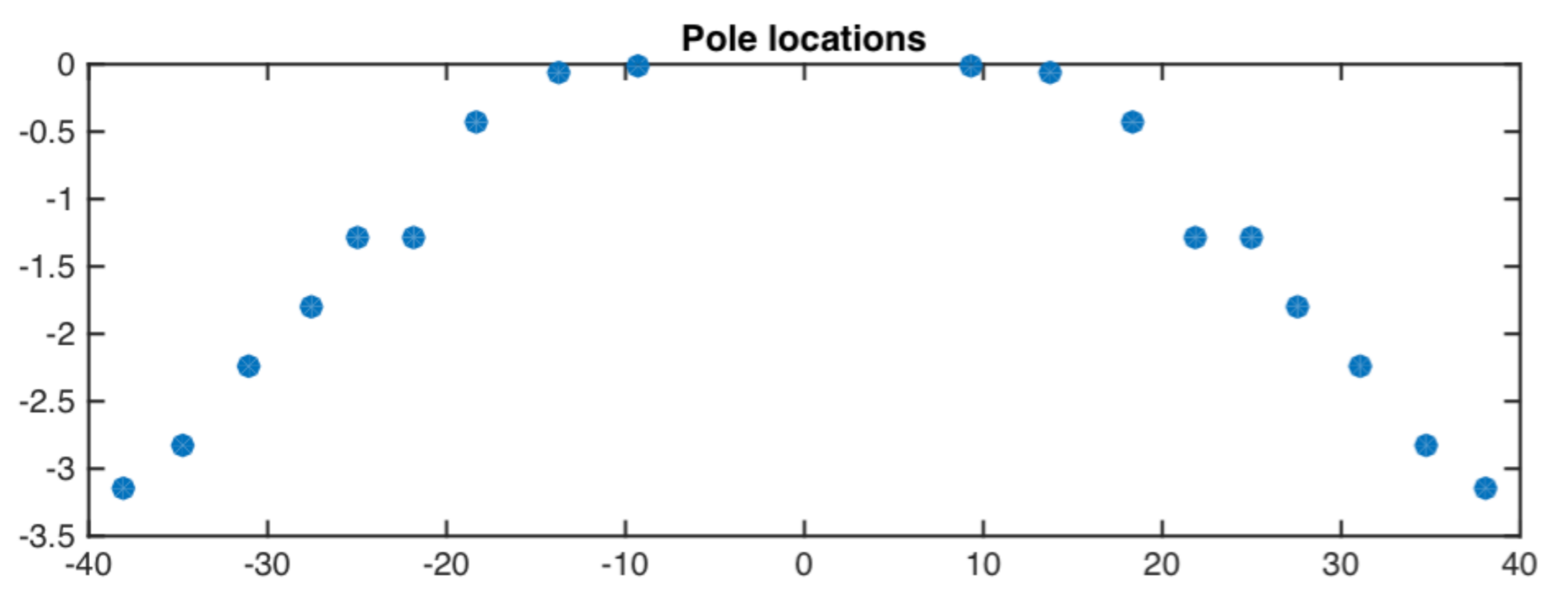}
\caption{\label{f:polin} Scattering poles of the potential shown in 
Figure \ref{f:potin}. They are computed very accurately using a 
Matlab code \cite{BiZ}.}
\end{center}
\end{figure}

A simple mathematical example
is given by scattering by compactly supported potentials
in dimension one, see Figure \ref{f:potin} for an example of such potential.
Scattering resonances are the rates of oscillations 
and decay of the solution of the wave equation and Figure \ref{f:wavin}
shows such a solution. 

\begin{figure}[htbp]
\begin{center}
\includegraphics[scale=0.7]{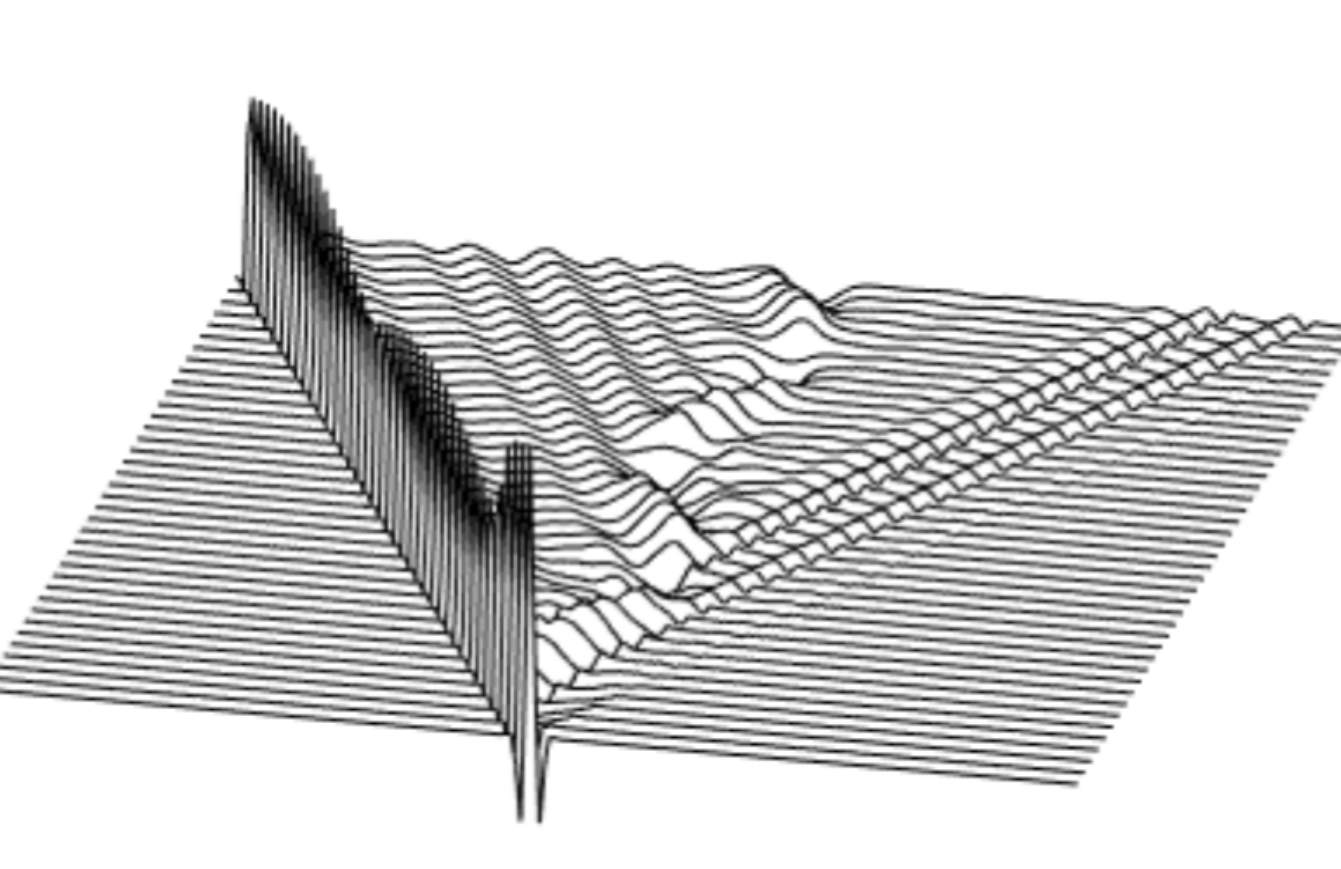}
\caption{\label{f:wavin} A solution of the wave equation
$ \partial_t^2 u - \partial_x^2 u + V u = 0 $ where $ V $ is 
shown in Figure \ref{f:potin}. The initial data is localized
near $ 0 $ and the time variable $ t $ points away from the viewer.}
\end{center}
\end{figure}

We see the main wave escape and some
trapped waves bounce in the well created by the potential and leak out. Figure \ref{f:ringin} shows the values
of the solution at one point. Roughly speaking if $ u ( t , x )$ is
a solution of the wave equation $ ( \partial_t^2 - \partial_x^2 + V( x ) ) u = 0 $
with localized initial data then 
\begin{equation}
\label{eq:uoftx}  u ( t, x ) \sim \sum_{ \Im \lambda_j > - A } e^{ - i \lambda_j t } u_j ( x ) + 
\mathcal O_K  ( e^{ - t A } ) , \ \ x \in K \Subset \RR , \end{equation}
where $ \lambda_j $ are complex numbers with $ \Im \lambda_j < 0$. They are
independent of the initial data and are precisely the scattering resonances -- 
see \S \ref{expan} for the precise statement. The
expansion \eqref{eq:uoftx} is formally related to the Breit--Wigner distribution \eqref{eq:BW} via the Fourier transform -- see Figure \ref{f:core}.

\begin{figure}[htbp]
\begin{center}
\includegraphics[scale=0.3]{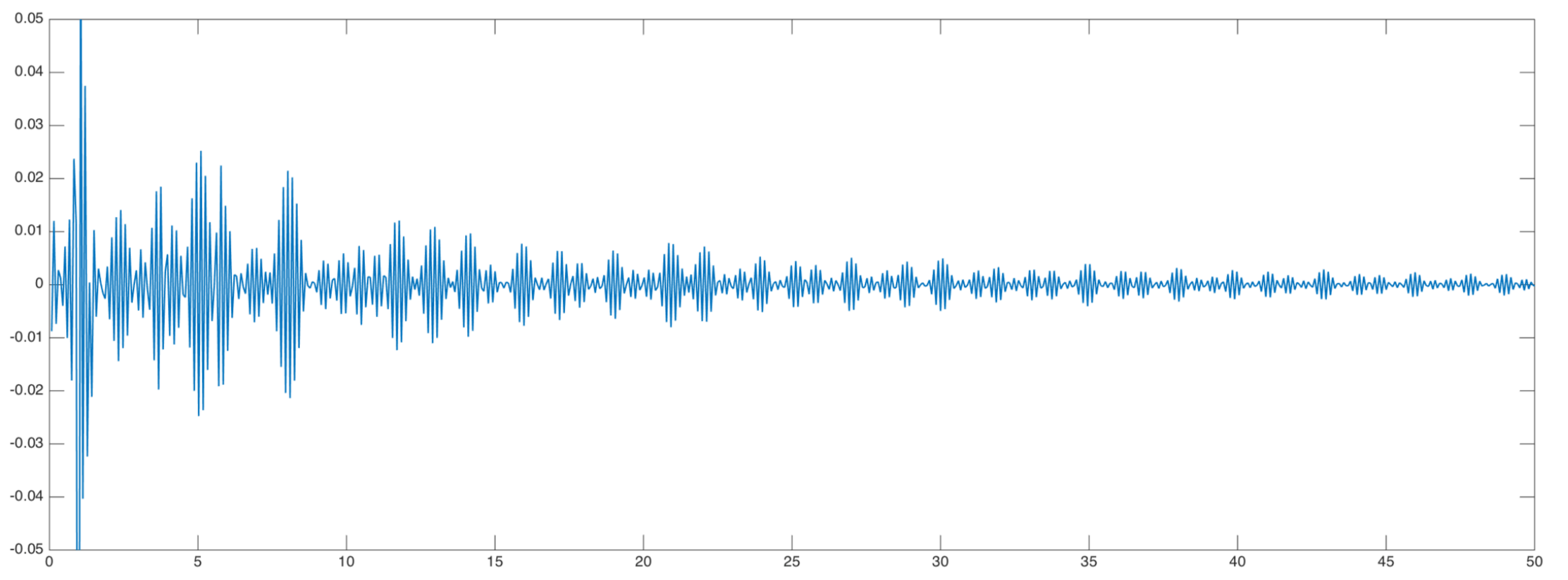}
\caption {\label{f:ringin} The plot of $ u ( 0 , t ) $  showing
oscillations and decay of the solution from Figure \ref{f:wavin} 
in the interaction region. The principal decay rate of the wave is 
determined by the resonance(s) closest to the real axis. For that we need
to justify the expansion \eqref{eq:uoftx} -- see Theorem \ref{t:5}.}
\end{center}
\end{figure}

Harmonic inversion methods, the first being the celebrated
Prony algorithm \cite{Prony}, 
can then be used to extract scattering resonances 
from solutions of the wave equation (as shown in Figure~\ref{f:ringin}; see for instance 
\cite{strauss}). The resulting complex numbers, that is the resonances
for the potential in Figure \ref{f:potin} are 
shown in Figure \ref{f:polin}.

\subsection{Resonances in the semiclassical limit}
\label{ressl}

For some very special systems resonances can be computed explicitely.
One famous example is the Eckart barrier: $ - \partial_x^2 + \cosh^{-2} x $.
It falls into the general class of P\"oschel--Teller potentials 
which can also be used to compute resonances of hyperbolic spaces or
hyperbolic cylinders -- see \cite{borth}.
Another example is given by scattering in the exterior of a spherical obstacle.
In this case scattering resonances
are zeros of Hankel functions which in odd dimensions are in fact zeros of
explicit polynomials  -- 
see \cite{Stef} and Figure \ref{f:sphere}.

\begin{figure}[htbp] 
\begin{center}
\includegraphics[width=5in]{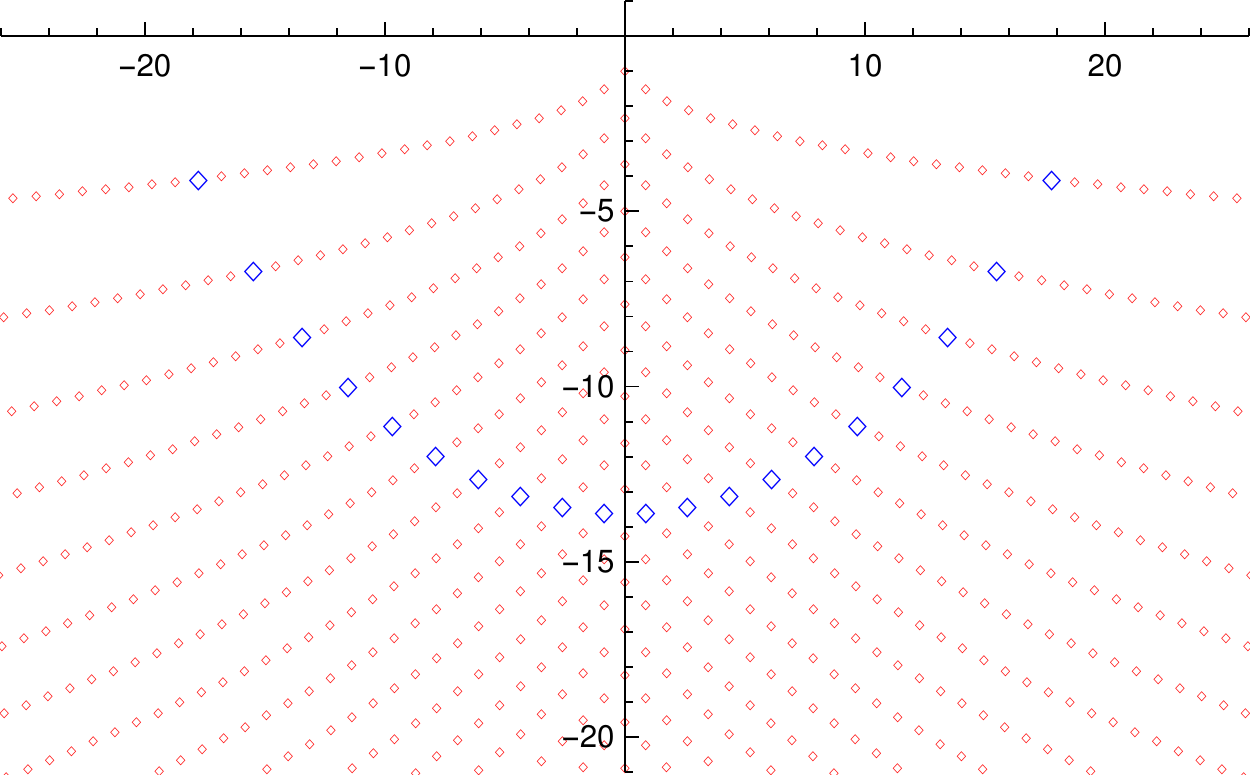}
\caption{\label{f:sphere}
  Resonances for the sphere in three dimensions. For each spherical 
momentum $ \ell $ they are given by solutions of  $ H^{(2)}_{\ell + \frac12} ( \lambda ) = 0 $ where $ H^{(2) }_\nu $ is the Hankel function of the second
kind and order $ \nu $. Each zero appears as a resonance of multiplicity 
$ 2 \ell +1 $: the resonances with $ \ell = 20 $ are highlighted.}
\end{center}
\end{figure} 

In general however it is impossible to obtain an explicit 
description of individual resonances. Hence we need to 
consider their properties and their distribution 
in asymptotic regimes. For instance in the case of obstacle scattering
that could mean the high energy limit. In the case of the sphere 
in Figure \ref{f:sphere} that corresponds to letting the angular momentum
$ \ell \to +\infty$. For a general obstacle that means considering 
resonances as $ |\lambda | \to + \infty $ and $ |\Im \lambda | \ll | \lambda |$.

The high energy limit is an example of a {\em semiclassical} limit.
To describe it we consider resonances of the 
Dirichlet realization of $ - h^2\Delta + V $ on $ \RR^n \setminus
\mathcal O $  in bounded subsets of $ \CC $ as $ h \to 0 $. When $ V \equiv 0 $ 
that corresponds to the high energy limit for obstacle problems and when 
$ \mathcal O = \emptyset $ to Schr\"odinger operators. 

\begin{figure}[htbp] 
\begin{center}
\hspace{0.5in}\includegraphics[width=4.5in]{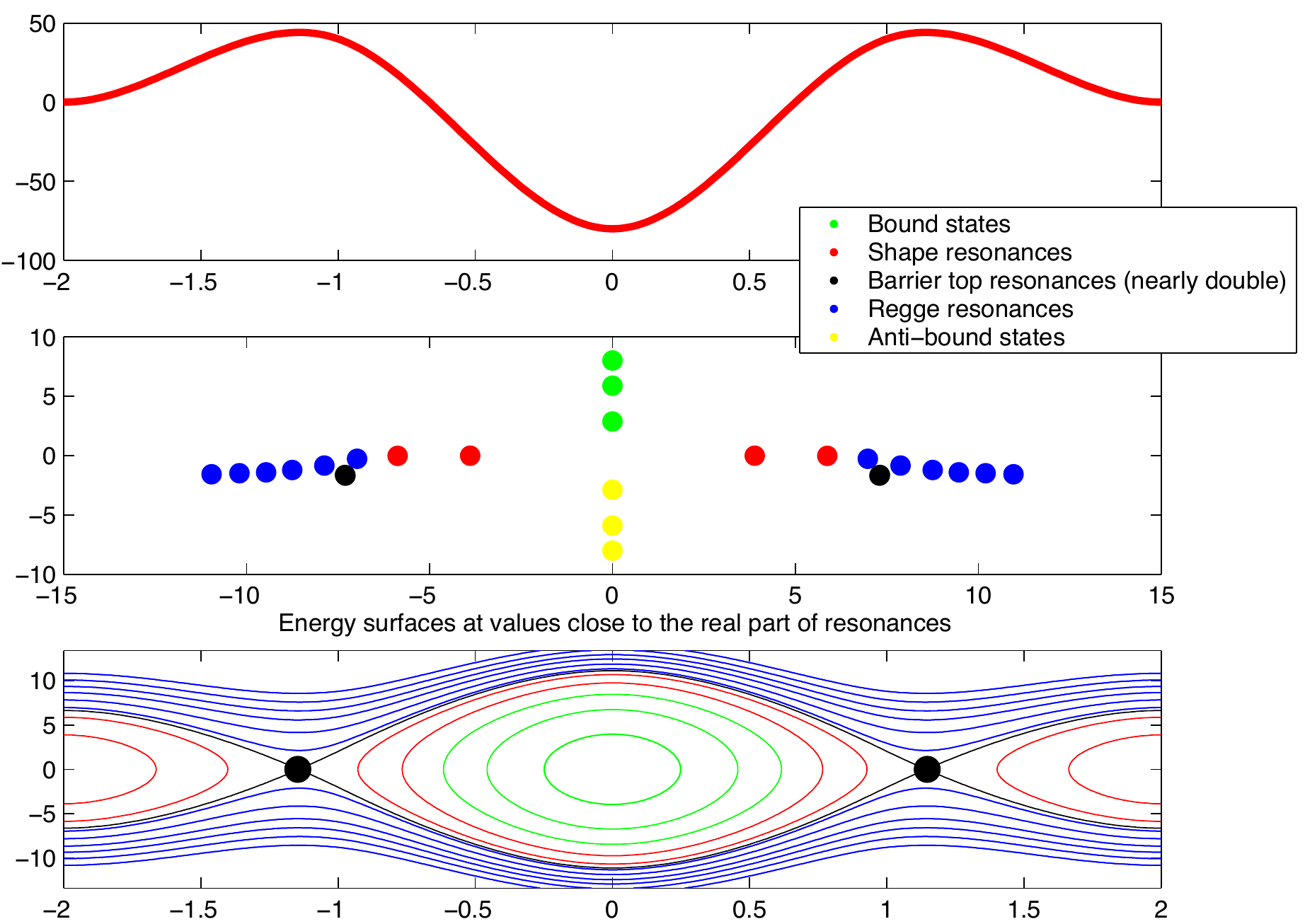}
\caption{\label{f:1}
  Resonances corresponding to different dynamical 
phenomena as poles of continuation of the resolvent 
$ \lambda \to ( - h^2 \partial_x^2 + V ( x) - \lambda^2 )^{-1} $ from 
$ \Im \lambda > 0 $ to $ \Im \lambda \leq 0 $ -- see \S \ref{merc3}.
The bound states are generated by negative level sets
of $ \xi^2 + V ( x )$ satisfying Bohr--Sommerfeld quantization conditions.
Bounded positive level sets of $ \xi^2 + V ( x ) $ can also satisfy 
the quantization conditions but they cannot produce bound state -- tunnelling
to the unbounded components of these level sets is responsible for 
resonances with exponentially small ($ \sim e^{ - S /h } $) imaginary parts/width.
The unstable trapped points corresponding to maxima of the potential
produce resonances which are at distance $ h $ of the real axis. }
\end{center}
\end{figure} 

In the case of semiclassical Schr\"odinger operators, 
the properties of the classical energy surface $ \xi^2 + V ( x ) = E $  
can be used to study 
resonances close to $ E \in \RR $.
Figure \ref{f:1} shows some of the principles in dimension
one. The classical energy surfaces are in this case integral curves of
the Hamilton flow, $ \dot x = 2 \xi $, $ \dot \xi = 
- V'( x)$ (Newton equations) and the properties of this flow
determine location of resonances when $ h $ is small. The
level sets which are totally trapped and satisfy quantization conditions
correspond to bound states. The level sets which have a trapped
component but also an unbounded component, give rise to resonances
with exponentially small decay rates, $ e^{-S/h} $. That make
sense since the corresponding quantum state take an exponentially
long time to tunnel through the barrier (see \cite{He-Sj0} for 
a general theory and \cite{Marr},\cite{GrMa} for some recent results and references). 
\begin{figure}[ht]
\begin{center}
\includegraphics[width=7.5cm]{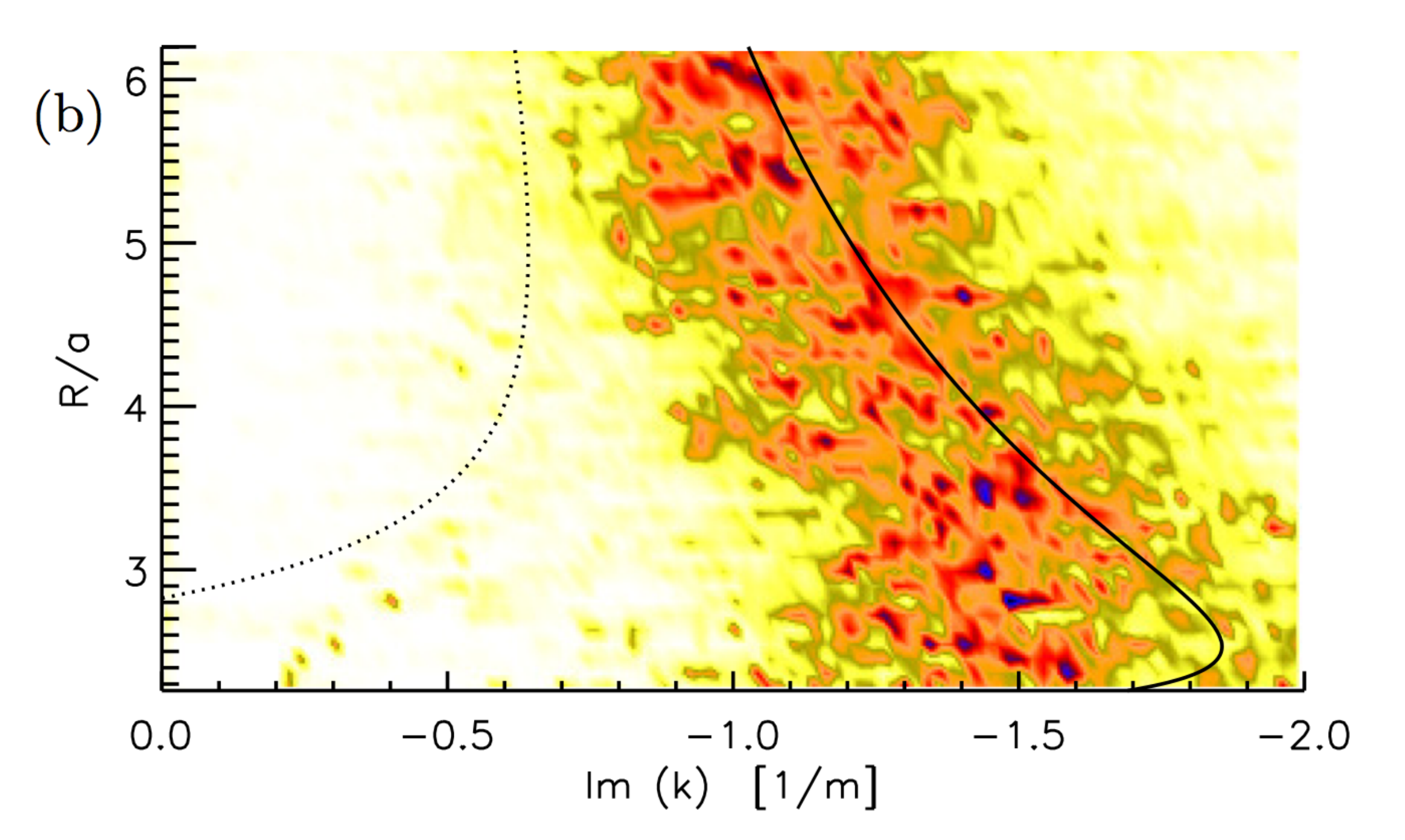} 
\includegraphics[width=7.5cm]{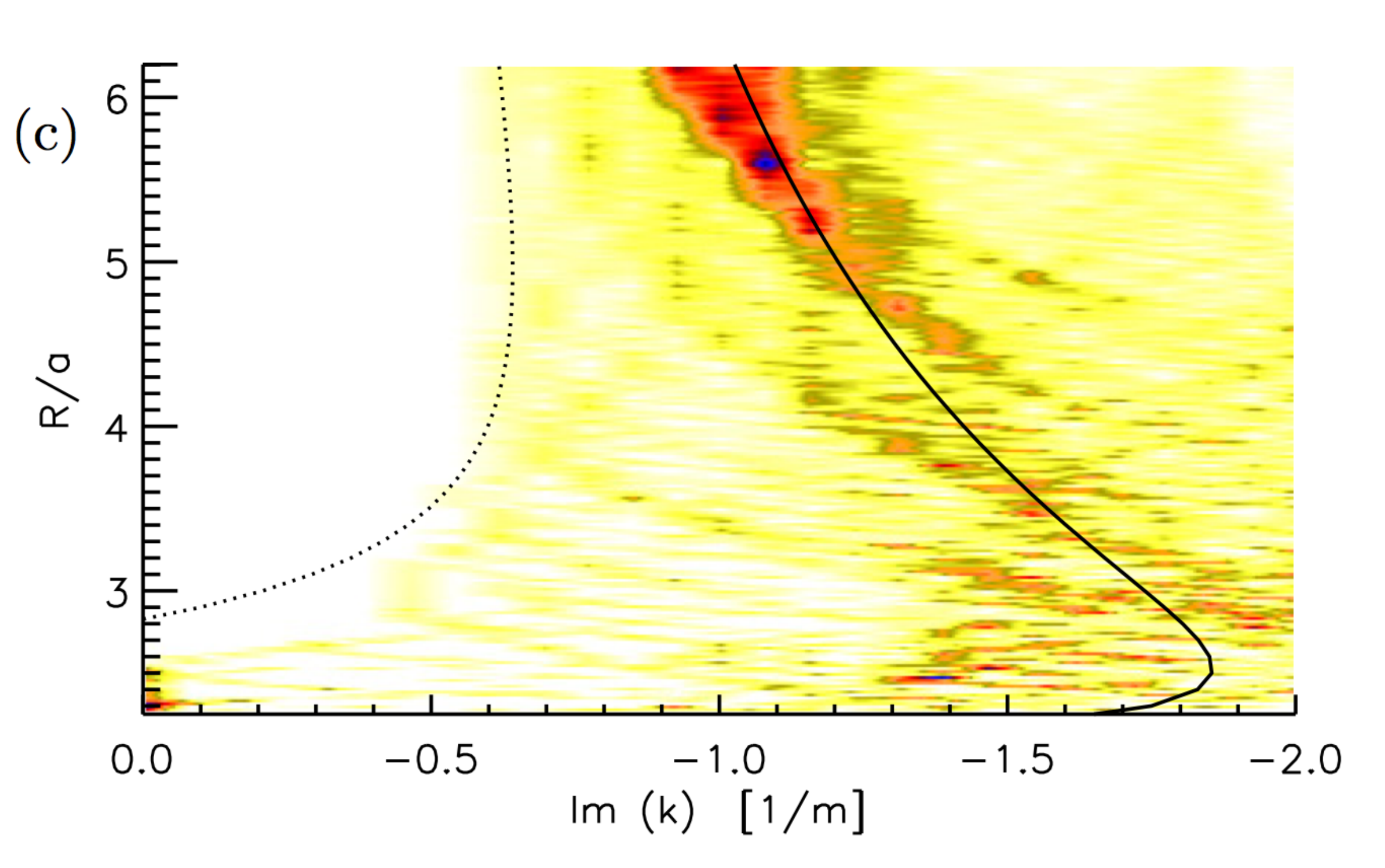}
\end{center}
\caption{Distribution of resonance width (decay rates) for a
three disc scatterer \cite{Bark} in a symmetry reduced microwave
model shown in Figure \ref{f:kuhl}: the plots show the density of
values of resonance width, $ \Im k $, for a range of
rest frequencies ($ 40 {\rm m}^{-1} \leq \Re k \leq 500 {\rm m}^{-1} $) against
the aspect ration of a three disc system (see Figure \ref{f:kuhl}). The 
experimental
data (left) and numerical  data (right) are compared. The dotted line
shows the topological pressure at $ \frac12 $ and the solid line $\frac12 $
of the classical decay rate as  functions of
the aspect ratio. The former predicts the resonance free region and
the latter, the concentration of decay rates.}
\label{f:class}
\end{figure}
Then we have energy 
levels which contain fixed points: that is an example of
{\em normally hyperbolic} trapping which produces resonances
at distance $ h$ from the real axis -- see \S\ref{resfree} 
for a general discussion and \cite{Ra} for precise analysis in one dimension 
and more references. Interestingly the same form of trapping 
occurs in the setting of rotating black holes, see \S\ref{expan} and
\cite{physrev}. Finally the unbounded 
trajectories would not produce resonances close (that is 
with imaginary parts tending to $ 0$ with $ h \to 0 $) to the real axis
if our potential were real analytic. But in the compactly supported, 
non smooth,
case the singularities at the boundary of the support produce
resonances at distance $ M h \log \frac 1 h $ where $ M$ depends on
the regularity of the potential at the boundary of the support.
(We call them Regge resonances as they were investigated in \cite{Re}
but should not be confused with closely related {\em Regge poles} \cite{Gr}.)
The anti-bound states are too deep in the lower half plane to have 
dynamical interpretation. They seem to be almost symmetric to bound
states and in fact when the potential is positive near the 
boundary of its support they are exponentially 
close to the reflection of bound states \cite{DG}.

\subsection{Other examples from physics and engineering}

We present here a few recent examples of scattering resonances appearing 
in physical systems. 

Figure \ref{f:hm} shows resonance peaks for 
a scanning tunneling microscope experiment
where a circular quantum corral of CO molecules 
is constructed -- see \cite{Mo} and references given 
there. The resonances are very close to eigenvalues 
of the Dirichlet Laplacian (rescaled by $ \hbar^2/m_{\rm{eff}} $
where $ m_{\rm{eff}} $ is the effective mass of the Bloch
electron). Mathematical results explaining existence of
resonances created by a barrier (here formed by a corral of CO  molecules)
can be found in \cite{NaStZw}.

\begin{figure}
\begin{center}
\includegraphics[width=2.15in]{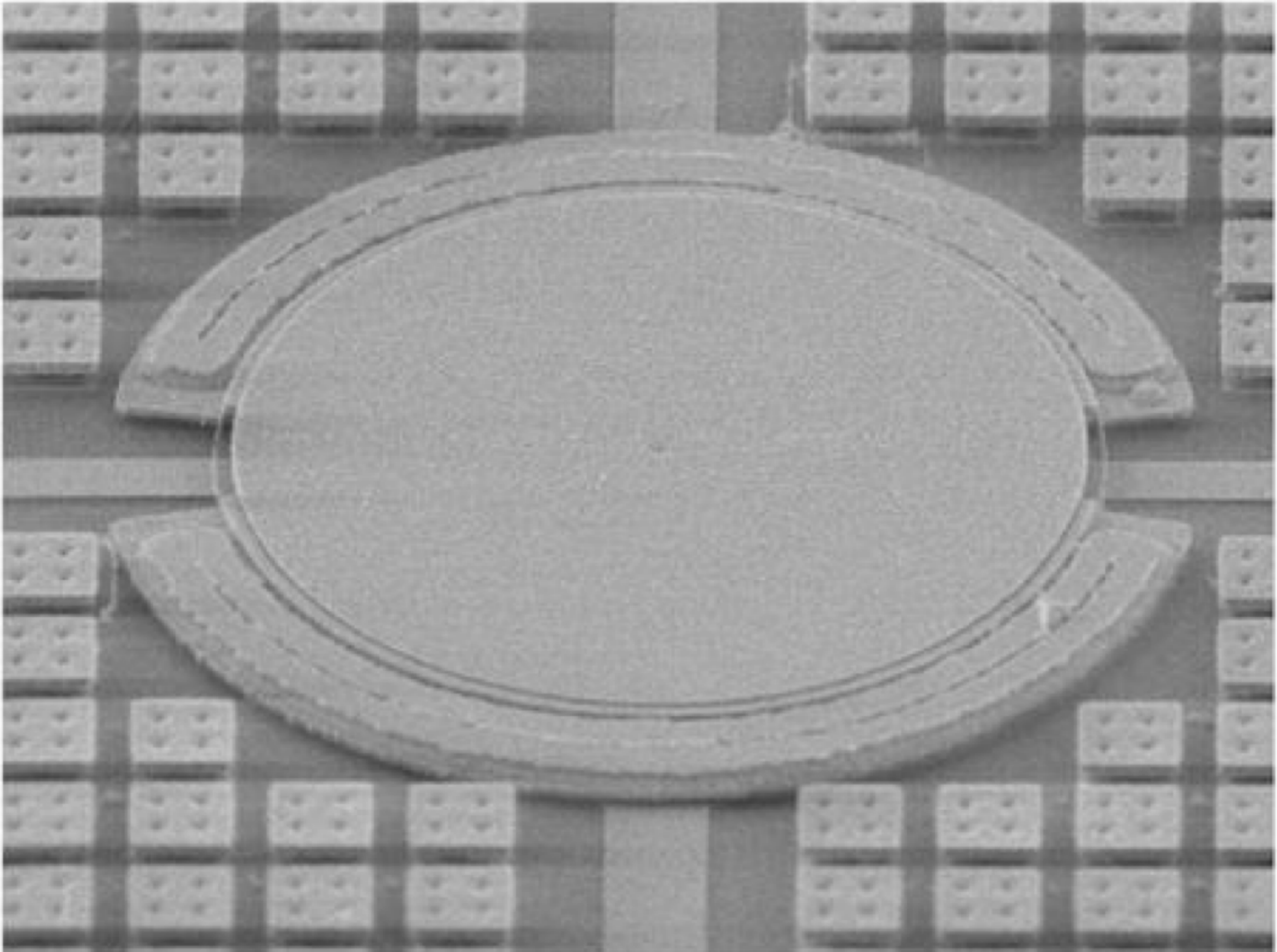}  \hspace{2cm} 
\includegraphics[width=1.22in]{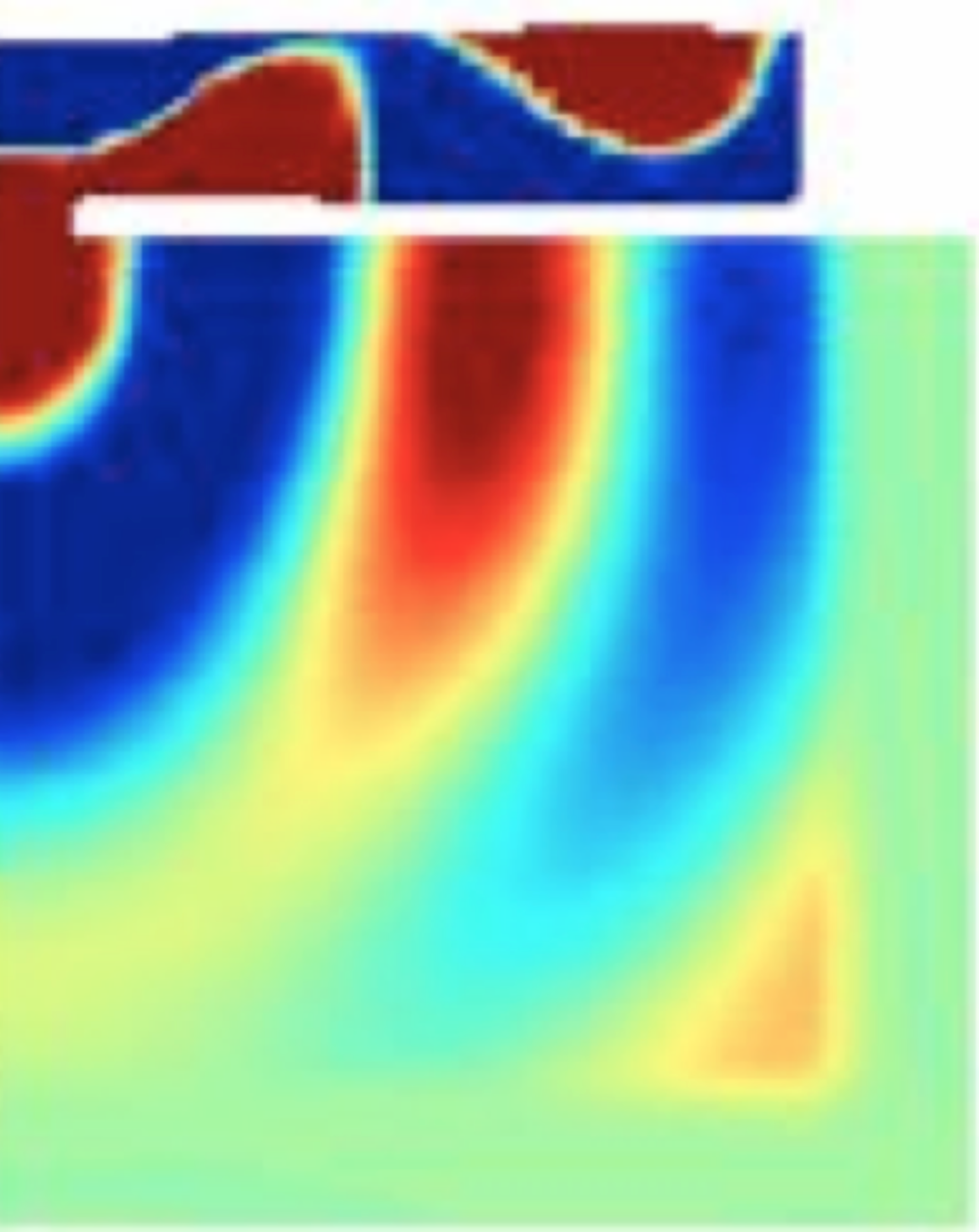}
\caption{\label{f:3}  A MEMS device on the left has resonances
investigated using the complex scaling/perfectly matched layer
methods \cite{BG} -- see \S\ref{rae}. A numerically constructed resonant state 
for the elasticity operator used to model the system -- 
see Definition~\ref{d:2} for a simpler case -- is
shown on the right.}
\end{center}
\end{figure}

Figure \ref{f:kuhl} shows an experimental set-up for 
microwave cavities used to study scattering resonances
for chaotic systems. Density of resonance was investigated in this 
setting in 
\cite{Potz} 
and that is related to semiclassical upper bounds in 
\S \ref{Weyl}. In \cite{Bark} dependence
of resonance free strips on dynamical quantities was confirmed
experimentally and \S \ref{resfree} contains related mathematical results
and references. The experimental and numerical findings in \cite{Bark}
are presented in Figure \ref{f:class}.

Figure \ref{f:3} shows a MEMS (the acronym for the {\em microelectromechanical systems})
resonator. The numerical calculations \cite{BG}
in that case are based on the complex scaling technique,
presented in a model case in \S \ref{rae},
adapted 
to the finite element methods. In that field it is known as the method of 
{\em perfectly matched layers} \cite{ber}. 

\begin{figure}
\includegraphics[width=5.75cm]{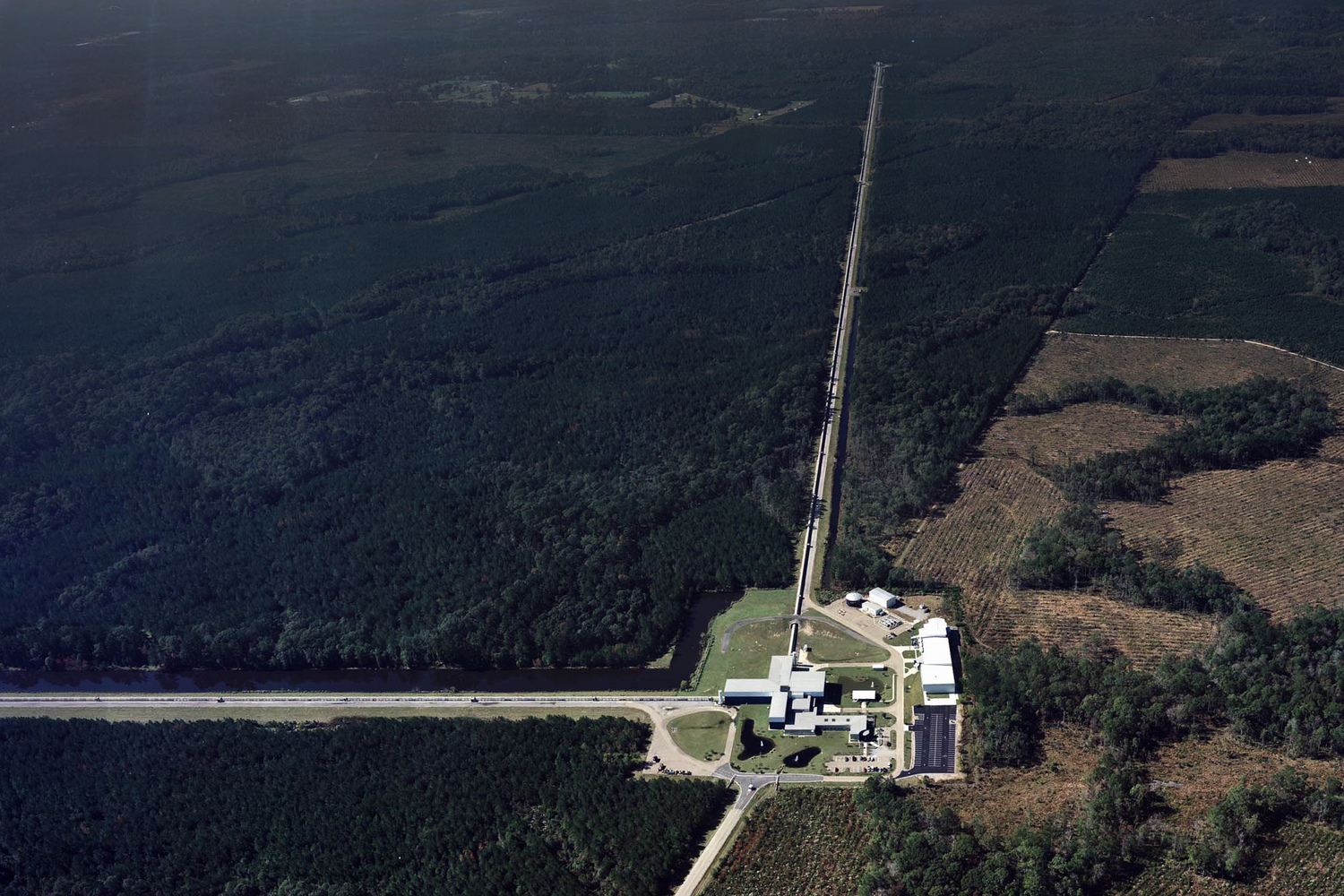}\quad
\includegraphics[width=7cm]{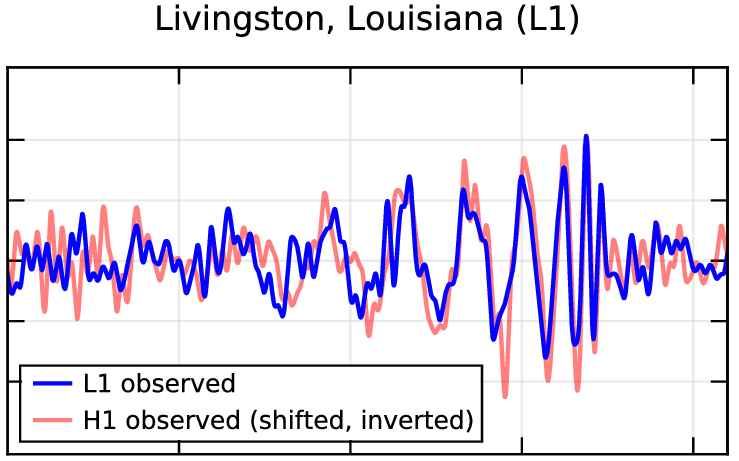}
\caption{Left: an aerial view of the LIGO laboratory in
Livingston, Louisiana, US. Right: the gravitational wave signal
observed on September 14, 2015 simultaneously by LIGO Livingston (blue)
and LIGO Hanford (red); see~\cite{LIGO-PRL}. Quasi-normal modes, as 
scattering resonances are called in the context of gravitational waves, 
are supposed to appear in the ``ringdown phase" but have not been 
observed yet. 
The picture is obtained from the LIGO Open Science
Center, \url{https://losc.ligo.org}, a service of LIGO Laboratory and the LIGO Scientific Collaboration. LIGO is funded by the U.S. National Science Foundation.}
\label{f:ligo}
\end{figure}

Figure~\ref{f:ligo} shows the profile of gravitational waves recently
detected by the Laser Interferometer Gravitational-Wave Observatory
(LIGO) and originating from a binary black hole merger.
Resonances for such waves are known by the name
of \emph{quasi-normal modes} in physics literature
and are the characteristic frequencies of the waves emitted
during the ringdown phase of the merger, when the resulting
single black hole settles down to its stationary state~--
see for instance~\cite{KokkotasSchmidt,zeeman,physrev}
and \S \ref{resrel}.

Our last example is a proposal of using Pollicott--Ruelle 
resonances in climate study by Chekroun et al \cite{cheka}. These are the resonances
appearing in expansions of correlations of chaotic flows and for the
new mathematical developments in their study see
\S \ref{dsPR}. The idea in \cite{cheka} is to use these resonances to encode
information about low-frequency variability  
of turbulent flows in the atmosphere and oceans. 
The spectral gap -- defined as the distance between the Ruelle--Pollicott resonances and the unitarity axis -- is used to see the roughness of parameter dependences
in different models -- see Figure \ref{f:PNAS}. The authors claim that 
``links between model sensitivity and the decay of correlation properties are not limited to this particular model and could hold much more generally".

\begin{figure}
\includegraphics[width=5.75cm]{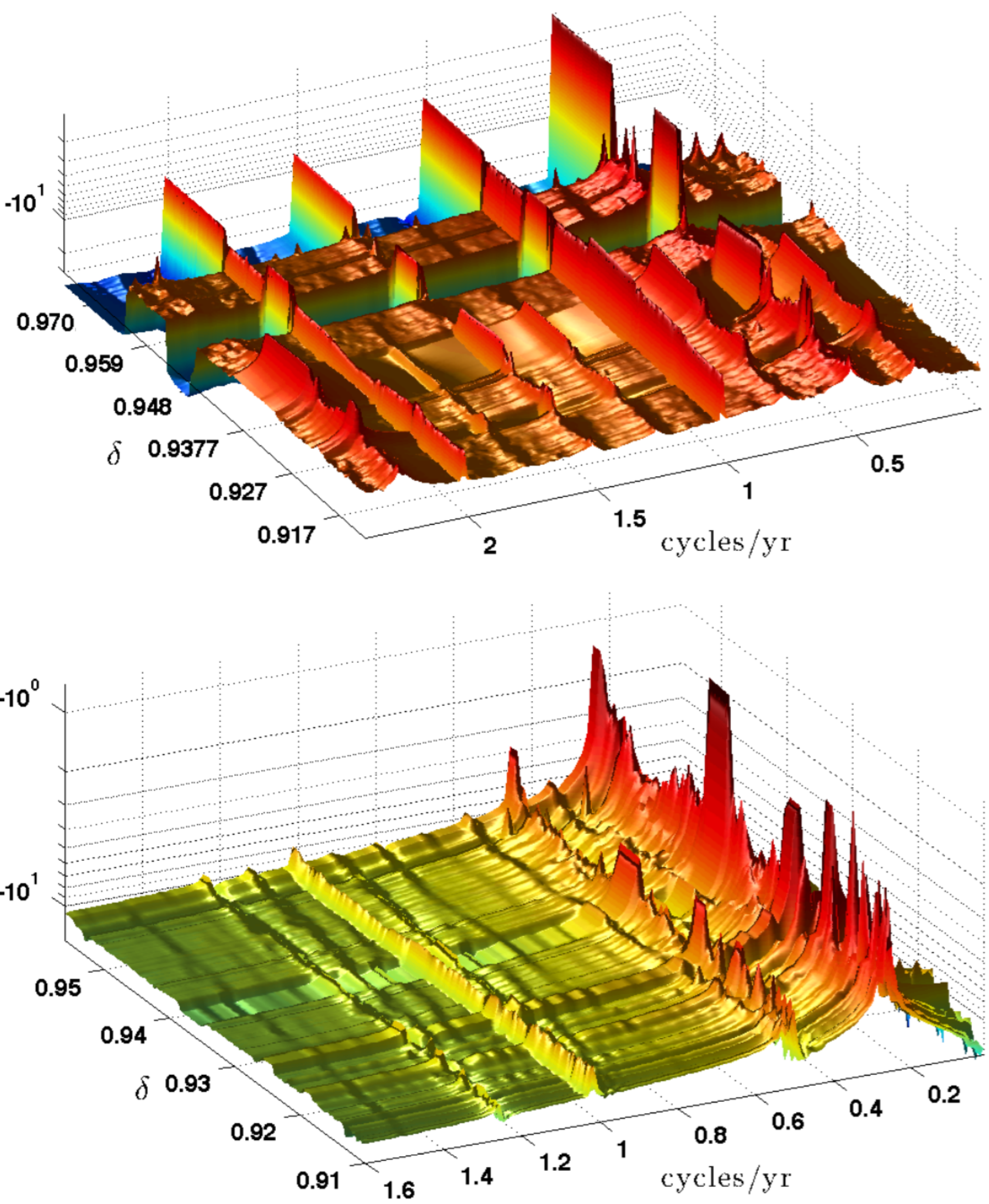}
\includegraphics[width=8cm]{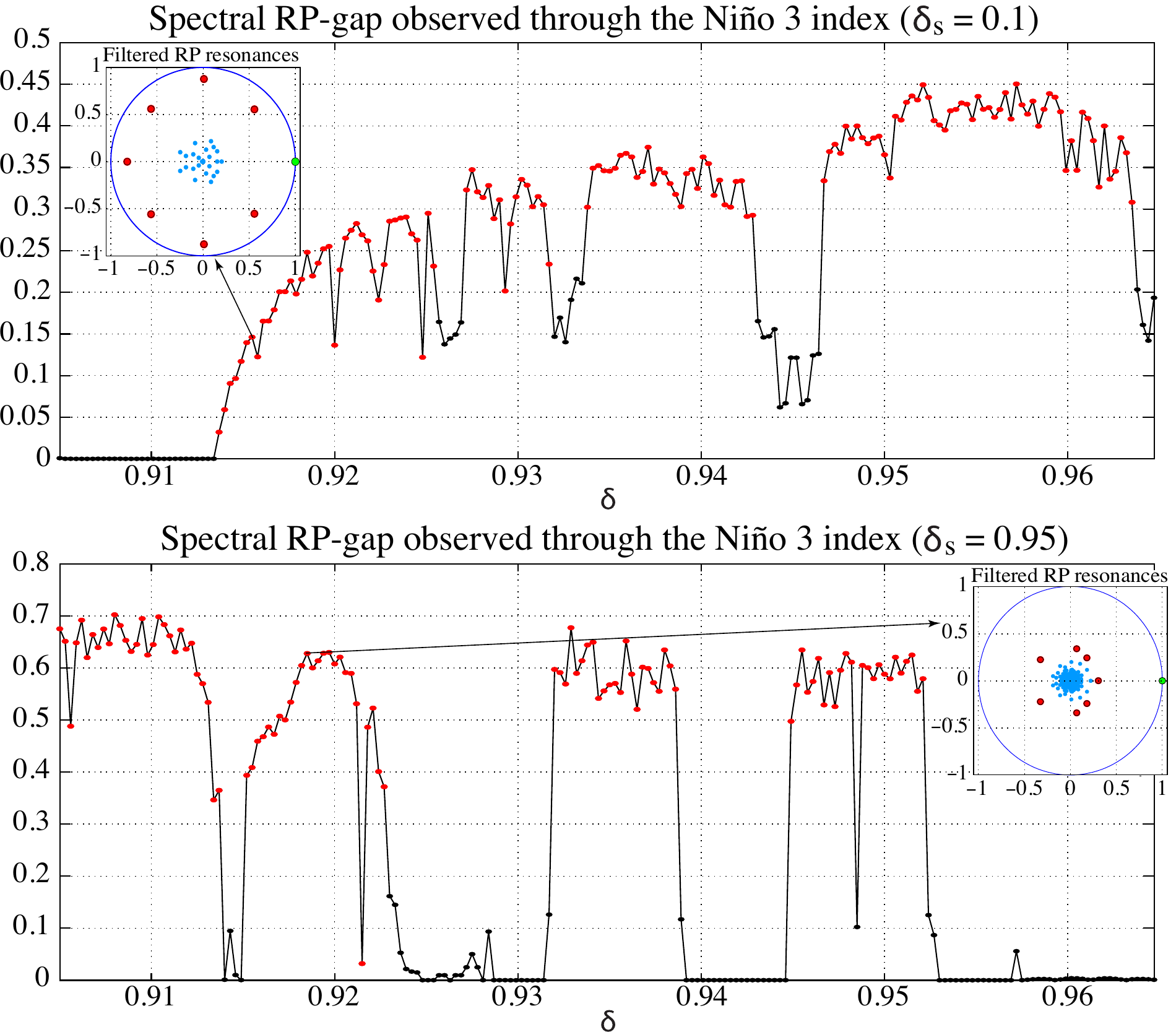}
\caption{Left: Parameter (denoted by $ \delta $) dependence of
power spectra of El Ni\~{n}o models considered in \cite{cheka}. Right: 
the dependence of the spectral gap on the same parameter $ \delta$; here
the resonance of maps obtained by using Markov partitions are presented
and they lie inside of the unit disk. The resonance for flows are obtained
by taking logarithms which is similar to the case of resonances of
quantum maps -- see Figure \ref{f:dylo}.}
\label{f:PNAS}
\end{figure}

\section{Potential scattering in three dimensions}
\label{pot3}

Operators of the form $ P_V := - \Delta + V $, where $ \Delta = 
\partial_{x_1}^2 + \partial_{x_2}^2 + \partial_{x_3}^2 $, and where
$ V$ is bounded and
compactly supported,  provide a
 setting in which one can easily present basic theory. Despite
the elementary set up, interesting open problems remain -- see \S \ref{othr1}. 
In this section we 
prove meromorphic continuation of the resolvent
of $ P_V$, $ R_V ( \lambda ) := ( P_V- \lambda^2 )^{-1} $, 
give a sharp bound on the number of resonances in discs, show
existence of resonance free regions and justify 
the expansion \eqref{eq:uoftx}. We also explain the
method of complex scaling in the simplest setting of one dimension. 
These are the themes which reappear
in \S\S \ref{srr},\ref{dsPR} when we discuss some recent advances.
However, in the setting of this section we can present them with complete proofs.
In \S \ref{resfree3} we also review some results in obstacle scattering
as they fit naturally in our narrative.

\subsection{The free resolvent}

Let $ P_0 := - \Delta$. This is a self-adjoint unbounded operator on $ L^2 
( \RR^3 ) $. We consider its resolvent, writing the spectral parameter as
 $ \lambda^2$. For $ \Im \lambda > 0 $, 
 \begin{equation}
 \label{eq:R00}  R_0 ( \lambda ) f ( x ) = \int_{ \RR^3 } R_0 ( \lambda, x , y ) 
 f ( y ) dy , \ \ f \in L^2 ( \RR^3 ) ,\end{equation}
 and we have an explicit formula for the {\em Schwartz kernel} of $ R_0 (\lambda ) $:
 \begin{equation}
 \label{eq:R01} R_0 ( \lambda , x, y ) = \frac{ e^{ i \lambda | x - y | } } { 4 \pi 
 | x - y | } .\end{equation} 
 (Since $R_0 ( \lambda, x, y ) $ depends only on $ |x-y| $ due to 
 translation and rotation invariance of $ - \Delta$, this can be seen
 using polar coordinates; see
 see \cite[Theorem 3.3]{res} for a different derivation.) 
 
 From this we see that \eqref{eq:R00} makes sense for {\em any} $ \lambda \in \CC $ 
 and  {\em any
 compactly supported} $ f $ (for instance $ f \in L^2_{\comp} ( \RR^3 ) $) and
 $ R_0 ( \lambda ) f $ is then a function {\em locally} in $ L^2 $,
 $ R_0 (\lambda ) f \in L^2_{\loc} ( \RR^3 ) $. In other words,
 \begin{equation}
 \label{eq:R0b}  R_0 ( \lambda ) : L^2 ( \RR^3 ) \to L^2 ( \RR^3 ) , \ \ \Im \lambda > 0 , \end{equation}
 has a {\em holomorphic} continuation,
\begin{equation}
\label{eq:R0bb} 
 R_0 ( \lambda ) : L^2_{\comp} ( \RR^3 ) \to L^2_{\loc} ( \RR^3 ) , \ \ 
 \lambda \in \CC . \end{equation}
This, and the connection to the wave equation, explains why we took 
$ \lambda $ as our parameter. The mapping property \eqref{eq:R0bb} remains
valid in all odd dimensions while in even dimensions continuation 
has to be made to the infinitely sheeted logarithmic plane.
In quantum scattering (as opposed to scattering of classical waves) 
the usual spectral parameter $ z = \lambda^2 $
is more natural in which case the continuation is from 
$ \CC \setminus [ 0 , \infty ) $ through the cut $ [ 0 , \infty ) $.

We have the following formula which will be useful later:
\begin{equation}
\label{eq:R0E}
\begin{gathered}
 R_0 ( \lambda ) - R_0 ( - \lambda ) = 
{
{\frac{i\lambda} 2} } 
\, \mathbb E( \bar \lambda )^* \mathbb E ( \lambda )  , \ \ \ \lambda \in \CC ,
 \\
\mathbb E ( \lambda ) : L^2 ( \RR^3 ) \to L^2_{\loc} (\SP^2 ),
\ \ \ 
\mathbb E ( \lambda ) g ( \omega ) := {
{\frac{1}{2\pi} }}
\int_{ \RR^3 } e^{- i \lambda \langle x , \omega \rangle } g (x) d x , 
\end{gathered}
\end{equation}
where $ L^2 ( \SP^2 ) $ is defined using the standard measure on the sphere. This follows 
from \eqref{eq:R01} and the elementary identity 
$\int_{ \SP^2} e^{ - i \langle y, \omega \rangle} d \omega = 
-2 i \pi ({ e^{ i |y| } - e^{ - i |y|}})/|y| $,
see \cite[Lemma 3.2]{res}.
In view of the spectral decomposition based on the Fourier transform,
\begin{equation}
\label{eq:Stone0} f = {
{\frac{1}{2\pi} }} \int_0^\infty  \mathbb E ( \lambda )^*\mathbb E ( \lambda ) f  \lambda^2 d \lambda , \ \
- \Delta  f = {
{\frac{1}{2\pi} }} \int_0^\infty  \lambda^2 \mathbb E ( \lambda )^* \mathbb E ( \lambda ) f  \lambda^2 d \lambda ,  
\end{equation}
$ f \in \CIc ( \RR^3) $, 
formula \eqref{eq:R0E} is a special case of the {\em Stone formula} 
relating resolvents and spectral measures \cite[(B.1.12)]{res}.

The spectral decomposition \eqref{eq:Stone0} and \eqref{eq:R0E} 
is one way to see the relation of $ R_0 ( \lambda ) $ with the wave equation:
for $ f \in \CIc ( \RR^3 ) $
\begin{equation}
\label{eq:R0wave}
\begin{split}
\frac{ \sin t \sqrt{- \Delta} }{ \sqrt{ - \Delta}} f  & = {
{\frac{1}{2\pi} }}
\int_0^\infty \sin t \lambda  \, \mathbb E ( \lambda )^* \mathbb E ( \lambda ) f
\, \lambda \, d \lambda \\
& = {
{\frac{1}{\pi i } }} \int_0^\infty 
\sin t \lambda \, ( R_0 ( \lambda ) - R_0 ( - \lambda ) ) f\, d \lambda \\
&= {
{\frac{1}{\pi i } }}  \int^\infty_0 \frac{e^{it\lambda}-e^{-it\lambda}}{2i}
(R_0(\lambda)-R_0(- \lambda)) f\, d \lambda\\
&={
{\frac{1}{ 2 \pi }}} \int_{\RR }
  e^{- it\lambda} R_0(\lambda) f  \, d \lambda -
  {
  {\frac{1}{ 2 \pi }}} \int_{\RR }
  e^{- it\lambda}
R_0 ( - \lambda)) f \, d \lambda .
\end{split}
\end{equation}
For $ t > 0 $ we can deform $ \RR $ in the second integral to the contour $ \RR - i \gamma $, $ \gamma > 0 $. By letting $ \gamma \to + \infty $ we then see that 
\begin{equation}
 \label{eq:R0wave1}
 \frac{ \sin t \sqrt{- \Delta} }{ \sqrt{ - \Delta}} f   = 
 {
 {\frac{1}{2\pi} }} \int_\RR e^{ - i \lambda t }  R_0 ( \lambda ) f d \lambda , \ \ t > 0 , \ \ \
 f \in \CIc ( \RR^3 ) .
\end{equation}
(The integrals above converge only in the distributional sense and hence one
should first ``integrate" both sides against $ \varphi ( t) \in \CIc ( ( 0 , \infty)) $ and then deform the contour.)
When combined with \eqref{eq:R01} we obtain the Schwartz kernel of  
$ \sin( t \sqrt{- \Delta} )/\sqrt{-\Delta} $, 
\begin{equation}
\label{eq:sintD}
 \frac{ \sin t \sqrt{- \Delta} }{ \sqrt{ - \Delta}}  ( x, y ) = 
 \frac{ \delta ( t - |x-y|) }{ 4 \pi t } , \ \ t > 0 . 
 \end{equation}
which gives Kirchhoff's formula
 \[  \frac{ \sin t \sqrt{- \Delta} }{ \sqrt{ - \Delta}} f  ( x ) 
 = \frac{1 }{4 \pi t } \int_{ \partial B ( x , t ) } f ( y ) d \sigma ( y ) , \ \
 t > 0 . \]
The important feature of \eqref{eq:sintD} is the support property
of the Schwartz kernel: it vanishes outside of the light cone
$ |x - y | = t $. That is the {\em sharp Huyghens principle} valid in 
all odd dimensions greater than one and violated in even dimensions. 

We conclude this section with

\begin{thm}
\label{t:1}
Let $ \Delta = \sum_{j=1}^3 \partial_{x_j}^2  $ be the Laplacian in $ \RR^3 $.
Then the resolvent 
$$ R_0 ( \lambda ) := ( - \Delta - \lambda^2 )^{-1} :
L^2 ( \RR^3 ) \to L^2 ( \RR^3 ) , \ \  \Im \lambda > 0 ,$$
extends
holomorphically to $ \CC $ as an operator 
\[   R_0 ( \lambda ) : L^2_{\comp} ( \RR^3 )  \to L^2_{\loc} ( \RR^3 ) . \]

Moreover, for any $ \rho \in \CIc ( B ( 0 , R ); [ 0, 1] ) $ we have
\begin{equation}
\label{eq:t1} 
\| \rho R_0 ( \lambda ) \rho \|_{ L^2 \to H^j } \leq 
C  e^{ 2 R (\Im \lambda )_- } ( 1 + |\lambda |)^{j-1} , \ \ j=0,1, 
\end{equation}
where $ C $ depends only on $ \rho $.
\end{thm}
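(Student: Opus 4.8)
The plan is to read everything off the explicit kernel formula \eqref{eq:R01}. The holomorphic continuation statement has already been discussed around \eqref{eq:R0b}--\eqref{eq:R0bb}: the kernel $R_0(\lambda,x,y) = e^{i\lambda|x-y|}/(4\pi|x-y|)$ is entire in $\lambda$, and for fixed compactly supported $f$ the integral $\int R_0(\lambda,x,y)f(y)\,dy$ converges for every $\lambda$ and defines a function in $L^2_{\loc}$; holomorphy in $\lambda$ then follows by differentiating under the integral sign (the singularity $|x-y|^{-1}$ is locally integrable in $\RR^3$, uniformly in $\lambda$ on compact sets). So the real content is the quantitative bound \eqref{eq:t1}.

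For the estimate, fix $\rho\in\CIc(B(0,R);[0,1])$. On the support of $\rho(x)\rho(y)$ we have $|x-y|\le 2R$, so the exponential factor is bounded: $|e^{i\lambda|x-y|}| = e^{-\Im\lambda\,|x-y|} \le e^{2R(\Im\lambda)_-}$, where $(\Im\lambda)_- = \max(-\Im\lambda,0)$. This already extracts the $e^{2R(\Im\lambda)_-}$ prefactor; it remains to bound $\|\rho R_0(\lambda)\rho\|_{L^2\to H^j}$ with the $\lambda$-dependence $(1+|\lambda|)^{j-1}$ for $j=0,1$, uniformly after factoring out that exponential. First I would handle the case $j=0$. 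Here the cleanest route is to compare with the free resolvent in its region of self-adjointness: for $\Im\mu \gg 1$ one has $\|R_0(\mu)\|_{L^2\to L^2} \le 1/\dist(\mu^2,[0,\infty)) \lesssim |\mu|^{-1}$ when, say, $\Im\mu = |\mu|$. Then write $\rho R_0(\lambda)\rho - \rho R_0(\mu)\rho$ as an integral operator whose kernel is $\rho(x)\big(e^{i\lambda|x-y|}-e^{i\mu|x-y|}\big)\rho(y)/(4\pi|x-y|)$; the difference of exponentials kills the $|x-y|^{-1}$ singularity (it is $O(|\lambda-\mu|)$ near the diagonal and bounded by $e^{2R(\Im\lambda)_-}+e^{2R(\Im\mu)_-}$ away from it), so this difference is Hilbert--Schmidt on $B(0,R)$ with norm $\lesssim (1+|\lambda|)\,e^{2R(\Im\lambda)_-}$ — actually, to get the sharp $|\lambda|^{-1}$ one should instead estimate $\rho R_0(\lambda)\rho$ directly via Schur's test: its kernel is pointwise bounded by $e^{2R(\Im\lambda)_-}/(4\pi|x-y|)$, and $\sup_x\int_{B(0,R)} |x-y|^{-1}\mathbf 1_{|x-y|\le 2R}\,dy$ is a finite constant, giving $\|\rho R_0(\lambda)\rho\|_{L^2\to L^2}\le C e^{2R(\Im\lambda)_-}$. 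This is exactly \eqref{eq:t1} for $j=0$ since $(1+|\lambda|)^{-1}\le 1$ — so in fact the $j=0$ bound as stated (with the extra $(1+|\lambda|)^{-1}$ on the right) is the weaker statement and Schur's test suffices.

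For $j=1$ I would use the resolvent identity $(-\Delta-\lambda^2)R_0(\lambda)=\mathrm{Id}$ to control one derivative. Pick $\tilde\rho\in\CIc$ with $\tilde\rho\equiv 1$ on $\supp\rho$; then $\rho R_0(\lambda)\rho = \rho\tilde\rho R_0(\lambda)\rho$ and applying $-\Delta$ gives $-\Delta\big(\rho R_0(\lambda)\rho f\big) = [-\Delta,\rho]R_0(\lambda)\rho f + \rho\big(\lambda^2 R_0(\lambda)\rho f + \rho f\big)$ — wait, more carefully: with $u=R_0(\lambda)\rho f$ one has $(-\Delta-\lambda^2)u=\rho f$, so $-\Delta(\rho u) = \rho(-\Delta u) + [-\Delta,\rho]u = \rho(\lambda^2 u+\rho f) + [-\Delta,\rho]u$. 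The commutator $[-\Delta,\rho]$ is a first-order differential operator with coefficients supported in $B(0,R)$, so $\|[-\Delta,\rho]u\|_{L^2}\lesssim \|\tilde\rho u\|_{H^1}$; bounding $\|\tilde\rho u\|_{H^1}$ is the same problem one sheet down, and interpolation/elliptic regularity ($\|\tilde\rho u\|_{H^1}^2 \lesssim \|\tilde\rho u\|_{L^2}\|\Delta(\hat\rho u)\|_{L^2} + \dots$) closes the loop: one gets $\|\rho R_0(\lambda)\rho\|_{L^2\to H^2}\lesssim (1+|\lambda|^2)\,e^{2R(\Im\lambda)_-}$ from the $j=0$ bound, and then $\|\cdot\|_{L^2\to H^1}\lesssim (1+|\lambda|)\,e^{2R(\Im\lambda)_-}$ by interpolating between the $H^0$ and $H^2$ estimates. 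The main obstacle is bookkeeping the cutoffs and commutators so that every step only involves $\rho R_0(\lambda)\rho$-type quantities with slightly enlarged cutoffs (the standard "nested cutoff" trick), together with getting the powers of $\lambda$ sharp — it is tempting to lose a power in the $H^2$ estimate, and one must be careful that the elliptic-regularity gain of two derivatives really costs only $|\lambda|^2$ and not more. Everything else is Schur's test and locally integrable singularities, which are routine.
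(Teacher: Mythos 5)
Your proposal contains a genuine logical error in the $j=0$ case, and it is not a small one. You correctly observe that Schur's test on the kernel $\rho(x)\rho(y)e^{i\lambda|x-y|}/(4\pi|x-y|)$ gives
$\|\rho R_0(\lambda)\rho\|_{L^2\to L^2}\le C e^{2R(\Im\lambda)_-}$,
with no decay in $|\lambda|$. You then claim this implies \eqref{eq:t1} with $j=0$ ``since $(1+|\lambda|)^{-1}\le 1$.'' But the direction of that implication is reversed: the bound asserted in \eqref{eq:t1}, namely $C e^{2R(\Im\lambda)_-}(1+|\lambda|)^{-1}$, is a \emph{smaller} upper bound than $C e^{2R(\Im\lambda)_-}$, hence a \emph{stronger} statement. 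Schur's test proves the weaker of the two and leaves the actual claim unproven. The decay $(1+|\lambda|)^{-1}$ is not decorative; it is exactly what is used later (in the proof of Theorem~\ref{t:4}) to make $\|VR_0(\lambda)\rho\|<1$ at high frequencies and invert $I+VR_0(\lambda)\rho$ by Neumann series. The gap then propagates to your $j=1$ argument: interpolating your (unproven) $H^2$ bound $\lesssim(1+|\lambda|^2)e^{2R(\Im\lambda)_-}$ against the Schur-test $L^2$ bound $\lesssim e^{2R(\Im\lambda)_-}$ yields only $H^1\lesssim(1+|\lambda|)e^{2R(\Im\lambda)_-}$, which is worse than the stated $H^1\lesssim e^{2R(\Im\lambda)_-}$ by one full power of $\lambda$. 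To close the interpolation you would need the $j=0$ bound with its $(1+|\lambda|)^{-1}$ decay as input.

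The source of the decay is oscillation in $\lambda$, which Schur's test (being an absolute-value estimate on the kernel) cannot see. The paper extracts it by a different route: write $R_0(\lambda)=\int_0^\infty e^{i\lambda t}U(t)\,dt$ with $U(t)=\sin(t\sqrt{-\Delta})/\sqrt{-\Delta}$, use the sharp Huygens principle (the kernel of $U(t)$ is supported on $|x-y|=t$) to truncate the integral to $t\in[0,2R]$ once the cutoffs $\rho$ are in place, and then integrate by parts in $t$ — using $U(0)=0$ and $\|\partial_t U(t)\|_{L^2\to L^2}=\|\cos(t\sqrt{-\Delta})\|\le 1$ — to gain the factor $\lambda^{-1}$. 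The $j=1$ case is read off directly from $\|U(t)\|_{L^2\to H^1}=1+|t|$ on the finite interval. If you wish to remain with the explicit kernel as in your proposal, you would need to replace Schur's test by a genuine oscillatory-integral argument that exploits $e^{i\lambda|x-y|}$, which is more delicate because the phase is not smooth at $x=y$; in effect you would be rediscovering the wave-group computation in disguise. Your holomorphic-continuation and exponential-prefactor observations are fine; the quantitative $\lambda$-dependence is where the proposal does not go through.
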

\begin{proof}
To see \eqref{eq:t1}  we apply 
the (distributional) Fourier inversion formula to \eqref{eq:R0wave1}. That gives
\[  R_0 ( \lambda ) =  \int_0^\infty e^{  i \lambda t } U(t)
dt , \ \ \ U(t) := \frac{\sin t \sqrt { - \Delta } }{\sqrt{-\Delta}} .\]
Since the Schwartz kernel \eqref{eq:sintD} is supported 
on $ |x-y| = t $ and $ \rho $ is supported in $ |x| < R $ we can 
also have
\begin{equation}
\label{eq:rRr} \rho R_0 ( \lambda ) \rho = \int_0^{2R} e^{  i \lambda t } 
\rho U(t) \rho dt . \end{equation}
To use this to prove \eqref{eq:t1}, we note that
\[  \begin{split} \| U ( t ) \|_{ L^2 \to H^1 } & \simeq \| U( t ) \|_{ L^2                         
  \rightarrow L^2 } +
\| \sqrt { - \Delta } U ( t ) \|_{L^2 \to L^2 } 
= 
\sup_{ \lambda \in \RR  } \frac{ |\sin t \lambda |}{|\lambda |} + 
\sup_{ \lambda \in \RR } |\sin t \lambda | \\
& = 1 + |t| .
\end{split}\] 
This and \eqref{eq:rRr} give the bound  \eqref{eq:t1}  for $ j = 1$. For 
$ j = 0 $
we write (noting that $ U ( 0 ) = 0 $)
\[ \begin{split} \lambda \rho R_0 ( \lambda ) \rho
= & {
{\frac 1 i }} \int_0^{2R}  \partial_t (e^{ i \lambda t} )
\rho U ( t) \rho dt
= -  {
{\frac 1 i }}  \int_0^{2R}  e^{ i \lambda t}
\rho \partial_t U ( t) \rho dt 
\,.\end{split} \]
Since $ \partial_t U (t) =\cos t \sqrt{ - \Delta } $ is uniformly bounded on 
$ L^2 $, the bound \eqref{eq:t1} for $ j = 0 $ follows. \end{proof}

\subsection{Meromorphic continuation and definition of resonances}
\label{merc3}

We now consider $ P_V : = - \Delta + V $ where $ V \in L^\infty_{\rm{comp}} ( 
\RR^3 ) $ is allowed to be complex valued. Since
\begin{equation}
\label{eq:PlaP0la}   P_V - \lambda^2 = ( I + V R_0 ( \lambda ) ) ( P_0 - \lambda^2 ) 
\end{equation}
the study of $ ( P_V - \lambda^2 )^{-1} $ reduces to the study of
$ I + V R_0 ( \lambda ) $. For $ \Im \lambda > 0 $
\[  \widehat{R_0 ( \lambda ) f } ( \xi ) = \frac{ \hat f ( \xi ) }{
|\xi|^2 - \lambda^2 },  \]
where $ f \mapsto \hat f $ is the Fourier transform.
This implies that  
\begin{equation}
\label{eq:R0la}  \| R_0 ( \lambda ) \|_{ L^2 \to L^2 } = \frac{1}{ d ( \lambda^2, \RR_+) }
\leq \frac1{ |\lambda| \Im \lambda } , \ \ \ \Im \lambda > 0 .
\end{equation}
It follows that 
\[  \Im \lambda \gg 1 \ \Longrightarrow \ \| V R_0 ( \lambda ) \|_{ L^2 
\to L^2 } < 1 , \]
and hence 
\begin{equation}
\label{eq:RVR0} 
\begin{split} 
  R_V ( \lambda ) & := ( P_V - \lambda^2)^{-1} \\
  & = R_0 ( \lambda ) 
( I + V R_0 ( \lambda ) )^{-1}  \end{split} \end{equation}
is a holomorphic family of operators from $ L^2 $ to $ L^2 $ when $
\Im \lambda \gg 1 $.

We would like to continue $ R_V ( \lambda ) $ as a {\em meromorphic family} of
operators $ L^2_{\comp} ( \RR^3 ) \to L^2_{\loc} ( \RR^3 ) $. 
Because of \eqref{eq:RVR0} and Theorem \ref{t:1} this follows from the 
following statement 
\begin{equation}
\label{eq:VR0}  
\begin{gathered}  ( I + V R_0 ( \lambda ) )^{-1} : L^2_{\comp} ( \RR^3 ) 
\to L^2_{\comp} ( \RR^3 )  \\ \text{ is a meromorphic family of operators for
$ \lambda \in \CC $.} 
\end{gathered}
\end{equation}
Strictly speaking, we should really say that this family is a continuation 
of the holomorphic family of inverses defined for $ \Im \lambda \gg 1 $. 
By a {\em meromorphic family of
operators} $ \lambda \mapsto A ( \lambda ) $ we mean a family which 
is holomorphic outside a discrete subset of $ \CC $ and at any point $ \lambda_0$
in that subset we have
\begin{equation}
\label{eq:Ala}  A ( \lambda ) = \sum_{j=1}^J \frac{ A_j }{ ( \lambda - \lambda_0)^j} + A_0 ( \lambda ) , \end{equation}
where $ A_j $ are operators of {\em finite rank} and $ \lambda \mapsto 
A_0 ( \lambda ) $ is holomorphic near $ \lambda_0 $.

The proof of \eqref{eq:VR0} relies on {\em analytic Fredholm theory}:
suppose that $ K ( \lambda ) : L^2 \to L^2 $ is a holomorphic family of
compact operators for $ \lambda \in \CC $ and that 
$ ( I + K ( \lambda_0 ) )^{-1} :L^2 \to L^2 $ exists at some $ \lambda_0 \in 
\CC $. Then 
\begin{equation}
\label{eq:AFT}
\lambda \mapsto ( I + K ( \lambda ) )^{-1} \ \text{ is a meromorphic family 
operators for $ \lambda \in \CC $.}
\end{equation}
(See \cite[Theorem C.5]{res} for a general statement and a proof.) 

It is tempting to apply \eqref{eq:AFT} to obtain \eqref{eq:VR0} but
one immediately notices that $ V R_0 ( \lambda ) $ is not defined on 
on $ L^2 $ once $ \Im \lambda \leq 0 $. To remedy this we introduce
$ \rho \in \CIc ( \RR^3 ) $ equal to $ 1 $ on $ \supp V $ and write
\[ I + V R_0 ( \lambda ) = ( I + V R_0 ( \lambda ) ( 1 - \rho )) ( I + 
V R_0 ( \lambda ) \rho ) .\]
We consider this first for $ \Im \lambda \gg 1 $ in which 
case $ R_0 ( \lambda ) $ is bounded on $ L^2 $.
Then 
\[ ( I + V R_0 ( \lambda ) ( 1 - \rho ) )^{-1} = 
I - V R_0 ( \lambda ) ( 1 - \rho ) \]
is a bounded operator  $ L^2 \to L^2 $ when $ \Im \lambda > 0 $ and
it continues holomorphically to $ \CC $ as an operator $ L^2_{\comp}  
\to L^2_{\comp} $. 

For $ \Im \lambda \gg 1 $, $ V R_0 ( \lambda ) \rho $
has small norm and we conclude that
\begin{equation}
\label{eq:RVR01}  ( I + V R_0 ( \lambda ) )^{-1} = ( I + V R_0 ( \lambda ) \rho )^{-1} 
( I - V R_0 ( \lambda ) ( 1 - \rho ) ) , 
\end{equation}
is a bounded operator on $ L^2 $ when $ \Im \lambda \gg 1 $ and 
we need to continue it as an operator $ L^2_{\comp} \to L^2_{\comp} $

Hence to obtain \eqref{eq:VR0} we need to show that
\begin{equation}
\label{eq:VRr} ( I + V R_0 ( \lambda ) \rho )^{-1} : L^2_{\comp} ( \RR^3 ) \to 
L^2_{\comp} ( \RR^3 )  \ \text{ is meromorphic in $ \lambda \in \CC $.}
\end{equation}
But now we can apply \eqref{eq:AFT} with $ K ( \lambda ) := V R_0 ( \lambda ) \rho$.
In fact, \eqref{eq:t1} shows that 
if $\supp \rho \subset B ( 0 , R  ) $ then 
$ \rho R_0 ( \lambda ) \rho : L^2 \to H^1 
( B ( 0 , R ) ) $. The Rellich--Kondrachov Theorem \cite[Theorem B.3]{res}, shows that $ \rho R_0 ( \lambda ) \rho$
is a compact operator. But then so is $ V R_0 ( \lambda ) \rho = V 
( \rho R_0 ( \lambda ) \rho ) $. It follows from \eqref{eq:AFT} that
\begin{equation}
\label{eq:VRrr} 
 ( I + V R_0 ( \lambda ) \rho )^{-1} : L^2( \RR^3)  \to L^2 ( \RR^3 ) \ \  
\text{ is a meromorphic family for $ \lambda \in \CC $. } \end{equation}

To obtain \eqref{eq:VR0} we need to show that compactness of the support
is preserved by $ ( I + V R_0 ( \lambda ) \rho )^{-1} $. To see this
let $ \chi \in \CIc ( \RR^n ) $ be equal to $ 1 $ on $ \supp \rho $.
We claim that
\begin{equation}
\label{eq:phVRr}  ( I + V R_0 ( \lambda ) \rho )^{-1} \chi = \chi ( I + V R_0 ( \lambda ) 
\rho )^{-1} .\end{equation}
In fact, this is valid for $ \Im \lambda \gg 1 $ by expanding the inverse
in Neumann series and then it follows by analytic continuation. From 
\eqref{eq:VRrr} and \eqref{eq:phVRr} we obtain \eqref{eq:VRr} which then 
gives \eqref{eq:VR0}. 

Combining \eqref{eq:RVR0}  and \eqref{eq:RVR01} with \eqref{eq:VR0} we proved 

\begin{thm}
\label{t:2} 
The operator $ R_V ( \lambda ) $ defined in \eqref{eq:RVR0} 
continues to a meromorphic family
\[   R_V ( \lambda ) : L^2_{\comp} ( \RR^3 ) \longrightarrow L^2_{\loc} ( 
\RR^3 ) , \ \ \ \lambda \in \CC . \]
\end{thm}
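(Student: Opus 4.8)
The plan is to reduce the whole statement to the factorization already derived: for $\Im\lambda\gg 1$ one has $R_V(\lambda)=R_0(\lambda)(I+VR_0(\lambda))^{-1}$, while $R_0(\lambda)$ continues holomorphically as a map $L^2_{\comp}(\RR^3)\to L^2_{\loc}(\RR^3)$ by Theorem \ref{t:1}. So the entire task is to continue $(I+VR_0(\lambda))^{-1}$ meromorphically; and because $R_0(\lambda)$ only lands in $L^2_{\loc}$, I need this continuation to act $L^2_{\comp}\to L^2_{\comp}$, which is precisely statement \eqref{eq:VR0}.

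First I would cut off: pick $\rho\in\CIc(\RR^3)$ equal to $1$ near $\supp V$ and factor $I+VR_0(\lambda)=(I+VR_0(\lambda)(1-\rho))(I+VR_0(\lambda)\rho)$. The first factor is elementary, since $(1-\rho)V=0$ forces $(VR_0(\lambda)(1-\rho))^2=0$, so its inverse is $I-VR_0(\lambda)(1-\rho)$, which by \eqref{eq:R0bb} continues holomorphically to all of $\CC$ as a map $L^2_{\comp}\to L^2_{\comp}$. For the second factor I would invoke the analytic Fredholm theorem \eqref{eq:AFT} with $K(\lambda):=VR_0(\lambda)\rho$: estimate \eqref{eq:t1} shows $\rho R_0(\lambda)\rho : L^2\to H^1(B(0,R))$, which embeds compactly into $L^2$ by the Rellich--Kondrachov theorem, so $K(\lambda)=V(\rho R_0(\lambda)\rho)$ is a holomorphic family of compact operators on $L^2$, and it is invertible for $\Im\lambda\gg 1$ by the bound \eqref{eq:R0la}. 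Hence $(I+VR_0(\lambda)\rho)^{-1}$ is meromorphic on $L^2(\RR^3)$.

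The step I expect to be the real point -- analytic Fredholm theory says nothing about supports -- is upgrading this $L^2\to L^2$ inverse to a map $L^2_{\comp}\to L^2_{\comp}$. For this I would choose $\chi\in\CIc(\RR^3)$ with $\chi\equiv 1$ near $\supp\rho$, observe the two identities $\chi K(\lambda)=K(\lambda)$ and $K(\lambda)\chi=K(\lambda)$ (the first because $VR_0(\lambda)\rho$ has range consisting of functions supported in $\supp V\subset\{\chi=1\}$, the second because $\rho\chi=\rho$), and deduce the intertwining $(I+VR_0(\lambda)\rho)^{-1}\chi=\chi(I+VR_0(\lambda)\rho)^{-1}$ by expanding both sides in Neumann series for $\Im\lambda\gg 1$ and then analytically continuing the resulting operator identity. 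Applied to $\chi f$ with $f$ compactly supported, this shows the inverse sends $L^2_{\comp}$ into functions supported in $\supp\chi$, which gives \eqref{eq:VR0}.

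Finally I would assemble the pieces: $(I+VR_0(\lambda))^{-1}=(I+VR_0(\lambda)\rho)^{-1}(I-VR_0(\lambda)(1-\rho))$ is meromorphic $L^2_{\comp}\to L^2_{\comp}$, and composing on the left with the holomorphic family $R_0(\lambda):L^2_{\comp}\to L^2_{\loc}$ of Theorem \ref{t:1} yields the meromorphic continuation $R_V(\lambda):L^2_{\comp}\to L^2_{\loc}$. The only thing to watch throughout is the bookkeeping with the cutoffs: at each stage one must check that the continued family agrees with the holomorphic inverse originally defined for $\Im\lambda\gg 1$, so that what we obtain really is a continuation of $R_V(\lambda)$ rather than of some spurious branch.
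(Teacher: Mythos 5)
Your proposal is correct and follows essentially the same route as the paper: the same factorization $I+VR_0(\lambda)=(I+VR_0(\lambda)(1-\rho))(I+VR_0(\lambda)\rho)$, the same identification of the first factor's inverse via nilpotency, the same appeal to analytic Fredholm theory with compactness coming from \eqref{eq:t1} and Rellich--Kondrachov, and the same intertwining identity $(I+VR_0(\lambda)\rho)^{-1}\chi=\chi(I+VR_0(\lambda)\rho)^{-1}$ established by Neumann series and analytic continuation to preserve compactness of supports. The small additions you make (spelling out $(VR_0(\lambda)(1-\rho))^2=0$ and the identities $\chi K=K$, $K\chi=K$) are correct and merely make explicit what the paper leaves tacit.
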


This gives a mathematical definition of scattering resonances:

\begin{defi}
\label{d:1}
 Suppose that $ V \in L^\infty_{\comp} ( \RR^3 ) $ and 
that $ R_V ( \lambda ) $ is the scattering resolvent of Theorem \ref{t:2}.
The poles of $ \lambda \mapsto R_V ( \lambda ) $ are called 
{\em scattering resonances} of $ V$. If $ \lambda_0 $ is a scattering
resonance then, in the notation of \eqref{eq:Ala} with $ A( \lambda ) = 
R_0 ( \lambda ) $, the multiplicity of $ \lambda_0 $ is defined as
\[ m ( \lambda_0 ) = \dim \, {\rm{span}}\, \left\{ A_1 ( L^2_{\comp} ) , 
\cdots ,  A_J ( L^2_{\comp} )\right\} . \]
\end{defi}

There are other equivalent definitions of multiplicity which use the
special structure of $ A_j $'s in the case of a resolvent. For instance
for $ \lambda_0 \neq 0 $,
\begin{equation}
\begin{split} 
\label{eq:mult}  m ( \lambda_0 ) & = \rank A_1 \\
& = \rank \oint_{\lambda_0 } 
R_V ( \lambda ) 2 \lambda d \lambda, 
\end{split} 
\end{equation}
where the integral is over a small circle containing $ \lambda_0 $
but no other pole of $ R_V $. The situation at $ 0 $ is more complicated
-- see \cite[\S 3.3]{res} which can serve as an introduction to 
general theory of Jensen--Kato \cite{JK} and Jensen--Nenciu \cite{JN}.

To see the validity of \eqref{eq:mult} we use the 
equation $ ( P - \lambda^2 ) R_V ( \lambda ) f = f $, $ f \in L^2_{\comp} $,
to see that near a pole $ \lambda_0 \neq 0 $,
\begin{equation}
\label{eq:Jordan}
\begin{gathered}
R_V( \lambda ) = 
\sum_{ j=1}^J \frac{ ( P_V - \lambda_0^2)^{j-1} \Pi_{\lambda_0 } }{ ( \lambda^2 - 
\lambda_0^2)^j } + A( \lambda, \lambda_0 ) , \\ 
\Pi_{\lambda_0} = {\textstyle{\frac{1}{ 2 \pi i }}} \oint_{\lambda_0 }
R_V ( \lambda ) 2 \lambda d \lambda , 
\end{gathered}
\end{equation}
where $ \lambda \mapsto 
A ( \lambda, \lambda_0 ) $ is holomorphic near $ \lambda_0 $, 
$ ( P_V - \lambda_0^2)^{J} \Pi_{ \lambda_0 } = 0 $ and 
$ P_V - \lambda_0^2 : \Pi_{\lambda_0 } ( L^2_{\comp} ) \to 
\Pi_{\lambda_0 } ( L^2_{\comp} )  $ -- see \cite[\S 3.2,\S 4.2]{res}.
This leads to 

\begin{defi}
\label{d:2}
A function $ u \in \Pi_{\lambda_0} ( L^2_{\comp} ( \RR^3 ) ) 
\subset L^2_{\loc} ( \RR^3 ) $ is called a {\em generalized resonant state}.
If $ ( P_V - \lambda_0^2 ) u = 0 $ then $ u $ is called a {\em 
resonant state}. This is equivalent to 
$ u \in  ( P_V -\lambda_0^2 )^{J-1} \Pi_{\lambda_0} ( L^2_{\comp} ( \RR^3 ) ) $.
\end{defi}

Formula \eqref{eq:RVR0} and the expansion \eqref{eq:Jordan} give 
the following characterization (see \cite[Theorem 3.7, Theorem 4.9]{res})
\begin{equation}
\label{eq:outg}
\begin{gathered}
\text{ u is a resonant state for a resonance $ \lambda_0 $} \\
\Updownarrow \\
 \exists \, f \in L^2_{\comp} ( \RR^3 ) \ \
u = R_0 ( \lambda_0 ) f , \ \ ( P_V - \lambda_0^2 ) u = 0 . 
\end{gathered}
\end{equation}
The condition $ u = R_0 ( \lambda ) f $ is called the {\em outgoing} condition.
In \S \ref{rae} we will see a more complicated but much more natural 
characterization of outgoing states. 


When $ V $ is real valued then $ P_V $ is a self-adjoint operator 
(with the domain given by $ H^2 ( \RR^3 ) $). The poles of $ R_V ( \lambda ) $
in $ \Im \lambda > 0 $ correspond to negative eigenvalues of $ P_V $
and in \eqref{eq:Jordan} we have $ J = 1$. This of course is consistent
with \eqref{eq:outg} since for $ \Im \lambda_0 > 0 $, $ R_0 ( \lambda_0 ) f
\in L^2 ( \RR^3 ) $.

\subsection{Upper bound on the number of resonances}
\label{upper}

Once we have defined resonances it is natural to 
estimate the number of resonances. Since they are now 
in the complex plane the basic counting function is given by
\begin{equation}
\label{eq:NV}
N_V ( r ) := \sum \{ m ( \lambda ) : |\lambda |\leq r \} ,
\end{equation}
where $ m ( \lambda ) $ is defined in \eqref{eq:mult}.

We will prove the following optimal bound
\begin{thm}
\label{t:3}
Suppose that $ V \in L^\infty_{\rm{comp}} ( \RR^3 ) $. Then
\begin{equation}
\label{eq:NVb}
N_V ( r ) \leq C r^3 .
\end{equation}
\end{thm}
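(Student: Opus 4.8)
The plan is to realize the counting function of resonances as (essentially) the zero-counting function of a scalar entire function, namely a suitable Fredholm determinant, and then to apply Jensen's formula. Concretely, resonances are the poles of $R_V(\lambda)$, and by the analytic Fredholm argument leading to \eqref{eq:VR0}, these coincide with the zeros of $\lambda\mapsto\det(I+V R_0(\lambda)\rho)$ for $\rho$ chosen as in that argument — \emph{provided} the operator $VR_0(\lambda)\rho$ is trace class (or at least in a Schatten class $\mathcal{S}_p$, in which case one uses the regularized determinant $\det_p$). The relation is that the multiplicity $m(\lambda_0)$ of a resonance equals the order of vanishing of this determinant at $\lambda_0$; this is the standard translation between operator-valued meromorphy and scalar entire functions, and I would cite \cite{res} for the bookkeeping identifying multiplicities. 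So the whole problem reduces to bounding the number of zeros of an entire function $D(\lambda):=\det_p(I+VR_0(\lambda)\rho)$ in $|\lambda|\le r$.

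The standard tool is the following: if $D$ is entire, $D(0)\ne 0$ (or one factors out the zero at the origin), and $|D(\lambda)|\le C\exp(C|\lambda|^n)$, then Jensen's formula gives $\#\{\lambda: D(\lambda)=0,\ |\lambda|\le r\}\le C' r^n + C'$. So the crux is to prove the exponential-type bound
\begin{equation}
\label{eq:detbound}
|D(\lambda)| \le C\exp\bigl(C|\lambda|^3\bigr), \qquad \lambda\in\CC .
\end{equation}
For this I would use the Weyl-type inequality bounding a (regularized) determinant by Schatten norms, e.g. $|\det_p(I+A)|\le \exp(C_p\|A\|_{\mathcal{S}_p}^p)$, and then estimate the Schatten norm of $A(\lambda)=VR_0(\lambda)\rho$. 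The key input is Theorem~\ref{t:1}: from \eqref{eq:t1} we know $\|\rho R_0(\lambda)\rho\|_{L^2\to H^1}\le Ce^{2R(\Im\lambda)_-}(1+|\lambda|)^{0}$ — wait, $j{-}1$ with $j{=}1$ gives exponent $0$, good — so the operator maps $L^2$ boundedly into $H^1$ of a fixed ball with norm $\le Ce^{2R(\Im\lambda)_-}$. Combined with the fact that the embedding $H^1(B)\hookrightarrow L^2(B)$ has singular values (characteristic values) decaying like $s_k\sim k^{-1/3}$ in three dimensions (Weyl asymptotics for the Dirichlet Laplacian on a ball), one gets $\|\rho R_0(\lambda)\rho\|_{\mathcal{S}_p}\le C_p e^{2R(\Im\lambda)_-}$ for every $p>3$. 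To improve the polynomial factor and get the correct power $|\lambda|^3$ one iterates: writing $R_0(\lambda)=R_0(i)+(\lambda^2+1)R_0(\lambda)R_0(i)$ and chaining such resolvent identities $N$ times, one trades powers of $|\lambda|$ for extra smoothing, so that a high iterate lands in $\mathcal{S}_1$ with norm growing polynomially in $|\lambda|$; then $\|A(\lambda)^N\|_{\mathcal{S}_1}\lesssim |\lambda|^{?}$ and one arranges things (as in Vodev's / Zworski's classical argument) so that the determinant bound \eqref{eq:detbound} comes out with exponent exactly $3$. A clean alternative is to estimate directly $\|A(\lambda)\|_{\mathcal{S}_p}$ for $p$ slightly above $3$ and note that $\log|\det_p(I+A(\lambda))|\le C\|A(\lambda)\|_{\mathcal{S}_p}^p \le C(1+|\lambda|)^{3+\epsilon}e^{2pR(\Im\lambda)_-}$; the $\epsilon$ loss then forces $N_V(r)=O(r^{3+\epsilon})$, and removing the $\epsilon$ to reach the sharp $r^3$ is precisely the delicate part.

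The main obstacle, then, is \emph{sharpness}: getting the exact power $r^3$ rather than $r^{3+\epsilon}$. This requires either (i) a more careful singular-value count — using that the resolvent of $-\Delta$ localized to a ball has eigenvalues $\mu_k(\rho R_0(i)\rho)\asymp k^{-2/3}$, hence $s_k(A(\lambda))\lesssim \langle\lambda\rangle\, k^{-2/3}$ roughly, so that $\sum_k \log(1+s_k(A(\lambda)))$ over $k\lesssim \langle\lambda\rangle^{3}$ contributes the right order — or (ii) the iteration scheme described above, carried out with enough care that the exponential-type constant in the $\lambda$-variable is genuinely cubic. One must also handle the $e^{2R(\Im\lambda)_-}$ factor: this only makes $|D(\lambda)|$ larger in the lower half-plane, and since $(\Im\lambda)_- \le |\lambda|$ it is absorbed into the $C|\lambda|^3$ (indeed into a lower-order term, since it is only $O(|\lambda|)$), so it does not affect the power of $r$ in Jensen's formula; one just needs $D$ to be entire of order $\le 3$, which it is. Finally, one should note $D$ may vanish at $\lambda=0$; dividing by the appropriate power of $\lambda$ before applying Jensen causes no trouble. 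Thus modulo the Schatten-class bookkeeping the proof is: (1) reduce to zeros of $D(\lambda)=\det_p(I+VR_0(\lambda)\rho)$, (2) prove $|D(\lambda)|\le Ce^{C|\lambda|^3}$ via Schatten-norm estimates fed by \eqref{eq:t1} plus Weyl asymptotics for singular values of the localized free resolvent, (3) conclude by Jensen's formula.
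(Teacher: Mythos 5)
Your overall strategy---encode resonances as zeros of a Fredholm determinant and apply Jensen's formula---is exactly the paper's, and your reduction to the growth estimate \eqref{eq:detbound} is right. But the argument you sketch for \eqref{eq:detbound} has a genuine gap, and it is precisely the hard part. You write that the factor $e^{2R(\Im\lambda)_-}$ from \eqref{eq:t1} ``is absorbed into the $C|\lambda|^3$ (indeed into a lower-order term)'' because $(\Im\lambda)_-\le|\lambda|$. That is wrong: that factor multiplies \emph{every} singular value, not the determinant itself. From \eqref{eq:t1} and \eqref{eq:sSob} alone you only get $s_j(VR_0(\lambda)\rho)\lesssim e^{C|\lambda|}(1+j)^{-1/3}$, so $\|VR_0(\lambda)\rho\|_{\mathcal S_p}^p\lesssim e^{Cp|\lambda|}$ for $p>3$, and feeding this into $|\det_p(I+A)|\le\exp(C\|A\|_{\mathcal S_p}^p)$ gives a \emph{doubly exponential} bound $\exp(Ce^{Cp|\lambda|})$. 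Equivalently, in the Weyl product the terms stay of size $e^{C|\lambda|}$ until $j\gtrsim e^{3C|\lambda|}$, so the naive sum of logs is $\gtrsim|\lambda|e^{3C|\lambda|}$, nowhere near $|\lambda|^3$. Neither your route (i) (sharper Weyl asymptotics for the localized free resolvent, which still carries the $e^{C|\lambda|}$ prefactor) nor your route (ii) (iterating resolvent identities, which only improves the polynomial prefactor) touches this exponential.

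What is missing is the use of \emph{analyticity near infinity} to show that the singular values of $\rho R_0(\lambda)\rho$ fall off much faster than any power once $j$ exceeds a fixed polynomial in $|\lambda|$. The paper does this by writing, via \eqref{eq:R0E}, $\rho(R_0(\lambda)-R_0(-\lambda))\rho=\frac{i\lambda}{2}\rho\,\mathbb E(\bar\lambda)^*\mathbb E(\lambda)\rho$, and then bounding $s_j(\mathbb E(\lambda)\rho)$ by conjugating with powers of $(-\Delta_{\SP^2}+1)$ and using Cauchy estimates on the entire kernel $e^{-i\lambda\langle x,\omega\rangle}$; optimizing over the power $\ell$ yields the Gaussian-type bound $s_j(\mathbb E(\lambda)\rho)\le C\exp(C|\lambda|-j^{1/2}/C)$ as in \eqref{eq:sjE}. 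This gives \eqref{eq:skR}: the singular values of $VR_0(\lambda)\rho$ are $\lesssim e^{C|\lambda|}$ only for $j\lesssim|\lambda|^2$, and are $\lesssim(1+j)^{-1/3}$ beyond that. The Weyl product then has $\sim|\lambda|^2$ terms each contributing $O(|\lambda|)$ to the log, plus a convergent tail, giving the cubic exponent. Without this step the proof does not close. (A cosmetic difference: the paper avoids regularized determinants by taking $\det(I-(VR_0(\lambda)\rho)^4)$, which is genuinely trace class by \eqref{eq:sj0}; your $\det_p$ variant is workable once the Gaussian singular-value decay is in hand, but the paper notes that modified determinants grow too fast in higher dimensions, so the power trick is the more robust choice.)
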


The bound, $ N_V ( r ) \leq C r^n $, valid
in all odd dimensions $n$, was proved in \cite{Z2} using methods
developed by Melrose \cite{Mel2},\cite{Mel3} who proved
$ N_V ( r ) \leq C r^{n+1} $. The proof presented here 
uses a substantial simplification of the argument due to Vodev 
\cite{Vo} who proved corresponding upper bounds in even 
dimension \cite{Vo2},\cite{Vo3}, following an earlier contribution by
Intissar \cite{in}. When $ n = 1 $ we have 
an asymptotic formula,
\begin{equation}
\label{eq:1da} N_V ( r ) = {\textstyle{\frac 2 \pi }} {\rm{diam}}\, ( \supp V ) \, r + o ( r) ,
\end{equation}
stated by Regge \cite{Re} and proved in \cite{Z1}.  

The exponent $ 3 $ in \eqref{eq:NVb} is optimal
as shown by the case of radial potentials.
When $ V ( x ) = v ( |x| ) ( R - |x|)_+^0 $,  where 
$ v $ is a $ C^2 $ even function, and $ v ( R ) > 0 $,
then, see \cite{Z11},
\begin{equation}
\label{eq:radas} N_V ( r )  =  C_R r^3 +
 o ( r^3 )   \,. \end{equation}
The constant $ C_R $ and its appearance in a refinement of \eqref{eq:NVb} is 
explained by Stefanov \cite{Stef}.

\medskip

\noindent
{\bf Interpretation.}  In the case of $ - \Delta + V $ on 
a bounded domain or a closed manifold, for instance on $ M = \TT^n := 
\RR^n / \ZZ^n  $ or $ M = \SP^{n} $ (the $n$-sphere), 
the spectrum is discrete and for $ V \in L^\infty ( \TT^n ; \RR)  $ 
we have the asymptotic Weyl law for the number of eigenvalues:
\begin{equation}
\label{eq:Weyll}
\begin{gathered}  | \{ \lambda: \lambda^2 \in \Spec( - \Delta_{M } + V ) \,, \  |\lambda|  \leq r \} | = c_n {\rm{vol}} \, ( M ) r^n  +  o ( r^n)  \,, \\
c_n = {  2\, {\rm{vol}} ( B_{\RR^n} ( 0 , 1 ) ) }/{ ( 2 \pi)^n  }
\,, 
\end{gathered}
\end{equation}
where the eigenvalues are included according to their multiplicities. 

In the case of $ - \Delta + V $ on $ \RR^n $, the discrete spectrum 
is replaced by the discrete set of resonances. Hence the bound 
\eqref{eq:NVb} is an analogue of the Weyl law. Except in dimension 
one -- see \eqref{eq:1da} -- the issue of
asymptotics or even optimal lower bounds remains unclear 
at the time of writing of this survey. 
We will discuss lower bounds and existence of resonances in \S \ref{othr1}.

\medskip

The strategy for the proof of the upper bound \eqref{eq:NVb} is to include
poles of $ R_V ( \lambda) $ among zeros of an entire function. 
The number of poles is then estimated by estimating the
growth of that function and using {\em Jensen's inequality} -- see
the proof of Theorem \ref{t:3} below. 
In particular, if $ h ( \lambda ) $ is our entire function then 
\eqref{eq:NVb} follows from a bound
\begin{equation}
\label{eq:hbd}
| h ( \lambda ) | \leq A e^{ A | \lambda |^3 } , 
\end{equation}
for some constant $ A$.

Hence our goal is to find a suitable $ h $ and to prove \eqref{eq:hbd}.
Before doing it we recall some basic facts about trace class operators
and Fredholm determinants. We refer to \cite[Appendix B]{res} for 
proofs and pointers to the literature. 

Suppose that $ A : H_1 \to H_2 $ is a bounded operator between two
Hilbert spaces. Then 
$ ( A^* A )^{\frac12} : H_1 \to H_1 $ is a bounded non-negative self-adjoint operator
and we denote its spectrum by $ \{ s_j ( A ) \}_{ j=0}^\infty $. 
The numbers $ s_j ( A )$ are called {\em singular values} of $ A $. 
We will need the following well known  inequalities \cite[Proposition B.15]{res}
\begin{equation}
\label{eq:KyFan}
s_{ j+k} ( A + B ) \leq s_j ( A ) + s_k ( B ) , \ \ 
s_{j+k} ( A B ) \leq s_j ( A ) s_k ( B ) . 
\end{equation}
If $ H_1 = H_2 = L^2 ( \RR^3 ) $ and in addition $ A : L^2 ( \RR^3 ) \to H^s ( \RR^3 ) $, $ s \geq 0 $, and the support of the
Schwartz kernel of $ A $ is contained in $ B ( 0 , R ) \times B ( 0 , R ) $,
then, with a constant depending on $ R $,  
\begin{equation}
\label{eq:sSob}
s_j ( A ) \leq C_R \| A \|_{ L^2 ( \RR^3 ) \to H^s ( \RR^3 ) } \, 
(1+j)^{-s/3} . 
\end{equation}
(See \cite[Example after Proposition B.16]{res} for a proof
valid in all dimensions $n$, that is with $ 3 $ replaced by $ n $ in 
\eqref{eq:sSob}.)

The operator $ A$ is said to be of {\em trace class}, $ 
A \in \mathcal L_1 ( H_1 , H_2 ) $, if 
\[  \| A \|_{\mathcal L_1 } := \sum_{j=0}^\infty s_j ( A ) < \infty .\]
For $ A \in \mathcal L_1 ( H_1, H_1 ) $ can we define  {\em Fredholm
determinant}, $ \det ( I + A ) $, 
\begin{equation}
\label{eq:detest}
|  \det ( I + A ) | \leq e^{ \| A \|_{ \mathcal L_1}} ,
\end{equation}
see \cite[\S B.5]{res}. A more precise statement is provided
by a {\em Weyl inequality} (see \cite[Proposition B.24]{res})
\begin{equation}
\label{eq:Weyli}
|  \det ( I + A ) | \leq \prod_{j=0}^\infty ( 1 + s_j ( A ) ) .
\end{equation}
Lidskii's theorem \cite[Proposition B.27]{res} states that
\[ \det ( I + A ) = \prod_{j=0}^\infty ( 1 + \lambda_j ( A ) ), 
\]
where $ \{ \lambda_j ( A ) \}_{j=0}^\infty $ are the eigenvalues
of $ A $. In particular, $ \det ( I + A ) = 0 $ if and only 
if $  I + A $ is not invertible. When $ \lambda \mapsto A ( \lambda ) $
is a holomorphic family of operators a more precise statement
can be made by taking account multiplicities -- see \cite[\S C.4]{res}.

We will now take $ A = - ( V R_0 ( \lambda ) \rho )^4 $, $ H_1 = L^2 ( \RR^3) $.
From \eqref{eq:t1}, \eqref{eq:KyFan} and \eqref{eq:sSob} we see
that
\begin{equation}
\label{eq:sj1}    s_j ( V R_0 ( \lambda ) \rho ) \leq
\| V \| s_j ( \rho R_0 ( \lambda ) \rho ) \leq 
C e^{ C( \Im \lambda )_- } (1+j)^{-1/3} . 
\end{equation}
Another application of \eqref{eq:KyFan} shows that
\begin{equation}
\label{eq:sj0} s_j ( ( V R_0 ( \lambda ) \rho )^4 ) \leq C e^{ C ( \Im \lambda)_- } 
(1+j)^{-4/3} , 
\end{equation}
 and hence $ ( V R_0 ( \lambda ) \rho)^4 \in \mathcal L_1$.
Thus we can define
\begin{equation}
\label{eq:defh}
h( \lambda ) := \det ( I - ( V R_0 ( \lambda ) \rho)^4 ) .
\end{equation}
We have
\[ I -  ( V R_0 ( \lambda ) \rho )^4 =  
  ( I - V R_0 ( \lambda ) \rho + ( V R_0 ( \lambda ) \rho )^2 - 
  ( V R_0 ( \lambda ) \rho )^{3} ) ( I +   V R_0 (
\lambda ) \rho  )\,. \]
Hence in view of \eqref{eq:RVR0} and \eqref{eq:RVR01} it is 
easy to believe that the poles of $ R_V ( \lambda ) $ are included, 
with multiplicities, among the zeros of $ h ( \lambda ) $ -- see
\cite[Theorem 3.23]{res}:
\begin{equation}
\label{eq:multh}
m_V ( \lambda_0 ) \leq \frac{1}{2 \pi i} \oint_{\lambda_0 } 
\frac{ h'( \lambda ) }{h( \lambda ) } d \lambda . 
\end{equation}

\medskip

\noindent
{\bf Remark.} It is not difficult to see that we could, just as
 in \cite{Z2}, 
take $ \det ( I - ( V R_0 ( \lambda ) \rho)^2 ) $. But the power
$ 4 $ makes some estimates more straightforward. We could
also have taken a modified determinant of $ I + V R_0 ( \lambda ) \rho
$ \cite[Theorem 3.23, \S B.7]{res} and that would give an entire
function whose zeros are exactly the resonances of $ V $.
That works in dimension three but in higher dimensions
the modified determinants grow faster than \eqref{eq:hbd} -- see
\cite{Z2}.

\medskip

\begin{proof}[Proof of Theorem \ref{t:3}]
Jensen's formula relates the number of zeros of an entire function 
$ h $ to its growth: if 
$ n ( r )$ is the number of zeros of $ h $ in $ |z| \leq r $
\[  \int_0^r \frac {n(t)}{t} dt = \frac{1}{ 2 \pi} 
\int_0^{2\pi} \log | h ( r e^{ i \theta } ) | d \theta - \log | h ( 0) | , \]
where we assumed that $ h ( 0 ) \neq 0 $ (an easy modification takes care of
the general case). By changing $ r $ to $ 2 r $, it follows that
\[  n ( r) \leq \frac{1}{ \log 2 } \sup_{ |\lambda | \leq2 r }  \log | h ( \lambda ) |
- \log | h ( 0) |. \]
It follows from this and \eqref{eq:multh} that
\[ | h ( \lambda ) | \leq A e^{ A |\lambda|^3 } 
\ \Longrightarrow \ n ( r ) \leq C r^3  \ \Longrightarrow \ N_V ( r ) \leq C r^3. \]

To prove the bound on $ h $ we first note that \eqref{eq:sj0} and \eqref{eq:detest} show that
$ | h ( \lambda ) | \leq C $ for $ \Im \lambda \geq 0 $. To get a good
estimate for $ \Im \lambda \leq 0 $ we use \eqref{eq:KyFan} and \eqref{eq:Weyli}:
\begin{equation}
\label{eq:HWey1} |  h ( \lambda ) | \leq  \prod_{k=0}^\infty \left( 1 + s_k( ( V R_0 (
\lambda ) \rho )^{4} )\right)  
\leq  \prod_{k=0}^\infty \left( 1 + \| V \|_{L^\infty}^4 s_{[k/4]}( \rho R_0 (
\lambda ) \rho )^{4} \right) 
. \end{equation}
Hence we need to estimate $ s_j ( \rho R_0 ( \lambda ) \rho ) $ for
$ \rho \in \CIc ( \RR^n ) $. 

For  $ \Im \lambda \geq 0 $ we already have the estimate
\eqref{eq:sj1}.  To obtain estimates for $ \Im \lambda \leq 0 $ we use
\eqref{eq:R0E} to write 
\[    \rho ( R_0 ( \lambda ) - R_0 ( -\lambda ) ) \rho = 
{\textstyle{\frac{i\lambda} 2} } 
\rho \mathbb E ( \bar \lambda )^* \mathbb E(  \lambda )\rho \]
Hence, using \eqref{eq:KyFan} and \eqref{eq:sj1} again, 
\begin{equation}
\label{eq:sj2} 
\begin{split} 
s_j ( \rho R_0 ( \lambda ) \rho ) & \leq C | \lambda| 
\| E ( \lambda ) \rho \|  s_{[j/2]} ( \mathbb E ( \lambda ) \rho ) + s_{ [ j/2] } ( \rho R_0 (
- \lambda ) \rho ) \\
& \leq C \exp ( C |\lambda | ) s_{[j/2] } ( \mathbb E ( \lambda ) \rho )  + C
(1+j)^{ - 1/3  } \,. 
\end{split}
\end{equation}

To estimate $ s_j ( \mathbb E ( \lambda ) \rho ) $ we use the Laplacian on
the sphere, $ - \Delta_{\SP^{2} }$ and the estimate:
\[ \begin{split}  \|    ( - \Delta_{\SP^{2} } + 1 )^{\ell}  \mathbb E ( \lambda ) \rho
\|  & \leq C \sup_{ \omega \in \SP^{n-1}, |x|\leq R } 
 \left|   ( - \Delta_{\omega } + 1 )^{\ell}  e^{ i \lambda \langle x ,
    \omega \rangle } \right|
    \\
    & \leq C^\ell  \exp ( C |\lambda |) ( 2 \ell)! , 
    \end{split}
\]
where we estimated the supremum using the Cauchy estimates and assumed
that $ \supp \rho \subset B ( 0 , R ) $. (We follow the usual 
practice of changing the value of $ C $ from line to line.)

From the explicit formula for the eigenvalues of 
$ - \Delta_{ \SP^2 } $ (given by $ k( k + 1 ) $ with multiplicity
$ k + 1 $), or from the general counting law \eqref{eq:Weyll}, we see
that 
\[ s_j ( ( - \Delta_{\SP^2 } + 1 )^{-\ell} ) \leq C^\ell (1+j)^{ - \ell} .\]
The combination of the last two estimates gives
\begin{equation}
\label{eq:spsj}
\begin{split} 
s_j ( \mathbb E ( \lambda )\rho ) & \leq s_j ( ( - \Delta_{\SP^2 } + 1
)^{-\ell} )  \|   ( - \Delta_{\SP^{2} } + 1 )^{\ell}  E_\rho (
\lambda ) \| \\
& \leq  C^\ell (1+j)^{- \ell  }  \exp ( C |\lambda |) ( 2 \ell
)! \,. 
\end{split}
\end{equation}
We now optimize the estimate \eqref{eq:spsj} in $ \ell $ which gives
\begin{equation}
\label{eq:sjE}   s_j ( \mathbb E ( \lambda ) \rho ) \leq C \exp \left( C | \lambda
 | - j^{\frac12}   / C  \right) \,. 
\end{equation}

 Going back to \eqref{eq:sj2} we obtain
\begin{equation}
\label{eq:skR}
\begin{split} s_j (  V R_0 ( \lambda ) \rho  ) & \leq 
C \exp \left( C  | \lambda
 | - j^{\frac12 } / C  \right)  + C (1+j)^{ - \frac13 }
\\
& 
\leq  \left\{ 
\begin{array}{ll}   e^{ C |\lambda | + C } \,, & j \leq C |\lambda
  |^{2} \\
 C (1+j)^{-\frac{1} 3 } \,, & j \geq C | \lambda |^{2} \,. 
\end{array} \right.
\end{split}
\end{equation}

Returning to \eqref{eq:HWey1} we use \eqref{eq:skR} as follows
\begin{equation}
\label{eq:hlaf}  \begin{split}
| h ( \lambda ) | & \leq \prod_{ k \leq C |\lambda|^2 } 
e^{C | \lambda| + C } \left( \exp \sum_{ k \geq C  | \lambda |^2 } 
 k^{ - 4/3} \right) 
\leq C e^{ C   |\lambda|^3}  \,,
\end{split} \end{equation}
which completes the proof.
\end{proof}

The key to fighting exponential growth when $ \Im \lambda < 0 $
is {\em analyticity} near infinity which is used implicitely 
in \eqref{eq:spsj}-\eqref{eq:hlaf}. That is a recurrent theme
in many approaches to the study of resonances -- see \ref{rae}.

Resonance counting has moved on significantly since these early 
results. Some of the recent advances will be reviewed in \S \ref{Weyl},
see also \cite{counting} for an account of other early works. 
The most significant breakthrough was Sj\"ostrand's  discovery \cite{SjDuke} of 
geometric upper bounds on the number of resonances in which the
exponent is no longer the dimension as in \eqref{eq:NVb} and \eqref{eq:Weyll}
but depends on the dynamical properties of the classical system.

\subsection{Resonance free regions}
\label{resfree3}

The imaginary parts of
resonances are interpreted as decay rates of the 
corresponding resonant states -- see \S \ref{expan} for a justification of that
in the context of the wave equation. If there exists a resonance closest
to the real axis then (assuming we can justify expansions like \eqref{eq:uoftx} -- see Theorems \ref{t:5},\ref{t:rel}) its imaginary part determines
the principal rate of decay of waves -- see the end of this 
section for some comments on that. If waves are localized in frequency
then imaginary parts of resonances with real parts near that frequency
should determine the decay of those waves. 

But to assure the possibility of having a {\em principal} resonance, that 
is a resonance closest to the real axis, 
we need to know that there exists a strip without any resonances. 
Hence it is of 
interest to study 
high frequency {\em resonance free regions} which are typically of the form 
\begin{equation}
\label{eq:resf1}
\Im \lambda > - F (\Re \lambda  )  , \ \ \Re \lambda > C  , \ \ F ( x ) = 
\left\{ \begin{array}{ll} 
(a) & e^{ - \alpha x} , \ \ \alpha > 0 \\
(b) & M \\
(c) & M \log x    \\
(d) &  \gamma x^\beta  , \ \ \beta \in \RR , \gamma > 0 \end{array} 
\right.
\end{equation}
where $ M> 0 $ may be fixed or arbitrarily large. In the setting
of compactly supported potentials we see cases of $ (c) $ and $ (d) $: fixed $ M$ for bounded potentials \cite{LP}, arbitrary $ M $ for 
smooth potentials \cite{Vai} and $ \beta = 1/a $ for 
potentials in the $a$--Gevrey class \cite{good}. For applications it
is important that the statement about resonance free region is quantitative
which means that it comes with some resolvent bounds -- see \eqref{eq:t4}
below and for applications \S \ref{expan}.

Here we present the simplest case:
\begin{thm}
\label{t:4}
Suppose that $  V \in L^\infty( \RR^3 ) $, $\supp V \subset B ( 0 , R_0 ) $,
and that 
$ R_V ( \lambda ) $ is the scattering resolvent of Theorem \ref{t:2}.

Then we can find $ A > 0 $ such that for any $ \chi \in \CIc ( B ( 0 , R_1 )  ) $,
$ R_1 \geq R_0 $,
there exists $ C $ for which 
\begin{equation}
\label{eq:t4}
\| \chi R_V ( \lambda ) \chi \|_{ L^2 \to H^j } \leq C 
( 1 + |\lambda |)^{j-1} e^{ 2 R_1 ( \Im \lambda)_- }   , \ \ j = 0 , 1 , 
\end{equation}
when  $ \Im \lambda > - {\textstyle\frac{1}{2R_0}} \log | \Re \lambda | + A $.
\end{thm}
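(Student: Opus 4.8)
The plan is to run a perturbative/Neumann-series argument for $(I+VR_0(\lambda)\rho)^{-1}$ exactly as in the meromorphic continuation proof, but now tracking norms quantitatively in the region where $\Im\lambda$ is allowed to dip logarithmically below the real axis. First I would fix $\rho\in\CIc(B(0,R_0'))$ equal to $1$ near $\supp V$ with $R_0'$ slightly larger than $R_0$, and recall from \eqref{eq:t1} that $\|\rho R_0(\lambda)\rho\|_{L^2\to L^2}\le C e^{2R_0'(\Im\lambda)_-}|\lambda|^{-1}$ for $|\lambda|\ge 1$. Hence $\|VR_0(\lambda)\rho\|_{L^2\to L^2}\le C\|V\|_{L^\infty} e^{2R_0'(\Im\lambda)_-}|\lambda|^{-1}$, and this is $<\tfrac12$ precisely when $e^{2R_0'(\Im\lambda)_-}\le |\lambda|/(2C\|V\|)$, i.e.\ when $(\Im\lambda)_-\le \tfrac{1}{2R_0'}\log|\lambda| - A_0$ for a suitable constant $A_0$. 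In that region the Neumann series converges, so $(I+VR_0(\lambda)\rho)^{-1}$ exists, is bounded by $2$ on $L^2$, and the potential has no resonances there; combined with \eqref{eq:RVR0}, \eqref{eq:RVR01}, \eqref{eq:phVRr} and Theorem~\ref{t:1} this yields \eqref{eq:t4} in essentially one line. The one subtlety is that $R_0'>R_0$ so the exponent comes out as $2R_0'(\Im\lambda)_-$ rather than $2R_0(\Im\lambda)_-$, which does not match the stated constant $\tfrac1{2R_0}$ in the exponent of $|\Re\lambda|$.

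To get the sharp constant $R_0$ one cannot afford to lose the $\rho$-cutoff thickening, so I would replace the crude operator-norm bound on $\rho R_0(\lambda)\rho$ by a sharper one that uses the actual support of the wave kernel. The key point, already visible in \eqref{eq:rRr}, is that $\rho R_0(\lambda)\rho=\int_0^{2R_0}e^{i\lambda t}\rho U(t)\rho\,dt$ when $\supp\rho\subset B(0,R_0)$; iterating, $(VR_0(\lambda)\rho)^N$ involves the composition of $N$ such truncated wave propagators, whose kernel is supported where the total ``travel time'' is at most $2R_0$ (because each factor $\rho U(t_j)\rho$ contributes $t_j\le$ the diameter of the supports, and the relevant quantity is bounded by $2R_0$ after telescoping through the cutoffs). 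Thus $\|(VR_0(\lambda)\rho)^N\|_{L^2\to L^2}\le C^N e^{2R_0(\Im\lambda)_-}|\lambda|^{-N}\,(\text{polynomial in }N)$ for $|\lambda|\ge1$, and the geometric series in $N$ converges as soon as $e^{2R_0(\Im\lambda)_-}|\lambda|^{-1}$ is dominated by a small constant — precisely the condition $\Im\lambda>-\tfrac1{2R_0}\log|\Re\lambda|+A$ after comparing $|\lambda|$ with $|\Re\lambda|$ in that thin strip. (To be fully careful one first smooths the truncation in $t$ against a $\CIc$ function as remarked after \eqref{eq:R0wave1}, which costs only an arbitrarily small enlargement of $R_0$ that can be absorbed into $A$, or one works directly with the sharp support of \eqref{eq:sintD}.)

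Once the series converges one reads off the bound. Writing $R_V(\lambda)=R_0(\lambda)(I+VR_0(\lambda)\rho)^{-1}(I-VR_0(\lambda)(1-\rho))$ and conjugating by $\chi$ using \eqref{eq:phVRr}-style commutation (choosing an auxiliary cutoff $\tilde\chi$ equal to $1$ on $\supp\chi\cup\supp\rho$), one obtains $\chi R_V(\lambda)\chi=\chi R_0(\lambda)\tilde\chi\cdot(\text{bounded by }C)\cdot\tilde\chi(I-VR_0(\lambda)(1-\rho))\chi$, and the outer factors are controlled by \eqref{eq:t1} with $R$ replaced by $R_1$, giving the $(1+|\lambda|)^{j-1}e^{2R_1(\Im\lambda)_-}$ on the right of \eqref{eq:t4}. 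Finally one disposes of the bounded region $|\lambda|\le C$ (where $R_V$ has at most finitely many poles, all with $\Im\lambda$ bounded away from the curve once $A$ is taken large, since resonances in $\Im\lambda>0$ are negative eigenvalues and there are none in $\Im\lambda\ge0$ with $\lambda^2\notin(-\infty,0]$): enlarging $A$ moves the admissible region off any fixed compact set of poles. The main obstacle is the second paragraph — getting the exponential constant to be exactly $2R_0$ rather than $2R_0+\epsilon$ — which forces one to use the sharp Huyghens support property of \eqref{eq:sintD} through the full Neumann series rather than a single resolvent estimate; everything else is bookkeeping with \eqref{eq:RVR01}, \eqref{eq:phVRr} and Theorem~\ref{t:1}.
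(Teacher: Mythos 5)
Your overall plan --- bound $\|VR_0(\lambda)\rho\|$ by Theorem~\ref{t:1}, invert $I+VR_0(\lambda)\rho$ by Neumann series in the logarithmic region, then feed this into the factorization $R_V=R_0(I+VR_0\rho)^{-1}(I-VR_0(1-\rho))$ together with the cutoff identity \eqref{eq:phVRr} --- is exactly what the paper does. But the ``subtlety'' you flag at the end of your first paragraph is not actually there, and the workaround you develop in your second paragraph is both unnecessary and wrong.

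\smallskip

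\emph{The non-problem.} You take $\rho\in\CIc(B(0,R_0'))$ with $R_0'>R_0$, and then worry that the exponent becomes $2R_0'$. This is self-inflicted: since $\supp V$ is a \emph{compact} subset of the \emph{open} ball $B(0,R_0)$, there is $\epsilon>0$ with $\supp V\subset B(0,R_0-\epsilon)$, so one may choose $\rho\in\CIc(B(0,R_0))$ equal to $1$ near $\supp V$. Applying Theorem~\ref{t:1} with $R=R_0$ then gives
$\|\rho R_0(\lambda)\rho\|_{L^2\to L^2}\le C\,e^{2R_0(\Im\lambda)_-}(1+|\lambda|)^{-1}$
directly, hence $\|VR_0(\lambda)\rho\|<\tfrac12$ as soon as $\Im\lambda>-\tfrac1{2R_0}\log|\Re\lambda|+A$ with $A$ large, and the Neumann series converges with $\|(I+VR_0(\lambda)\rho)^{-1}\|\le2$. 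That is the paper's (one-sentence) resolution; your entire second paragraph is superfluous.

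\smallskip

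\emph{The incorrect step.} Your second paragraph claims that the kernel of $(VR_0(\lambda)\rho)^N$ is supported where the ``total travel time'' is at most $2R_0$, ``after telescoping through the cutoffs.'' This is false. Each factor $\rho\,U(t_j)\,\rho$ does force $|x_j-x_{j+1}|=t_j\le 2R_0$, but nothing stops $x_1,\dots,x_{N+1}$ from bouncing back and forth inside $B(0,R_0)$, so $\sum_j t_j$ can be as large as $2R_0\,N$. The correct estimate is therefore
$\|(VR_0(\lambda)\rho)^N\|\le\bigl(C\,e^{2R_0(\Im\lambda)_-}(1+|\lambda|)^{-1}\bigr)^N$,
that is, $e^{2R_0N(\Im\lambda)_-}$ rather than $e^{2R_0(\Im\lambda)_-}$. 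You would still reach the right conclusion, because convergence of this geometric series requires exactly $C\,e^{2R_0(\Im\lambda)_-}(1+|\lambda|)^{-1}<1$; but the stated support property, and the bound you wrote, are genuine errors and would not survive scrutiny.

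\smallskip

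\emph{Minor remarks on the third paragraph.} This part is broadly fine. Two small points: the bounded-$|\lambda|$ region needs no separate treatment, since for any fixed $R$ one removes $\{|\lambda|\le R\}$ from $\{\Im\lambda>-\tfrac1{2R_0}\log|\Re\lambda|+A\}$ simply by enlarging $A$, so the Neumann estimate is uniform on the whole admissible region; and the prefactors $(1+|\lambda|)^{j-1}e^{2R_1(\Im\lambda)_-}$ indeed come from applying Theorem~\ref{t:1} (with $R=R_1$) to the outer cutoffs $\chi R_0(\lambda)\chi$, as you say, with $\chi$ chosen equal to $1$ on $\supp\rho$ so that the $(I+VR_0(\lambda)\rho)^{-1}$ factor commutes through via \eqref{eq:phVRr}.
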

\begin{proof}
Without loss of generality we can take $ \chi $ which is equal to $ 1 $ 
on the support of $ \rho $ used in \eqref{eq:VRr}. Then using
Theorem \ref{t:1}, \eqref{eq:RVR0}, \eqref{eq:RVR01}  and \eqref{eq:phVRr} we see
that
\[ 
\begin{split} \| \chi R_V ( \lambda ) \chi \|_{ L^2 \to H^j}  & 
= \| \chi R_0 ( \lambda ) \chi ( I + V R_0 ( \lambda ) \rho)^{-1} 
(I - V \chi R_0 ( \lambda ) \chi ( 1 - \rho ) ) \|_{ L^2 \to H^j } \\
& \leq \| \chi R_0( \lambda ) \chi \|_{ L^2 \to H^j } 
( 1 + \| V \|_\infty \| \chi R_0( \lambda ) \chi \|) \| ( I + V R_0 ( \lambda ) \rho )^{-1} \| \\
& \leq C_1  ( 1 + |\lambda |)^{ j-1} 
e^{ 4 R_1 ( \Im \lambda )_- } \| ( I + V R_0 ( \lambda ) \rho )^{-1} \|
, \end{split}
\] 
where the norms are operator norms $ L^2 \to L^2 $ unless indicated otherwise.
Assuming, as we may that, $ \supp \rho \subset B ( 0, R_0 ) $ (the only requirement
on $ \rho \in \CIc ( \RR^3 ) $ is that it is equal to $ 1 $ on the support
of $ V $), it is sufficient to prove 
that there exist $ A $ such that
\begin{equation}
\label{eq:t41}
\| ( I + V R_0 ( \lambda ) \rho )^{-1} \|_{ L^2 \to L^2 } < 2 \ \ \
\text{when $ \ \ \Im \lambda > - {\textstyle\frac{1}{2R_0}} \log | \Re \lambda | + A $.} \end{equation}
However, \eqref{eq:t1} applied with $ j = 0 $ gives
\[  \| V R_0 ( \lambda ) \rho \| \leq C_2 ( 1 + |\lambda |)^{-1} 
e^{ 2 R_0 ( \Im \lambda)_- } < \textstyle\frac12 , \]
once $ A $ is large enough (depending on $ C_2  $ which depends on 
$ \rho $ and $ V $).
This completes the proof as we can then invert $I + V R_0 ( \lambda ) \rho $
as a Neumann series.
\end{proof}

In the setting of obstacle scattering,
(see Figure \ref{f:helm}  for an example and \cite[\S\S 4.1,4.2]{res} for
a general framework which includes this case) the study of resonance free
regions and of the closely related decay of waves was initiated by 
Lax--Phillips \cite{LP}, Morawetz \cite{Mo} and Ralston \cite{rals}, see also 
Morawetz--Ralston--Strauss \cite{Mrs}. 
These early results stressed the importance of 
the {\em billiard flow}\footnote{That is the flow defined by 
propagation along straight lines with reflection at the boundary;
trapping refers to existence of trajectories which never escape to 
infinity.} and
provided an impetus for the study of
propagation of singularities for boundary value problems
by Andersson, Ivrii, 
Melrose, Lebeau, Sj\"ostrand and Taylor
-- see H\"ormander \cite[Chapter 24]{H3} and references given there.
A direct method, alternative to Lax--Phillips theory, for relating
propagation of singularities to resonance free strips was
developed by Vainberg \cite{Vai},\cite[\S 4.6]{res}. All 
the possibilities in \eqref{eq:resf1} can occur in the 
case of scattering by obstacles with {\em smooth} boundaries:

\begin{center}
\begin{tabular} {|| l | l ||}
\hline
\ & \ \\
\ \ \ \ \ \ \ \ Geometry & \ \ \ \ \ \ \ \ \ \ \ \
\ \ \ \ \ \ \ \ \ \ Resonance free region  \\
\ & \ \\
\hline
Arbitrary obstacle & (a) \cite{bu0}; optimal for obstacles with 
elliptic closed orbits \\ 
\ &  
\cite{StVo1},\cite{TaZ},\cite{StefDuke} \\
\hline
Two convex obstacles & (b) with a pseudo-lattice of resonances below the strip \\
\ & 
\cite{ik1},\cite{Ge1}, hence optimal \\
\hline
Several convex obstacles & (b) with $ M$ determined by a 
{\em topological pressure} of   \\
\ & the chaotic system \cite{GaRi},\cite{ik2}; an improved gap \cite{PeSt} \\
\hline
Smooth non-trapping  & (c) with arbitrarily large $ M$ \cite{Mel0},\cite{Vai},
using \\
obstacles & propagation of singularities  \cite[Chapter 24]{H3}; \\
\hline
Analytic non-trapping & (d) with $ \beta = \frac13 $ \cite{baleron},\cite{popo}
based on propagation of \\
obstacles & Gevrey-3 singularities \cite{le}; optimal for convex obstacles \\
\hline
Smooth strictly convex & (d) with $ \beta = \frac13 $ and $ \gamma $ determined
by the curvature  \\
obstacles & \cite{hale},\cite{SZ4}\cite{Long1}; Weyl asymptotics for resonances\\
\ &  in cubic bands $ \sim r^{n-1} $ \cite{SZ5} \\
\hline
\end{tabular}
\end{center}

There has been recent progress in the study of  
non-smooth obstacles, and in particular in 
taking account of the effect of diffraction at conic points
on the distribution of resonances -- see the lecture by Wunsch
\cite{wu2} for a survey and references. 

\begin{SCfigure}
\centering                                                         
\includegraphics[width=8cm]{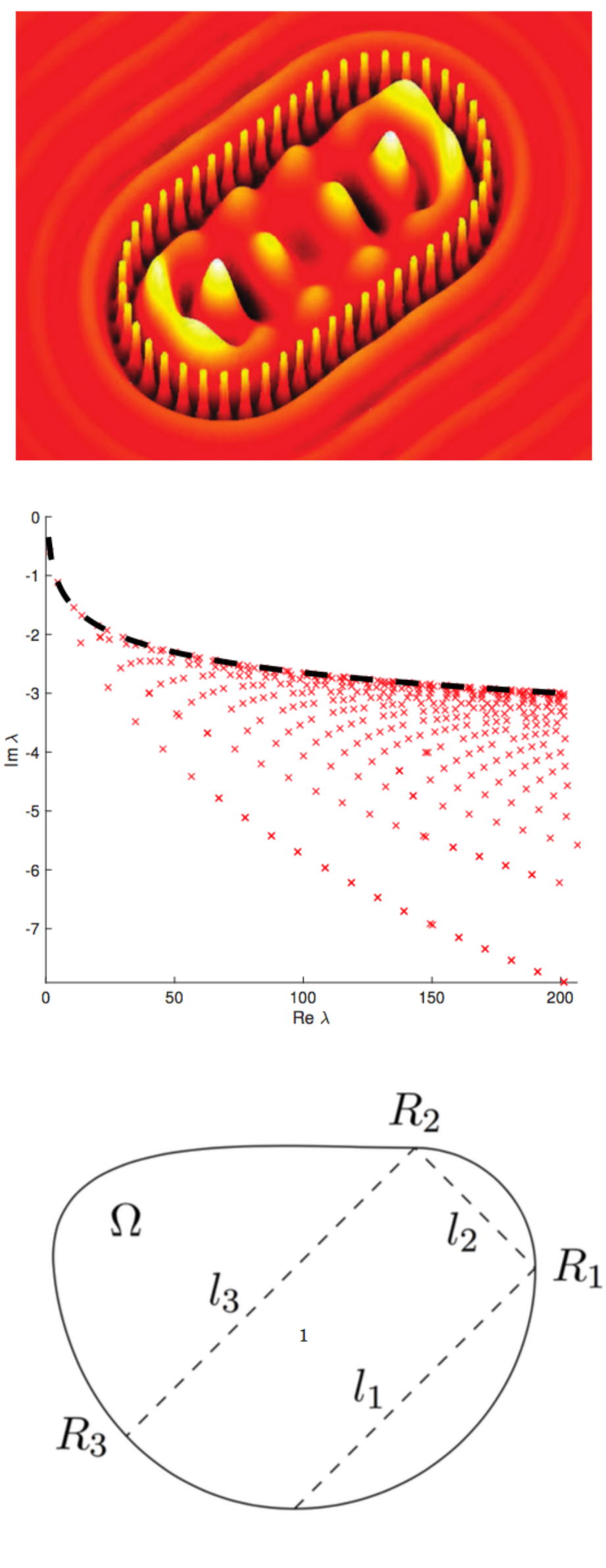}
\caption{An illustration of the results of \cite{gal1},\cite{gal2}. At the top an image of a quantum corral formed from individual iron atoms taken using a scanning tunneling microscope; this
motivated the quantum Sabine law of \cite{haze} where other numerical experiments
can be found. In the middle resonances computed for a simple model of a circular quantum corral: scattering by a delta function of the form $ \delta_{\SP^1}$ where $\SP^1\subset\mathbb{R}^2$ is the unit circle.
 It shows resonances and the bound on the resonance free region predicted by the Sabine law (the dashed black line). 
To understand the Sabine law, consider, $u_0$,  a function localized in space and 
momentum to $ ( x_0 , \xi_0 )\in \Omega \times \SP^{1} $ up to the scale allowed by the uncertainty principle (a ``wave packet"). A wave started with initial data $u_0$ propagates along the billiard flow starting from $(x_0,\xi_0)$.
 At each intersection of the billiard flow with the boundary, the amplitude inside of $\Omega$ will decay by a factor, $R$, depending on the point and direction of intersection. Suppose that the billiard flow from $(x_0,\xi_0)$ intersects the boundary at $(x_n,\xi_n)\in\partial\Omega\times \SP^{1}$, $n>0$. Let $l_n=|x_{n+1}-x_n|$ be the distance between two consecutive intersections with the boundary, 
 as shown at the bottom figure. Then the amplitude of the wave decays by a factor $\prod_{i=1}^nR_i$ in time $\sum_{i=1}^nl_i$ where $R_i=R(x_i,\xi_i)$. The energy scales as amplitude squared and since the imaginary part of a resonance gives the exponential decay rate of $L^2$ norm, this leads us to the heuristic that resonances should occur at 
$\Im \lambda=\left.\overline{\log |R|^2}\right/(2\bar{l})$
where the mean is defined by $\bar{f}=\frac{1}{N}\sum_{i=1}^Nf_i$. We call this heuristic rule the {\em quantum Sabine law}.}
\label{f:galk}
\end{SCfigure}

Another rich set of recent results concerns scattering by ``thin"
barriers modeled by delta function potentials  (possibly energy 
dependent) supported on hypersurfaces in $ \RR^n $. 
For example, refinements
of the Melrose--Taylor parametrix techniques 
give, in some cases, the optimal resonance free region defined
by $ F ( x ) = x^{ -\beta } $, $ \beta > 0 $ and 
 a mathematical explanation of a {\em Sabine law} for quantum corrals
\cite{haze}.  See
the works of Galkowski \cite{gal1},\cite{gal2},\cite{gal3} and
Smith--Galkowski \cite{galsm} where references to earlier
literature on resonances for transmission problems can also be found,
and Figure~\ref{f:galk} for an illustration.

Finally we make some comments on low energy bounds on resonance
widths. Universal lower bounds bounds on $ |\Im \lambda |$ for 
{\em star shaped obstacles} were obtained by Morawetz \cite{Mo},\cite{Mo2} 
for obstacle
problems 
and by Fern\'andez--Lavine \cite{fela} for more general operators.
In the case of {\em star shaped obstacles} the best bound is due to 
Ralston \cite{RalstonDecay} who using Lax--Phillips theory \cite{LP} showed that 
in all odd dimensions
\[  \Im \lambda < - 2\, {\rm{diam}}( \mathcal O)^{-1}  .\]
Remarkably this bound is optimal for the sphere in dimensions three and five.
The obvious difficulty in obtaining such bounds is the lack of a variational principle due to the
non-self-adjoint nature of the problem (see \S \ref{rae}). 

The opposite extreme of star shaped obstacles are {\em Helmholtz resonators} shown in Figure \ref{f:helm}. 
Here a cavity is connected to the exterior through a neck of 
width $ \epsilon $. A recent breakthrough  by 
 Duyckaerts--Grigis--Martinez \cite{dgm} provided precise
asymptotics (as $ \epsilon \to 0 $) of the resonance width associated
to the lowest mode of the cavity. 

\begin{figure}
\includegraphics[scale=0.37]{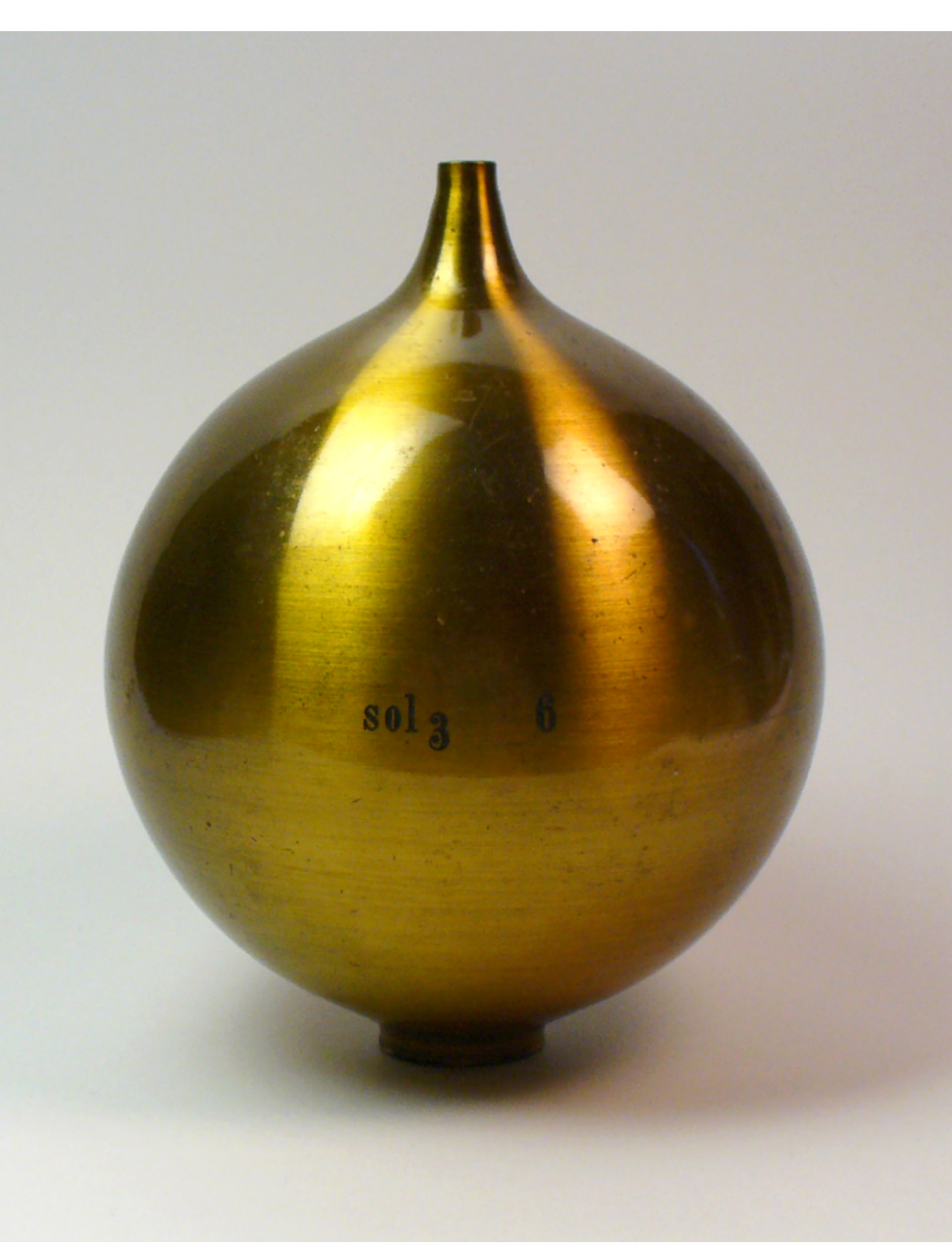} \hspace{0.2in} \includegraphics[scale=0.4]{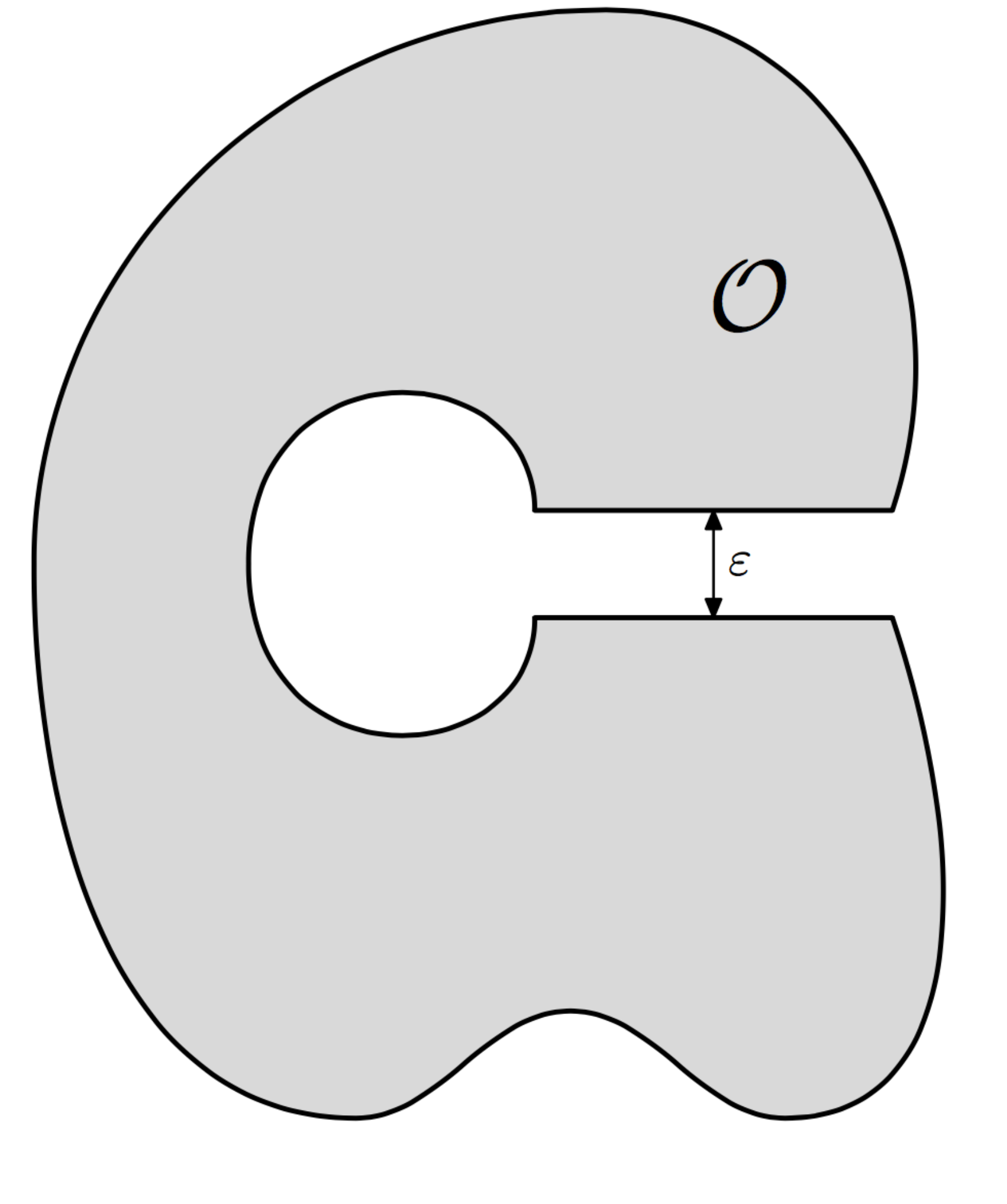}
\caption{\label{f:helm}
Left: an actual {\em Helmholtz resonator} (reproduced from
\url{https://en.wikipedia.org/wiki/Helmholtz_resonance} under the
Creative Commons licence). Right: 
a general mathematical model allowing for arbitrary cavities and 
exteriors \cite{dgm}. The shaded part is the obstacle $ \mathcal O $ and
resonances are the values of $ \lambda $ for which there exists 
a solution to $ ( - \Delta - \lambda^2 ) u = 0 $ in $ \RR^n \setminus 
\mathcal O $, $ u |_{\partial \mathcal O } = 0 $ which is 
outgoing, that is, $ u |_{ \RR^n \setminus B ( 0 , R) }  
= R_0 ( \lambda) f |_{\RR^n \setminus B( 0 , R ) }$
for some $ f \in L^2_{\comp} ( \RR^n ) $ and $ R \gg 1 $. 
This is also an example  
in which high energy quasimodes supported in the cavity 
provide an approximation to resonances
\cite{TaZ},\cite{StefDuke} with imaginary parts $ \mathcal O ( e^{ - c | \Re \lambda |} ) $.}
\end{figure}

\subsection{Resonance expansions of scattered waves}
\label{expan}

We will use Theorem \ref{t:4} and prove an expansion of solutions to the
wave equation in terms of resonances -- see 
\eqref{eq:uoftx} and Figures \ref{f:wavin} and \ref{f:ringin}. 
To simplify the presentation we make
the following assumption:
\begin{center}
{ $P_V $ has no eigenvalues and $ 0 $ is {\em not} a resonance; }\\
{all resonances of $ P_V $ are {\em simple}\footnote{This is expected to be true generically 
and is claimed in \cite{klop}. The proof is incomplete so this seems to 
be an open problem.},} 
\end{center}
see \cite[\S 2.2,3.2.2]{res} for the general case and \cite[\S 7.5]{res} for
resonance expansions in the case where trapping is allowed.

\begin{thm}
\label{t:5}
Suppose that $ V \in L^\infty ( \RR^3 ) $ is real valued,
$ \supp V \subset B ( 0 , R_0 ) $ and that the assumptions
above holds. 
Let $w(t,x)$ be the solution of                                            
\begin{equation}\label{419}
\begin{cases}
& (\partial^2_t + P_V)w(t,x) =0  \,, \\
& w(0,x)=f(x) \in  H^1_{ }(B ( 0 , R_1 ) )  \,, \\
&  \partial_tw(0,x)=g(x) \in L^2(B( 0 , R_1 ) )  \,,
\end{cases}
\end{equation}
where $ R_1 \geq R_0 $. Then for any $ a  >0  $, 
\begin{equation}\label{420}
\begin{split}
w ( t , x ) =
\sum_{ \Im \lambda_j > - a  } e^{- i 
\lambda_j t } w_{j } ( x )  + E_a ( t )  \,,
\end{split}
\end{equation}
where $ \{ \lambda_j \}_{j=1}^{\infty} $ are the resonances of $ P_V$ 
and $ w_j $ are the corresponding resonant states,
\begin{equation}
\label{eq:ujofx}
w_j = 
{\rm{Res}}_{
  \lambda = \lambda_j} \left(  i R_V ( \lambda ) g + \lambda R_V (
\lambda) f \right) .
\end{equation}
exists  a constants $ C $ depending on $V$, $ R_1 $ and $ a $ such that
\begin{equation}
\label{eq:EAoft}  \| E_a ( t ) \|_{ H^1 ( B ( 0 , R_1 ) ) } 
\leq C e^{ - t a }
\left( \| f \|_{ H^1 } + \| g \|_{ L^2 } \right) \,, \ \ t \geq 0 . 
\end{equation}
\end{thm}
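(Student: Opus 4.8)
The plan is to represent the solution $w(t,x)$ via the spectral/Fourier calculus as a contour integral of the cutoff resolvent $\chi R_V(\lambda)\chi$, and then to push the contour of integration down into the lower half-plane, picking up residues at the resonances $\lambda_j$ with $\Im\lambda_j>-a$, while the remaining integral over the shifted contour $\Im\lambda=-a$ is estimated using the polynomial resolvent bound of Theorem \ref{t:4} (which applies because $a$ is fixed and $-a > -\frac{1}{2R_0}\log|\Re\lambda| + A$ for $|\Re\lambda|$ large, and for bounded $|\Re\lambda|$ one uses analytic Fredholm theory to see there are only finitely many poles in a compact region). This is the same contour-deformation mechanism already used in \eqref{eq:R0wave}--\eqref{eq:R0wave1} for the free equation, now with $R_0$ replaced by $R_V$ and with resonance poles present.

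First I would solve \eqref{419} by the formula $w(t,x) = \cos(t\sqrt{P_V})f + \frac{\sin(t\sqrt{P_V})}{\sqrt{P_V}}g$, and rewrite this, using the Stone formula / functional calculus as in \eqref{eq:R0wave}, as
\[
w(t,x) = \frac{1}{2\pi}\int_{\RR} e^{-i\lambda t}\bigl( i R_V(\lambda)g + \lambda R_V(\lambda) f\bigr)\,d\lambda,
\]
valid (in the distributional sense, then improved by the cutoffs) for $t>0$; here the assumption that $P_V$ has no eigenvalues and $0$ is not a resonance is exactly what makes $R_V(\lambda)$ regular on the real axis so that the integral along $\RR$ makes sense and no discrete spectral part appears. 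Since $f,g$ are supported in $B(0,R_1)$ and we only care about $w$ on $B(0,R_1)$, I would insert cutoffs $\chi$ equal to $1$ there and work with $\chi R_V(\lambda)\chi$, so that Theorem \ref{t:4} is directly applicable. Second, I would justify deforming the contour from $\RR$ to $\{\Im\lambda=-a\}$: the integrand is meromorphic in the strip $-a\le\Im\lambda\le 0$ with only finitely many poles (all simple, by hypothesis), and the horizontal pieces of the rectangular contour at $\Re\lambda=\pm T$ go to zero as $T\to\infty$ because of the factor $e^{-i\lambda t}$ combined with the polynomial-in-$|\lambda|$ bound \eqref{eq:t4} — this requires a small argument, most cleanly done by first pairing against a test function in $t$ or by using that $e^{-i\lambda t}$ decays in the relevant quadrant together with an $L^2$-in-$\lambda$ version of the resolvent estimate obtained by interpolating \eqref{eq:t4} with a gain from integrating by parts in $t$. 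The residues at the $\lambda_j$ produce precisely the terms $e^{-i\lambda_j t}w_j(x)$ with $w_j$ as in \eqref{eq:ujofx}, and the leftover integral along $\Im\lambda=-a$ is $E_a(t)$.

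Third, I would estimate $E_a(t)$: on the line $\Im\lambda=-a$ we have $e^{-i\lambda t}=e^{-i(\Re\lambda)t}e^{-at}$, so $\|E_a(t)\|_{H^1(B(0,R_1))}\le e^{-at}\cdot\frac{1}{2\pi}\int_{\RR}\bigl\|\chi R_V(-a i+s)\chi\bigr\|\,(1+|s|)\,ds\cdot(\|f\|_{H^1}+\|g\|_{L^2})$ formally, but the $\lambda$-integral diverges with the pointwise bound alone, so the actual argument must integrate by parts in $t$ once or twice (using $\partial_\lambda$ on $e^{-i\lambda t}$, i.e. multiplying by powers of $t$ and differentiating the resolvent, or equivalently using $(\partial_t^2+P_V)w=0$ to trade two powers of $\lambda$ for two $t$-derivatives that land on $f,g$) to gain enough decay in $s$ for absolute convergence; this produces the constant $C$ depending on $V,R_1,a$ and leaves the clean bound \eqref{eq:EAoft}. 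I expect the main obstacle to be precisely this last quantitative step — making the contour shift and the tail estimate rigorous given only a polynomially-growing resolvent bound, i.e. controlling the non-absolutely-convergent oscillatory integral — rather than the algebra of the residues, which is formal. The standard remedy (also the one used in \cite{res}) is to regularize by a smooth $t$-cutoff, shift the contour for the regularized integral where everything converges, and then remove the regularization using the $H^1\to H^1$ bound with the extra $t$-derivatives absorbed into the data.
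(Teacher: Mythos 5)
Your high-level plan — Stone formula, cutoff resolvent, contour deformation past the poles, residues plus remainder — matches the paper, and you correctly identify the convergence of the oscillatory integral over the shifted contour as the crux of the matter. But your proposed shifted contour, the horizontal line $\Im\lambda=-a$, is the wrong choice, and the remedies you sketch do not repair it. The paper instead deforms to the \emph{logarithmic} contour
\[
\Gamma=\{\,\lambda-i(a+\epsilon+\delta\log(1+|\lambda|))\ :\ \lambda\in\RR\,\},\qquad \delta<\tfrac1{2R_1},
\]
which lies entirely inside the pole-free/resolvent-bound region of Theorem \ref{t:4}. On $\Gamma$, one gains $|e^{-i\lambda t}|=e^{-(a+\epsilon)t}(1+|\lambda|)^{-\delta t}$: for $t>2R_1+1/\delta$ this polynomial decay overwhelms the $(1+|\lambda|)^{2R_1\delta}$ growth coming from the $e^{2R_1(\Im\lambda)_-}$ factor in \eqref{eq:t4}, making the integral over $\Gamma$ absolutely convergent with the bound $Ce^{-at}\|g\|_{L^2}$. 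On your horizontal line this mechanism is absent: $e^{-i\lambda t}$ contributes only the $\Re\lambda$-independent factor $e^{-at}$, the $L^2\to H^1$ resolvent bound is $O(1)$ in $|\Re\lambda|$, and the tail integral diverges. Integrating by parts in $\lambda$ makes things worse (since $\partial_\lambda R_V=2\lambda R_V^2$), and trading powers of $\lambda$ for regularity of the data (your other suggestion) yields a bound in $\|g\|_{H^2}$, not $\|g\|_{L^2}$, which cannot simply be removed by density because that version of the remainder estimate is not uniform in $g$.

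Two further points of mismatch with the paper's argument. First, the paper's regularization is in the \emph{spatial} variable, not a $t$-cutoff: it first takes $g\in H^2$ and uses $\chi(R_V(\lambda)-R_V(-\lambda))\chi\,(1+P_V)=(1+\lambda^2)\chi(R_V(\lambda)-R_V(-\lambda))\chi$ to gain $(1+\lambda^2)^{-1}$ decay, which is only needed to justify sending the vertical contour pieces $\gamma_r^\pm$ and the real tail $\gamma_r^\infty$ to zero as $r\to\infty$; the sharp $\|g\|_{L^2}$ estimate on $E_\Gamma(t)$ comes independently from the logarithmic-contour gain, and the extension to $g\in L^2$ is then a density argument combined with the crude energy estimate $\|w(t)\|_{H^1}\le C\|g\|_{L^2}$ for bounded $t$ (since the sharp contour estimate only kicks in for $t>2R_1+1/\delta$). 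Second, and more minor: your remark that for bounded $|\Re\lambda|$ one falls back on analytic Fredholm theory is fine, but the contour must also be chosen to avoid the finitely many poles near the real axis (the paper's $\epsilon$); and your sum is taken over exactly $\{\Im\lambda_j>-a\}$, whereas the contour $\Gamma$ captures a slightly larger set, with the extra terms absorbed into $E_a(t)$ since they already decay faster than $e^{-at}$.
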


\medskip

\noindent
{\bf Interpretation.} Suppose that instead of solving the wave equation \eqref{419} 
with $ x \in \RR^3 $ we consider it for $ x \in \Omega \Subset \RR^3 $,
with, say, Dirichlet boundary conditions, $ u ( x, t ) = 0 $, $ x \in \partial
\Omega $ (for $ \partial \Omega$ smooth). Then the solution can be 
expanded in a series of eigenfunctions of the Dirichlet 
realization of $ P_V$. Let us assume for simplicity that all eigenvalues
are positive, $ 0 < \lambda_1^2 \leq \lambda_{2}^2  \cdots \leq \lambda_{j}^2 
\to \infty $, $ P_V \varphi_{j} = \lambda_{j}^2 \varphi_{j} $, $ \varphi_{j} |_{\partial \Omega } = 0 $, 
$ \langle \varphi_{i}, \varphi_{j} \rangle_{L^2 ( \Omega ) } = \delta_{ij} $. For 
$ j \in - \NN^* $ we then put
\[  \lambda_{ -j } := - \lambda_{ j} < 0 , \ \ \varphi_{ -j} := \varphi_{j} . \]
The solution of \eqref{419} on $ \RR \times \Omega $ 
has an expansion converging in $ \CI ( \RR ; L^2 ( \Omega ) ) $:
 \begin{equation}
\label{eq:expeig}
\begin{gathered}
w ( t , x) = \sum_{ j \in \ZZ^* } e^{ - i \lambda_j t } u_j ( x ) , \\
u_j (x) = {\textstyle{\frac12}} \left( \langle f , \varphi_j \rangle_{L^2 ( \Omega ) } 
+ i \lambda_j^{-1} \langle g , \varphi_j \rangle_{ L^2 ( \Omega ) } \right) 
\varphi_j ( x ) .
\end{gathered}
\end{equation}
A more invariant way to express $ u_j's  $ is given by 
\eqref{eq:ujofx} with $ R_V ( \lambda ) = ( P_V - \lambda^2 )^{-1} $,
the resolvent of the Dirichlet realization of $ P_V $ on $ \Omega $.

Hence \eqref{420} is an analogue of the standard expansion \eqref{eq:expeig}.
However, for large times 
the solution $ u ( t , x ) $ (that is both 
$u_j$'s and the remainder $ E_A $) are more regular. 
The proof shows (by applying \eqref{eq:t4} with $ j = 2$, which is 
also valid) that 
 \begin{equation}
\label{eq:EAoft1}  \| E_a ( t ) \|_{ H^2 ( B ( 0 , R_1 ) ) } 
\leq C e^{ - t a }
\left( \| f \|_{ H^1 } + \| g \|_{ L^2 } \right) \,, \ \ t > 10 R_1 ,
\end{equation}
and if $ V \in \CI $ we could replace $ H^2 ( B ( 0 , R_1 )  ) $ in \eqref{eq:EAoft1} by 
$ H^p $ for any $ p $. That smoothing is visible in Figure \ref{f:wavin}.
This indicates in a simple case a relation between distribution of resonances
and propagation of singularities -- see \cite{Vai},\cite[\S 4.6]{res} and
also \cite{BS},\cite{BS1} and \cite{Ga} for some recent applications.

In \eqref{eq:EAoft1} we did not aim  at the 
optimality of the condition $ t > 10 R_1 $: using propagation of 
singularities it suffices to take $ t > 2 R_1 $.

\begin{proof}[Proof of Theorem \ref{t:5}]
We first consider \eqref{419} with $f \equiv 0$ and 
$ g \in H^2_{\rm{comp}} $. 
By the spectral theorem,
the solution of \eqref{419} can be written as
\begin{equation}
\label{eq:Utw}
w(t)= U(t) g := \frac{\sin t \sqrt { P_V}}{ \sqrt {P_V}} = 
\int^\infty_0\frac{\sin t\lambda}{\lambda} \,dE_\lambda(g)  \end{equation}
where $ dE_\lambda $ is the spectral measure, which just as in the case of
the free operator $ P_0 $ \eqref{eq:R0E},\eqref{eq:Stone0}, can be expressed using Stone's formula \cite[Theorem B.8]{res}: 
\begin{equation}
\label{eq:dEla}
 d E_\lambda = {
 {\frac{ 1 } { \pi i }}} ( R_V ( \lambda ) - R_V ( - \lambda
 ) ) \lambda d \lambda .
\end{equation}
Hence, as in \eqref{eq:R0wave},
\begin{equation}\label{422}
\begin{split}
w(t) 
&=\frac{1}{\pi i}\int^\infty_0  \sin t\lambda (R_V(\lambda)-R_V(-\!\lambda))gd\lambda
\\
& = \frac{1}{ 2 \pi } \int_\RR
  e^{- it\lambda}( R_V(\lambda)  - 
R_V( - \lambda)) gd\lambda  \,. 
\end{split}
\end{equation} 
To justify convergence of the integral 
we use 
the spectral theorem (see \eqref{eq:dEla}) which shows that
\[  ( R_V ( \lambda ) - R_V ( - \lambda ) ) ( 1 + P_V ) = 
( 1 + \lambda^2)
( R_V ( \lambda ) - R_V ( - \lambda ) ) 
\,. \]
From that we conclude that for $ \chi \in \CIc $ 
equal to $ 1 $ on $ \spt g $, 
\begin{equation}
\label{eq:w1inH2} \begin{split}  \chi( R_V ( \lambda ) - R_V ( - \lambda ) )\chi g 
  & = \chi( R_V ( \lambda ) - R_V ( - \lambda ) )  g \\
&  = \chi( R_V ( \lambda ) - R_V ( - \lambda ) )\chi ( 1 + \lambda^2)^{-1}
( 1+ P_V)  g  \,. \end{split}\end{equation}
Theorem \ref{t:4} and the assumption that there is no zero resonance\footnote{We 
do not prove it here but it is a consequence of {\em Rellich's uniqueness theorem} that for $ V $ real valued $ R_V ( \lambda ) $ has no poles in $ \RR 
\setminus \{ 0 \} $ -- see \cite[\S 3.6]{res}.}  shows that
\[  \| \chi ( R_V ( \lambda ) - R_V ( -\lambda )\chi  \|_{L^2 \to H^1}
 \leq C  , \ \  \lambda \in \RR . \]
Hence the integral on the right hand side of 
\eqref{422} converges in $ H^1_{\rm{loc}} $.

\begin{figure}
\begin{center}
\includegraphics[scale=1]{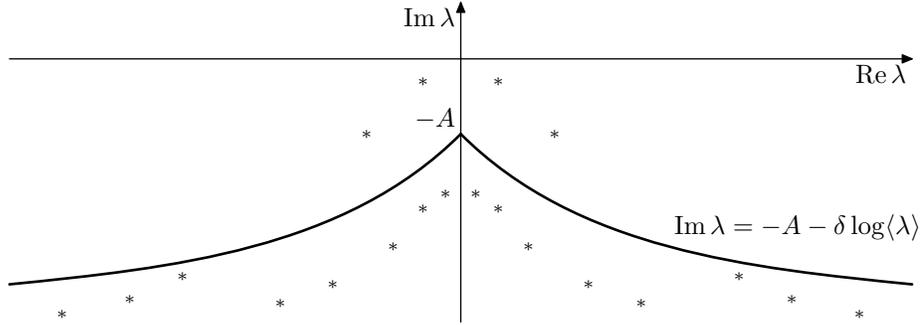}
\caption{\label{f:conto} The contour used to obtain the resonance
expansion.}
\end{center}
\end{figure}

We now want to deform the contour in \eqref{422}. For that 
we choose $ r $ large enough so that all the
resonances with $ \Im \lambda > - a - \delta \log ( 1 + | \lambda | ) $ 
are contained in $ | \lambda | \leq r $. If we choose $ \delta < \frac1{2 R_0 } 
$ that is possible thanks to Theorem \ref{t:4}.

We now deform the contour
of integration using the following contours:
\begin{equation}
\label{eq:defGam}
\begin{gathered}
\Gamma := \{ \lambda - i ( a + \epsilon  + \delta \log ( 1 + |  \lambda |
) )  \; : \; \lambda \in \RR \} \,,    
\\
\gamma_r^\pm  :=  \{ \pm r - it \; : \; 0 \leq t \leq a +
 \epsilon + \delta \log ( 1  + r ) \} \,,  \ \ \gamma_r := \gamma_r^+
 \cup \gamma_r^- \,, \\ 
\Gamma_r := \Gamma \cap \{ | \lambda| \leq r \} \,, \ \ 
\gamma_r^{\infty } := ( - \infty , - r) \cup ( r , \infty) \,. 
\end{gathered}
\end{equation}
Here we choose $ \epsilon $ and so that there are no resonances on $
\Gamma $.  We also put 
\[ \Omega_a  := \{ \lambda \; : \; \Im \lambda \geq  - a - \epsilon - \delta \log ( 1 + |  \lambda |
) \} \,. \]
and define
\[ \Pi_{a}  ( t ) := i \sum_{ \lambda \in \Omega_a } {\rm{Res}}_{
  \lambda = \lambda_j } (  \chi R_V ( \lambda ) \chi e^{ - i \lambda t })   \,. \]
In this notation  the residue theorem shows that
$ U ( t) $ defined in \eqref{eq:Utw} is given by 
\begin{equation}
\label{eq:rUr}
  \chi \, U ( t ) g  =   \Pi_A ( t ) g + E_{\Gamma_r} ( t ) +
E_{\gamma_r} ( t )  + E_{\gamma_r ^\infty } ( t ) 
 \,, 
\end{equation}
where (with natural orientations -- see Figure \ref{f:conto})
\begin{equation}
\label{eq:defEgamma}
E_\gamma ( t ) :=  \frac{1}{ 2 \pi } \int_\gamma 
  e^{- it\lambda} \chi ( R_V(\lambda)) -  R_V( - \lambda)) \chi g d\lambda\,. 
\end{equation} 
Using \eqref{eq:t4} and \eqref{eq:w1inH2} we obtain
\[ \| E_{\gamma^\infty _r} ( t ) g \|_{H^1}  \leq 
C \int_r^\infty ( 1 + |\lambda |^2)^{-1} \|g\|_{H^2}  \leq 
\frac C r \| g\|_{H^2} \to 0 , \ \ \ r \to \infty \,, \]
and
\[ \| E_{\gamma_r} ( t ) g \|_{H^1} \leq \frac {C ( 1 + r )^{4 R_1 \delta} }{ 1 + r^2 } \| g
\|_{H^2} \to 0 , \ \ \ r \to \infty 
\  \ \text{ if $ \delta < \frac 1{ 2 R_1 } $.}
\]

Returning to \eqref{eq:rUr} we see that 
\begin{equation}
\label{eq:rUr'}  \chi \, U ( t) \chi g =  \Pi_A ( t ) g + E_\Gamma
( t ) g \,, \ g \in H^2 \,,
 \end{equation}
where $ E_\Gamma $ is defined using \eqref{eq:defEgamma} with
$ \Gamma $ given in \eqref{eq:defGam}.
 
We now show that when $ t $ is large enough,
\begin{equation}
\label{eq:EGt}  \|  E_\Gamma ( t ) g \|_{H^1} \leq C e^{ - t a } \| g \|_{L^2}
\,.
\end{equation}
For that we use \eqref{eq:t4} with $ j = 1 $ for $ |\lambda | > R $, and the
assumption that there are no poles of $ R_V ( \lambda ) $ near $
\Gamma $. Thus we obtain:
\[ \begin{split}   \|  E_\Gamma ( t ) g \|_{H^1}  &  \leq C e^{ - a t } 
\int_\RR e^{ - t \delta \log ( 1 + | \lambda | ) } e^{  \delta 2 R_1 \log ( 1
  + | \lambda | ) }\| g \|_{L^2 } d \lambda  \\
& \leq C e^{- a t } \int_\RR ( 1 + | \lambda | )^{ -\delta (  t - 2R_1 )
   }  \| g \|_{L^2 }
 d \lambda  \\
& \leq C_\epsilon e^{-at }  \| g \|_{L^2 } \,, \ \  t > 2 R_1 + 1/\delta + 
\epsilon  \,. 
\end{split}
\] 
Since $ H^1 $ is dense in $ L^2 $ the decomposition \eqref{eq:rUr'} 
is valid for $ g \in L^2 $ proving theorem for $ f = 0 $, once $ t $ is
large enough.
But for bounded $ t $, $ \| w ( t )\|_{H^1 } \leq C \| g \|_{L^2} $.
The result follows by noting that $ \chi E_a ( t )  = E_\Gamma ( t) g $.

 The case of arbitrary 
$ f \in H^1_{\rm{comp} } $ and  $g \equiv 0$ follows by replacing
${\sin t\lambda}/{\lambda}$ by $\cos t\lambda$ in the formula for $w(t,x)$.
\end{proof}

The expansion of waves provides an (admittedly) weak approximation to 
correlations (see Figure \ref{f:core}) related to the Breit--Wigner approximation.
Suppose $ g  \in L^2 ( B ( 0 , R_1 ) ) $, $ h \in H^{-1} ( B ( 0 , R_0 ) )$
and put 
$ U ( t ) := \sin t \sqrt P_V / \sqrt P_V $. We define the following correlation
function
\begin{equation}
\label{eq:corr1}  \rho_{g,h} ( t) := \int_{\RR^3 } U ( t ) g ( x ) \overline { h ( x ) } dx, \end{equation}
and its {\em power spectrum}:
\begin{equation}
\label{eq:power1}  \hat \rho_{ g, h } ( \lambda ) = \int_0^\infty \rho_{ g, h } ( t ) e^{ i \lambda
t } d t . \end{equation}
A formal application of the expansion \eqref{422} suggests
\begin{equation}
\label{eq:BW1}
\hat \rho_{ g, h } ( \lambda ) \sim  
\sum_{ \Im \lambda_j > - a } \frac{ \Res_{ \lambda = \lambda_j} 
\langle R_V ( \lambda ) g, h \rangle }{ \lambda - \lambda_j } 
\end{equation}
but it is difficult to give useful remainder estimates in general.
Expansion \eqref{eq:BW1} is just an expansion of a meromorphic function
with simple poles (as assumed here) following an analogue of 
\eqref{eq:rRr}.

For Breit--Wigner type formula near a single resonance in the semiclassical limit see 
G\'erard--Martinez \cite{gma} and G\'erard--Martinez--Robert \cite{gmar}
and for high energy results and results for clouds of resonances, 
Petkov--Zworski \cite{pz1},\cite{pz2} and Nakamura--Stefanov--Zworski
\cite{NaStZw}.

We finally comment on expansions on the case of the Schr\"odinger equation.
In that case the dispersive nature of the equation makes it harder to 
justify resonance expansions. It can be done when a semiclassical parameter
is present, see Burq--Zworski \cite{bz} and 
Nakamura--Stefanov--Zworski \cite{NaStZw}, or when one considers a localized
resonance -- see Merkli--Sigal \cite{messy}, Soffer--Weinstein \cite{soweto}
and references given there.

\subsection{Complex scaling in dimension one}
\label{rae}

In \S \ref{merc3} we established meromorphic continuation using
analytic Fredholm theory. That allowed us to obtain general bounds on the
number of resonances and to justify resonance expansions. To 
obtain more refined results relating geometry to the distribution of 
resonances, as 
indicated in \S \ref{ressl}, we would like to use spectral theory of partial
differential equations. And for that we would like to have a differential
operator whose eigenvalues would be given by resonances. That operator
should also have suitable Fredholm properties. We would call this
an {\em effective meromorphic continuation}.

The method of {\em complex scaling} 
produces a natural family of non-self-adjoint
operators whose discrete spectrum consists of resonances.
It originated in the work of  
Aguilar-Combes \cite{AgCo}, Balslev-Combes \cite{BaCo} and was 
developed by Simon \cite{Si1}, Hunziker \cite{Hunz}, Helffer-Sj\"ostrand 
\cite{He-Sj0}, Hislop--Sigal \cite{His} and other authors.
For very general compactly supported ``black box" perturbations and 
for large angles of 
scaling it was
studied in \cite{SZ1} while the case of long range black box perturbations
was worked out in \cite{SjL}. The method has been 
extensively used in computational chemistry -- see Reinhardt \cite{Rei} 
for a review. As the method of {\em perfectly matched layers}
it reappeared in numerical analysis -- see Berenger \cite{ber}.

In this section we present the simplest case of the method by working
in one dimension\footnote{Our presentation is based on \cite[\S 2]{SZ1}
and on an upublished
 note by Kiril Datchev \url{http://www.math.purdue.edu/~kdatchev/res.ps}.}.
The idea is to consider $ D_x^2 $ as a restriction of the complex
second derivative $ D_z^2 $ to the real axis thought of as a contour
in $ \CC $. This contour is then deformed away from the support of $ V
$ so that $ P = D_x^2 + V ( x ) $ can be restricted to it. This provides ellipticity
at infinity at the price of losing self-adjointness.

Before proceeding we need to discuss the definition of resonances
in dimension one. In that case the free resolvent, 
$ R_0 ( \lambda ) = ( P_0 - \lambda^2 )^{-1} $, $ P_0 = - \partial_x^2 $,
has a very simple Schwartz kernel,
\[  R_0 ( \lambda , x , y ) = \frac{i}{2 \lambda } e^{ i \lambda | x - y| } , \]
which means that as an operator $ R_0 ( \lambda ) : L^2_{\comp} ( \RR ) 
\to L^2_{\rm{loc}} ( \RR ) $, the resolvent continues to a meromorphic
family with a single pole at $ \lambda = 0 $\footnote{This is 
consistent with the expansion \eqref{eq:uoftx} for the free wave
equation: the single pole is ``responsible" for the failure of the 
sharp Huyghens principle in that case.}. 

The approach of \S \ref{merc3} carries through without essential 
modifications and in particular the characterization of  
resonant states \eqref{eq:outg} applies. Since for $ \lambda \neq 0 $,
\[ u ( x ) = R_0 ( \lambda ) f, \  \ f \in L^2_{\comp} 
\ \Longleftrightarrow \ u ( x ) = a_\pm e^{ \pm i \lambda x } , \ \ 
\pm x \gg 1 , \ \ u \in H^2_{\loc} ( \RR ) , \]
we have the following characterization of scattering resonances
\begin{equation}
\label{eq:scatres1}
\begin{gathered}
\text{ $ \lambda \neq 0 $ is a scattering resonance of $ P_V = 
- \partial_x^2 + V ( x ) $, $ V \in L^\infty_{\comp} ( \RR ) $}
\\
\Updownarrow 
\\
\exists \, u \in H^2_{\rm{loc}} ( \RR ) , \ \
( P_V - \lambda^2 ) u = 0 , \ \ 
u ( x ) = a_\pm e^{ \pm i \lambda x } \neq 0 , \ \ 
\pm x \gg 1 , \end{gathered}
\end{equation}
For reasons of simplicity we will not consider multiplicities.
The only possibility here is having algebraic multiplicities 
since solutions satisfying the condition in \eqref{eq:scatres1}
are unique up to a multiplicative constant.

\begin{figure}
\includegraphics[scale=1]{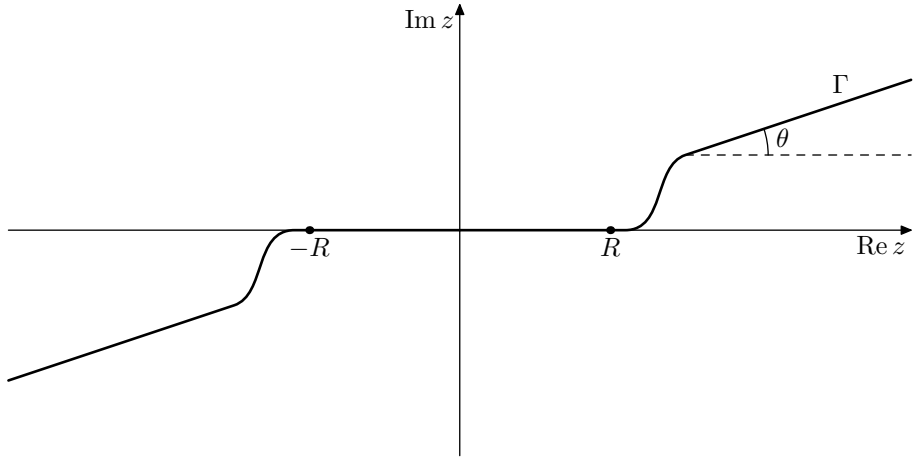}
\caption{\label{f:dat1}
 Curve $\Gamma$ used in complex scaling. The curve is given by $ x
 \mapsto x + i g ( x) $ for 
  a $C^\infty$ function $g$ satisfying $g(x) = 0$ for $-R \le x \le R$
  and $g(x) = x \tan \theta$ for $|x|$ sufficiently large, where
  $\theta$ is a given constant.}
\end{figure}

We now construct an operator which has resonances as its discrete
spectrum. 
For that let  $\Gamma \subset \CC$ be a $ C^1 $ {\em simple curve}.
We define differentiation and integration of functions mapping
$\Gamma$ to $\CC$ as follows. Let 
$\gamma(t)$ be a parametrization 
$\RR \to \Gamma$, and let $f \in C^1(\Gamma)$ in the sense 
that $f \circ \gamma \in C^1(\RR)$. We define
\[
\partial_{z,\Gamma} f(z_0) = \gamma'(t_0)^{-1} \partial_t(f \circ
\gamma)(t_0)\,, \ \ \ \gamma(t_0) = z_0 \,, 
\]
where the inverse and the multiplication are in
the sense of complex numbers.

The chain rule shows that $ \partial^\Gamma_z f ( z_0 )$ 
is independent of parametrization and that for $ F $ holomorphic
near $ \Gamma $,
\[ \partial_{z,\Gamma} { F|_\Gamma }  = \partial_{z} F |_\Gamma \,, \]
where $ \partial_z = \frac 12 ( \partial_x - i \partial_y) $ (see \cite[\S 4.6]{res}). 

We make the following assumption on the behaviour of $ \Gamma $ at infinity:
\begin{gather}
\label{eq:assGam}
\begin{gathered}
\exists \, 0 < \theta < \pi \,, z_\pm \in \CC \,, 
K \Subset \CC , \ \ \ 
\Gamma \setminus K  =
 \bigcup_{\pm} \, \left( z_\pm \pm e^{i \theta } ( 0 , \infty ) \right)
 \setminus K . 
\end{gathered}
\end{gather}

An example of  $ \Gamma $ is shown in
Figure \ref{f:dat1}. One can consider more general behaviour at
infinity such as shown in Figure \ref{f:dat2} where $ \Gamma = \{ x + i g
( x ) : x \in \RR\} $ for a specific $ g$.

Given a potential $ V \in L^\infty_{\rm{comp} } ( \RR; \CC ) $ we
further assume that 
\begin{equation}
\label{eq:GAML}     \Gamma \cap \RR \supset [ - L , L ], \ \ \supp V \subset ( - L
, L ) . \end{equation}
The potential $V$ is then a well defined function on $\Gamma$, so that putting 
\begin{equation}
\label{eq:Pgamma} P_{V, \Gamma} :=  -\partial_{z,\Gamma}^2 + V(z) \,, \end{equation}
makes sense.

We now want to relate eigenvalues of $ P_{V,\Gamma } $ to the
resonances, that is to $ \lambda$'s for which \eqref{eq:scatres1} holds.
For that let us write $ \Gamma $ as a disjoint union of connected components:
\[ \Gamma = \Gamma_- \cup [ - L , L ] \cup \Gamma_+ \]
where $ \Im z \to \pm \infty $ on $ \Gamma_\pm $. 
We then observe that  
\begin{equation}
\label{eq:apmbmp}  (  P_{V, \Gamma } - \lambda^2 ) u_\Gamma  \ \Longrightarrow \
u_\Gamma ( z ) = a_{\pm } e^{ \pm i \lambda z }+ b_{\pm } e^{ \mp i \lambda z }, \ \ 
z \in \Gamma_{\pm } .\end{equation}
That is because on $ \Gamma_\pm $ we are away from the 
support of the potential and the unique solutions to our differential 
equations have to come from holomorphic solutions to 
\begin{equation}
\label{eq:Upm}   ( - \partial_z^2 - \lambda^2 ) U_\pm = 0 , \ \ 
 u_\Gamma |_{\Gamma_{\pm} } = U_\pm|_{\Gamma_\pm} . 
 \end{equation}
Condition \eqref{eq:assGam} shows that
\[ \begin{gathered} 
 \Re ( \pm i \lambda z |_{\Gamma_\pm } ) = 
- \sin ( \theta + \arg \lambda ) |\lambda | | z| + \mathcal O ( 1 ) < 
- \epsilon |\lambda | | z| + \mathcal O ( 1 ) , \\
 \Re ( \mp i \lambda z |_{\Gamma_\pm } ) = 
 \sin ( \theta + \arg \lambda ) |\lambda | | z| + \mathcal O ( 1 ) >
 \epsilon |\lambda | | z| + \mathcal O ( 1 ) .
\end{gathered}
\] 
Hence $ u_\Gamma \in L^2 ( \Gamma ) $ if and only if $ b_\pm = 0 $ in 
\eqref{eq:apmbmp}. But then
\[ u ( x) := \left\{ \begin{array}{ll} U_+|_{[L, \infty )} ( x), & x > L,  \\
\ \ u_\Gamma ( x ) , & x \in [ -L, L ], \\
 U_-|_{  (-\infty , -L  ]} ( x), & x < -  L,
 \end{array} \right.
 \]
satisfies the condition in \eqref{eq:scatres1} and hence $ \lambda $ is 
a resonance.

\begin{figure}
\includegraphics[scale=1]{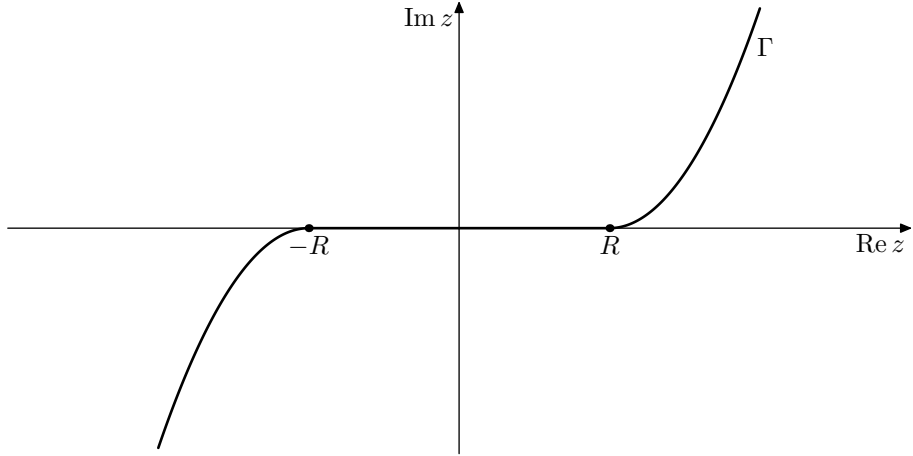}
\caption{\label{f:dat2}
Curve $\Gamma$ used in PML computations -- see \cite{BG} 
and references given there. A typical curve is given by
a function $ \RR \ni x \mapsto x + i g ( x )  $ where 
$ g(x) = -|x+R|^\alpha$ for  $x < -R$, $g(x) = 0$ for $-R \le x \le R$, and $g(x) = (x-L)^\alpha$ for $x > R$, where $\alpha > 1$.}
\end{figure}

This argument can be reversed and consequently we basically proved 
\cite[Theorem 2.20]{res}:

\begin{thm}[\bf Complex scaling in dimension one]
\label{t:cs1}
Suppose that $ \Gamma $ satisfies \eqref{eq:assGam} and $ P_{V,\Gamma} $
is defined by \eqref{eq:Pgamma} with $ V \in L^\infty_{\rm{comp} } (
\RR; \CC ) $. Define
\[ \Lambda_\Gamma := \{ \lambda \in \CC \setminus {\overline \RR}_- : 
- \theta < \arg \lambda < \pi - \theta \}, \]
where $ \arg : {\overline \RR}_-  \to ( - \pi, \pi ) $.

For $ \lambda \in \Lambda_\Gamma $, 
\begin{equation}
\label{eq:Pgamm1} 
P_{V,\Gamma} - \lambda^2 : H^2 ( \Gamma ) \to L^2 ( \Gamma ) \,, 
\end{equation}
is a Fredholm operator and the spectrum of $ P_{\Gamma, V } $ in $
\Lambda_\Gamma $ is discrete. 

Moreover, the eigenvalues of $ P_{V, \Gamma } $ in $ \Lambda_\Gamma $
coincide with the resonances of $ P_V $ there, and  
\begin{equation}
\label{eq:Pgamm2}
m ( \lambda ) = 
{\textstyle{\frac{ 1}{ 2 \pi i} } } \tr_{ L^2 ( \Gamma ) } \oint_\lambda ( \zeta^2 - P_{V,\Gamma})^{-1}
2 \zeta d\zeta ,   \ \ \ \lambda \in \Lambda_\Gamma , 
\end{equation}
where $ m ( \lambda ) $ is the multiplicity of the
resonance at $ \lambda $ (see Definition \ref{d:1}) and the integral is over a sufficiently small positively oriented
circle enclosing $ \lambda $.
\end{thm}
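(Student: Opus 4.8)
The plan is to establish, in order, (i) the Fredholm property of $P_{V,\Gamma}-\lambda^2$ on $H^2(\Gamma)\to L^2(\Gamma)$ for $\lambda\in\Lambda_\Gamma$, (ii) discreteness of the spectrum there, (iii) the identification of eigenvalues with resonances, and (iv) the multiplicity formula \eqref{eq:Pgamm2}. For (i), I would argue by constructing an approximate inverse away from the compact set $K$ and then patching. On each ray $z_\pm\pm e^{i\theta}(0,\infty)$ the operator $-\partial_{z,\Gamma}^2-\lambda^2$ reduces, via the parametrization, to $-e^{-2i\theta}\partial_t^2-\lambda^2$, which is elliptic with symbol $e^{-2i\theta}\xi^2-\lambda^2$; this symbol is nonvanishing precisely when $\lambda^2\notin e^{-2i\theta}[0,\infty)$, i.e. when $\arg\lambda\in(-\theta,\pi-\theta)$ mod $\pi$, which is the definition of $\Lambda_\Gamma$. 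So on the two ends the operator is invertible up to compactly supported error (a standard parametrix construction for a constant-coefficient elliptic ODE, or an explicit Green's function using the exponentials $e^{\pm i\lambda z}$ that decay on $\Gamma_\pm$). On the compact part $[-L,L]\cup K$ one uses interior elliptic regularity for $-\partial_x^2+V$. Gluing these with a partition of unity gives a parametrix $Q(\lambda)$ with $Q(P_{V,\Gamma}-\lambda^2)=I+K_1$, $(P_{V,\Gamma}-\lambda^2)Q=I+K_2$ with $K_i$ compact (they are supported near $K\cup[-L,L]$ and gain derivatives, so compact on $H^2\to H^2$ by Rellich); hence $P_{V,\Gamma}-\lambda^2$ is Fredholm of index zero. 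Holomorphy of $\lambda\mapsto P_{V,\Gamma}-\lambda^2$ together with invertibility for one value of $\lambda$ (say large $|\lambda|$ with $\arg\lambda$ chosen so that the parametrix error can be made small in norm, e.g. $\Im\lambda$ large along a suitable ray where the free-style estimate \eqref{eq:R0la}-type bound applies on the contour) then yields (ii), discreteness, via analytic Fredholm theory as in \eqref{eq:AFT}.

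For (iii) I would give the two inclusions. The ``eigenvalue $\Rightarrow$ resonance'' direction is exactly the computation already displayed in the text before the theorem: if $(P_{V,\Gamma}-\lambda^2)u_\Gamma=0$ with $u_\Gamma\in L^2(\Gamma)\subset H^2(\Gamma)$, then on $\Gamma_\pm$ it must agree with a holomorphic solution $U_\pm$ of $(-\partial_z^2-\lambda^2)U_\pm=0$, i.e. a combination $a_\pm e^{\pm i\lambda z}+b_\pm e^{\mp i\lambda z}$; the sign computation using \eqref{eq:assGam} shows $e^{\mp i\lambda z}$ grows on $\Gamma_\pm$ while $e^{\pm i\lambda z}$ decays (here $\lambda\in\Lambda_\Gamma$ is used to get $\sin(\theta+\arg\lambda)>0$), so $L^2(\Gamma)$ membership forces $b_\pm=0$; then glueing $U_\pm|_{\pm x\ge L}$ to $u_\Gamma|_{[-L,L]}$ along the real axis (where $\Gamma$ coincides with $\RR$, by \eqref{eq:GAML}) produces a genuine $H^2_{\mathrm{loc}}(\RR)$ solution of the outgoing type \eqref{eq:scatres1}, so $\lambda$ is a resonance. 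For the converse, start from a resonant state $u\in H^2_{\mathrm{loc}}(\RR)$ with $u=a_\pm e^{\pm i\lambda x}$ for $\pm x\gg1$; since $e^{\pm i\lambda x}$ is entire, analytically continue $u$ off $[-L,L]$ along $\Gamma_\pm$ by the explicit exponential, obtaining $u_\Gamma\in H^2(\Gamma)$ (it decays on $\Gamma_\pm$ by the same sign computation) with $(P_{V,\Gamma}-\lambda^2)u_\Gamma=0$. One must also check $u_\Gamma\not\equiv0$, which is clear since it equals $u$ on $[-L,L]$ and $u$ is nontrivial (if $u$ vanished on $[-L,L]$, unique continuation for the ODE $-u''+Vu=\lambda^2u$ would kill it). One should also note injectivity of the continuation in the other direction and nontriviality match-ups so that the correspondence of eigenspaces is exact.

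For (iv), the multiplicity formula, I would compare the two finite-rank spectral projectors. On the resonance side, $m(\lambda)$ is defined by \eqref{eq:mult}/\eqref{eq:Jordan} through the residue of $R_V$ at $\lambda$; on the scaled side, $\frac{1}{2\pi i}\oint_\lambda(\zeta^2-P_{V,\Gamma})^{-1}2\zeta\,d\zeta$ is the Riesz projection onto the generalized eigenspace of $P_{V,\Gamma}$ at $\lambda^2$, and its trace is the algebraic multiplicity. The link is the operator identity, valid for $\lambda\in\Lambda_\Gamma$ outside the discrete set, that $(\zeta^2-P_{V,\Gamma})^{-1}$ and $R_V(\zeta)$ are intertwined by the ``deformation'' map: for a compactly supported $f$ supported in $[-L,L]$, $R_V(\zeta)f$ restricted to a neighborhood of $[-L,L]$ equals $(\zeta^2-P_{V,\Gamma})^{-1}(f|_\Gamma)$ restricted there, because both solve $(P_V-\zeta^2)w=f$ with the outgoing behavior, and outgoing states are exactly those that extend holomorphically and decay along $\Gamma_\pm$. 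Taking the contour integral $\oint_\lambda\cdots2\zeta\,d\zeta$ of both sides and using that $R_V$ maps $L^2_{\mathrm{comp}}\to L^2_{\mathrm{loc}}$ with finite-rank residue whose range consists of (generalized) resonant states supported-up-to-decay near the interaction region, one matches the ranks; the cyclicity of trace and the fact that all the relevant residue operators factor through functions on $[-L,L]$ (where $\Gamma=\RR$) makes the trace on $L^2(\Gamma)$ equal to the dimension count in Definition \ref{d:1}. This bookkeeping — carefully matching generalized eigenspaces and not just eigenspaces, and checking the intertwining respects Jordan structure — is the step I expect to be the main obstacle; the Fredholm/ellipticity part (i)–(ii) is routine once the symbol condition cutting out $\Lambda_\Gamma$ is identified, and the eigenvalue/resonance bijection (iii) is essentially the computation already sketched in the surrounding text.
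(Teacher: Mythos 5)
Your proposal is essentially the paper's own argument, filled out. The paper's displayed ``proof'' consists precisely of the exponential sign analysis on $\Gamma_\pm$ establishing the eigenvalue~$\Rightarrow$~resonance direction, plus the remark ``this argument can be reversed,'' and then defers the Fredholm property and the multiplicity formula to \cite[Theorem~2.20]{res}; you fill in exactly those gaps in the expected way (parametrix on the rays where $P_{0,\Gamma}-\lambda^2$ is a constant-coefficient elliptic ODE with symbol $e^{-2i\theta}\xi^2-\lambda^2$, interior ellipticity on the compact piece, Rellich for compactness of the error, analytic Fredholm theory for discreteness, and the intertwining of $(\zeta^2-P_{V,\Gamma})^{-1}$ with $R_V(\zeta)$ on cutoffs supported in $[-L,L]$ for the multiplicity formula). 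Two small cautions, neither of which undermines the plan: the Fredholm condition $\arg\lambda\in(-\theta,\pi-\theta)\ \mathrm{mod}\ \pi$ actually describes two sectors, of which $\Lambda_\Gamma$ is only one --- the outgoing/decaying correspondence of part (iii) is what singles it out; and the intertwining identity in (iv), while correctly stated, should be phrased a little more carefully as ``$u_\Gamma=(\zeta^2-P_{V,\Gamma})^{-1}(f|_\Gamma)$ restricted to $[-L,L]$ extends along $\RR$ to an outgoing solution of $(P_V-\zeta^2)w=f$, hence equals $R_V(\zeta)f$ there,'' from which one compares Laurent coefficients at the pole; the way you justify the identity (``both solve the same equation with outgoing behavior'') implicitly assumes this extension step.
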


\medskip

\noindent
{\bf Interpretation.}
We first remark that 
\[ \Pi_{ \lambda, \Gamma } = \frac{ 1 } {2 \pi i } \oint_\lambda (
\zeta^2 - P_{V, \Gamma } )^{-1} \,  2 \zeta \, d \zeta : L^2 ( \Gamma
) \to L^2 ( \Gamma ) , \]
is a projection.  
Hence, the method of complex scaling identifies
the multiplicity of a resonance with a trace
of a (non-orthogonal) projection. 
We gain the advantage
of being able to use methods of spectral theory, albeit in the murkier
non-normal setting. 
The resonant states, that is the outgoing solutions
to  $ ( P - \lambda^2 ) u $, are restrictions to $ \RR $ of functions
which continue holomorphically to fuctions with $ L^2 $ restrictions
to $ \Gamma $. Since have dealt only  with compactly supported
potentials our countours $ \Gamma $ had to coincide with $ \RR$
near the support of $ V$.  The method generalizes to the 
case of potentials which are analytic and decaying in conic
neighbourhoods of $ \pm ( L , \infty ) $. Generalizations to higher dimensions 
are based on the same principles but the treatment is no longer as explicit.

\subsection{Other results and open problems}
\label{othr1}
For $ V \in \CIc ( \RR^n ) $, real valued, existence of infinitely many 
resonances was proved by 
Melrose \cite{melb} for $ n = 3 $ 
and by 
S\'a Barreto--Zworski \cite{SaBZ} for all odd $ n $ (including 
for any superexponentially decaying 
potentials).
The surprising fact that there exist complex valued potentials in 
odd dimensions greater than or equal to three that have {\em no}
resonances at all was discovered
by Christiansen \cite{Ch}.

Quantitative statements about the
counting function, $ N_V ( r ) $, of resonances in $ \{ |\lambda | \leq r \}$ were
obtained by Christiansen \cite{Ch1} and S\'a Barreto \cite{Sa}:
\[ \limsup_{ r \to \infty } \frac{ N_V ( r ) } { r } > 0 , \ \ 
V \in \CIc ( \RR^n; \RR ) , \ \ V \neq 0 . \]
For potentials generic in $ \CIc ( \RR^n; \mathbb F)  $ or $ L^\infty_{\rm{c}} ( 
\RR^n ; \mathbb F) $, $ {\mathbb F} = \RR $ or $ \CC $, Christiansen and Hislop
\cite{CH} proved a stronger statement 
\begin{equation}
\label{eq:CH}  \limsup_{ r \to \infty } \frac{ \log N_V ( r ) } {\log r } = n . 
\end{equation}
The limit 
\eqref{eq:CH} 
means that the upper bound $ N_V ( r ) \leq C r^n $ in Theorem \ref{t:3} is 
optimal for generic complex or real valued potentials. The only case 
of asymptotics $ \sim r^n $ for {\em non-radial} potentials was 
provided by Dinh and Vu \cite{dv13} who proved that potentials 
in a large subset of $ L^\infty (B ( 0 , 1 )) $ have resonances satisfying the Weyl law \eqref{eq:radas}.
The proofs of these results use techniques from several complex variables.

Recently Smith--Zworski \cite{SmZ} showed that any real valued
\[  V \in 
 H^{\frac{n-3}2} ( \RR^n ) \cap L^\infty_{\rm{comp}} ( \RR^n) , \]
has infinitely many resonances and
any  $ V \in L^\infty_{\rm{comp}} ( \RR^n) $ 
has some resonances ($n$ odd). It is still a (ridiculous) open question if
every real bounded potential ($ \neq 0 $) has infinitely many resonances.

The most frustrating open problem is existence of
an optimal lower bound. It is not clear if we can expect an asymptotic formula.
\begin{conj}
\label{c:1} 
Suppose $ V \in L^\infty_{\comp}  ( \RR^n ) $, $ n $ odd, is 
real-valued and non-zero. Then there exists $ c > 0 $ such that
\[  N_V ( r ) \geq c r^n , \]
where the counting function $ N_V ( r ) $ is defined in \eqref{eq:NV}.
\end{conj}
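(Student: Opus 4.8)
\medskip
\noindent
{\bf A strategy, and the obstacle.}
The natural plan is to reduce Conjecture~\ref{c:1} to counting the zeros of an entire function and then to run the proof of Theorem~\ref{t:3} in reverse. Using the Birman--Krein scattering determinant -- or, equivalently, a suitably regularized Fredholm determinant of $ I + V R_0 ( \lambda ) \rho $ in the spirit of \eqref{eq:defh}, a construction available in odd dimensions -- one first produces an entire function $ D_V ( \lambda ) $, of order at most $ n $ and finite type, whose zeros are \emph{exactly} the resonances of $ P_V $, counted with multiplicity. The estimates behind Theorems~\ref{t:1}, \ref{t:3} and \ref{t:4} then supply two one-sided bounds,
\begin{equation}
\label{eq:DVup}
\log | D_V ( \lambda ) | \leq C ( 1 + |\lambda| )^{n} \ \ (\lambda \in \CC) ,
\qquad
\log | D_V ( \lambda ) | \leq C ( 1 + |\lambda| )^{n-1} \ \ \Bigl( \Im \lambda \geq - \tfrac{1}{2R_0} \log | \Re \lambda | + A \Bigr),
\end{equation}
the second coming from the logarithmic resonance-free region. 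Granting these, Conjecture~\ref{c:1} -- indeed, presumably a two-sided asymptotic $ N_V ( r ) \sim c_1 ( V ) r^n $ -- would follow from understanding the growth of $ \log | D_V | $ along rays into the lower half-plane, the essential case being a \emph{matching lower bound along the imaginary axis}: the existence of $ c > 0 $ and $ t_0 $ with
\begin{equation}
\label{eq:DVlow}
\log | D_V ( - it ) | \geq c\, t^{\,n} , \qquad t \geq t_0 .
\end{equation}
Indeed, feeding \eqref{eq:DVlow} and \eqref{eq:DVup} into Carleman's formula for the half-plane $ \Im \lambda < 0 $ -- the half-plane analogue of Jensen's formula -- the boundary contribution on $ \RR $ is of the lower order $ |\lambda|^{n-1} $, while the interior growth forces the zeros of $ D_V $ to be abundant. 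One must be careful that $ n $ is an integer: the usual minimum-modulus ($ \cos \pi \rho $) estimates degenerate, and an entire function of order $ n $ may have \emph{no} zeros at all, as $ e^{\lambda^n} $ shows. This is exactly why the strong bound over a full half-plane in \eqref{eq:DVup} is needed: it forces the Hadamard exponential factor of $ D_V $ to have degree at most $ n-1 $, and hence pins the order-$n$ growth of \eqref{eq:DVlow} on the canonical product over the resonances.

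The crux is therefore \eqref{eq:DVlow}, and this is where I expect the real obstacle to lie. For $ \lambda = - it $ the free kernel \eqref{eq:R01} is $ e^{\,t|x-y|} / ( 4 \pi |x-y| ) $, and heuristically $ \log | D_V ( - it ) | $ should admit an expansion $ c_0 t^n + O ( t^{n-1} ) $ whose leading coefficient $ c_0 = c_0 ( V ) $ is obtained by comparing $ D_V $ with a model determinant attached to the convex hull of $ \supp V $ -- in effect a Weyl-type computation for the ``imaginary-frequency'' problem -- with $ c_0 > 0 $ unless $ V \equiv 0 $. In one dimension this program \emph{is} carried out: the transfer matrix reduces $ D_V ( - it ) $ to an explicit product of $ 2 \times 2 $ matrices, and one obtains $ c_0 $ proportional to $ \operatorname{diam} ( \supp V ) $, which is precisely the content of \eqref{eq:1da} (Regge \cite{Re}, \cite{Z1}). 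In dimension $ n \geq 3 $ the transfer matrix is replaced by an infinite-dimensional propagation / Dirichlet-to-Neumann object, and the asymptotics of the determinant have not been controlled: the difficulty is to rule out massive cancellation among the eigenvalues of $ V R_0 ( - it ) \rho $, which for a sign-changing real $ V $ carry varying phases. That such cancellation can be total is not an idle worry -- for \emph{complex} compactly supported $ V $ in odd dimensions $ \geq 3 $ there are examples with \emph{no} resonances at all \cite{Ch} -- so any proof of \eqref{eq:DVlow} must genuinely exploit the reality of $ V $, presumably through the symmetry $ \overline{ D_V ( \bar\lambda ) } = D_V ( \lambda ) $ together with the unitarity $ | \det S_V ( \lambda ) | = 1 $ on $ \RR $, which ties the zeros of $ D_V $ to its behaviour on the real axis.

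An alternative route, the Poisson summation formula for resonances $ u ( t ) := 2 \sum_j m ( \lambda_j ) e^{ - i \lambda_j t } $ for $ t > 0 $, runs into the same wall from the trace side: the distribution $ u $ has a singularity at $ t = 0 $ whose strength encodes $ \int V $ and lower-order spectral invariants of $ V $, and for a self-adjoint operator a singularity of this type yields a Weyl law by a Tauberian argument -- the way \eqref{eq:Weyll} is proved -- but here the ``spectral measure'' $ \sum_j m ( \lambda_j ) \delta_{\lambda_j} $ is complex and not positive, so the Tauberian step fails. What has been extracted this way is only $ \limsup_{r \to \infty} N_V ( r ) / r > 0 $ \cite{melb},\cite{Ch1},\cite{Sa}, far short of $ r^n $, whereas the optimal exponent $ n $ is known for \emph{radial} potentials, where \eqref{eq:DVlow} again reduces to an ODE computation \cite{Z11},\cite{Stef}, and, via pluripotential theory, for potentials \emph{generic} in the relevant spaces \cite{CH},\cite{dv13}. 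For a fixed arbitrary real $ V $ the conjecture is open precisely because neither the determinant nor the trace approach can at present turn ``the leading term is nonzero'' into ``the resonances are there''.
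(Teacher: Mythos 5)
The statement you were asked to prove is labeled a \emph{conjecture} in the paper precisely because no proof is known; the paper itself says, immediately before stating it, ``The most frustrating open problem is existence of an optimal lower bound,'' and offers nothing resembling a proof. You recognize this correctly, and rather than manufacture a fake argument you give an honest and, as far as I can tell, accurate account of the standard strategy (scattering/Fredholm determinant $D_V$, Hadamard factorization, Carleman's formula for the half-plane, a hoped-for lower bound $\log|D_V(-it)|\gtrsim t^n$) and of precisely where it breaks down. Your diagnosis matches the surrounding text of \S\ref{othr1}: the $n=1$ case \eqref{eq:1da} and the radial case \eqref{eq:radas} go through exactly because $\log|D_V(-it)|$ can be computed by ODE/transfer-matrix methods; Christiansen's complex examples with no resonances \cite{Ch} show that any proof of the lower bound along $i\RR_-$ must use the reality of $V$ in an essential way; the trace-formula route only yields $\limsup N_V(r)/r>0$ \cite{melb},\cite{Ch1},\cite{Sa} because the resonance ``measure'' $\sum m(\lambda_j)\delta_{\lambda_j}$ is complex and the Tauberian step is unavailable; and the sharp exponent is known only generically \cite{CH} or for a large class in a potential-theoretic sense \cite{dv13}.

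Two small technical remarks on the framing, not on the substance. First, your one-sided bound $\log|D_V(\lambda)|\leq C(1+|\lambda|)^{n-1}$ in the region above the logarithmic curve is actually weaker than what the proof of Theorem~\ref{t:3} gives: the determinant $h$ used there satisfies $|h(\lambda)|\leq C$ uniformly in $\Im\lambda\geq 0$, so the boundary term in Carleman's formula is truly lower order; this strengthens, rather than weakens, your point. Second, your observation that an entire function of integer order $n$ may have no zeros (viz.\ $e^{\lambda^n}$) and that the half-plane bound is what pins the growth on the canonical product is exactly the right thing to worry about; making that rigorous requires combining the half-plane estimate with the functional equation $\overline{D_V(\bar\lambda)}=D_V(\lambda)$ and $|D_V|=|D_V(-\cdot)|$ on $\RR$, which you also flag. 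None of this closes the gap: the missing ingredient remains a mechanism to forbid cancellation in $\log|D_V(-it)|$ for a general sign-changing real $V$, and the paper offers none. Your writeup is a correct assessment of an open problem, not a proof, and should not be read as one.
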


All of the above questions can be asked in even dimensions and for obstacle
problems in which case the bound \eqref{eq:NVb} was established by 
Melrose \cite{Mel3} for odd dimensions and by Vodev \cite{Vo2},\cite{Vo3}
in even dimension. A remarkable recent advance is due to Christiansen
\cite{Ch16}
who proved that for any obstacle in even dimensions we always have
$ r^n $ growth for the number of resonances -- see that paper for 
other references concerning lower bounds in even dimension.

Existence of resonances can be considered as a primitive inverse
problem: a potential with no resonances is identically zero.
In one dimension or in the radial case finer inverse results have been obtained: see 
Korotyaev \cite{Ko1},\cite{Ko2}, 
Brown--Knowles--Weikard \cite{bkw}, Bledsoe--Weikard \cite{bw}, 
Datchev--Hezari \cite{dh}  
and references given there.

Another recent development in the study of resonances for compactly 
supported potentials concerns highly oscillatory potentials
$ V ( x ) = W ( x , x/\epsilon ) $ where $ W : \RR^n \times
\RR^n/ \ZZ^n  \to \RR $ (or $\CC $) is compactly supported
in the first set of variables. Precise results for $ n = 1 $
were obtained by Duch\^{e}ne--Vuki\'cevi\'c--Weinstein \cite{dvw}:
 resonances are close to resonances of $ W_0 ( x ) := 
 \int_{\RR^n/\ZZ^n} W ( x , y ) dy $,  and the difference
is given by $ \epsilon^4 \alpha + \mathcal O ( \epsilon^5 ) $
where, in the spirit of homogenization theory, $ \alpha $ can be computed
using an effective potential. In a remarkable follow-up Drouot \cite{Dr} generalized this result
to all odd dimensions and obtained a full asymptotic expansion in powers of  
$\epsilon $.

\section{Some recent developments}
\label{srr}
We will now describe some recent developments
in meromorphic continuation, resonance free regions, resonance counting 
and resonance expansions for semiclassical operators 
$ - h^2 \Delta_g + V $ on Riemannian manifolds with Euclidean and non-Euclidean infinities. We will also formulate some conjectures: some 
quite realistic (even numbered) and
some perhaps less so (odd numbered)\footnote{\label{foot} Small prizes are offered
by the author for the first proofs of the conjectures within five years
of the publication of this survey: a dinner in a 
{\raisebox{-2pt}{\includegraphics[scale=0.03]{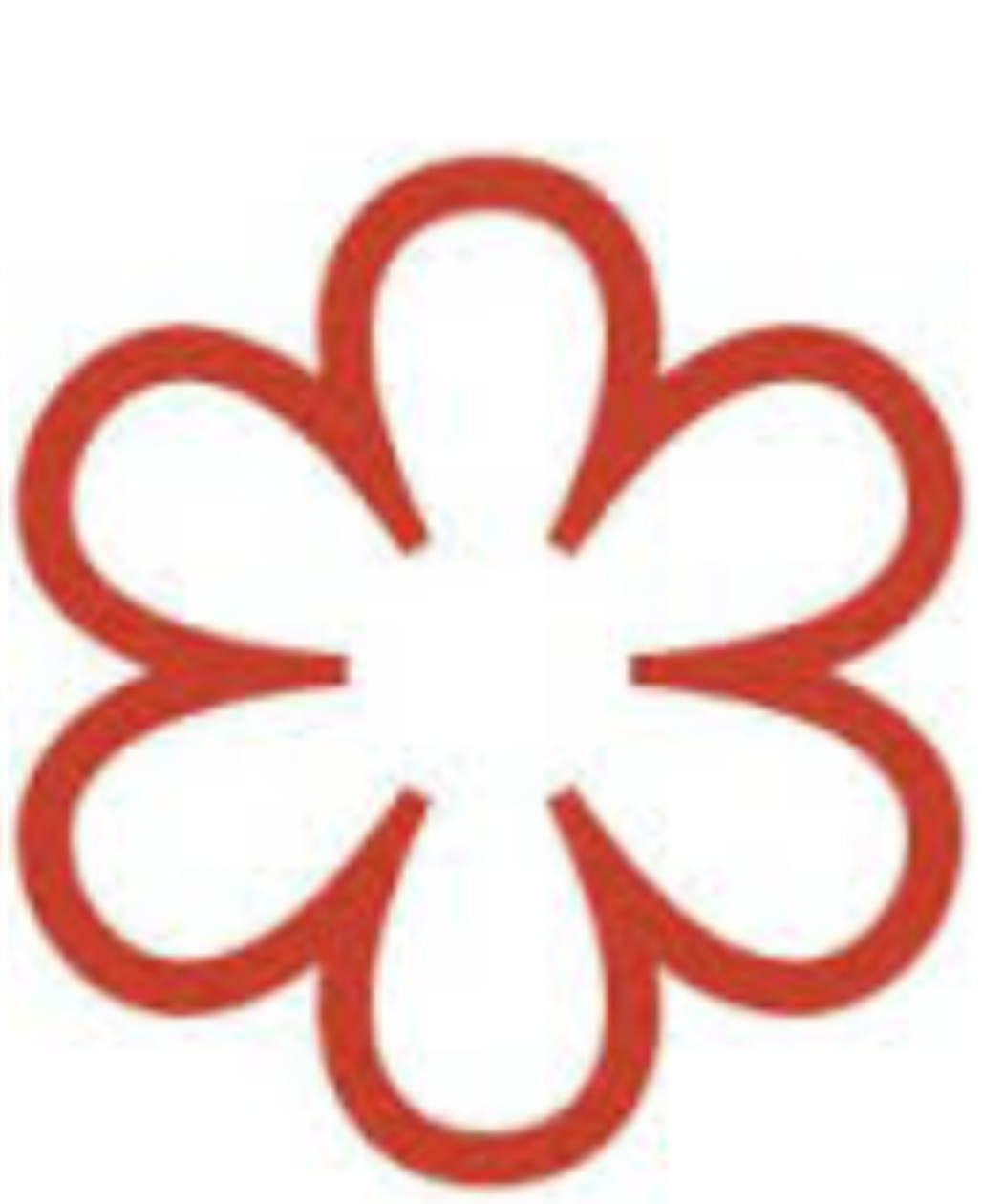}\includegraphics[scale=0.03]{mstar}}} restaurant
for an even numbered conjecture and in 
a 
{\raisebox{-2pt}{\includegraphics[scale=0.03]{mstar}\includegraphics[scale=0.03]{mstar}\includegraphics[scale=0.03]{mstar}}} restaurant, for an odd numbered one.}.

Before doing it let us mention some interesting topics which lie beyond the
scope of this survey. We will discuss hyperbolic trapped sets
but not {\em homoclinic trapped sets.} Although {\em not} stable under
perturbations these trapped sets occur in many situations
 each with its own
interesting structure in distribution of resonances. An impressively precise
study of this has been made by Bony--Fujiie--Ramond--Zerzeri \cite{bfrz}. 
In our survey of counting results we will concentrate on {\em 
fractal Weyl laws} and we refer to Borthwick--Guillarmou \cite{bogus} 
for recent results on resonance counting in geometric settings. In the semiclassical 
Euclidean setting Sj\"ostrand \cite{SjWeyl} obtained precise asymptotic counting
results in the case of {\em random potentials}. The introduction of
randomness should be a beginning
of a new development in the subject. We also do not discuss
the important case of resonances for magnetic Schr\"odinger operators
and refer to Alexandrova--Tamura \cite{alt},  Bony--Bruneau--Raikov, 
\cite{bobrr} and Tamura \cite{tam} for some recent 
results and pointers to the literature. We do not 
address issues around threshold resonances (see Jensen--Nenciu \cite{JN}
and references given there) and their importance in dispersive estimates 
(see the survey by Schlag \cite{Sch})
and for non-linear equations (for an example of a linearly
counterintuitive phenomenon, see \cite[Figure 6]{hz}). Finally, for the role that
shape resonances have had in the study of blow-up phenomena we defer to 
Perelman \cite{Per} and Holmer--Liu \cite{holm} and to the references given there.

The section is organized as follows: in \S \ref{vasy} we review 
Vasy's method for defining resonances for asymptotically hyperbolic manifolds.
In \S \ref{resfree} we first review general results on 
resonance free regions and then describe the case of hyperbolic (in the dynamical
sense) trapped sets. As applications of results in \S \ref{vasy} and 
\S \ref{resfree} we
discuss expansions of waves in black hole backgrounds in
\S \ref{resrel}. The last \S \ref{Weyl} is devoted to mathematical study
of fractal Weyl laws  with some references to the growing physics literature
on that subject.

\subsection{Meromorphic continuation in geometric scattering}
\label{vasy}

In \S \ref{merc3} we showed how to continue the resolvent meromorphically 
across the spectrum using analytic Fredholm theory. In \S \ref{rae}
we presented the {\em method of complex scaling} which provides
an {\em effective meromorphic} continuation in the sense that
resonances are identified with eigenvalues of non-self-adjoint Fredholm
operators. As one can see already in the one dimensional presentation 
that method is closely tied to the structure of the operator near infinity.

With motivation coming from general relativity in physics (see 
\S \ref{resrel}) and from analysis on locally symmetric spaces
it is natural to consider different structures near infinity.
Here we will discuss 
complete asymptotically hyperbolic Riemannian manifolds modeled on 
the hyperbolic space near infinity. For more general hyperbolic 
ends see Guillarmou--Mazzeo \cite{guma} where meromorphy of the 
resolvent was established using analytic Fredholm theory
without providing an effective meromorphic continuation as defined above. 
We also remark that complex scaling method is possible in the case
of manifolds with cusps. For a subtle application of that
see a recent paper by Datchev \cite{Dat3} where existence of  arbitrarily wide
resonance free strips for negative curvature perturbations of 
$ \langle z \mapsto z + 1\rangle\backslash \HH^2 $ is established.
For a recent analysis of a higher rank symmetric spaces and references to earlier
works see Mazzeo--Vasy \cite{mv1},\cite{mv2} and Hilgert--Pasquale--Przebinda \cite{przebic}.

A basic example of our class of manifolds
is given by the hyperbolic space $\mathbb H^{n}$, which can be viewed as the open unit ball in $\mathbb R^n$
with the metric
\index{hyperbolic space}
\begin{equation}
  \label{e:hypspace}
g=4{dw^2\over (1-|w|^2)^2},\quad
w\in B_{\mathbb R^n}(0,1).
\end{equation}
A more interesting family of examples is provided by \emph{convex co-compact hyperbolic surfaces}, which are
complete two-dimensional Riemannian manifolds of constant sectional curvature $-1$ whose infinite ends are \emph{funnels},
that is they have the form
\begin{equation}
  \label{e:funnel-end}
[0,\infty)_v\times \mathbb S^1_\theta,\quad
\mathbb S^1=\mathbb R/\ell \mathbb Z,\
\ell >0;\quad
g=dv^2+\cosh^2 \!v \, d\theta^2.
\end{equation}
Convex co-compact hyperbolic surfaces can be viewed as the quotients of $\mathbb H^2$ by certain discrete subgroups
of its isometry group $\PSL(2;\mathbb R)$ \cite{borth}, and have profound applications in algebra and number theory -- see for instance 
Bourgain--Gamburd--Sarnak \cite{bourgain}. Furthermore,
they give fundamental examples of \emph{hyperbolic trapped sets} and are a model object to study the
effects of hyperbolic trapping on distribution of resonances. Examples which will
be discussed in \S\S \ref{resfree},\S \ref{Weyl} are shown in Figure \ref{f:borth1}.

\begin{figure}
\includegraphics[scale=0.3]{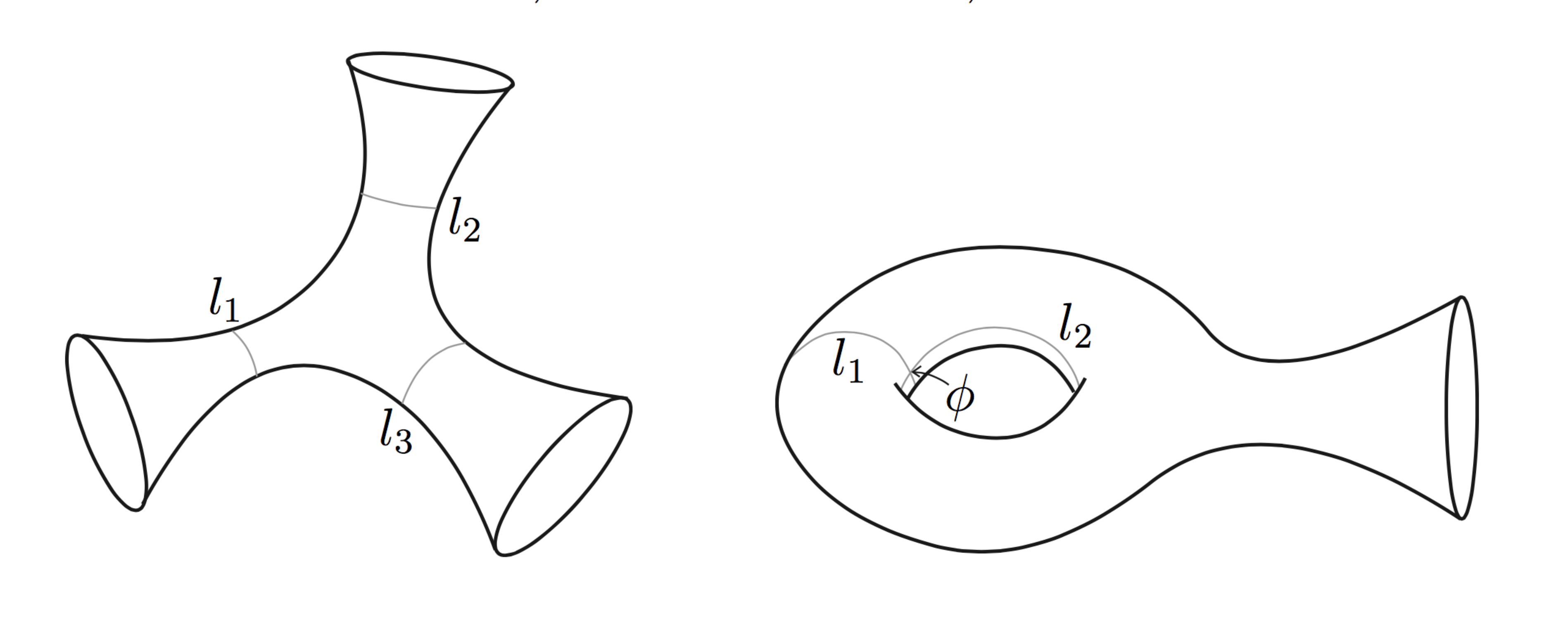}
\caption{\label{f:borth1} Left: surfaces $ X( \ell_1, \ell_2, \ell_3 ) $ with three funnels determined by lengths
of the geodesic boundaries of the funnels \eqref{e:funnel-end} -- see 
\cite[\S 16.2.1]{borth}. Right: funneled tori $ Y( \ell_1 , \ell_2 , \varphi ) $
parametrized by the length of the two geodesics generating homotopy group
of the torus and the angle between. The left example has infinity with three
components; the right one with one component.}
\end{figure}

The examples above are generalized as 
{even asymptotically hyperbolic manifolds}. To define that class of manifolds,
suppose that $  \overline M$ is a compact manifold with boundary $ \partial M \neq \emptyset$ of dimension $ n +1 $. We denote by $ M $ the interior of $ \overline  M $. 
The Riemannian manifold $ ( M , g ) $ is {\em even asymptotically hyperbolic}
if there exist functions $ y' \in \bCI ( M ; \partial M )$ and
 $ y_1  \in {\bCI} (  M ; (0, 2)  ) $\footnote{Here we follow the notation 
of \cite[Appendix B]{H3} where $ \bCI ( M ; V)$ denotes functions 
$ M \to V $, which 
are smoothly extendable across $ \partial M $ and $ \dCI (  \overline M ; V) $ functions which are 
extendable to smooth functions {\em supported} in $ \overline M $.}, 
$ y_1|_{\partial M } = 0 $, $ dy_1|_{\partial 
M } \neq 0 $, such that 
\begin{equation}
\label{eq:coords}  \overline M \supset y_1^{-1} ( [0 , 1 ) ) \ni m \mapsto ( y_1( m), y'( m ) ) 
\in [ 0 , 1 ) \times \partial M \end{equation}
is a diffeomorphism, 
and near $ \partial M $ the metric has the form,
\begin{equation}
\label{eq:gash}   g|_{ y_1 \leq \epsilon }  = \frac{ dy_1^2 + h ( y_1^2 )  }{ y_1^2 } ,  \end{equation}
where 
 $ [ 0, 1 ) \ni t \mapsto h ( t ) $, 
is a smooth family of Riemannian metrics on $ \partial M $.
 For 
the discussion of invariance of this definition and of its geometric meaning
we refer to \cite[\S 5.1]{res} and \cite[\S 2]{g}. Here we will point out how it fits 
with the two examples. For \eqref{e:hypspace} we have 
\begin{equation} 
\label{eq:hypspace} 
\begin{gathered}
 g=4{dw^2\over (1-|w|^2)^2} = \frac{ dy_1^2 + h ( y_1^2 ) }{ y_1^2}, \\ y_1 := \frac{2 ( 1 - |w| ) }{ 1 + |w|} \in [0, 2) , \ \ 
y' := \frac{w}{|w|} \in \SP^{n-1} , \ \
h ( t) = { ( 1 - t)^2 } h_0 (y',dy') ,
\end{gathered}
\end{equation}
where $ h_0 $ is the standard metric on the unit sphere $ \SP^{n-1} $.
In the case of \eqref{e:funnel-end},  we have a coordinate system $ y$ on $ \{  v > 0 \} $:
\begin{equation}
\label{eq:funnel-end}
\begin{gathered}
g = dv^2 + \cosh^2 \! v \, d \theta^2 = \frac{ dy_1^2 + h (y_1^2 ) }{y_1^2} , \\
y_1 = 2 e^{-v}  \in [0, 2 ) , \ \ y_2 = \theta \in 
\SP^1, \ \ h(  t ) = ( 1 + t)^2 dy_2^2 .
\end{gathered}
\end{equation}

Let $ - \Delta_g \geq 0 $ be the Laplace--Beltrami operator for the metric $ g $.
Since $ - \Delta_g $ is a self-adjoint operator, the spectrum is contained in $ [ 0 , \infty ) $ the operator 
$- \Delta_g - \lambda^2 -  (\frac n 2)^2 $ is invertible on $ H^2 ( M , d
\vol_g ) $ for $ \Im \lambda > \frac n 2  $. Hence we can define
\begin{equation}
\label{eq:Res4Deltag}   R (\lambda ) := ( - \Delta_g - \lambda^2 - 
{\textstyle{\frac {\, n^2} 4 }}  )^{-1} : L^2 ( M, d\!\vol_g ) \to H^2 ( M, d \!\vol_g ) , \ \ 
\Im \lambda  > {\textstyle{\frac n 2}}  . 
\end{equation}
It turns out that $ R ( \lambda ) 
: L^2 \to H^2 $ is meromorphic for $ \Im \lambda  > 0 $: the poles correspond
to $ L^2 $ eigenvalues of $ - \Delta_g $ and hence lie in $ i (0 , \frac n 2 ) $.
A closely related standard fact is that the continuous spectrum of 
$ - \Delta_g $ is equal to $ [ (\frac n 2)^2 , \infty ) $. This explains
our ``shifted" convention in defining $ R ( \lambda ) $.

Elliptic regularity shows that $ R ( \zeta ) : \dCI ( M ) \to 
\CI ( M )$, $ \Im \lambda  > n/2 $. 
Hence is natural to consider {\em meromorphic continuation} of 
\begin{equation}
\label{eq:merchyp}
R ( \lambda ) \; : \; \dCI ( M ) \longrightarrow  \CI ( M ) , \ \ 
\lambda \in \CC . 
\end{equation}
That meromorphy was first established
for any asymptotically hyperbolic metric (that 
is a metric of the form \eqref{eq:gash} but with 
$ h = h ( y_1 ) $) by Mazzeo--Melrose \cite{mm}.
Other early contributions were made by Agmon \cite{Ag}, Fay \cite{Fa}, Lax--Phillips \cite{LaPh}, Mandouvalos \cite{Man}, Patterson \cite{Pa} and Perry \cite{Pe}. 
Guillarmou \cite{g} showed that the evenness condition was needed for 
a global meromorphic continuation and clarified the construction given in 
\cite{mm}. 

All these arguments relied on analytic Fredholm theory \eqref{eq:AFT}
and did not provide an effective continuation in the sense \S \ref{rae}.
A recent breakthrough due to Vasy \cite{vasy1} provided such 
effective continuation by expressing $ R ( \lambda ) $ using 
$ P ( \lambda )^{-1} $ where 
\[  \lambda \mapsto P ( \lambda )  \ \text{ is a
holomorphic family of differential operators.}\] 
Hence, microlocal methods can now be used to prove 
results which were not available before, for instance
existence of resonance free strips for non-trapping metrics \cite{vasy2}. 
Other applications in the theory of resonances will be 
presented in \S\S \ref{resfree},\ref{Weyl}. Roughly speaking,
thanks to Vasy's method we can now concentrate on the interaction 
region where interesting dynamical phenomena occur and treat
infinity as a black box, almost in the same way as complex scaling
allowed us in the Euclidean case. (See Wunsch--Zworski
\cite{WuZw} for a class of 
asymptotically Euclidean manifolds to which the method  of 
complex scaling also applies.) We also mention some 
applications not covered by this survey: 
a quantitative version of Hawking radiation by Drouot \cite{Dr}, 
exponential decay of waves in the Kerr--de Sitter case and
the description of quasi-normal modes for perturbations of Kerr--de Sitter 
black holes by Dyatlov \cite{xpd},\cite{dya},
rigorous definition of quasi-normal modes for Kerr--Anti de Sitter 
black holes\footnote{A related approach to meromorphic continuation, also motivated by the study of 
Anti-de Sitter black holes, was independently developed by Warnick \cite{Wa}.
It is based on physical space techniques for hyperbolic equations
and it also provides meromorphic continuation of resolvents for even 
asymptotically hyperbolic metrics \cite[\S 7.5]{Wa}.}
 by Gannot \cite{ga}. The construction of the Fredholm family 
also plays a role in the study of linear and
non-linear scattering problems -- see the works of
Baskin--Vasy--Wunsch \cite{BaVWu}, Hintz--Vasy \cite{HiV1},\cite{HiV2}
and references given there. In particular, it is important in the
recent proof of nonlinear stability of Kerr--de Sitter black holes
by Hintz and Vasy \cite{HiV3}.

We will follow the presentation of \cite{V4D} and for simplicity will
not consider the semiclassical case. That of course is essential 
for applications and can be found in a textbook presentation of 
\cite[Chapter 5]{res}.

Let $ y' \in \partial M $ denote the variable on $ \partial M $.
Then \eqref{eq:gash} implies that near $ \partial M $, the Laplacian has the form 
\begin{gather}
\label{eq:Deltagg} 
\begin{gathered} - \Delta_g = ( y_1 D_{y_1} )^2 + i ( n  + y_1^2 \gamma ( y_1^2, y') ) y_1
D_{y_1} - y_1^2 \Delta_{h (y_1^2) } , \\
\gamma ( t, y') := - \partial_t \bar h ( t ) / \bar h ( t ) , \ \
\bar h ( t ) := \det h ( t ) , \ \ D := \textstyle{\frac 1 i } \partial .
\end{gathered}
\end{gather}
Here  $ \Delta_{h (y_1^2 ) } $ is the Laplacian for the family of metrics on $ \partial M $ depending smoothly on $ y_1^2 $ and 
$ \gamma \in \CI ( [0,1]\times \partial M ) $. (The logarithmic derivative 
defining $ \gamma $ is independent of of the density on $ \partial M$ needed to 
define the determinant $ \bar h $.)

As established in \cite{mm} (see \cite[\S 6]{V4D} for a direct argument)  
the unique $ L^2 $ solution, $ u $, to 
$ ( - \Delta_g - \lambda^2 - ({\textstyle{\frac n 2 }})^2  ) u =f \in 
\dCI ( M) $, $ \Im \lambda > 0 $,
satisfies
\begin{equation}
\label{eq:u2F} u = y_1^{-i \lambda + \frac n 2 } \bCI ( M )  \ \ \ \text{ 
and $ \ \ \ \   y_1^{ i \lambda - \frac n 2 }u |_{ y_1 < 1 } = 
F ( y_1^2 , y') , \ \ F \in \bCI ( [0,1 ]\times \partial M )  $.} \end{equation}
This suggests two things:

\begin{itemize}

\item in order to reduce the investigation to the study of regularity we 
should conjugate $ - \Delta_g  $ by the weight $ y_1^{-i \lambda + \frac n 2 } $.

\item the desired smoothness properties should be stronger in the 
sense that the functions should be smooth in $ (y_1^2 , y' ) $.

\end{itemize}

Motivated by this we calculate, 
\begin{equation}
\label{eq:firstconj} y_1^{ i \lambda - \frac n 2 }   ( - \Delta_g - 
\lambda^2 - ({\textstyle{\frac n 2}})^2  ) y_1^{-i \lambda + \frac n 2 } =
x_1 P ( \lambda ) , \ \   x_1 = y_1^2 , \ \ x' = y',   , \end{equation}
where, near $ \partial M $, 
\begin{equation}
\label{eq:Plag} P ( \lambda ) = 4 ( x_1 D_{x_1}^2 - ( \lambda + i ) D_{x_1} ) 
- \Delta_h + i \gamma ( x ) \left( 2 x_1 D_{x_1} - \lambda - i 
{\textstyle\frac{ n-1} 2 } \right) . \end{equation}
We note that \eqref{eq:firstconj} makes sense globally since away from the 
boundary $ y_1 $ is a smooth non-zero function on $ M $. 

To define the operator $ P ( \lambda ) $ geometrically we introduce a new
manifold using coordinates \eqref{eq:coords} and $ x_1 = y_1^2 $ for $ y_1 > 0 $:
\begin{equation}
\label{eq:coordX}  X = [ -1 , 1 ]_{x_1}  \times \partial M \sqcup \left( M \setminus y^{-1} 
(( 0, 1 ) ) \right). \end{equation}
We note that $ X_1 := X \cap \{ x_1 > 0 \} $ is diffeomorphic to $ M $ but
$ \overline X_1 $ and $ \overline M $ have different 
$ \CI $-structures.
We can extend $ x_1 \to h ( x_1 ) $ to a family of smooth non-degenerate
metrics on $ \partial M $ on $ [-1,1]$. Using \eqref{eq:Deltagg} 
that provides a natural extension 
of the function $ \gamma $ appearing  \eqref{eq:firstconj}.
If in local coordinates we define
\begin{equation}
\label{eq:dmug}  d \mu_g =  \bar h ( x) dx . \end{equation}
then with respect to $ L^2 = L^2 ( X, d\mu_g ) $ we have 
\begin{equation}
\label{eq:formaladj}
P ( \lambda )^* = P ( \bar \lambda ) .
\end{equation}

To define spaces on which $ P ( \lambda ) $ is a Fredholm operator
we recall the notation for Sobolev spaces on manifolds with boundary. 
We denote by $ \bar H^s ( X^\circ ) $ the space
of restrictions of elements of $ H^s $ on an extension of $ X$ across the 
boundary to the interior of $ X $.

\begin{defi}
\label{d:hypXY}
We define the following Hilbert spaces 
\begin{equation}
\label{eq:hypXY}
\begin{gathered}  \mathscr Y_s := \bar H^s ( X^\circ ) ,  \ \
\mathscr X_s := \{ u \in \mathscr Y_{s+1} : P ( \lambda) u \in \mathscr Y_s \},
\end{gathered}
\end{equation}
and the operator
$ P ( \lambda ) : \mathscr X_s \to \mathscr Y_s $,
given by \eqref{eq:firstconj} and \eqref{eq:Plag}.
\end{defi}

Since the dependence on $ \lambda $ in $ P ( \lambda ) $ occurs only in 
lower order terms we can replace $ P ( \lambda ) $ by $ P ( 0 ) $ 
in \eqref{eq:hypXY}. Hence the definition of $ \mathscr X_s $ is
independent of $ \lambda $.

\noindent
{\bf Motivation:} Since for $ x_1 < 0 $ the operator $ P ( \lambda ) $
is hyperbolic with respect to the surfaces $ x_1 = a < 0 $ we can 
motivate Definition \ref{d:hypXY} as follows.  Consider
$ P = D_{x_1}^2 - D_{x_2}^2 $ on $ [ -1, 0 ] \times \SP^1 $ and define
\begin{gather*}  Y_s := \{ u \in \bar H^s ( [-1, \infty ) \times \SP^1 ) : \supp u \subset 
[ -1, 0 ] \times \SP^1 \}, \\ X_s := \{ u \in Y_{s+1} : P u \in Y_s \}. 
\end{gather*}
Then standard hyperbolic estimates -- see for instance 
\cite[Theorem 23.2.4]{H3} -- show that for any $ s \in \RR $, the operator
$ P : X_s \to Y_s $ is invertible. Roughly, the support condition 
gives $ 0 $ initial values at $ x_1 = 0 $ and hence $ P u = f $ can 
be uniquely solved for $ x_1 < 0 $. 


We can now state the result about mapping properties of $ P ( \lambda ) $:
\begin{thm}
\label{t:6}
Let $ \mathscr X_s , \mathscr Y_s $ be defined in \eqref{eq:hypXY}.
Then for $ \Im \lambda > - s - \frac12 $ the operator
$   P ( \lambda ) : \mathscr X_s \to \mathscr Y_s $,
has the Fredholm property, that is 
$\dim \{ u \in \mathscr X_s : P ( \lambda ) u = 0 \} < \infty$ ,
$ \dim \mathscr Y_s / P ( \lambda ) \mathscr X_s < \infty $, 
and $ P ( \lambda ) \mathscr X_s $ is closed.

Moreover for $ \Im \lambda > 0 $, $ \lambda^2 + (\frac n 2 )^2 \notin 
\Spec ( - \Delta_g ) $ and $ s > -\Im \lambda - \frac12 $, 
$ P ( \lambda ) : \mathscr X_s \to \mathscr Y_s $
is invertible. 
Hence,  for $ s \in \RR $ and $ \Im \lambda > - s - \frac12 $,
$ \lambda \mapsto P ( \lambda )^{-1} : \mathscr Y_s \to \mathscr X_s , $
is a meromorphic family of operators with poles of finite rank.
\end{thm}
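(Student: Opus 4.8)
The plan is to prove the Fredholm property by constructing a parametrix microlocally, piece by piece according to the structure of the characteristic set of $P(\lambda)$, and then deduce meromorphy from analytic Fredholm theory. First I would examine $P(\lambda)$ region by region. In the interior $X^\circ \cap \{x_1 > \delta\}$ the operator is (up to the weight, which is elliptic there) just $-\Delta_g - \lambda^2 - (n/2)^2$, so standard elliptic estimates apply. Near $\partial M$, where $x_1 < 0$, formula \eqref{eq:Plag} shows $P(\lambda)$ is a wave-type operator with $x_1$ playing the role of time; here the ``support/hyperbolic'' estimate indicated in the Motivation paragraph (cf.\ \cite[Theorem 23.2.4]{H3}) gives control, the point being that propagation moves data toward $x_1 = -1$, away from the region of interest, so one picks up estimates without a boundary condition at $x_1 = -1$ (the space $\mathscr Y_s$ builds in the correct one-sided support/extension behavior). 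The genuinely delicate region is the transition zone $x_1 = 0$, i.e.\ $\partial M$ itself, where $P(\lambda)$ degenerates: the coefficient of $D_{x_1}^2$ vanishes to first order, so $P(\lambda) = 4(x_1 D_{x_1}^2 - (\lambda+i)D_{x_1}) - \Delta_h + \cdots$, and the characteristic set contains a ``radial set'' over $x_1 = 0$ lying in the conormal bundle $N^*\{x_1=0\}$.

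The main obstacle, then, is the estimate at this radial set, and this is the heart of Vasy's method. At the radial set the Hamilton flow of the principal symbol has a source/sink structure in the fiber direction, and the behavior of the subprincipal symbol — which is where $\lambda$ enters, through the term $-(\lambda+i)D_{x_1}$ in \eqref{eq:Plag} — determines a threshold regularity: one can propagate estimates \emph{into} the radial set provided the Sobolev order $s$ is above a critical value, and \emph{out of} it provided $s$ is below another. The computation yields precisely the condition $\Im\lambda > -s - \tfrac12$ in the statement. I would invoke the standard radial-point propagation estimates (Melrose's original versions, or the form in \cite[\S E.4]{res} or the references in \cite{vasy1},\cite{V4D}): these give, for $s > -\Im\lambda - \tfrac12$,
\[
  \|u\|_{\mathscr X_s} \le C\bigl(\|P(\lambda)u\|_{\mathscr Y_s} + \|u\|_{\mathscr X_{-N}}\bigr)
\]
for any $N$, with the analogous estimate for the formal adjoint $P(\bar\lambda) = P(\lambda)^*$ (using \eqref{eq:formaladj}) acting on the dual spaces. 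A compactness/functional-analysis argument then upgrades these to: $P(\lambda)$ has finite-dimensional kernel, closed range, and finite-dimensional cokernel; this is the Fredholm statement. Assembling the interior elliptic estimate, the hyperbolic estimate for $x_1<0$, propagation of singularities along the non-radial part of the characteristic set, and the radial-point estimate into a single global bound on $X$ is the bookkeeping core of the proof.

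For the invertibility claim when $\Im\lambda > 0$ and $\lambda^2 + (n/2)^2 \notin \Spec(-\Delta_g)$: here I would connect $P(\lambda)^{-1}$ back to the genuine resolvent $R(\lambda)$ of \eqref{eq:Res4Deltag}. The conjugation identity \eqref{eq:firstconj}, $y_1^{i\lambda - n/2}(-\Delta_g - \lambda^2 - (n/2)^2)y_1^{-i\lambda+n/2} = x_1 P(\lambda)$, together with the mapping property \eqref{eq:u2F} showing that the $L^2$-solution has exactly the conormal regularity encoded in $\mathscr X_s$, shows that for such $\lambda$ the map $P(\lambda): \mathscr X_s \to \mathscr Y_s$ is injective; since it is Fredholm, one checks the index is zero (e.g.\ by deforming $\lambda$ within the upper half-plane, where the index is constant and can be computed at one point), hence it is invertible. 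Finally, for meromorphy: $\lambda \mapsto P(\lambda)$ is a holomorphic family of Fredholm operators (the $\lambda$-dependence is polynomial and sits in lower-order terms, so it does not affect the function spaces, as noted after Definition \ref{d:hypXY}), it is invertible at some point by the previous step, so analytic Fredholm theory — the operator-valued version, \cite[Theorem C.5]{res} or its analogue — gives that $\lambda \mapsto P(\lambda)^{-1}$ is meromorphic with finite-rank polar parts on any connected open set on which the Fredholm property holds, in particular on each half-plane $\{\Im\lambda > -s-\tfrac12\}$, and letting $s\to\infty$ covers all of $\CC$.
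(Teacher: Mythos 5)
Your proposal is correct and follows essentially the same route as the paper, namely Vasy's method as presented in \cite{V4D}: ellipticity in $\{x_1>\delta\}$, hyperbolic estimates in $\{x_1<-\delta\}$ with the one-sided support built into $\mathscr Y_s$, and Melrose-type radial point propagation estimates at the conormal to $\{x_1=0\}$ (where the threshold $\Im\lambda > -s-\tfrac12$ comes out of the subprincipal term $-(\lambda+i)D_{x_1}$), assembled with the dual estimate for $P(\bar\lambda)=P(\lambda)^*$ into the Fredholm property, after which identification with the conjugated resolvent on $\{x_1>0\}$ and analytic Fredholm theory give the rest. The only place I would tighten is the index-zero step: rather than asserting it can ``be computed at one point,'' establish surjectivity directly for $\Im\lambda\gg 1$ by solving the elliptic equation on $\{x_1>0\}$ via $R(\lambda)$ and then extending to $\{x_1<0\}$ by the backward hyperbolic problem, which anchors the index and is what \cite{V4D} does.
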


In view of \eqref{eq:firstconj} this immediately recovers 
the results of Mazzeo--Melrose \cite{mm} and Guillarmou \cite{g}
in the case of even metrics
(Guillarmou showed that for generic non-even metrics global meromorphic
continuation does not hold; he also showed that
the method of \cite{mm} does provide a meromorphic continuation 
to $ \CC \setminus -i \NN^* $ for all asymptotically hyperbolic 
metrics and to $ \CC $ for the even ones):

\begin{thm}
\label{t:7}
Suppose that $ ( M , g)  $ is an even asymptotically hyperbolic manifold
and that $ R ( \lambda ) $ is defined by \eqref{eq:Res4Deltag}.
Then 
$    R (\lambda ) : \dCI ( M ) \to \CI ( M ) $, 
continues meromorphically from $ \Im \lambda > \frac n 2 $
to $ \CC$ with poles of finite rank. 
\end{thm}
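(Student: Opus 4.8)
The plan is to deduce Theorem~\ref{t:7} directly from Theorem~\ref{t:6} together with the conjugation identity \eqref{eq:firstconj}. The point is that \eqref{eq:firstconj} expresses, near $\partial M$,
\[
  y_1^{\,i\lambda-\frac n2}\bigl(-\Delta_g-\lambda^2-({\textstyle\frac n2})^2\bigr)y_1^{\,-i\lambda+\frac n2}
  = x_1 P(\lambda),\qquad x_1=y_1^2,
\]
so that on $M$ (equivalently on $X_1 = X\cap\{x_1>0\}$) the operator $-\Delta_g-\lambda^2-({\frac n2})^2$ is conjugate, up to multiplication by the weight $y_1^{\,\pm(i\lambda-\frac n2)}$ and by the nonvanishing factor $x_1$ on $\{x_1>0\}$, to the restriction of $P(\lambda)$ to $X_1$. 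First I would record that this identity, stated near $\partial M$, in fact holds globally: away from $\partial M$ the function $y_1$ is smooth and strictly positive on $M$, hence the conjugation and the factor $x_1$ are smooth and invertible there, and the operator $P(\lambda)$ (defined by \eqref{eq:Plag} near the boundary and by the conjugated Laplacian in the interior) extends to the manifold $X$ of \eqref{eq:coordX} with the stated properties.

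Next I would extract the meromorphic continuation. Fix $s\in\RR$; by Theorem~\ref{t:6}, $\lambda\mapsto P(\lambda)^{-1}:\mathscr Y_s\to\mathscr X_s$ is meromorphic on $\{\Im\lambda>-s-\frac12\}$ with poles of finite rank, and it agrees with the genuine resolvent (via \eqref{eq:firstconj}) in the region $\Im\lambda>\frac n2$. Now given $f\in\dCI(M)$ and $\lambda$ in the continuation region, set, near $\partial M$,
\[
  R(\lambda)f := y_1^{\,-i\lambda+\frac n2}\,P(\lambda)^{-1}\bigl(y_1^{\,i\lambda-\frac n2}x_1^{-1}f\bigr),
\]
and by the conjugation identity this is the same operator as the interior conjugate away from the boundary, so it is globally well defined. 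One checks that $y_1^{\,i\lambda-\frac n2}x_1^{-1}f\in\dCI(X_1^\circ)\subset\mathscr Y_s$ for every $s$ (here $f$ vanishes to infinite order at $\partial M$, which kills the singular weight and the factor $x_1^{-1}$), that $P(\lambda)^{-1}$ maps into $\mathscr X_s$, and that elliptic regularity in the interior plus the structure of $\mathscr X_s$ near the boundary gives $R(\lambda)f\in\CI(M)$. Multiplying by a smooth weight and the smooth invertible factor preserves the finite-rank pole structure. Since $s$ is arbitrary and the spaces are nested, the poles obtained for different $s$ are consistent, so one gets a meromorphic family $R(\lambda):\dCI(M)\to\CI(M)$ on all of $\CC$, with poles of finite rank, agreeing with \eqref{eq:Res4Deltag} for $\Im\lambda>\frac n2$. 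That is exactly the claim.

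The main obstacle, to my mind, is not any single estimate but the careful bookkeeping at the boundary: one must verify that the weight $y_1^{\,-i\lambda+\frac n2}$ together with the factor $x_1^{-1}=y_1^{-2}$ interacts correctly with the function spaces $\mathscr X_s,\mathscr Y_s$ — in particular that applying $P(\lambda)^{-1}$ and then undoing the conjugation lands one in $\CI(M)$ and not merely in some weighted space, and that the identification of the interior expression with the boundary expression is genuinely an equality of operators on $M$ and not just a formal match. This amounts to unwinding \eqref{eq:u2F}: the expected form $u=y_1^{\,-i\lambda+\frac n2}\bCI(M)$ with smoothness in $(y_1^2,y')$ is precisely what the spaces $\mathscr X_s$ encode, so the regularity theory built into Theorem~\ref{t:6} does the work, but one should state this translation explicitly. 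The rest — that multiplying a finite-rank meromorphic family by smooth invertible factors keeps it finite-rank meromorphic, and that the continuation is independent of $s$ — is routine.
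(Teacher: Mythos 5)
Your proposal is correct and takes exactly the same route as the paper, which simply remarks that Theorem~\ref{t:7} ``immediately'' follows from Theorem~\ref{t:6} via the conjugation identity \eqref{eq:firstconj} and defers the details to the references \cite{V4D} and \cite[\S 5.6]{res}. Your write-up spells out the implicit bookkeeping (the weight $y_1^{-i\lambda+\frac n2}$, the factor $x_1^{-1}$, the consistency across values of $s$, and the regularity at the boundary) that the paper leaves to those references, and your formula for $R(\lambda)$ in terms of $P(\lambda)^{-1}$ is the intended one.
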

For self-contained proofs of these theorems we refer to \cite{V4D},
and for the semiclassical version in the same spirit to \cite[\S 5.6]{res}. 
The general idea is to estimate $ u $ in terms of  $ P ( \lambda ) u $
with lower order corrections:
\[  \| u \|_{\bar H^{s+1} (X^\circ ) } \leq C \| P ( \lambda ) u \|_{\bar H^s ( 
X^\circ ) } + C \| \chi u \|_{ H^{-N} ( X ) } , \]
where $ \chi \in \CIc ( X^\circ )$, $ \chi = 0 $ in $ x_1 < - 2 \delta $, 
$ \chi = 1 $ in $ x_1 > - \delta $, $ \delta > 0 $. That is done by using
ellipticity of $ P ( \lambda ) $ in $ x_1 > \delta $, hyperbolicity of
$ P ( \lambda ) $ in $ x_1 < - \delta $ and propagation of singularities
 in the transition region $ x_1 = 0 $ -- see \cite[\S 4,5]{V4D}. 

The key propagation estimate used by Vasy \cite{vasy1} comes from the
work of Melrose on propagation estimates at {\em radial points}
occurring in scattering on asymptotically Euclidean spaces \cite{mel}. These
estimates also play a role in  applications of microlocal 
methods to dynamical systems -- see \S \ref{dsPR}. 
Here we will only state one basic consequence -- see \cite[\S 4, Remark 3]{V4D}:
\begin{equation}
\label{eq:regPla}
P ( \lambda ) u \in \bCI ( X ) , \ \ u \in \bar H^{s+1} ( X ) , \ \ 
s > - \Im \lambda - {\textstyle{\frac12}} \ 
\Longrightarrow \ u \in \bCI ( X) . 
\end{equation}
This means that above a threshold of regularity given by 
$ - \Im \lambda + \frac12 $ the operator is hypoelliptic and that
\begin{equation}
\label{eq:charker}  \ker_{ \mathscr X_s}  P ( \lambda ) \neq \{ 0 \} , \ \
s > - \Im \lambda - {\textstyle{\frac12}} \ 
\Longleftrightarrow \ \exists \, u \in \bCI ( X) ,  \ P ( \lambda ) u = 0 .
\end{equation}
This implies that resonant states, in the original 
coordinates, are characterized by 
\begin{equation}
\label{eq:reshyp}
( - \Delta_g - \lambda^2 - (\textstyle{\frac n 2})^2 ) w = 0 , \ \ 
y_1^{ i \lambda - \frac n 2 } w \in \bCI ( M ) ,
\end{equation}
which should be compared to \eqref{eq:outg}.

Some understanding of \eqref{eq:regPla} can be obtained as follows.
Suppose we consider a simplified operator $ P_0 ( \lambda ) = 
x D_{x}^2 - ( \lambda + i ) D_{x}   $, 
Then solutions of $ P_0 ( \lambda ) u = 0 $  are given by 
\[  u ( x ) = a_+ x_+^{ i \lambda } + a_- x_-^{i\lambda } + b , \ \ 
\lambda \notin - i \NN , \ \ x \in \RR . \]
Here we use the notation of \cite[\S 3.2]{H1}. From \cite[Example 7.1.17]{H1}
we then see that $ x_\pm^{i \lambda } \in H_{\loc}^{  - \Im \lambda + \frac12 - }\setminus
H_{\loc} ^{ - \Im \lambda + \frac12 }$. Hence, 
\[  u \in \bar H^{s+1} ( (-1,1) ) , \ \  s > - \Im \lambda  - {\textstyle{\frac12}}
\ \Longrightarrow \ 
 a_\pm =  0 \ \Longrightarrow \ u \in \bCI ( ( -1,1) ).  \]
This also shows that \eqref{eq:regPla}, and consequently Theorem \ref{t:6},
are essentially optimal.

The actual proof of \eqref{eq:regPla} is based on the analysis of 
the Hamilton flow of the principal symbol of $ P ( \lambda ) $, 
$ x_1 \xi_1^2 + |\xi'|^2_{h(x)} \in \CI ( T^* X ) $, and of 
positive commutator estimates depending on lower order terms -- that is where
the dependence on $ \lambda $ comes from. For that we refer to 
 \cite[\S 4]{V4D},\cite[\S E.5.2]{res}.

One weakness of the method lies in the fact that it provides
effective meromorphic continuation only in strips, even though
the resolvent is meromorphic in $ \CC $.
Hence, results which involve larger regions (such as
asymptotics of resonances for convex obstacles 
\cite{SZ5},\cite{Long} where resonances lie in cubic regions,
$ \Im \lambda \sim - | \Re \lambda |^{\frac13} $) are still inaccessible
in the setting of asymptotically hyperbolic manifolds (or even 
$ \HH^n \setminus \mathcal O $).
Analyticity near infinity should play a role when larger regions
are considered and towards that aim we formulate a conjecture which 
could perhaps interest specialists in analytic hypoellipticity. It is
an analytic analogue of \eqref{eq:regPla} and it  also has a
microlocal version:
\begin{conj}
\label{c:hypo}\footnote{After this survey first appeared Conjecture \ref{c:hypo}  was proved by Zuily \cite{Zu16} who used results of Bolley--Camus \cite{BoCa1} and Bolley--Camus--Hanouzet \cite{BoCa2}. These methods also showed analyticity of radiation patterns of 
resonant states, $ F $ in \eqref{eq:u2F}, also when the metric is not even. The award of the prize (see footnote \ref{foot}) took place at 
\url{http://www.latabledulancaster.fr} on November 30, 2016.}
Suppose that $ P ( \lambda ) $ is given by \eqref{eq:Plag} and that
near $  x_1 = 0 $ the coefficients of $ P ( \lambda ) $ are real 
analytic. Let $ U $ be a sufficiently small neighbourhood of $ x_1 = 0 $.
Then,
\[  P ( \lambda ) u \in C^\omega ( U ) , \ \ 
u \in H^{s+1} ( U ) , \ s > - \Im \lambda - {\textstyle{\frac12}} 
\ \Longrightarrow \ u \in C^\omega ( U ) , \]
where $ C^\omega $ denotes the space of analytic functions.
\end{conj}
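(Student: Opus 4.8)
\medskip
\noindent
{\bf Towards a proof.}
The plan is to reduce the analyticity question to a single microlocal estimate at the degeneration $ \{ x_1 = 0 \} $. First I would observe that by \eqref{eq:regPla} the hypothesis $ u \in H^{s+1}(U) $ with $ s > - \Im \lambda - \frac12 $ already forces $ u \in \bCI ( U ) $, so the issue is purely to upgrade $ C^\infty $ smoothness to analyticity, and one may localize freely. Away from $ \{ x_1 = 0 \} $ the two standard regimes take care of themselves: for $ x_1 > 0 $ the operator $ P(\lambda) $ of \eqref{eq:Plag} is elliptic with real analytic coefficients, so classical analytic elliptic regularity (Morrey--Nirenberg) gives $ u \in C^\omega $; for $ x_1 < 0 $ it is of real principal type with real analytic principal symbol $ x_1 \xi_1^2 + |\xi'|^2_{h(x)} $, so by the H\"ormander--Sj\"ostrand theory of propagation of the analytic wave front set, $ \WF_{\rm a}(u) $ lies in the characteristic set and is invariant under the Hamilton flow. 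The geometric point is that on $ \{ x_1 \xi_1^2 + |\xi'|^2_h = 0 ,\ \xi' \neq 0 \} $ one has $ x_1 \to 0 \Rightarrow |\xi_1| \to \infty $, so every null bicharacteristic over $ \{ x_1 < 0 \} $ limits onto the radial set lying over $ \{ x_1 = 0 \} $ -- exactly the radial point structure exploited by Vasy \cite{vasy1} following Melrose \cite{mel}. Hence analytic regularity at that radial set propagates into $ \{ x_1 < 0 \} \cap U $, and everything is reduced to an analytic regularity statement at the radial points.

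The heart of the matter is therefore an \emph{analytic radial point estimate}, the analogue in the analytic category of the Melrose estimate used in \cite{vasy1}, with the threshold $ - \Im \lambda - \frac12 $ playing the same role. Freezing the tangential variables $ x' $ microlocally, I would study $ P(\lambda) $ near $ \{ x_1 = 0 \} $ as a degenerate elliptic (Fuchsian) boundary value problem on $ \{ x_1 \geq 0 \} $, whose one dimensional model is the operator $ x_1 D_{x_1}^2 - (\lambda+i) D_{x_1} $ from the excerpt, with indicial solutions $ 1 $ and $ x_1^{i\lambda} $. For operators of this type there is a theory of analytic and Gevrey regularity up to a characteristic boundary, developed by Bolley--Camus \cite{BoCa1} and Bolley--Camus--Hanouzet \cite{BoCa2}, which I would adapt. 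This is where the regularity threshold enters: the ``bad'' indicial branch $ x_1^{i\lambda} $ lies in $ H^{-\Im\lambda+\frac12-}\setminus H^{-\Im\lambda+\frac12} $, hence is excluded precisely when $ s+1 > -\Im\lambda+\frac12 $, i.e. under the hypothesis, and its exclusion is what allows the analytic estimate to close.

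Finally one assembles a global analytic a priori bound by a nested ball induction on the order of derivatives, in the style of Morrey--Nirenberg: one proves $ \| \partial^\alpha u \|_{L^2(U')} \leq C^{|\alpha|+1} |\alpha|! $ on a slightly shrunk $ U' $, using elliptic estimates where $ x_1 > \delta $, hyperbolic energy estimates where $ x_1 < -\delta $, and the analytic radial point estimate in the transition layer $ |x_1| < 2\delta $ to absorb the commutators produced by differentiating. I expect this last point to be the genuine obstacle: in the transition layer the commutator $ [P(\lambda),\partial_{x_1}^k] $ contains terms carrying one fewer power of $ x_1 $ than $ P(\lambda) $ itself, and recovering the lost derivative uniformly in $ k $, so that the constants stay geometric, seems to require the precise algebraic structure of the Fuchsian indicial equation -- and the exclusion of the branch $ x_1^{i\lambda} $ -- rather than a soft microlocal positivity argument. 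The microlocal version of the conjecture follows from the same scheme localized in $ \xi' $; as the footnote records, this program was carried through by Zuily \cite{Zu16}, who in the process also obtained analyticity of the leading radiation field $ F $ in \eqref{eq:u2F}.
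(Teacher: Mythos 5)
The paper offers no proof of this statement: it is stated as a \emph{conjecture}, with only the remark that $P(\lambda)$ falls into the Baouendi--Goulaouic class of Fuchsian operators and a footnote recording that Zuily \cite{Zu16} later proved it using Bolley--Camus \cite{BoCa1} and Bolley--Camus--Hanouzet \cite{BoCa2}. So there is no in-paper argument to compare against.

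What you have written is a plausible and well-informed \emph{plan}, consistent with that attribution: you correctly use \eqref{eq:regPla} to reduce the problem to upgrading $C^\infty$ to $C^\omega$; you correctly treat $\{x_1>0\}$ by analytic elliptic regularity and $\{x_1<0\}$ by propagation of the analytic wave front set along null bicharacteristics of $x_1\xi_1^2+|\xi'|^2_h$, which do limit onto the radial set over $\{x_1=0\}$; and you correctly identify the reduction to a Fuchsian model where the bad indicial branch $x_1^{i\lambda}\in H^{-\Im\lambda+\frac12-}\setminus H^{-\Im\lambda+\frac12}$ is excluded by the threshold $s+1>-\Im\lambda+\frac12$. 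This is precisely the structure the paper's own remark points at. However, it is not a proof, and you say so yourself: the step that would actually close the Morrey--Nirenberg style nested-ball induction -- a quantitative analytic a priori estimate near the radial set with constants geometric in the order of differentiation, absorbing the commutator terms that lose a power of $x_1$ -- is asserted to be the hard part and then deferred to Zuily. That deferral is honest, but it means the core analytic estimate (the entire content of the Bolley--Camus machinery in this setting) is missing rather than sketched. In short: correct identification of the route, no independent derivation of the decisive estimate.
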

We remark that in the analytic case the operator $ P ( \lambda ) $
belongs to the class of {\em Fuchsian differential operators}
studied by Baouendi--Goulaouic \cite{BaG} and that the conjecture
is true for $ P ( \lambda ) = 4 ( x_1 D_{x_1}^2 - ( \lambda + i ) 
D_{x_1 } ) - \Delta_h $, where $ h $ is a metric on $ \partial M$,
independent of $ x_1 $ and $ \lambda \notin - i \NN^* $ \cite{LebZ}.

\subsection{Resonance free regions}
\label{resfree}

We will  now consider semiclassical operators 
\begin{equation} 
\label{eq:defPofh} P = P ( h ) :=  - h^2 \Delta_g + V , 
\end{equation}
on Riemannian manifolds $ ( M , g ) $ where $ M$ 
is  isometric to $ ( \RR^n, g_0 )  $ outside of a compact set,
with $ g_0 $ the Euclidean metric, and $ V \in \CIc ( \RR^n ; \RR ) $.
More general classes of metrics and potentials can be considered -- see
\cite{SjL}. We could also generalize the class of manifolds 
 -- see \cite{WuZw}. Later in this section we will also discuss the
case of asymptotically hyperbolic manifolds of \S \ref{vasy}.

The method of complex scaling (see \cite{SjL},\cite{WuZw}) gives
(see Figure \ref{f:res1})
\begin{equation}
\label{eq:mercP}
\begin{split}
( P - z )^{-1} : \CIc ( M ) \to \CI ( M ) \ \ & \text{ continues
meromorphically from $ \Im z > 0 $}\\
& \ \ \ \ \text{  to $ \Im z > - \theta \Re z $, $ \Re z > 0 $.}
\end{split}
\end{equation}
We denote the set of poles of $ ( P( h)  - z )^{-1} $,
that is the set of resonances of $ P ( h ) $, by $ \Res ( P ( h ) ) $
(we include $ h $ to stress the dependence on our semiclassical 
parameter).

\begin{figure}
\includegraphics[width=3.5in]{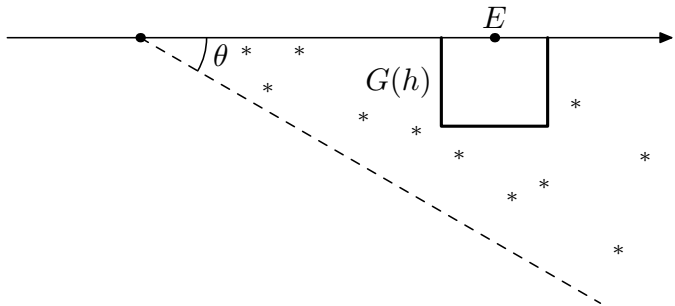}
\caption{\label{f:res1}
The meromorphic continuation \eqref{eq:mercP}: 
The resonances of $ P $ are identified with the eigenvalues the 
scaled operator $ P_\theta $ constructed by a higher dimensional
version of the method presented in \S \ref{rae}. Resonances are 
studied near a fixed energy level in a neighbourhood of size 
depending on $ h $.}
\end{figure}
The asymptotic parameter $ h$ is supposed to be small and we
will consider resonances of $ P ( h ) $ near a fixed energy level.
When $ V \equiv 0 $
then the limit $ h \to 0 $ corresponds to the high energy limit
but even in that case the link with physical intuitions
around classical/quantum correspondence is useful. 

The distribution of resonances is closely related to the properties
of the classical Hamiltonian flow
\begin{equation}
\label{eq:Hamf}
\varphi_t : T^* M \to T^* M , \ \ \varphi_t := \exp ( t H_p ) , \ \
p := |\xi|^2_g + V ( x ) , 
\end{equation}
where $ T^* M $ is the cotangent bundle of $ M $, and $ H_p $ 
is the Hamilton vector field of $ p $. In local coordinates
$ ( x, \xi ) \in T^*M $, $ x \in M $, 
\begin{equation}
\label{eq:Hamp}  H_p = \sum_{ j=1}^n \partial_{\xi_j} p \, \partial_{x_j}
- \partial_{x_j} p \, \partial_{\xi_j} .
\end{equation}

To define the trapped set we first define incoming and outgoing sets
at a given interval of energies $ J \subset \RR  $:
\begin{equation} 
\label{eq:defGama}
\begin{gathered}
\Gamma^\pm_J := \{ ( x, \xi ) \in p^{-1} ( J ) : \pi ( \varphi_ t( x, \xi ) )
\not \to \infty \text{ as $ t \to \mp \infty $} \}, \\
\pi : T^* M \to M , \ \ \pi ( x, \xi ) := x  . 
\end{gathered}
\end{equation}
and the trapped set at energy $ E $:
\begin{equation}
\label{eq:defKE}
K_J = \Gamma_J^+ \cap \Gamma_J^- .
\end{equation}
If $ J =\{ E\} $ then we write $ K_{ \{ E \} } = K_E $.
The simplest is ``non-trapping scattering" in which $ K_E = \emptyset $. 
This implies that 
$ K_J = \emptyset $ for some neighbourhood $ J$ of $ E $
 -- see \cite[\S 6.1]{res}
for other basic properties of $ \Gamma^\pm $ and $K$.

The behaviour of the flow near the trapped set has an effect
on the distribution of scattering resonances and in particular on 
the type of resonance free regions. As in \S \ref{resfree3} we consider
different possibilities:
\begin{equation}
\label{eq:resf1bis}
\Im z > - G ( h  )  , \ \ | \Re z - E | < \delta   , \ \ G ( h ) = 
\left\{ \begin{array}{ll} 
(a) & e^{ - \alpha /h} , \ \ \alpha > 0 \\
(b) & \gamma h  \\
(c) & M h \log {\textstyle{\frac 1 h } }     \\
(d) &  \gamma h^\beta  , \ \ \beta \in \RR , \gamma > 0 \end{array} 
\right.
\end{equation}
In the case of $ P ( h ) = - h^2 \Delta_g $ (or for obstacle problems)
this is equivalent to resonance free regions \eqref{eq:resf1} with 
\begin{equation}
\label{eq:FtoG}  F ( x ) = 2 x G ( 1/x ) .
\end{equation}
Note that $ \lambda^2 = h^2 z $; this 
is consistent with dynamical quantities defined for $ p = |\xi|_g^2 $
in the $ z$-picture and for $ p = |\xi|_g $ in the $ \lambda$ picture.
Nevertheless this is potentially confusing when comparing results involving
\eqref{eq:pressure} and \eqref{eq:Lyap}.

The following table indicates various stable dynamical configurations with 
pointers to the literature (we refer to \eqref{eq:resf1bis} for the types of resonance free regions):
\begin{center}
\begin{tabular} {|| l | l ||}
\hline
\ & \ \\
\ \ \ \ \ \ Hamiltonian flow  & \ \ \ \ \ \ \ \ \ 
 Resonance free region and resolvent bounds \\
\ & \ \\
\hline
General case & (a) \cite{bu},\cite{cv},\cite{vo},\cite{Dat2},\cite{rt},\cite{jsh},\cite[\S 6.4]{res}; 
\\
\ & optimal for shape resonances \cite{He-Sj0},\cite{Marr}, 
and \\
\ &  for ``resonances from quasimodes"  
\cite{TaZ},\cite{StefDuke}\\
\ &  \cite{gann},\cite[\S 7.3]{res}; corresponding cut-off resolvent 
\\
\ & bounds (cf.\eqref{eq:t4}) also optimal \cite{ddz}
\\
\hline
Normally hyperbolic trapping & (b) with $ \gamma $ given by a ``Lyapounov exponent"
\cite{dy4}, \\
\ & \cite{GeSj1}, \cite{NZ3}, \cite{WuZw2},\cite[\S 6.3]{res};    \\
\ & optimal for one closed hyperbolic orbit \cite{GeSj} \\
& \ and for $r$-normally hyperbolic
trapping \cite{dy} 
\\
\hline
Hyperbolic trapped set & (b) with $ \gamma $ determined by a 
{\em topological pressure} of   \\
\ & the trapped set \cite{NZ1}; an improved gap expected; \\
\ &  known for hyperbolic quotients \cite{Naud},\cite{Stoya},\cite{DyZa}
\\
\hline
Smooth non-trapping  & (c) with arbitrarily large $ M$ \cite{M2},\cite{SZ10} \\
\hline
$a$-Gevrey non-trapping  & (d) with $ \beta = 1- 1 /a $ \cite{rou} \\
\hline
Analytic non-trapping & (d) with $ \beta = 0 $ \cite{He-Sj0},\cite{SjDuke};
optimal with density   \\
\ &  $ \sim h^{-n} $  via Sj\"ostrand's trace formula \cite{SjX},
\cite[\S 7.4]{res} \\
\hline
\end{tabular}
\end{center}
(See Definitions \ref{d:hyp},\ref{d:nhyp} for definitions 
hyperbolic and normally hyperbolic trapped sets 
and \eqref{eq:pressure},\eqref{eq:Lyap}
for definitions of ``Lyapounov exponenents" and topogical pressure respectively;
smooth, Gevrey and analytic refers to the regularity of the coefficients
of the operator $ P $.)

The first case we will consider is that of {\em hyperbolic trapped sets}:
\begin{defi}
\label{d:hyp}
Suppose that $ dp |_{ p^{-1} ( E ) } \neq 0 $. 
We say that the flow $ \varphi^t$ 
is {\em hyperbolic on $ K_E $}, if
for any $\rho\in K_{E} $,
the tangent space to $ p^{-1} ( E ) $ at $\rho$ splits into flow, unstable and stable 
subspaces:
\begin{equation} \label{eq:aa}
\begin{split}
i)\ & T_\rho (p^{-1} ( E )) = \RR H_p (\rho ) \oplus E^+ _ \rho \oplus 
E^- _ \rho \,, \quad  \dim E^\pm _ \rho  = n-1 \\
ii)\ &  d \varphi^t_\rho ( E^\pm_\rho ) = E^\pm _{ \varphi^t ( \rho ) }\,, 
\quad \forall t\in\RR\\
iii)\ & \exists \; \lambda > 0\,, \text{ and a smooth metric $ \rho 
\mapsto \|\bullet \|_\rho $ such that }
\\ &  \ \| d\varphi^t_\rho ( v ) \|_{\varphi^t ( \rho ) } 
 \leq  e^{-\lambda |t| } \| v \|_{\rho}  \,,
  \ \  \text{for all $ v \in E^\mp_ \rho  $, $ \pm t \geq 0 $.}\\
\end{split}
\end{equation}
\end{defi}
For more on this in dynamical systems see
\cite[\S 17.4]{kahal}. One important property is the stability of this condition.
We also remark that  the existence of the metric 
$ \| \bullet \|_\rho $ in part (iii) or 
\eqref{eq:aa} follows from the same estimates {\em for some} metric 
with $ C e^{- \lambda |t| } $ on the right hand side.
Examples include $ p = |\xi|_g^2 $ where the curvature is negative near
the trapped set or $ p = |\xi|^2 + V ( x ) $ where $ V $ is a potential
given by several bumps (see \cite[Figure 1]{NZ3}). In obstacle scattering
trapped sets are hyperbolic for several convex obstacles satisfying
Ikawa's non-eclipse condition (see \cite[(1.1)]{NSZ}).

A dynamically defined function which plays a crucial role here is
the {\em topological pressure}. We present  a definition valid when
closed trajectories are dense in $ K_E $ -- that is the case in the examples
mentioned above. Let $ \gamma $ denote a closed trajectory of $ \varphi_t $ 
on $ p^{-1} ( E ) $ and $ T_\gamma $ its length. We then define
\begin{equation}
\label{eq:pressure}
\begin{gathered}
\mathcal P_E ( s ) := \lim_{ T \to \infty } {{\frac 1 T }} \log
\sum_{ T \leq T_\gamma \leq T +1} J^+ (\gamma )^{-s}  , \\ 
J^+ ( \gamma ) := \int_0^{T_\gamma } \varphi^+ ( \varphi_t ( \rho_\gamma) ) dt , \ \ \rho_\gamma \in \gamma , \ \ \varphi^+ ( \rho ) := \frac{d}{dt} \log  
\det ( d \varphi_t ( \rho) |_{ E_{\rho_\gamma } }^+ ) |_{t=0} 
\end{gathered}
\end{equation}
and the expression is independent of 
the choice of $ \rho_\gamma \in \gamma $ and of the density 
defining the determinant.  (Strictly speaking this is the pressure
associated to $ - s \varphi^+ $ but as that is the only one we will
consider we just call it the pressure.)
 
This definition 
is not the one that is in fact useful in the study of resonances but
it is the simplest one to state. It can be interpreted as follows:
because of (iii) in \eqref{eq:aa} we have $  J^+ ( \gamma ) > 1 $
(here we choose the determinant using the density coming from
the metric $ \| \bullet \|_\rho $)
and this expression measures the averaged rate of expansion in ustable directions 
$ E_{\rho_\gamma}^+ $ along the closed orbit $ \gamma $. Hence 
$ J^+ ( \gamma )^{-s} $ gets smaller with increasing $ s $. On the
other hand, the count over closed orbits of length $ T_\gamma \sim T $
measures the complexity of the dynamical system: it is more complex if
there are more closed orbits of lengths in $ [ T , T+1 ]$. It follows that
 for $ s > 0 $,
$ \mathcal P_E ( s ) $ measures the fight between complexity and
dispersion in the unstable directions. If $ \mathcal P_E ( s_0 ) = 0 
$ then for $ s > s_0 $ 
 the dispersion wins. For two 
dimensional systems the {\em Bowen pressure formula} shows that
the Hausdorff dimension of the trapped set is given in terms of $ s_0 $:
\begin{equation}
\label{eq:BowP}
\dim K_E = 2 s_0 + 1 ,
\end{equation}
-- see for instance \cite{BowP} and references given there. 
When $ p = |\xi|_g^2 $ on a {\em convex co-compact surface}, 
$ M = \Gamma \backslash \HH^2 $ -- see
Figure \ref{f:borth1} -- then, in that special constant curvature case,
we have for any 
$ E > 0 $, 
\begin{equation}
\label{eq:defdel}  
\begin{gathered} \dim K_E = 2 \delta + 1 , \ \ 
\mathcal P_E ( s ) = 2 E^{-\frac12} ( \delta - s ) , \ \ \delta := \dim \Lambda ( \Gamma ) , \\
\Lambda ( \Gamma ) := \overline {\Gamma z } \cap \partial \HH^2 . 
\end{gathered}
\end{equation}
The set $ \Lambda ( \Gamma )  $ is called the {\em limit set} of $ \Gamma$
and the definition does not depend on the choice of $ z \in \HH^2 $ -- 
see \cite{borth} and references given there.

The analogue of Ikawa's result for several convex obstacles \cite{ik2}, 
postulated independently in the physics literature by Gaspard--Rice \cite{GaRi},
was established  for operators of the form \eqref{eq:defPofh} by 
Nonnenmacher--Zworski \cite{NZ1},\cite{NZ2}.
Thanks to the results of \S \ref{vasy}  the result also holds
for $ -h^2 \Delta_g $ on even asymptotically hyperbolic manifolds. In that
case it generalizes celebrated results of Patterson and Sullivan 
(see \cite{borth}) formulated using dimension of the limit set \eqref{eq:defdel}.
That gap is shown as the top line on the right graphs in Figure \ref{f:dy1}.

\begin{thm}
\label{t:8}
Suppose that an operator \eqref{eq:defPofh} has a hyperbolic trapped 
set at energy $ E $, in the sense of Definition \ref{d:hyp}. If the pressure
defined in \eqref{eq:pressure} satisfies
\begin{equation}
\label{eq:presscond}
0 > \mathcal P_E ( {\textstyle {\frac12} } ) 
\end{equation}
then there are no resonances near $ E $ with $ \Im z > h \mathcal P_E ( \frac12) $.

More precisely, if $ \gamma_E (\delta) := \min_{|E-E'|\leq \delta}( -  \mathcal P_{E'} ( \frac12) ) $ then 
\begin{equation}
\label{eq:t8} \exists \, \delta > 0 \, \, \forall \, \gamma < \gamma_E ( \delta)  \, \, \exists \, h_0 \ 
\ \Res ( P ( h ) ) \cap  \left( [ E - \delta, E + \delta ] - i 
h [0 , \gamma ] \right) = 
\emptyset ,
\end{equation}
when $ h < h_0 $.
In addition for any $ \chi \in \CIc ( M ) $, we have, for some $ c, C  > 0$,
\begin{equation}
\label{eq:t8bis}   \| \chi ( P - z )^{-1} \chi \|_{ L^2 \to L^2 } \leq 
C h^{-1 + c {\Im z} /h } \log {\textstyle{\frac 1h }}\,,
\end{equation}
for $ z \in [ E - \delta, E + \delta ] - i 
h [ 0 ,  \gamma ]$.
\end{thm}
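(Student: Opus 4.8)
The plan is to reduce the global statement to a localized semiclassical estimate and then run a positive-commutator/escape-function argument adapted to the hyperbolic trapped set. First I would use the complex scaling reduction \eqref{eq:mercP}: the resonances of $P(h)$ near $E$ are the eigenvalues of the scaled operator $P_\theta$, so it suffices to bound $(P_\theta - z)^{-1}$ on $L^2$ for $z$ in the indicated box. Away from the trapped set — i.e.\ in the scaled/absorbing region and microlocally on $\Gamma^+_J \setminus \Gamma^-_J$ and $\Gamma^-_J\setminus\Gamma^+_J$ — one has standard non-trapping resolvent bounds of size $O(h^{-1})$ from propagation of singularities, so the entire difficulty is concentrated microlocally near $K_E$. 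The idea is to sandwich $P(h)$ between cutoffs $\chi$ supported near the projection of $K_E$, and to prove an a priori estimate of the form
\[
\| \chi u \|_{L^2} \le C h^{-1 + c\,(\Im z)/h}\log{\textstyle\frac1h}\, \| \chi (P-z) u \|_{L^2} + O(h^\infty)\|u\|,
\]
which upon inversion gives both the resonance-free region \eqref{eq:t8} and the bound \eqref{eq:t8bis}.

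The key technical device is a refined \emph{dynamical} weight (escape function) constructed from the topological pressure $\mathcal P_E(\tfrac12)$. One uses a microlocal partition adapted to the hyperbolic splitting $E^\pm_\rho$ in Definition \ref{d:hyp}, and propagates along the flow for the Ehrenfest time $T_e \sim \tfrac12 |\log h|/\lambda_{\max}$. The essential point is that the $L^2\to L^2$ norm of the propagator localized to a tube around a closed orbit $\gamma$ decays like $J^+(\gamma)^{-1/2}$ (the $\tfrac12$ power reflecting that $L^2$ mass is amplitude, not energy, and that only half the unstable Jacobian appears — this is the source of the $\mathcal P_E(\tfrac12)$, as in \eqref{eq:pressure}), while the number of such tubes grows sub-exponentially at the rate measured by the pressure. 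Summing over the partition and iterating over $T_e/T_{\min}$ steps yields a gain $e^{T_e \mathcal P_E(1/2)} \sim h^{-\mathcal P_E(1/2)/\lambda_{\max}}$, i.e.\ an effective norm of the cut-off resolvent of size $h^{-1}\cdot h^{-\mathcal P_E(1/2)/\lambda_{\max}}$ up to the $\log(1/h)$ losses coming from the number of iteration steps. When $z$ is pushed down into the lower half-plane by $\Im z = -h\gamma$, propagating for time $t$ produces an extra gain $e^{t\,\Im z/h}$, which is what converts the fixed gap into the $\gamma$-dependent improvement in \eqref{eq:t8} and the $c(\Im z)/h$ exponent in \eqref{eq:t8bis}. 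The parameter $\delta$ enters through the continuity of $E'\mapsto \mathcal P_{E'}(\tfrac12)$ and the stability of hyperbolicity, so that the pressure estimate holds uniformly for $\Re z \in [E-\delta,E+\delta]$.

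The hard part is the \emph{hyperbolic dispersion estimate for the localized propagator}: controlling $\|U_\chi(t)\|_{L^2\to L^2}$ up to the Ehrenfest time with a gain governed by $J^+(\gamma)^{-1/2}$ rather than the naive $J^+(\gamma)^{-1}$ (which would only give the classical-decay-rate bound, not the pressure bound). This requires a careful WKB/FBI-transform analysis of how wavepackets spread along unstable directions under $\varphi_t$, a uniform-over-$h$ control of the number of pieces in the microlocal partition (a combinatorial/covering argument on $K_E$ via a symbolic coding of orbits), and a summation lemma showing that the resulting sum is dominated by $\exp(T\,\mathcal P_E(\tfrac12))$. One must also verify that the weighted spaces on which one conjugates $P_\theta$ are compatible with the complex scaling and that the trapped set is genuinely disjoint from the scaled region (this uses $K_E$ compact and contained in a fixed compact subset of $T^*M$, so complex scaling does not touch it). I would model the argument on the treatment of several convex obstacles and of barrier-type potentials, citing \cite{NZ1},\cite{NZ2} for the core estimate and \cite[\S 6.3, \S 7.3]{res} for the semiclassical packaging, and use the results of \S\ref{vasy} (Theorems \ref{t:6}, \ref{t:7}) to extend the same scheme verbatim to even asymptotically hyperbolic manifolds, where the "black box at infinity" furnished by $P(\lambda)$ plays the role of the complex-scaled operator.
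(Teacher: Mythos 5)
Your proposal correctly identifies the Nonnenmacher--Zworski strategy \cite{NZ1},\cite{NZ2}, which is exactly what the paper cites for this theorem (the survey does not reprove it but refers to \cite[\S 2]{NZ1} and \cite[\S 8.1]{Nonn} for outlines). The central mechanism you describe --- reduction to a deformed/localized operator, non-trapping propagation away from $K_E$, the hyperbolic dispersion estimate $\|U_\chi(t)\|\lesssim J^+(\gamma)^{-1/2}$ for wavepackets along the unstable foliation, a microlocal partition coded symbolically, and the summation $\sum_\gamma J^+(\gamma)^{-1/2}\sim e^{T\mathcal P_E(1/2)}$ over Ehrenfest time --- is precisely the content of the Nonnenmacher--Zworski proof, and the source of the $\tfrac12$ in the pressure is exactly the one you identify.

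One substantive remark on packaging: the paper advocates a simplification over what you propose. Rather than the complex-scaled operator $P_\theta$, it replaces it by a complex absorbing potential $P_W = -h^2\Delta_g + V - iW$ with $W\equiv 1$ near infinity, which is Fredholm for $\Im z > -1$ without any scaling machinery, and then invokes the Datchev--Vasy gluing theorem \cite{dv1} plus \cite[\S 8]{NZ3} to transfer the $(P_W - z)^{-1}$ estimates into \eqref{eq:t8bis}. This avoids the delicate compatibility checks you flag (weighted spaces vs.\ scaled region, disjointness of the trapped set from the scaling contour) since the dynamics is genuinely decoupled from infinity. Your complex-scaling route is viable but harder to execute and, as the paper points out, is precisely the sort of technicality the CAP/gluing reformulation was designed to remove --- especially if one wants the result simultaneously on asymptotically hyperbolic manifolds, where Vasy's operator $P(\lambda)$ would replace complex scaling and the gluing argument decouples infinity from the interaction region in one stroke.

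Finally, a bookkeeping error worth fixing: your intermediate claim that the cut-off resolvent has size $h^{-1}\cdot h^{-\mathcal P_E(1/2)/\lambda_{\max}}$ does not match \eqref{eq:t8bis}. When $\mathcal P_E(\tfrac12)<0$ that expression would be {\em smaller} than $h^{-1}$, but the actual bound at $\Im z=0$ is $Ch^{-1}\log\tfrac1h$ regardless of the magnitude of the pressure. The pressure governs the {\em range} of $\gamma$ for which the truncated-in-time resolvent integral converges, and the $\log\tfrac1h$ factor reflects the Ehrenfest-time truncation; the constant $c$ in $h^{c\Im z/h}$ comes from the dynamical rates, not from a product $h^{-1}\cdot(\text{pressure gain})$. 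The pressure gain is what makes the iteration convergent, not what multiplicatively reduces the resolvent norm.
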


It is not clear if the pressure condition \eqref{eq:presscond} is 
needed to obtain a resonance strip -- see Theorem \ref{t:dylo}
below. What seems to be clear is that $ \mathcal P_E ( \frac12 ) $ 
is a robust classical quantity determining the strip while
the improvements require analysing quantum effects. The upper 
bound \eqref{eq:t8bis} 
is optimal for $ \Im z = 0 $ thanks to a general result of
Bony--Burq--Ramond \cite{BBR},\cite[\S 7.1]{res}.

Applications of the estimate \eqref{eq:t8bis} include local smoothing
estimates with a logarithmic loss by Datchev \cite{Dat} and 
Strichartz estimates with {\em no loss} by Burq--Guillarmou--Hassell \cite{buhai}.
The method of proof was used by Schenck \cite{Schenck} to obtain decay estimates for damped wave equations and by Ingremeau \cite{Ingrem} to describe distorted
plane waves in quantum chaotic scattering. 

For an outline of the proof and a more relevant description of the pressure
function we refer to \cite[\S 2]{NZ1} and to an 
excellent survey by Nonnenmacher \cite[\S 8.1]{Nonn} where a discussion 
of optimality can also be found. Here we make one general comment
relevant also in the case of Theorem \ref{t:9} below. Thanks to the 
``gluing" results of Datchev--Vasy \cite{dv1} one can prove
existence of the pole free region \eqref{eq:t8} and the cut-off resolvent
bound \eqref{eq:t8bis} for a simpler operator modified by a 
{\em complex absorbing potential\,}\footnote{This method is related to 
a method used in chemistry to compute resonances -- 
see \cite{RiMe} and \cite{semi} for the
original approach and \cite{Jag} for some recent developments
and references. A simple mathematical result justifying this computational
method is given in \cite{Vis}: if $ V \in L^\infty_{\comp } ( \RR^n ) $ then
the eigenvalues of $ - \Delta + V - i \epsilon x^2 $ converge to resonances of $ - \Delta + V$ 
uniformly on compact subsets  of
the region $ \arg z > - \pi/4 $.}:
\[  P_W := -h^2 \Delta_g + V ( x) - i W , \ \ W \in \CI ( M ; [ 0 , 1 ] ) , \]
with $ W ( x ) \equiv 1 $ for $ |x| > R $ where $ R $ is chosen large enough.
The operators $ P_W - z : H^2 \to L^2 $ are Fredholm operators for $ \Im z > -1 $
and are invertible for $ \Im z > 0 $. As shown in \cite[\S 8]{NZ3} 
the estimates on $ ( P_W - z )^{-1} $ imply estimates \eqref{eq:t8bis} while
being easier to obtain.

\begin{figure}
\includegraphics[width=15cm]{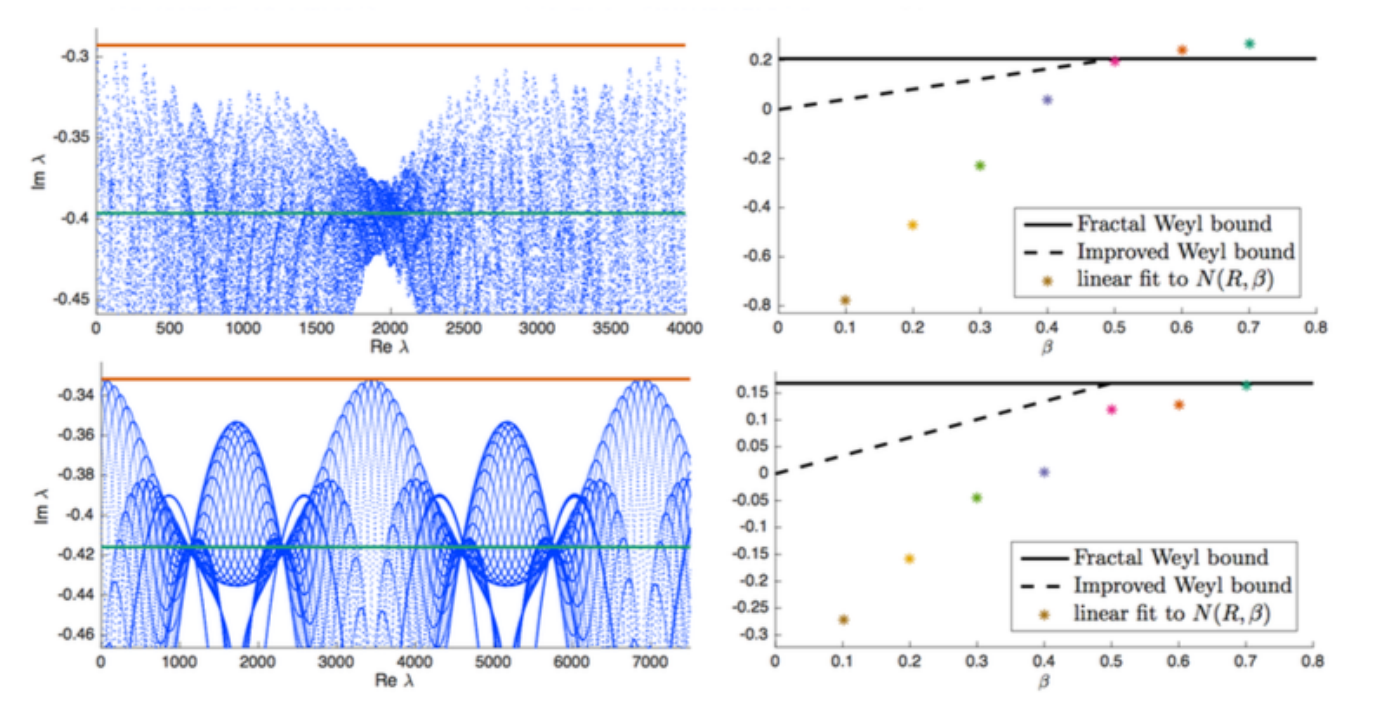}
\caption{Left: resonances for $ X ( 6, 7 , 7 ) $ 
and $ Y ( 7 , 7 , \pi/2 ) $ from 
\cite[Appendix]{dy6} (see Figure \ref{f:borth1} for the definition and
picture of these surfaces; here we show the 
poles of the continuation of $ ( - \Delta_g - \lambda^2 - \frac1 4 )^{-1} $). 
The top line is the pressure gap \eqref{eq:t8} with $ P (\frac12) =  \frac12  - \delta $. The lower line is given by half of the classical decay rate, $ P ( 1 ) /2 = ( 1 - \delta)/2 $ where $ \delta $ is the dimension of the limit sets for the two surfaces -- see
\eqref{eq:defdel}. 
Right: numerical illustration of Theorems \ref{t:fwl} and \ref{t:dy6} with the standard \cite{glz},\cite{fwl} and 
improved \cite{dy6} fractal Weyl laws -- see \S \ref{Weyl} -- for the corresponding surfaces;
here $ N ( R , \beta ) $ is the number of resonances in 
$ | \Re \lambda | \leq R $, $ \Im \lambda > \frac12 + ( \beta - 1 ) \delta $.
The ``chain patterns" visible on the left (which are even more pronounced for more
symmetric systems such as $ X ( \ell, \ell, \ell ) $ -- see \cite{borwe})
have been recently investigated by Weich \cite{W16}.}
\label{f:dy1}
\end{figure}

The trapped sets to which Theorem \ref{t:8} applies are typically 
{\em fractal}. In fact, in dimension 2 the condition $ \mathcal P_E ( \frac12) < 0 $
is equivalent to the trapped set being {\em filamentary} in the sense
that $ \dim K_E < 2 $, that is the trapped set is below the mean of the
maximal dimension $ 3 $ and the minimal dimension $ 1 $ (direction of the flow). 
We will now consider another case in which the trapped set is {\em smooth}
but with hyperbolicity in the transversal direction:

\begin{defi}
\label{d:nhyp}
We say that the Hamiltonian flow is {\em normally hyperbolic} at 
energy $ E $ if for some $ \delta$ and $ J = [ E - \delta, E + \delta ] $,
$ K_J $ is a smooth symplectic submanifold of $ T^*M $, 
and there exists a continuous distribution of
linear subspaces,
$ K_J \ni \rho \longmapsto  E^{\pm}_\rho
\subset T_\rho  ( T^* M ) $,
invariant
under the flow, $ d\varphi_t ( E^\pm_{ \rho } ) = E^\pm_{ \varphi_t ( \rho ) } $,
 satisfying, 
$ \dim E^\pm_\rho = d_{\perp}$  and 
\begin{gather}
\label{eq:NH}
\begin{gathered}
 T_\rho K_J \cap E^\pm_\rho =  E^+_\rho
\cap E^-_\rho = \{ 0 \}, 
\ \ \ 
T_{\rho}(T^{*}X) = T_{\rho}K^{\delta} \oplus  E_{\rho}^{+} \oplus 
E_{\rho}^{-} , \\
\forall v \in E^\pm_\rho,\ \ \forall t>0,\quad \| d\varphi_{\mp t}(\rho) v \|_{\varphi_{\mp t} ( \rho ) }  \leq C
e^{ - \lambda t } \| v \|_\rho \,,
\end{gathered}
\end{gather}
form some smoothly varying norm on $ T_\rho ( T^*M ) $, $ \rho \mapsto  \| \bullet \|_\rho $.
\end{defi}

This dynamical configuration is stable under perturbations under a stronger addition 
condition of {\em $ r$-normal hyperbolicity}. Roughly speaking that means that
the flow on $ K_J $ has weaker expansion and contraction rates than the
flow in the transversal directions -- see \cite{HPS},\cite{dy} for 
precise a definition and \eqref{eq:NHrel} below for an example. Normally hyperbolic trapping occurs in many 
situations: for instance, for null flows for black hole metrics, see 
\cite{physrev},\S \ref{resrel}, and in molecular dynamics, see \cite[Remark 1.1]{NZ3},\cite{SWW06},\cite{GSWW10}.
Another important example comes from contact
Anosov flows lifted to the cotangent bundle, see \cite{Ts},\cite[Theorem 4]{NZ3},
\S \ref{resfreeA}.
The following general result was proved by Nonnenmacher--Zworski \cite{NZ3}:
\begin{thm}
\label{t:9}
Suppose that for $ P $ given by \eqref{eq:defPofh} and that 
at energy $ E $ the trapped set is hyperbolic in the sense of Definition 
\ref{d:nhyp}. Define the minimal expansion rate by 
\begin{equation}
\label{eq:Lyap}
\nu_{\min} :=\liminf_{ t \to \infty } \frac 1 t \inf_{ \rho \in K_E } \log
\det\left(d\varphi_t|_{ E^+_ \rho } \right)\, . 
\end{equation}
Then near $ E $ there are no resonances with $ \Im z > -h \nu_{\min} /2 $.
More precisely 
\begin{equation}
\label{eq:t9}
\exists \, \delta \, \forall \, \epsilon > 0 \, \exists \, h_0 \ \ 
\Res ( P ( h ) ) \cap \left( [ E - \delta, E + \delta ] -
i h {\textstyle {\frac 12 }} [ 0 , \mu_{\min } - \epsilon ]\right) = \emptyset , 
\end{equation}
and the estimate \eqref{eq:t8bis} holds in this resonance free region.
\end{thm}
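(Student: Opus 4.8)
The statement to prove is Theorem~\ref{t:9}: under the normal-hyperbolicity hypothesis of Definition~\ref{d:nhyp} at an energy $E$, the operator $P(h)$ has a resonance free strip of size $h\nu_{\min}/2$ below the real axis near $E$, together with the polynomial-in-$h$ resolvent bound \eqref{eq:t8bis}. My plan is to reduce, exactly as suggested in the text following the theorem, to a \emph{complex absorbing potential} (CAP) model: replace $P(h)$ by $P_W=-h^2\Delta_g+V-iW$ with $W\in\CI(M;[0,1])$, $W\equiv 1$ outside a large ball containing the interaction region, and $W\equiv 0$ near $K_E$. By the Datchev--Vasy gluing results \cite{dv1}, a resolvent bound of the form \eqref{eq:t8bis} for $(P_W-z)^{-1}$ on the relevant box $[E-\delta,E+\delta]-ih[0,\mu_{\min}-\epsilon]$ implies the corresponding bound for $\chi(P(h)-z)^{-1}\chi$; and since the resonances of $P(h)$ in this region are exactly the poles of the meromorphically continued cut-off resolvent (via complex scaling, \eqref{eq:mercP}), the absence of poles of $(P_W-z)^{-1}$ there gives \eqref{eq:t9}. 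So the whole problem becomes a question about the Fredholm operator $P_W-z:H^2\to L^2$, which is invertible for $\Im z>0$ and holomorphic for $\Im z>-1$.

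\textbf{Key steps.} (i) \emph{Microlocalize near the trapped set.} Away from $K_E$ (but still on $p^{-1}(E)$) the flow is non-trapping after a bounded time, so standard propagation of singularities plus ellipticity (away from the characteristic set) and the absorption $-iW$ at infinity give, for any $z$ in the box, an a priori estimate $\|u\|_{H^2}\le C h^{-1}\|(P_W-z)u\|_{L^2}+Ch^{-1}\|\,\mathrm{Op}_h(a)u\|_{L^2}$ where $a$ is a symbol supported in a small neighborhood of $K_E$. (ii) \emph{The model near $K_E$.} On this neighborhood, normal hyperbolicity means that after a symplectic reduction along the symplectic submanifold $K_J$, the transverse dynamics looks like a family of harmonic-oscillator-type hyperbolic fixed points $\xi_1^2-x_1^2$ in the $d_\perp$ transverse directions. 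The crucial input is the \emph{propagation estimate at a normally hyperbolic invariant manifold}: roughly, a positive-commutator / escape-function argument, using an escape function that increases along the transverse unstable directions, shows that for $\Im z>-h\nu_{\min}/2+h\epsilon$ one gains control of $u$ microlocally near $K_E$ from $(P_W-z)u$ with only a loss of $h\log(1/h)$, i.e. $\|\mathrm{Op}_h(a)u\|\le Ch^{-1}\log(1/h)\,\|(P_W-z)u\|$. The sharp constant $\nu_{\min}$ enters precisely because the escape function's growth rate along flow lines is governed by $\det(d\varphi_t|_{E^+_\rho})$, and the threshold $\nu_{\min}/2$ is the analogue of the radial-point threshold $-\Im\lambda-\tfrac12$ appearing in \eqref{eq:regPla}; this is the normally hyperbolic generalization of the one-orbit estimate of G\'erard--Sj\"ostrand \cite{GeSj}. (iii) \emph{Combine.} Feeding the near-$K_E$ estimate into the away-from-$K_E$ estimate absorbs the error term and yields $\|u\|_{H^2}\le Ch^{-1}\log(1/h)\|(P_W-z)u\|$ on the whole box, which gives both invertibility (hence no resonances) and \eqref{eq:t8bis} after tracking the $\Im z$-dependence of the escape function (the factor $h^{c\,\Im z/h}$ comes from the exponential weight $e^{G/h}$ with $G$ of size $\sim\Im z\cdot(\text{escape time})\sim \Im z\log(1/h)$).

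\textbf{Main obstacle.} The heart of the matter, and the genuinely hard step, is step (ii): constructing the escape function near the normally hyperbolic trapped set and proving the sharp propagation estimate with the correct threshold $h\nu_{\min}/2$. Unlike an isolated hyperbolic closed orbit, here $K_E$ is a higher-dimensional invariant manifold, so one cannot diagonalize the transverse linearization globally; the stable/unstable distributions $E^\pm_\rho$ are only continuous, not smooth, and the expansion rate is non-uniform along $K_E$, which is exactly why $\nu_{\min}$ appears as a $\liminf$ of a $t$-averaged infimum rather than a single Lyapunov exponent. Making the positive-commutator argument work requires a careful second-microlocal or wavefront-set-along-$K_E$ calculus and a quantitative control of how the error terms from the non-smoothness of $E^\pm$ interact with the semiclassical $h\log(1/h)$ losses. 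A secondary technical point is justifying the gluing \cite{dv1}: one must check that the CAP model $P_W$ genuinely has the same resonances as $P(h)$ in the box (no spurious poles introduced by $W$, which is where one uses that $W$ is supported away from $K_E$ and that $-iW$ only \emph{adds} absorption) and that the cut-off resolvent estimates transfer with the stated powers of $h$. I would organize the write-up so that steps (i) and (iii) are quick consequences of cited results (\cite[\S6]{res}, \cite{dv1}, \cite[\S8]{NZ3}), and devote the bulk of the argument to the escape-function construction of step (ii), following \cite{NZ3} and its relation to the radial-point estimates of \cite{mel} used in \S\ref{vasy}.
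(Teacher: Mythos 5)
Your proposal is correct and follows essentially the same route the paper indicates: reduce to the complex absorbing potential model $P_W=-h^2\Delta_g+V-iW$ via the Datchev--Vasy gluing results \cite{dv1}, then prove the invertibility and resolvent bound for $(P_W-z)^{-1}$ by microlocalizing near $K_E$ and running an escape-function/positive-commutator argument whose threshold is governed by $\nu_{\min}/2$, as in \cite{NZ3}. The paper does not give a self-contained proof of Theorem~\ref{t:9} (it cites \cite{NZ3} and, in the paragraph after Theorem~\ref{t:8}, explicitly describes the CAP reduction as the mechanism), and your identification of step (ii) — the escape-function construction near a non-smooth, higher-dimensional normally hyperbolic $K_E$, and the origin of the $h^{c\Im z/h}$ factor from an $\mathcal{O}(\log(1/h))$ weight — as the genuine difficulty matches where the real work in \cite{NZ3} lies. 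One small remark: for the restricted but important case $d_\perp=1$ with smooth $\Gamma_J^\pm$, the paper also points to the alternative, more elementary proof of Dyatlov \cite{dy4},\cite[\S6.3]{res} (based on a direct commutator with a product of defining functions of the stable and unstable manifolds rather than a general escape-function machinery); you chose the more general \cite{NZ3} route, which is the one that actually covers the full strength of Definition~\ref{d:nhyp}.
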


\begin{figure}
\hspace{-0.6in}\includegraphics[width=10cm]{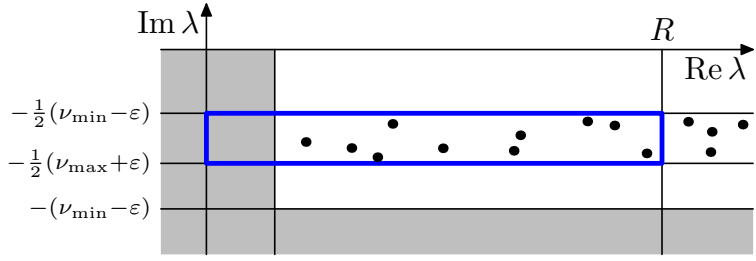}
\caption{A schematic presentation of the results of \cite{dy} for 
an operator $ - \Delta_g $ with an $r$-normally hyperbolic trapped set. Here $ 
R = 1/h  $ and we use the resolvent $ ( - \Delta_g - \lambda^2 )^{-1} $.
The resonance free strip is the same as in Theorem \ref{t:9} and 
the resonances in the strip $ \Im \lambda > - \frac12 (\nu_{\max} + 
\epsilon )  $ satisfy a Weyl law: 
$| \{ \lambda  : \Im \lambda  > - \frac12 ( \nu_{\max} + \epsilon ) , 0<  \Re \lambda < R \}
| \sim R^{n-1} \vol( K_{[0,1]} ) / (2 \pi)^{n-1} $ -- see \cite[Theorem 3]{dy}.} 
\label{f:iv}
\end{figure}

For more general trapped sets but for analytic coefficients of $ P $ and 
without the estimate \eqref{eq:t8bis}, the resonance free region \eqref{eq:t9}
was obtained early on by G\'erard--Sj\"ostrand \cite{GeSj1}. Sometimes, 
for instance in the case of black holes,  $ d^\perp = 1 $ (see \eqref{eq:NH}), $ \Gamma_J^\pm $ (see
\eqref{eq:defGama}) are smooth, and 
$ T_\rho K_J \oplus E_\rho^+ = T_\rho \Gamma^{\pm}_J $. In that 
case a resonance free region $ \Im z > - h /C $
and the bound \eqref{eq:t8bis} was obtained
by Wunsch--Zworski \cite{WuZw2}. An elegant proof with the  
width \eqref{eq:Lyap} and a sharp resolvent estimate was given by 
Dyatlov \cite{dy4}\cite[\S 6.3]{res}. The width given by $ \nu_{\min} $ is essentially optimal
as one can already see from the work of G\'erard--Sj\"ostrand \cite{GeSj} on 
the pseudo-lattice of resonances generated by a system with one 
hyperbolic closed orbit. A general result was provided by Dyatlov \cite{dy}:
if $ K_E $ is $r$-normally hyperbolic for any $ r $ and if
the maximal expansion rate $ \nu_{\max}$ satisfies
$ \nu_{\max} < 2 \nu_{\min} $, then the resonance free region 
is essentially optimal in the sense that in the strip below $ \Im z = 
h \nu_{\min}/2 $ there exist
infinitely many resonances -- see Figure~\ref{f:iv}.

We now discuss improvements over the pressure gap. The first
such improvement for scattering resonances was achieved by Naud \cite{Naud}
who extended {\em Dolgopyat's method} \cite{dole} to the case of 
convex co-compact quotients and showed that there exists $ \gamma > 0 $
such that when $ \delta < \frac12 $ (see \eqref{eq:defdel}) 
then there are no resonances other than $ i ( \delta - \frac12 ) $ 
with $ \Im \lambda > \delta - \frac12 - \gamma $. The Dolgopyat method 
was further developed by Stoyanov \cite{Stoya} for higher dimensional 
quotients (as an application of general results) 
and by Oh--Winter \cite{oh} for uniform gaps for arithmetic quotients. 
Petkov--Stoyanov \cite{PeSt} also adapted Dolgopyat's method
to the case of several convex obstacles. All of these results assume
that $ \mathcal P_E ( \frac12)  \leq 0 $. 

A new method for obtaining improved resonance
free regions, applicable also when $ \mathcal P_E ( \frac12) > 0 $,
was introduced by Dyatlov--Zahl \cite{DyZa}. It is based on a 
{\em fractal uncertainty principle }(FUP) which in \cite{DyZa} was
combined with an investigation of additive structure of limit sets
(see \eqref{eq:defdel}) to obtain an improved gap for quotients
$ \Gamma \backslash \HH^2 $, 
with $ \delta ( \Gamma ) \approx \frac12 $. In particular that
produced the first  resonance free strips when $ \mathcal P_E ( \frac12) > 0 $.

Roughly speaking
the standard uncertainty principle says that wave functions cannot be strongly localized in both position and frequency near a point.
The fractal uncertainty principle (FUP) states that a wave function cannot be strongly localized in both position and frequency near a fractal set.
To describe FUP rigorously we will use
a simpler model studied recently by Dyatlov and Jin \cite{dylo}:
open quantum maps with products of exact Cantor sets as trapped sets.

\begin{figure}
\includegraphics[scale=0.8]{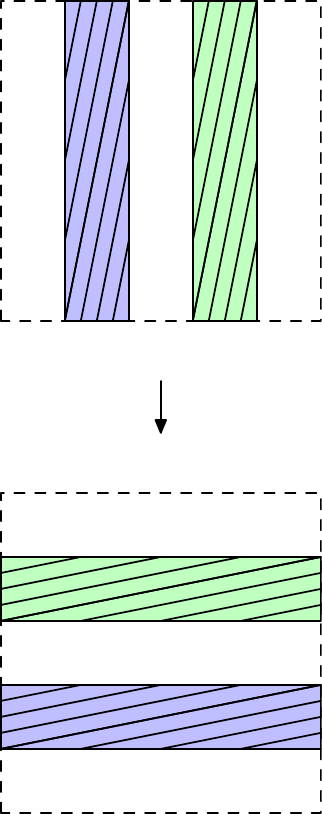}\quad\quad\quad \quad \quad
\includegraphics[scale=0.55]{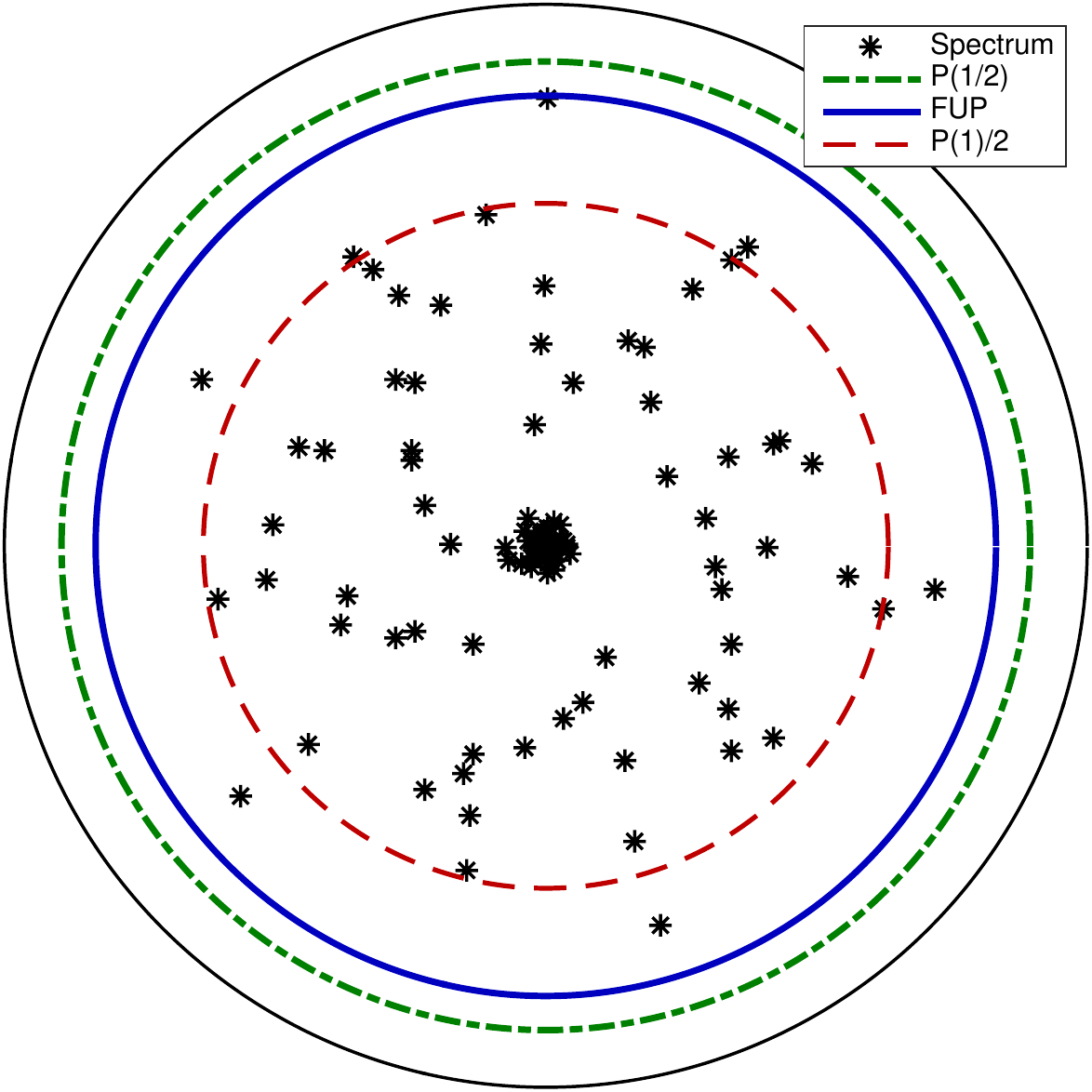}
\caption{Left: a schematic representation of an open
baker map (a piecewise smooth symplectic relation) $ \kappa_{ M, \mathcal A} $:
$ \RR^2/\ZZ^2 \ni ( y , \eta ) \mapsto ( My-a , (\eta + a )/M )
$,  $ ( y , \eta ) \in ( a /M, ( a+1)/M) $, $ a \in \mathcal A
\subset \{ 0 , \ldots , M- 1\}$, here with $ M = 5 $, $ \mathcal A 
= \{ 1,3 \}. $  A quantization, $ B_{5N} $, of this open map (relation) is given
in \eqref{eq:oqm}. 
Right: resonances of  $ B_{5N } $ with $ N = 5^4 $. 
Different theoretical gaps are also shown: for this quantum map
the fractal uncertainty principle (FUP) of \cite{dylo},\cite{DyZa} does produce
an optimal gap.}
\label{f:dylo}
\end{figure}

Open quantum maps have been studied in physics and mathematics.
They are quantizations of symplectic relations on $ \TT^2 = \RR^2 / \ZZ^2 $
where the symplectic relations have features of Poincar\'e maps 
in scattering theory: see 
Nonnenmacher \cite[\S 5]{Nonn} for a general introduction,
\cite[\S 1.4]{dylo} for references to the physics literature, 
Nonnenmacher--Sj\"ostrand--Zworski \cite{NSZ1} 
for a reduction of chaotic scattering problems to quantizations of Poincar\'e
maps and Nonnenmacher--Zworski \cite{NZ0} for quantization of piecewise
smooth relations.

An example of a piecewise smooth symplectic relation is shown on the left in  
Figure~\ref{f:dylo}: the torus is divided into five ``vertical strips", three of them
are thrown out and the remaining two are stretched/contracted and then shifted
(the caption to Figure \ref{f:dylo} has a general formula for $ \kappa_{M, \mathcal A}$.) This relation, $ \kappa_{ 5, \{ 1,3 \}} $, is quantized by the following
family of matrices:
\begin{equation}
\label{eq:oqm}
\begin{gathered}
\hspace{-0.5in} B_{5N} := \mathcal F_{5N} ^*  \left( \begin{array}{lllll}
  0 & \ \ \ \ 0 & 0 & \ \ \ \ 0 & 0  \\ 
 0 & \chi_N \mathcal F_{N}\chi_N   &  0 & \ \ \ \ 0 & 0  \\
  0 & \ \ \ \ 0 & 0 & \ \ \ \ 0 & 0  \\
 0 & \  \ \ \ 0  & 0 & \chi_N \mathcal F_{N} \chi_N & 0 \\
   0 & \ \ \ \ 0 & 0 & \ \ \ \ 0 & 0 
\end{array} \right) 
\end{gathered}
\end{equation}
where $ \mathcal F_L $ is the unitary Fourier transform on $\ell^2_L := \ell^2 
( \ZZ/L \ZZ ) $, 
$ \chi_N := {\rm{diag}} \,( \chi ( j/N ) )$ for $ \chi \in \CIc ( ( 0 , 1 ); [ 0,1 ])$. 

On the classical level a relation $ \kappa $ defines discrete time analogues of \eqref{eq:defGama}
and \eqref{eq:defKE}:
\[  \Gamma^\pm := \bigcap_{ \pm r \geq 0 } \kappa^r ( \TT^2 ) , \ \ 
K = \Gamma^+ \cap \Gamma^- . \]
In the case of $ \kappa_{ M, \mathcal A} $ we have 
\begin{equation}
\label{eq:Cantor}
\begin{gathered}
\Gamma_- = \mathcal C \times \SP^1 , \ \ \Gamma_+ = 
\SP^1 \times \mathcal C , \ \ K = \mathcal C
\times \mathcal C , \ \
\dim K = 2 \delta, \\
\mathcal C := \bigcap_k \bigcup_{ j \in \mathcal C_k } \left[
\frac{ j }{M^k} , \frac{j+1}{M^k }\right], \ \ 
\mathcal C_k := \left\{ \sum_{ j=0}^{k-1} a_j M^j : a_j \in \mathcal A \right\} , \\
\delta := \dim \mathcal C = \frac{ \log | \mathcal A |}{\log M } .
\end{gathered}
\end{equation}

If we think of $ \TT^2 $ as being an analogue of a hypersurface $ \Sigma $ 
transversal to the flow then $ K $ is the analogue $ K_E \cap \Sigma $.
The linearization of $ \kappa_{ M , \mathcal A} $ (at regular points)
has eigenvalues $ M, M^{-1}$ and hence we can think of $ \kappa $ as the flow
at time $ t = \log M $ with the expansion rate $ \lambda = 1 $ (see (iii)
in \eqref{eq:aa}). The pressure of the suspension at time $ \log M$ 
is given by 
\begin{equation}
\label{eq:pressureK}
\mathcal P (s) = \delta - s ,
\end{equation}
see \cite[\S 8]{Nonn} for a discussion of pressure for suspensions.

The operator $ B_N $ is considered as quantization of the propagator
at time $ t = \log M $ with $ h = 2\pi /N $. Hence the 
conceptual correspondence between eigenvalues
of $ B_N $ and resonances of semiclassical operators $ P ( h ) $
(established in some cases in \cite{NSZ1}) is 
\[   \Spec ( B_N ) \ni \lambda = e^{ - i t z / h } \ \longleftrightarrow \  
z \in \Res ( P ( h ) ) . \]
Since $ t = \log M $  the correspondence between 
resonances free regions is given by the following 
\begin{equation}
\label{eq:corres}
( [ 1 , 2 ] - i h [ 0 , \gamma ] ) \cap \Res ( P ( h ) ) 
= \emptyset  \ \longleftrightarrow \ \Spec ( B_N ) \cap \{ |\lambda | \geq 
M^{-\gamma } \} = \emptyset .
\end{equation}
In view of this and \eqref{eq:pressureK} the analogue of \eqref{eq:t8} is
\begin{equation}
\label{eq:pressure1}
\forall \, \epsilon > 0   \ \ \  \Spec ( B_N ) \cap \{ |\lambda | \geq 
M^{\delta-\frac12 + \epsilon } \} = \emptyset , \ \ \ N \geq N_\epsilon ,
\end{equation}
see \cite[\S 8]{Nonn}.

The general principle of \cite{DyZa} and \cite{dylo} for quantization of
$ \kappa_{M, \mathcal A} $ 
goes as follows. 
In the notation of \eqref{eq:oqm} and \eqref{eq:Cantor}  we say that 
\begin{equation}
\label{eq:FUP1}
\text{ FUP holds with exponent $ \gamma $ }  \Longleftrightarrow  
\| \indic_{\mathcal C_k } \mathcal F_N \indic_{\mathcal C_k } \|_{ 
\ell^2_N \to \ell^2_N } \leq  C N^{-\gamma} .
\end{equation}
Then we have (see \cite[Proposition 2.7]{dylo} for this case and 
\cite[Theorem 3]{DyZa} for the case of convex co-compact quotients)
\begin{equation*}
\text{ FUP holds with exponent $ \gamma $ } \ \Longrightarrow \
\forall \epsilon > 0 \, \ \Spec ( B_N ) \cap \{ |\lambda | \geq 
M^{-\gamma + \epsilon } \} = \emptyset ,
\end{equation*}
for $ N > N_\epsilon $,

The case of $ \gamma = \frac12 - \delta $ is easy to establish but a
finer analysis of FUP gives \cite{dylo}:
\begin{thm}
\label{t:dylo}
Suppose that $ B_N $ is a quantization of $ \kappa_{M, \mathcal A } $
and $ \delta $ is given in \eqref{eq:Cantor}.
Then there exists
\begin{equation} 
\label{eq:FUP}
\beta > \max ( 0 , {\textstyle{\frac12}} - \delta ) 
\end{equation}
such that
\begin{equation}
\label{eq:improv}
\forall \, \epsilon > 0 \, \ \ \Spec ( B_N ) \cap \{ |\lambda | \geq 
M^{- \beta + \epsilon } \} = \emptyset , \ \ N > N_\epsilon .
\end{equation}
\end{thm}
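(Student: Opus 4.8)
The plan is to follow the two--step scheme laid out just before the statement: first reduce Theorem~\ref{t:dylo} to the fractal uncertainty principle~\eqref{eq:FUP1} for the Cantor sets $\mathcal C_k$ of~\eqref{eq:Cantor}, and then prove that principle with an exponent strictly larger than the elementary value $\beta_0:=\max(0,\tfrac12-\delta)$. The reduction is exactly the implication displayed after~\eqref{eq:FUP1} (the argument behind \cite[Proposition~2.7]{dylo}): a bound $\|\indic_{\mathcal C_k}\mathcal F_{M^k}\indic_{\mathcal C_k}\|_{\ell^2_{M^k}\to\ell^2_{M^k}}\le C\,M^{-\beta k}$ valid for all $k$ forces $\Spec(B_N)\cap\{|\lambda|\ge M^{-\beta+\epsilon}\}=\emptyset$ for $N>N_\epsilon$, which is~\eqref{eq:improv}. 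So everything is reduced to producing a single $\beta>\beta_0$ for which such a bound holds. The value $\beta_0$ is forced by two elementary facts: the Hilbert--Schmidt norm of $\indic_{\mathcal C_k}\mathcal F_{M^k}\indic_{\mathcal C_k}$ equals $\bigl(|\mathcal C_k|^2/M^k\bigr)^{1/2}=M^{(\delta-\frac12)k}$, while its operator norm is always $\le1$; thus~\eqref{eq:FUP1} holds with $\gamma=\beta_0$, and the entire content of the theorem is the \emph{strict} gain over it.

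The engine for the gain is a submultiplicative structure coming from the exact self--similarity in~\eqref{eq:Cantor}. Regarding elements of $\mathcal C_k$ as length--$k$ strings of digits from $\mathcal A$, one has $\mathcal C_{k_1+k_2}=\bigsqcup_{w\in\mathcal C_{k_1}}\bigl(w+M^{k_1}\mathcal C_{k_2}\bigr)$. A careful expansion of $\mathcal F_{M^{k_1+k_2}}$ over these blocks --- separating the several types of cross--phase $e^{-2\pi i (\cdot)/M^{k}}$ that appear and absorbing them into diagonal unitaries --- exhibits $\indic_{\mathcal C_{k_1+k_2}}\mathcal F_{M^{k_1+k_2}}\indic_{\mathcal C_{k_1+k_2}}$ as built from phase--twisted copies of the scale--$k_1$ and scale--$k_2$ operators. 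Introducing the phase--robust quantity $\varrho_k:=\sup_{\theta}\|\indic_{\mathcal C_k}\mathcal F_{M^k}e_\theta\indic_{\mathcal C_k}\|$, the supremum over the finite family of diagonal unitaries $e_\theta$ that can occur (chosen stable under the decomposition), this yields $\varrho_{k_1+k_2}\le\varrho_{k_1}\varrho_{k_2}$, hence $\varrho_k\le C\,\varrho_{k_0}^{\lfloor k/k_0\rfloor}$ for every fixed $k_0$. Consequently~\eqref{eq:FUP1} holds with exponent $-\log\varrho_{k_0}/(k_0\log M)$ for each $k_0$, and it is enough to exhibit one scale $k_0$ with $\varrho_{k_0}<M^{-\beta_0 k_0}$.

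When $\delta<\tfrac12$ this last point is comparatively soft. Since $\|A\|=\|A\|_{\HS}$ forces $A$ to have rank $\le1$, and since right multiplication by an invertible diagonal matrix changes neither the rank nor the Hilbert--Schmidt norm, it suffices to see that the $\mathcal C_{k_0}\times\mathcal C_{k_0}$ submatrix of $\mathcal F_{M^{k_0}}$ has rank $\ge2$ for some finite $k_0$; this already holds at $k_0=2$ (the standing assumption $|\mathcal A|\ge2$ makes $\mathcal C_2$ large enough), because rank one would require $d^2\equiv0\pmod{M^2}$ for every nonzero difference $d$ of two digit values of $\mathcal A$, impossible as $0<|d|<M$. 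Hence $\varrho_{2}<\|\indic_{\mathcal C_2}\mathcal F_{M^2}\indic_{\mathcal C_2}\|_{\HS}=M^{(\delta-\frac12)\cdot2}$ strictly, and submultiplicativity converts this into an exponent $\beta>\beta_0$ at all scales. When $\delta\ge\tfrac12$ the Hilbert--Schmidt bound is vacuous and one must instead show directly that $\varrho_{k_0}<1$ at some finite scale: here the plan is to invoke the Bourgain--Dyatlov fractal uncertainty principle for the $\delta$--regular discrete set $\mathcal C_{k_0}$, uniformly over the diagonal twists entering the submultiplicativity, which yields $\varrho_{k_0}\le M^{-\beta_1 k_0}$ with some $\beta_1>0=\beta_0$, and --- for $\delta$ near $\tfrac12$, where that argument gives only a weak $\beta_1$ --- to sharpen the constant by the additive--energy method of Dyatlov--Zahl \cite{DyZa}, exploiting that $\mathcal C$ is far from an arithmetic progression at fine scales. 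Combining the two ranges covers every admissible digit set and produces a $\beta>\max(0,\tfrac12-\delta)$, hence~\eqref{eq:improv} and the spectral gap~\eqref{eq:corres}.

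The main obstacle is the base estimate at one finite scale in the regime $\delta\ge\tfrac12$: the soft rank argument gives nothing there (the operator norm can legitimately equal $1$ until $k_0\gtrsim 1/(1-\delta)$, when the number of Fourier--side vanishing conditions finally exceeds the dimension of functions supported on $\mathcal C_{k_0}$), and extracting a strictly positive exponent is precisely the fractal uncertainty principle, whose proof is itself a delicate self--improving argument over dyadic scales --- and, in the additive--energy refinement, requires classifying the near--extremizers of the discrete uncertainty inequality. Two further points to watch: the block decomposition of the second step must be genuinely submultiplicative, with no lossy constant per concatenation (any constant $>1$ would wipe out the exponent gain), and all the uncertainty estimates must be run for the phase--twisted operators $\indic_{\mathcal C_k}\mathcal F_{M^k}e_\theta\indic_{\mathcal C_k}$ with bounds uniform in the admissible $\theta$, not merely for the untwisted restricted Fourier transform.
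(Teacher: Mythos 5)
Your overall scheme—reduce to the discrete fractal uncertainty principle for $\mathcal C_k\subset\ZZ/M^k\ZZ$, exploit submultiplicativity from the exact self--similarity, and then establish a strict improvement at a single finite scale—is indeed the structure of the Dyatlov--Jin proof (this survey does not prove the theorem; it only cites~\cite{dylo}). Two remarks on the parts that are right but overbuilt, and one on a genuine gap.

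First, the submultiplicativity step is cleaner than your write--up suggests: there is no need to pass to a phase--robust quantity $\varrho_k$. Writing $j=j_2+M^{k_2}j_1$, $l=l_2+M^{k_1}l_1$ with $j_1,l_2\in\mathcal C_{k_1}$, $j_2,l_1\in\mathcal C_{k_2}$, the kernel factors as
\[
\mathcal F_{M^{k_1+k_2}}[j,l]=\mathcal F_{M^{k_1}}[j_1,l_2]\,\mathcal F_{M^{k_2}}[j_2,l_1]\,e^{-2\pi i\,j_2l_2/M^{k_1+k_2}},
\]
and the cross phase is a \emph{diagonal unitary multiplier on the intermediate tensor slot} $(l_2,j_2)$: the restricted operator is $(\text{shuffle})\circ(A_{k_1}\otimes I)\circ D_\phi\circ(I\otimes A_{k_2})$ with $\|D_\phi\|=1$, so $r_{k_1+k_2}\le r_{k_1}r_{k_2}$ holds directly for $r_k:=\|\indic_{\mathcal C_k}\mathcal F_{M^k}\indic_{\mathcal C_k}\|$. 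Your rank--two argument for $\delta<\tfrac12$ at $k_0=2$ is then correct and sufficient, provided you take the rows $j,j'$ and columns $l,l'$ with a common high digit (e.g.\ $j=a+a'M$, $j'=b+a'M$), so that $j-j'=a-b$ with $0<|a-b|<M$ without assuming $0\in\mathcal A$; the minor condition $(j-j')(l-l')\equiv0\ (\mathrm{mod}\ M^2)$ then fails and rank $\ge 2$ follows, giving $r_2<M^{2(\delta-1/2)}$ strictly.

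The genuine gap is the base estimate for $\delta\ge\tfrac12$, and your plan for it does not close. You propose to invoke the Bourgain--Dyatlov fractal uncertainty principle and the Dyatlov--Zahl additive--energy refinement to get $\varrho_{k_0}<1$ at a finite scale, but neither delivers what is needed here. The Bourgain--Dyatlov theorem is formulated for $\delta$--regular subsets of $\RR$ with a continuum--scale uncertainty pairing; translating it to the discrete cyclic group at a fixed scale $M^{k_0}$ is itself a nontrivial task, and more to the point it was not available when Theorem~\ref{t:dylo} was proved (see the footnote to Conjecture~\ref{c:2}). The Dyatlov--Zahl additive--energy improvement produces a strictly positive exponent only for $\delta$ in a neighbourhood of $\tfrac12$, not uniformly for all $\delta\in[\tfrac12,1)$, so it cannot be the source of the base case when $\delta$ is close to $1$. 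The Dyatlov--Jin argument in this regime is a \emph{direct} finite--dimensional argument: one shows that $\indic_{\mathcal C_{k_0}}\mathcal F_{M^{k_0}}\indic_{\mathcal C_{k_0}}$ cannot have norm exactly $1$ at every scale, i.e.\ there is no nonzero $u$ supported on $\mathcal C_{k_0}$ whose discrete Fourier transform is also supported on $\mathcal C_{k_0}$, once $k_0$ is large enough that the number of Fourier--side vanishing conditions $M^{k_0}-|\mathcal C_{k_0}|$ exceeds $|\mathcal C_{k_0}|$; one must then establish injectivity of the resulting Vandermonde--type system, which requires a genuine linear--algebra argument tied to the base--$M$ digit structure of $\mathcal C$ rather than a citation. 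Without this step your proof does not cover every admissible $\mathcal A$, and the ``combining the two ranges'' conclusion is unsupported.
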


A numerical illustration of Theorem \ref{t:dylo} is shown in 
Figure~\ref{f:dylo}. In the example presented there the numerically 
computed best exponent in FUP \eqref{eq:FUP1} is sharp.
Some other examples in \cite{dylo} show however
that it is not always the case. We will return to open quantum maps
in \S \ref{Weyl}.

Theorem \ref{t:dylo} is the strongest gap result for hyperbolic trapped sets and
it suggests that a resonance free strip of size $ h$ exists
for all operators $ P ( h)$ with such trapped sets:

\begin{conj}
\label{c:2}\footnote{After this survey first appeared Conjecture \ref{c:2} was proved in the case of finitely generated hyperbolic surfaces by 
Bourgain--Dyatlov \cite{Body}. That means that for convex co-compact surfaces,
$ \Gamma \setminus \HH^2 $ -- see Figure~\ref{f:borth1} and \cite{borth} -- 
there is a high energy gap for all values, $0 \leq \delta < 1 $, of the dimension of the limit set of $ \Gamma $ -- see \eqref{eq:defdel}. This is the first result  about gaps for quantum Hamiltonians for any value of the pressure $ \mathcal P_E ( \frac12) $ defined in \eqref{eq:pressure}. The proof is based on the 
fractal uncertainty principle of \cite{DyZa} and fine harmonic analysis estimates related to the Beurling--Malliavin multiplier theorem, see \cite{Nazad} and references given there.} Suppose that $ P ( h ) $ given by \eqref{eq:defPofh} has a compact hyperbolic 
trapped set $ K_E $ in the sense of Definition \ref{d:hyp} (and 
$ M$ is a manifold for which we have an effective meromorphic continuation of
$ (P ( h ) -z)^{-1} $). Then there exists
$ \gamma > 0 $, $ \delta > 0 $ and $ h_0 $ such that 
\[    \Res ( P ( h ) ) \cap 
\left( [ E - \delta, E + \delta ] - i h [ 0  , \gamma ] \right) = \emptyset,  \ \
h < h_0 .
\]
\end{conj}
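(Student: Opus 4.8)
The plan is to follow the strategy behind Theorem~\ref{t:dylo}: reduce the resolvent bound near the real axis to a \emph{fractal uncertainty principle} (FUP) for the trapped set, and then establish that FUP by harmonic analysis.

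\smallsection{Step 1: microlocal reduction to an open quantum map} Using the effective meromorphic continuation available by hypothesis (complex scaling, \S\ref{rae}, or Vasy's method, \S\ref{vasy}) together with the gluing results of Datchev--Vasy \cite{dv1}, I would first replace $P(h)-z$ by the model operator $P_W-z=-h^2\Delta_g+V-iW-z$ with $W\equiv 1$ outside a large ball, so that the question becomes a semiclassical estimate for $(P_W-z)^{-1}$ microlocalized near $K_E$. Then, following Nonnenmacher--Sj\"ostrand--Zworski \cite{NSZ1}, I would reduce the spectrum of $P_W$ near $E$ to the spectrum of an open quantum map $B(h)$ quantizing the Poincar\'e return map $\kappa$ on a hypersurface $\Sigma$ transversal to $H_p$; its trapped set is $K_E\cap\Sigma$, a compact locally maximal hyperbolic invariant set with stable and unstable laminations. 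As in \eqref{eq:corres}, the desired resonance-free strip $\Im z>-h\gamma$ is equivalent to a spectral gap $\Spec(B(h))\cap\{|\lambda|\geq e^{-\gamma t_0}\}=\emptyset$, where $t_0$ is the mean return time.

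\smallsection{Step 2: a fractal uncertainty principle for $K_E\cap\Sigma$} This is the crux. In coordinates adapted to the hyperbolic splitting, the stable and unstable sets are locally graphs over porous, Ahlfors $\delta$-regular subsets $\mathcal C_\pm$ of an interval, and I would need a bound $\|\indic_{\mathcal C_+(h)}\,\mathcal F_h\,\indic_{\mathcal C_-(h)}\|_{L^2\to L^2}=O(h^{\beta})$ for the semiclassical Fourier transform $\mathcal F_h$ and $h$-neighbourhoods $\mathcal C_\pm(h)$, with some $\beta>\max(0,\tfrac12-\delta)$ valid for \emph{every} $\delta\in(0,1)$. Unlike the model of \cite{dylo}, one cannot exploit exact self-similarity, so I would instead deduce the FUP from the regularity of the stable/unstable foliations for $-h^2\Delta_g$ (real analytic in the constant-curvature case) combined with general harmonic-analysis estimates: a $\tfrac12-\delta$ exponent follows from Ahlfors regularity by an elementary cell-counting argument, and improving it to $\beta>0$ for \emph{all} $\delta$ requires delicate complex-analytic input in the spirit of the Beurling--Malliavin multiplier theorem and the logarithmic-integral criterion, pushing the method of \cite{DyZa} (which only reached $\delta\approx\tfrac12$) to cover the whole range $0\leq\delta<1$.

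\smallsection{Step 3: propagating the FUP and conclusion} Writing $B(h)$ as a composition of microlocalized Fourier integral operators associated to $\kappa$, iterating roughly $c\log\tfrac1h$ times and inserting cutoffs to the stable and unstable neighbourhoods, the FUP of Step~2 yields $\|B(h)^{\lceil c\log(1/h)\rceil}\|\leq Ch^{\beta'}$ for some $\beta'>0$; this produces the spectral gap, hence by Step~1 the bound $\|\chi(P(h)-z)^{-1}\chi\|=O(h^{-1}\log\tfrac1h)$ on a strip $\Im z>-h\gamma$, and in particular the claimed absence of resonances there. The main obstacle is Step~2: establishing the fractal uncertainty principle with a \emph{positive} exponent for arbitrary fractal dimension $\delta$, since the trapped sets coming from $-h^2\Delta_g$ are genuine fractals without exact scaling symmetry and the only available routes pass through subtle complex-analytic estimates that must be married to the geometry of limit sets -- this is exactly the regime where no gap is currently known once $\mathcal P_E(\tfrac12)>0$. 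By contrast the microlocal reductions of Steps~1 and 3, though technically heavy in the variable-return-time, non-product setting, are essentially available from \cite{NSZ1},\cite{NZ1},\cite{NZ3}.
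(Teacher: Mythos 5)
This statement is a conjecture; the paper offers no proof, and as of the survey's writing the result was open in the generality you address. The footnote records only that a special case --- convex co-compact hyperbolic surfaces $\Gamma \backslash \HH^2$ --- was subsequently resolved by Bourgain--Dyatlov via a fractal uncertainty principle (FUP) together with Beurling--Malliavin-type harmonic analysis, which is broadly the route you sketch. So there is no paper proof to compare with; I assess your plan on its own merits and against that partial result.

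Your proposal contains a genuine gap, which you yourself name: Step~2. Be aware that this gap is not a loose end to be tightened later --- it \emph{is} the conjecture. The machinery of Steps~1 and~3 (complex absorbing potential and Datchev--Vasy gluing, Nonnenmacher--Sj\"ostrand--Zworski reduction to an open quantum map, iterating the propagator $\sim\log\tfrac1h$ times and inserting stable/unstable cutoffs) is available in the literature and would follow with technical, but by now standard, work. But the fractal uncertainty principle with exponent $\beta > \max(0, \tfrac12-\delta)$ for \emph{every} $\delta \in (0,1)$ is precisely what was unknown. Your remark that an elementary cell count yields $\beta = \tfrac12 - \delta$ is correct, but that exponent becomes vacuous once $\delta \geq \tfrac12$; and the only improvement available at the time, \cite{DyZa}, handled $\delta$ near $\tfrac12$ by exploiting additive structure of limit sets for hyperbolic quotients --- a much more rigid setting than general Ahlfors-regular sets. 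Moreover, the Beurling--Malliavin-type machinery you invoke is intrinsically one-dimensional: it applies when stable and unstable slices of the trapped set are $\delta$-regular subsets of an interval. For general $P(h)$ with a compact hyperbolic trapped set --- higher transverse dimension, variable curvature, with stable/unstable laminations that are only H\"older rather than analytic --- that product-of-intervals structure is simply not there, and your Step~2 would fail as written even if the one-dimensional FUP were granted. Your strategy outline is plausible and, with hindsight, well aimed; but it is a plan with a hole exactly where the conjecture lives, not a proof.
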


\subsection{Resonance expansions in general relativity}
\label{resrel}

In this section we will describe some recent results concerning
expansions of solutions to wave equations for black hole metrics
in terms of {\em quasi-normal modes} (QNM). That is the name
by which scattering resonances
go in general relativity \cite{KokkotasSchmidt}. We want to 
emphasize the connection to the results of \S \ref{vasy},\S \ref{resfree} and
in particular to the $r$--normally hyperbolic dynamics \cite{WuZw2}.

We will consider the case of Kerr--de Sitter, or
asymptotically Kerr--de Sitter metrics. 
These model rotating black holes in the case of positive 
cosmological constant $ \Lambda > 0 $. From the mathematical 
point of view that makes infinity ``larger" and provides
exponential decay of waves which makes a rigorous formulation
of expansions easier. When one adds frequency localization 
weaker expansions are still possible in the Kerr case
-- see \cite[Theorem 2]{dya},\cite[(13)]{physrev}. References
to the extensive mathematics literature in the case of $ \Lambda = 0 $
can be found in \cite{dya}.

To define the Kerr--de Sitter metric we consider the manifold 
$ X = ( r_+ , r_C ) \times \mathbb S^2 , $ where $ r_+ $ is 
interpreted as the event horizon of the black hole and $ r_C $ the
cosmological horizon.  
The wave equation is formulated on $ \mathbb R 
\times X $ using a Hamiltonian defined on 
$T^*(\mathbb R\times X)$.
We denote the coordinates on $ \mathbb R \times X $ by $ ( t,r, \theta, \varphi ) $  and write $ (\xi_t, \xi_r , \xi_\theta, \xi_\varphi )
$ for the corresponding conjugate (momentum) variables. In this 
notation the Kerr--de Sitter Hamiltonian has the following frightening
form:
\begin{gather}
\label{eq:Gr}
\begin{gathered}
G=\rho^{-2}(G_r+G_\theta), \ \ \rho^2=r^2+a^2\cos^2\theta, \\
G_r=\Delta_r\xi_r^2-{(1+\alpha)^2\over\Delta_r}((r^2+a^2)\xi_t+a\xi_\varphi)^2,\\
G_\theta=\Delta_\theta\xi_\theta^2+{(1+\alpha)^2\over\Delta_\theta\sin^2\theta}(a\sin^2\theta\,\xi_t+\xi_\varphi)^2,
\ \ 
\alpha={\Lambda a^2\over 3}.
\\
\Delta_r=(r^2+a^2)\Big(1-{\Lambda r^2\over 3}\Big)-2Mr,\quad
\Delta_\theta=1+\alpha\cos^2\theta.
\end{gathered}
\end{gather}
Here $ \Lambda > 0 $ is the cosmological constant, $ a $ is the 
rotation speed ($ a = 0 $ corresponds to the simpler case of the
Schwarzschild--de Sitter metric), and $ M > 0 $ is a parameter
(in the case of $ \Lambda = 0 $ it is the mass of the black hole) -- see
Figure \ref{f:dyz2} for the allowed range of parameters. 
The Hamiltonian $G$ is the  dual metric to the semi-Riemannian Kerr--de
Sitter metric $g$ .
The principal symbol of the wave operator 
 $\Box_g$ is given by $ G $. We refer to Hintz \cite{hi} and
 Hintz--Vasy \cite{HiV2} for a geometric description of 
 these metrics and definitions of asymptotically Kerr--de Sitter metrics.

\begin{figure}
\includegraphics[width=7cm]{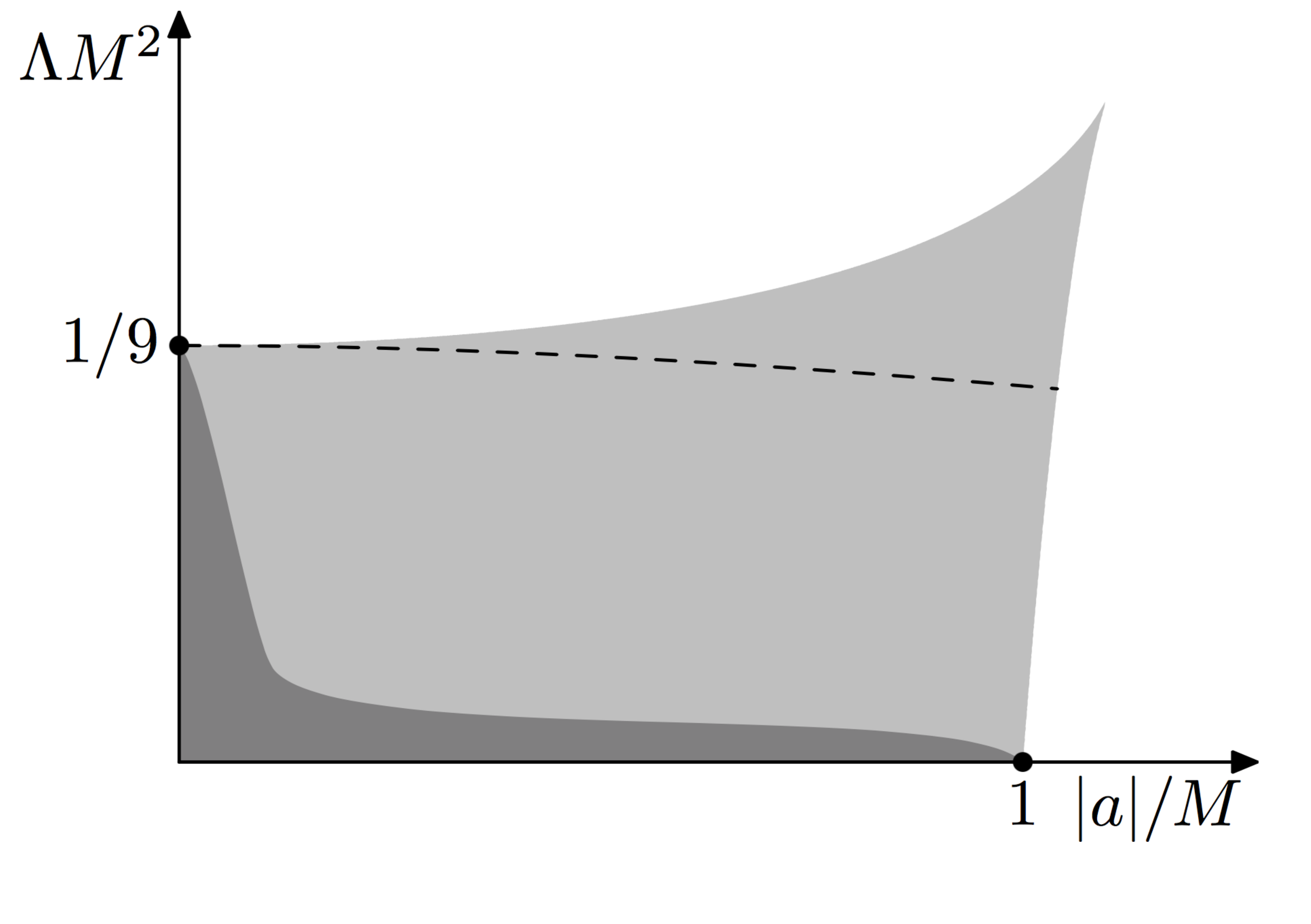}
\includegraphics[width=7cm]{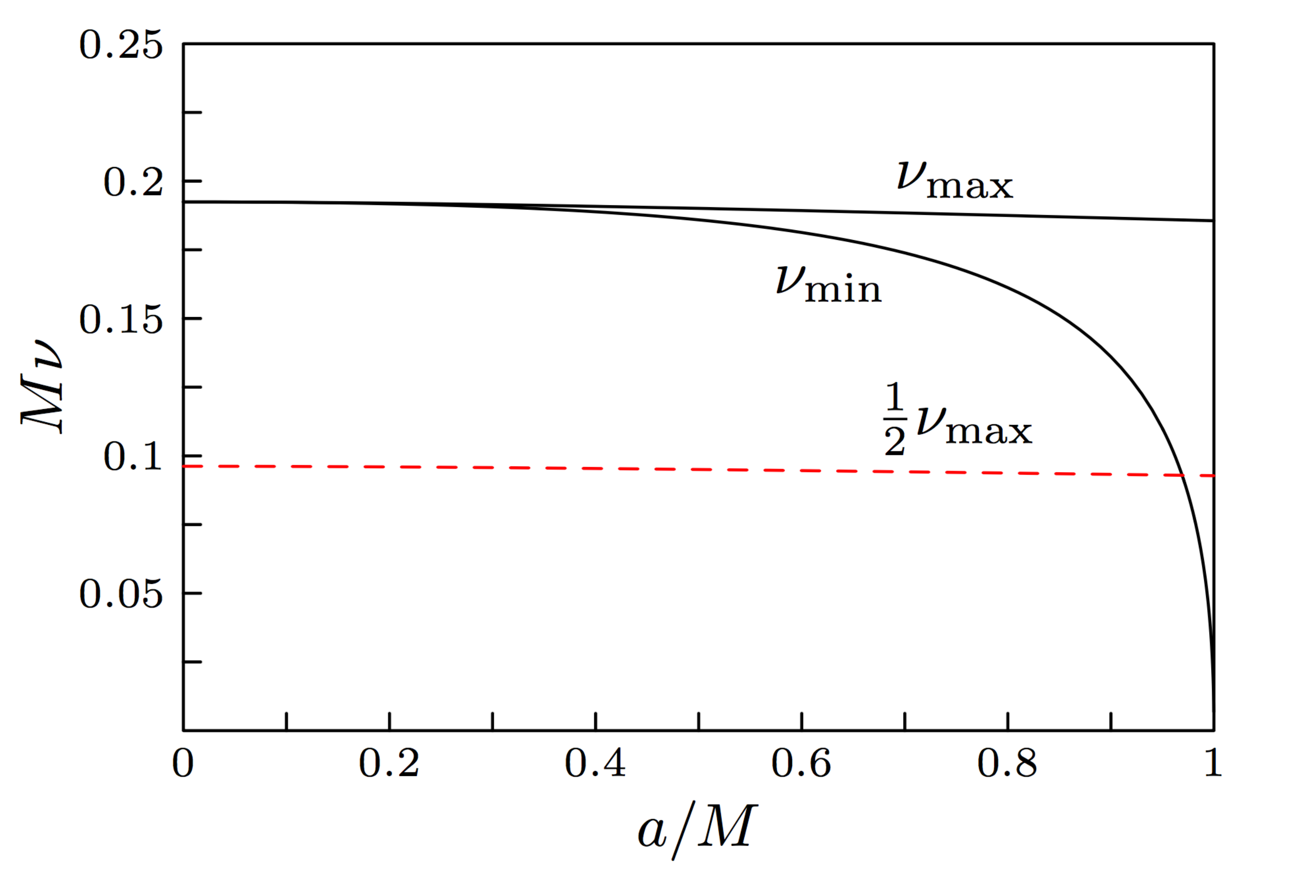}
\caption{Left: numerically computed admissible range of
parameters for the subextremal Kerr--de Sitter black hole
(light shaded) and the range to which the 
resonance expansions of \cite{dya} apply. QNM are defined and
discrete for parameters below the dashed line,
$ ( 1 - \Lambda a^2/3 )^3 = 9 \Lambda M^2$, see \cite[\S 3.2]{dya}.
Right: The dependence of $ \nu_{\max} $ and $ \nu_{\min} $ on the
  parameters $ M $ and $ a $ in the case of $ \Lambda = 0 $.  The
  dashed line indicates the range of validity of the pinching condition needed 
  for the Weyl law \cite{dy} recalled in Figure \ref{f:iv}.}
\label{f:dyz2}
\end{figure}

The key fact is that $ \Delta_r $ has simple zeros at $ r = r_+ $ and $ r = r_C $
which gives 
\begin{equation}
\label{eq:Plag1} P_g ( \lambda ) := e^{ - i \lambda t } \Box_g e^{ i \lambda t } 
: \CI ( X ) \to \CI ( X) , \ \ X =  ( r_+ , r_C ) \times \mathbb S^2 , 
\end{equation} 
structure similar to that of $ P ( \lambda) $
in \eqref{eq:Plag}, with $ r=r_+ $ or $ r= r_C $ corresponding to $ x_1 = 0 $
(when $ a= 0 $, $ r \in ( r_+, r_C ) $ corresponds to $ x_1 > 0 $; 
for rotating black holes the situation is more complicated but microlocally
the structure is similar).
  
Since we are interested in solving $ \Box_g w = 0 $ we consider the null
geodesic flow 
$ \varphi_t := \exp t H_G $ (see \eqref{eq:Hamp}) on the positive 
light cone $ \mathcal C_+ \subset \{ G = 0 \} $. (Null stands for $ G = 0 $
which is preserved by the flow.)
The trapped set, $ \mathcal K $, consists of null geodesics that stay away from $r=r_+,r=r_C$ for all times. We refer to Dyatlov \cite[Prop.3.2]{dya} for the description of this
trapped set and the constraints on the parameters $ a$, $M $ and $ \Lambda $.
Here we remark that it has a particularly simple form when 
$ a = 0$: 
\[ \mathcal K = \left\{ ( t , 3M, \omega ; \xi_t , 0 , \omega^* ) : 
( \omega, \omega^* )  \in T^* \SP^2 , \ \  
\frac{ 1 - 9 \Lambda M^2 }{ 27 M^2 } 
| \omega^* |_\omega^2 = \xi_t^2 \right\}, \]
where $ | \bullet |_\omega $ is the standard metric on $ T^*_\omega \SP^2 $.

In general $ \mathcal K $  
is symplectic
in the sense that the  symplectic form on $ T^* X $ 
is nondegenerate on the surfaces $K = \mathcal K\cap\{t=\const\}$. 
The flow is $ r$-normally hyperbolic for any $ r $. That means that
there exists a
$C^r$ splitting
$ T_\mathcal K \mathcal C_+=T \mathcal K\oplus \mathcal V_+\oplus\mathcal V_-$,
invariant under the flow and such that for some constants $\nu>0,C>0$,
\begin{gather}
\label{eq:NHrel}
\begin{gathered}
\sup_{(x,\xi)\in K}
|d\varphi^{\mp t}
|_{\mathcal V_\pm}|\leq Ce^{-\nu t},\ 
\  \
\sup_{(x,\xi)\in K}
|d\varphi^{\pm t}
|_{TK}|\leq Ce^{\nu|t|/r},\ 
t\geq 0. 
\end{gathered}
\end{gather}
In other words, the maximal expansion rates (Lyapunov exponents) on
the trapped set are $r$-fold dominated by the expansion and
contraction rates in the directions transversal to the trapped set. 
As shown in Hirsch--Pugh--Schub \cite{HPS} (see also \cite[\S 5.2]{dya})  $r$-normal
hyperbolicity is stable under perturbations: when $  G_\epsilon $ is a
time independent (that is, stationary) Hamiltonian such that $ G_\epsilon  $ is close to $ G $
in $ C^r $ near $ \mathcal K $, the flow for $ G_\epsilon $ is $
r$-normally  hyperbolic in the sense that the trapped set $ \mathcal K_\epsilon
$ has $ C^r $ regularity, is symplectic and \eqref{eq:NHrel} holds.
For Kerr(--de Sitter) metrics the flow is $r$-normally 
hyperbolic for all $ r $ as shown by Wunsch--Zworski \cite{WuZw2},
Vasy \cite{vasy1} and Dyatlov \cite{dya}, essentially 
because the flow on $\mathcal K$ is completely integrable. The dependence
of maximal and minimal expansion rates on the parameters is shown in 
Figure \ref{f:dyz2}.

Vasy's method \cite{vasy1} which was described in \S \ref{vasy} 
applies in this setting. Quasi-normal modes (QNM) for 
Kerr--de Sitter metrics are defined as in 
\eqref{eq:charker}: they are complex frequencies $ \lambda $ such that
there exist $ u = u ( r , \theta, \phi ) $ such that 
\begin{equation}
\label{eq:charrr}  \text{ $ \Box_g ( e^{ - i \lambda t } u ) = 0 $
and $ u $ continues smoothly across the two horizons.} 
\end{equation}
The connection 
with \eqref{eq:charker} comes from \eqref{eq:Plag1} and the fact that
 $ P_g ( \lambda ) u = 0$. The 
same definition works for perturbations. 

For Schwarzschild--de Sitter black holes, QNM were described by 
S\'a Barreto--Zworski \cite{SaBZb}: at high frequencies they 
lie on a pseudo-lattice as in the case of one hyperbolic closed
orbit \cite{GeSj} but with multiplicities coming from the spherical 
degeneracy. Dyatlov \cite{zeeman},\cite{dy} went much further
by describing QNM for Kerr--de Sitter black holes: for small
values of rotations he showed a Zeeman-like splitting of 
multiplicities predicted in the physics literature, and for
perturbations of Kerr--de Sitter black holes, he obtained
a counting law (see Figures~\ref{f:iv},\ref{f:dyz2}).
In particular, for small values of $ a $ the results of \cite{zeeman}
show that there are no modes with $ \Im \lambda \geq 0 $.
For Kerr black holes (\eqref{eq:Gr} with $ \Lambda = 0 $) Shlapentokh-Rothman \cite{shlapa} showed that this is the case in the full range $ |a| < M $.
This suggests the following
\begin{conj}
\label{c:rel}
For $ a $, $ \Lambda $ and $ M $ satisfying 
$ ( 1 - \Lambda a^2/3 )^3 >  9 \Lambda M^2 $ and in the shaded region 
of Figure \ref{f:dyz2} (or in some other ``large" range of values),
any $ \lambda \neq 0 $ for which \eqref{eq:charrr} holds satisfies
$ \Im \lambda < 0 $ and $ \lambda = 0 $ is a simple resonance.
\end{conj}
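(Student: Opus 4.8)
The plan is to reduce Conjecture \ref{c:rel} to a \emph{mode stability} statement and then attack that by a Whiting-type transformation. First I would invoke the Fredholm framework behind Theorem \ref{t:6} and Theorem \ref{t:7}, which applies to Kerr--de Sitter metrics via Vasy's method \cite{vasy1} (see also \cite{WuZw2},\cite{dya}): QNM are the poles of $P_g(\lambda)^{-1}$, and for $\lambda\neq 0$ they are characterized, as in \eqref{eq:charker}, by the existence of a nonzero $u$ with $P_g(\lambda)u=0$ smooth across both horizons $r=r_+$ and $r=r_C$, cf.~\eqref{eq:charrr}. So it suffices to prove: if $\Im\lambda\geq 0$, $\lambda\neq 0$, and $P_g(\lambda)u=0$ with $u$ smooth across the two horizons, then $u\equiv 0$; and, separately, that $\lambda=0$ is a simple pole of $P_g(\lambda)^{-1}$ with resonant state the constants. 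This reduction is essentially a restatement, but it is the right one, since it trades a statement about a meromorphic family for a fixed-$\lambda$ uniqueness question.

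The core is then the radial ODE. The Kerr--de Sitter d'Alembertian is completely integrable on the relevant parameter range: writing $u=\sum_m e^{im\varphi}R_{\ell m}(r)S_{\ell m}(\theta)$, the angular factor is a de Sitter spheroidal harmonic with separation constant $\Lambda_{\ell m}(\lambda)$, and $R_{\ell m}$ solves a second order ODE on $(r_+,r_C)$ with regular singular points at the two horizons, the remaining roots of $\Delta_r$ entering only as parameters; smoothness across the horizons becomes, after a Regge--Wheeler change of variable, an outgoing/incoming condition at each end. A naive Wronskian argument — multiply by $\bar R_{\ell m}$, integrate over $(r_+,r_C)$, and extract the sign of $\Im\lambda$ from the boundary flux — fails, because the radial potential is genuinely complex (rotation, complex $\lambda$, and the cosmological terms all contribute) and the ergoregion produces superradiance classically. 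I would therefore look for a Kerr--de Sitter analogue of \emph{Whiting's transformation}: an integral transform $R\mapsto \widetilde R$ carrying solutions of the radial equation to solutions of a transformed equation whose potential is real, or whose imaginary part has a definite sign when $\Im\lambda\geq 0$, so that the Wronskian argument closes and forces $\widetilde R\equiv 0$, hence $R\equiv 0$. Such a transform is the engine of Shlapentokh-Rothman's mode stability for Kerr \cite{shlapa} and of Dyatlov's small-$a$ result \cite{dya},\cite{zeeman}; the task is to produce it, with the required mapping properties on $(r_+,r_C)$, when $\Lambda>0$. The case $\lambda=0$ I would treat by hand: the radial equation degenerates, the only solution bounded at both horizons is constant, and one checks via \eqref{eq:charker} and a Jensen--Kato type analysis that the pole is simple.

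I expect the main obstacle to be exactly the construction and control of this transform in the two-finite-horizon geometry. First, unlike Kerr, where one boundary sits at $r=\infty$ and the kernel is built from confluent hypergeometric asymptotics, here both singular points are at finite $r$ and the natural candidate kernels are of Heun type; establishing the transform's mapping and invertibility properties on $[r_+,r_C]$, uniformly in $(\ell,m,\lambda)$ and in $(a,M,\Lambda)$, is a delicate ODE problem. Second, near the boundary of the admissible region — the extremal limits and the surface $(1-\Lambda a^2/3)^3=9\Lambda M^2$ — horizons merge, $\Delta_r$ acquires double roots, and both the transform and the angular separation constants $\Lambda_{\ell m}(\lambda)$ for non-real $\lambda$ (whose location in $\CC$ must be controlled so that the argument still applies) degenerate; covering a genuinely ``large'' range of parameters, rather than a perturbative neighbourhood of Schwarzschild--de Sitter, is what makes the conjecture hard. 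A complementary, separation-free route would be a global energy estimate combining a red-shift multiplier at each horizon \cite{WuZw2}, a normally hyperbolic trapping estimate at the photon sphere in the sense of Theorem \ref{t:9}, and a boundary-flux bookkeeping; but there the superradiant ergoregion contribution is precisely the term one cannot obviously sign, so one is driven back to an algebraic identity of Whiting type.
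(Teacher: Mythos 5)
What you have written is not a proof of Conjecture~\ref{c:rel} but an accurate assessment of the \emph{state of the problem} --- and indeed the paper itself presents this as an open conjecture, noting only that Shlapentokh-Rothman \cite{shlapa} settles the $\Lambda=0$ (Kerr) case in the full subextremal range and that Dyatlov \cite{zeeman} covers small $a$ with $\Lambda>0$. Your reduction to a fixed-$\lambda$ mode-stability statement via \eqref{eq:charrr} and \eqref{eq:charker} is correct, and your diagnosis of why the obvious approaches fail is sound: a $\bar R$-Wronskian/flux argument on $(r_+,r_C)$ does not close because the rotation and cosmological terms make the radial potential complex with no sign for $\Im\lambda\geq 0$, and a separation-free energy estimate combining red-shift multipliers at the horizons with a normally hyperbolic trapping estimate in the sense of Theorem~\ref{t:9} (as in \cite{WuZw2},\cite{dya}) stalls at precisely the superradiant ergoregion contribution. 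Pointing to Whiting's transformation as the right engine is also correct --- it is what powers \cite{shlapa}.

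The gap is that you have not produced the object your plan requires, and you say as much: a Whiting-type integral transform $R\mapsto\widetilde R$ adapted to Kerr--de Sitter, with verified mapping, invertibility and boundary properties on $[r_+,r_C]$ uniform in $(\ell,m,\lambda,a,M,\Lambda)$, such that the transformed potential admits a signed Wronskian identity for $\Im\lambda\geq 0$. In Kerr the construction leans on $r=\infty$ being an irregular singular point and on confluent-hypergeometric asymptotics; in Kerr--de Sitter $\Delta_r$ is a quartic with four roots, both physical endpoints are finite regular singular points, the radial ODE is of (generalized) Heun type, and no analogue of Whiting's kernel with the needed symmetry is known. You would further need to control the location of the de Sitter spheroidal separation constants $\Lambda_{\ell m}(\lambda)$ for non-real $\lambda$ so that the sign argument survives, and the $\lambda=0$ assertion --- that the pole is simple in the sense of \eqref{eq:mult}/\eqref{eq:Jordan}, i.e.\ that the cokernel of $P_g(0)$ is one-dimensional and the Jordan block is trivial --- is stated, not carried out. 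In short: the reduction and the choice of weapon are right, the obstacles you name are the real ones, but the key lemma has not been forged, and until it is this remains a research programme rather than a proof of the conjecture.
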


The study of QNM is motivated by the expectation
that forward solutions to $ \Box_g u = f \in \CIc ( \RR \times X ) $ have 
expansions in terms of  $ u_k $'s and $ \lambda_k$'s satisfying \eqref{eq:charrr},
\[  u ( t , r , \theta, \varphi ) \sim \sum_{ k } e^{ - i \lambda_k t } 
u_k ( r, \theta, \varphi ) , \]
but the convergence and errors can be subtle.

The first expansion involving infinitely many QNM lying on 
horizontal strings of the pseudo-lattice of \cite{SaBZb} was 
obtained by Bony--H\"afner \cite{boha}. Using his precise results on 
the distribution of QNM for Kerr--de Sitter metrics with 
small values of  $ a $, Dyatlov \cite{zeeman} obtained similar
expansions for rotating black holes. In \cite{dy} he formulated
an expansion of waves for perturbations of Kerr--de Sitter metrics
in terms of a microlocal projector, see also \cite{dya}.

Here we will only state a simpler result which is an almost
immediate consequence of Theorem \ref{t:7}, Theorem \ref{t:9} and 
of gluing results of Datchev--Vasy \cite{dv1}. For 
tensor-valued wave equations on perturbations of Schwarzschild--de Sitter spaces (including Kerr--de Sitter spaces with small values of $ a $) 
and in any space-time dimension $ n \geq 4 $  this result was obtained 
by Hintz \cite{hi}. A more precise formulation valid across the 
horizons can be found there. 

\begin{thm}
\label{t:rel}
Suppose that $ ( X, g ) $ is a stationary perturbation of the 
Kerr--de Sitter metric in the sense of \cite{hi},\cite{HiV2} and
that $ u $ is the forward solution  $ \Box_g u = f \in \CIc ( \RR \times X ) $.
Let $ \nu_{\min } $ be the minimal expansion rate at the trapped
set given by \eqref{eq:Lyap} (see Figure \ref{f:dyz2}). Then for 
any $ \epsilon > 0 $, 
\[  u ( t , x ) = \sum_{ \Im \lambda_j > - \frac12 \nu_{\min} + \epsilon  } \,\,
\sum_{m=0}^{m_j} \sum_{ \ell = 0 }^{d_j} t^\ell e^{ - i \lambda_j t } 
u_{jm\ell} ( x )  + \mathcal O_K  ( e^{ - ( \nu_{\min}/2 - \epsilon) t }) , \ \
x \in K \Subset X . \]
When $ d_j = 0 $, $ u_{j,m,0} $ and $ \lambda_j $ satisfy \eqref{eq:charrr};
otherwise the resonance expansion is computed from the residue of
$ \lambda \mapsto P_g ( \lambda )^{-1}  e^{ - i \lambda t } $ -- 
see \cite[\S 5.1.1]{HiV2} (and \cite[Theorem 2.7]{res} for a simple example).
\end{thm}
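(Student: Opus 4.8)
The plan is to reduce the wave expansion to a contour-deformation argument for the resolvent $P_g(\lambda)^{-1}$, exactly parallel to the proof of Theorem~\ref{t:5}, but using the Fredholm family of Theorem~\ref{t:7} in place of the Euclidean resolvent and the resonance-free strip of Theorem~\ref{t:9} in place of Theorem~\ref{t:4}. First I would take the Fourier--Laplace transform in $t$: since $u$ is the forward solution of $\Box_g u = f$ with $f \in \CIc(\RR\times X)$, for $\Im\lambda$ large we may write $\hat u(\lambda,x) = P_g(\lambda)^{-1}\hat f(\lambda,x)$, with $\hat f$ holomorphic in $\lambda$ and rapidly decaying in $\Re\lambda$ on horizontal lines. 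Vasy's construction (Theorem~\ref{t:6}, transported to the Kerr--de Sitter setting as in \cite{vasy1},\cite{dya},\cite{hi}) shows that $\lambda\mapsto P_g(\lambda)^{-1}:\mathscr Y_s\to\mathscr X_s$ continues meromorphically to a half-plane $\Im\lambda > -s-\tfrac12$, with poles of finite rank; these poles are the QNM. Inverting the transform gives $u(t,x) = \frac{1}{2\pi}\int_{\Im\lambda = C} e^{-i\lambda t} P_g(\lambda)^{-1}\hat f(\lambda,\cdot)\,d\lambda$, and the whole game is to push the contour down.

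The second step is the dynamical input. Wunsch--Zworski \cite{WuZw2} together with \cite{vasy1},\cite{dya} establish that the null-geodesic flow near the photon sphere is $r$-normally hyperbolic for every $r$ (see \eqref{eq:NHrel}); in particular it is normally hyperbolic in the sense of Definition~\ref{d:nhyp}. I would then invoke Theorem~\ref{t:9} to get, for $\epsilon>0$, a strip $\Im\lambda > -\tfrac12\nu_{\min}+\epsilon$ (after the usual translation between the $z$- and $\lambda$-pictures, and between $h\to0$ and $|\Re\lambda|\to\infty$) which contains only finitely many resonances $\lambda_j$ and in which $\chi P_g(\lambda)^{-1}\chi$ satisfies a polynomial resolvent estimate analogous to \eqref{eq:t8bis}. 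A point needing care: Theorem~\ref{t:9} is stated for $-h^2\Delta_g + V$ on a manifold with an effective meromorphic continuation, so I would use the gluing theorem of Datchev--Vasy \cite{dv1} to combine the interior estimate near the trapped set with the Vasy model near the horizons, producing the global cut-off bound on $P_g(\lambda)^{-1}$ on the line $\Im\lambda = -\tfrac12\nu_{\min}+\epsilon$ with loss at most a fixed power of $|\lambda|$.

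With the resolvent bound in hand, the third step is Cauchy's theorem: deform $\{\Im\lambda = C\}$ down to $\{\Im\lambda = -\tfrac12\nu_{\min}+\epsilon\}$, picking up residues at the finitely many $\lambda_j$ in between. Each residue of $e^{-i\lambda t}P_g(\lambda)^{-1}\hat f(\lambda,\cdot)$ at a pole of order $m_j+1$ with a Jordan block of ``algebraic'' length contributes a term $\sum_{\ell=0}^{d_j} t^\ell e^{-i\lambda_j t} u_{j m \ell}(x)$; when the pole is simple in the strong sense, $d_j=0$ and $u_{j,m,0}$ solves \eqref{eq:charrr}, while in general the $u_{jm\ell}$ come from the Laurent expansion of $P_g(\lambda)^{-1}$ as in \cite[\S5.1.1]{HiV2}. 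The remaining integral over the lower line is bounded by $e^{-(\nu_{\min}/2-\epsilon)t}$ times the polynomial-in-$\lambda$ resolvent factor against the rapidly decaying $\hat f$; since $f$ is smooth and compactly supported, $\hat f$ decays faster than any polynomial on horizontal lines, so the integral converges and yields the $\mathcal O_K(e^{-(\nu_{\min}/2-\epsilon)t})$ remainder, locally uniformly on compacta $K\Subset X$ (the cutoff $\chi$ is where the $K$-dependence of the constant enters).

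The main obstacle I anticipate is not the contour deformation itself but verifying that the hypotheses of Theorem~\ref{t:9} genuinely apply to the geometric operator $P_g(\lambda)$ arising from $\Box_g$ on a \emph{perturbation} of Kerr--de Sitter, rather than to the model $-h^2\Delta_g+V$: one must check that $P_g(\lambda)$ has the right microlocal structure at the horizons (radial points with the correct sink/source dynamics, so that Vasy's propagation estimates hold) and that stationarity of the perturbation is what guarantees a well-defined spectral family and preserves $r$-normal hyperbolicity via Hirsch--Pugh--Shub \cite{HPS}. This bookkeeping — together with tracking how the Jordan block structure of poles produces the $t^\ell$ factors and identifying exactly when $d_j>0$ can occur — is where essentially all the work lies; the analytic estimates are then standard given \cite{dv1},\cite{WuZw2},\cite{dy4}. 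For the detailed verification I would refer to \cite{dya},\cite{hi},\cite{HiV2}.
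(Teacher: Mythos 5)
Your proposal is correct and follows exactly the route the paper indicates: the paper explicitly states that Theorem~\ref{t:rel} is ``an almost immediate consequence of Theorem~\ref{t:7}, Theorem~\ref{t:9} and of gluing results of Datchev--Vasy \cite{dv1},'' and defers the detailed verification to Hintz \cite{hi}, which is precisely the combination of Fourier--Laplace transform, Vasy's Fredholm family, the normally hyperbolic resolvent bound, gluing, and contour deformation you describe. Your identification of the anticipated difficulty (verifying the microlocal structure at the horizons for a perturbed metric, and tracking Jordan block contributions) is also where the real work lies, and is exactly what the cited references are invoked to handle.
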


\subsection{Upper bounds on the number of resonances: fractal Weyl laws}
\label{Weyl}
The standard Weyl law for the density of quantum states was 
already recalled in \eqref{eq:Weyll} as motivation for the 
upper bound on the number of resonances in discs \eqref{eq:NVb}.
In this section we will describe finer upper bounds which 
take into account the geometry of the trapped set. 
They were first proved  by Sj\"ostrand \cite{SjDuke} for 
operators with analytic coefficients but in greater geometric generality
than presented here.

Throughout this section we 
assume that 
\begin{equation}
\label{eq:sectionass}
\text{ $ P ( h ) $ given by \eqref{eq:defPofh} has {\em a compact hyperbolic 
trapped set $ K_E $} }
\end{equation}
in the sense of Definition \ref{d:hyp} and that
$ M$ is a manifold for which we have an effective meromorphic continuation of
$ ( P ( h ) -z)^{-1}  $ (see \S\S\ref{rae},\ref{vasy}).

When $ M $ is a {\em compact} manifold then $ P ( h )$ has discrete
spectrum and the counting laws have a long tradition -- see Ivrii \cite{Ivre}.
The basic principle can be described as follows: if we are interested in 
\[ N_{P(h)} ( [ a , b ] ) := | \Spec ( P ( h ) ) \cap [ a, b ] |, \ \
P ( h ) = -h^2 \Delta_g + V ,  \]
then the relevant region in phase space is given by 
\begin{equation}
\label{eq:phaseab} 
p^{-1} ( [ a , b ] ) =  \{ ( x, \xi ) :  a \leq p ( x, \xi ) \leq b \} , \ \ 
p ( x, \xi ) := |\xi|_g^2 + V ( x ) . 
\end{equation}
The uncertainty principle says that a maximal (homogeneous) localization of a state
in phase space is to a box of sides of length $ \sqrt h $ (here centered at
$ ( 0 , 0 )$):
\[  \left( \int_{\RR^n} | x_j  u ( x ) |^2 dx \right)^{\frac12}
\times \left( \int_{\RR^n } | \xi_j  \mathcal F_h u ( \xi ) |^2 d\xi \right)^{\frac12}
\geq \frac{ h}{ 2}  \| u \|^2_{L^2} , \]
where $ \mathcal F_h u ( \xi ) := ( 2 \pi h)^{-n/2} \int_{\RR^n }
u ( x ) e^{ - i \langle x , \xi \rangle/h } dx $ is the (unitary) semiclassical
Fourier transform. Hence the maximal number of quantum states ``fitting" into
an $h$-independent open set $ U \subset T^* M $  is proportional to $  
\vol( U  ) h^{-n} $. For eigenvalue counting we need a factor of
$ (2 \pi )^{-n} $ which gives
\begin{equation}
\label{eq:NhW}   N_{P(h)} ( [ a , b ] ) = ( 2 \pi h)^{-n} \left( \vol ( p^{-1} ( [ a, b ] )) + 
o ( 1 ) \right) .
\end{equation}
By rescaling and taking $ [ a, b ] = [ 0 , 1 ] $ we obtain \eqref{eq:Weyll}.

\begin{figure}[ht]
\begin{center}
\includegraphics[width=11.5cm]{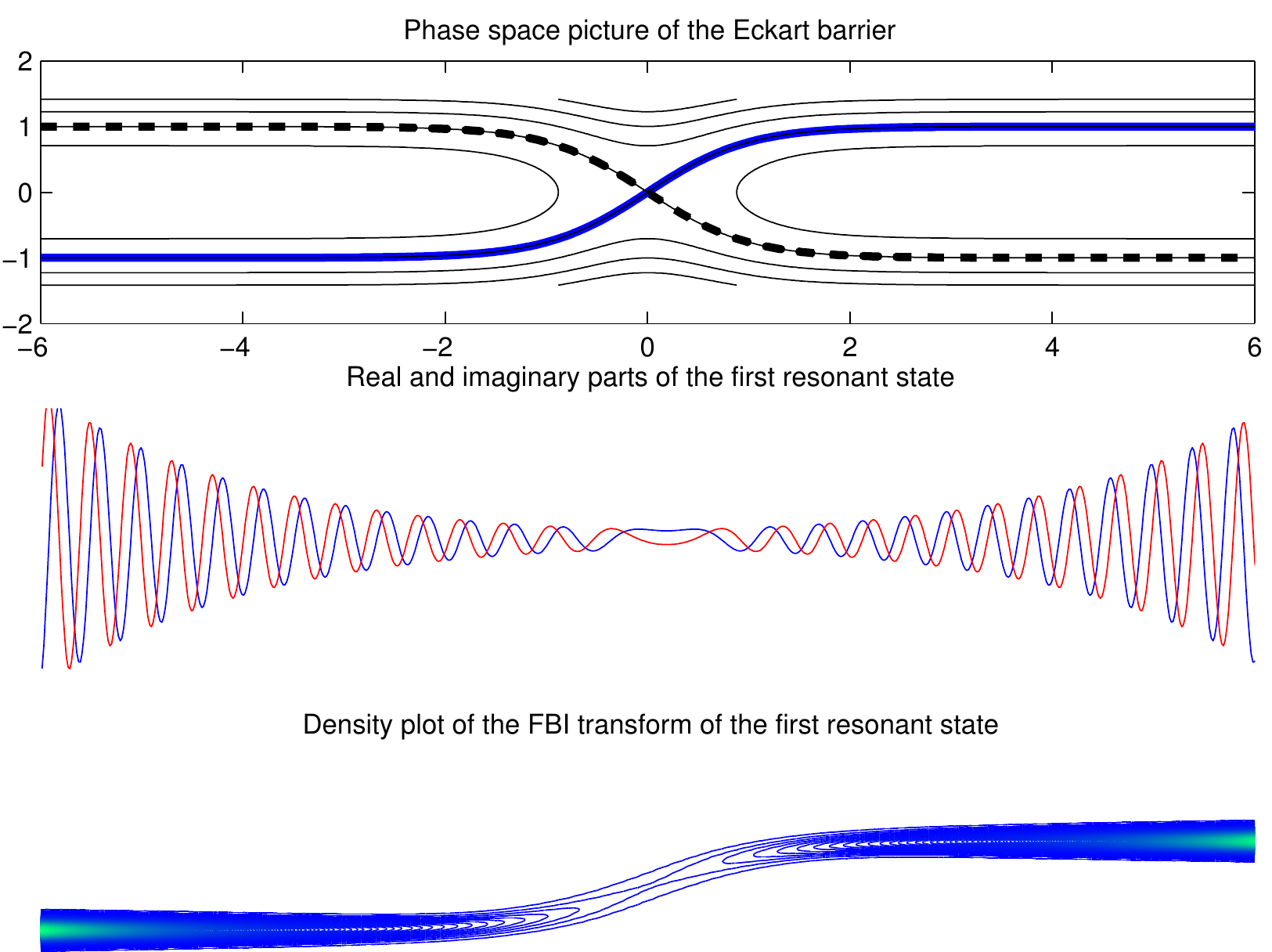}
\end{center}
\caption{The top figure shows the phase portrait for the Hamiltonian $ p ( x , \xi \
) =                                                                                 
\xi^2 + \cosh^{-2} ( x ) $, with $ \Gamma_1^\pm $ highlighted. The middle
plot shows the resonant state corresponding to the resonance
closest to the real axis at $ h = 1/16 $ (compare this to the
resonances in Figure~\ref{f:1} generated by the local maxima), 
and the bottom plot shows
 the squared modulus of its FBI tranform (which describes a wave function 
 in phase space \cite[Chapter 13]{EZ}). 
The resonance states were computed by D.~Bindel \cite{BiZ}
 and the FBI transform
was provided by L.~Demanet. We see that
the mass of the FBI transform is concentrated on $ \Gamma_1^+ $,
with the exponential growth in the outgoing direction. See also 
\url{https://www.youtube.com/watch?v=R7eUpEVpckk&feature=youtu.be&list=PL32oQ9j1OWc0RIlNwnY6BtDawepnBnQyx}}
\label{f:FBI}
\end{figure}

Suppose now that we consider a scattering problem, in which case $ M $ is non-compact,
say, equal to $ \RR^n $ outside a compact set. In that case for 
a subset $\Omega = \Omega ( h ) $ we can define the number of
resonances of $ P ( h ) $ in $ \Omega $:
\begin{equation}
\label{eq:NhP}
N_{ P ( h ) } ( \Omega ) = | \Res ( P ( h ))  \cap \Omega | ,
\end{equation}
which generalizes the definition above. 
The basic heuristic principle is that 
the trapped set $ K_{ \Omega \cap \RR } $ should
replace $ p^{-1} ( [ a, b ] ) $ as everything else escapes. 
To guarantee that escape we assume 
that $ K \subset \{ \Im z > - \gamma h \} $. This 
restriction on the imaginary parts is 
explained by the fact that the corresponding resonant states have
an $h$-independent rate of decay: $ \Re e^{ - i t z/h } = e^{  t \Im z /h} 
\geq e^{ - t \gamma } $.

We should
then look at the maximal number of localized quantum states that can cover
$ K_{\Omega \cap \RR } $. But that number is now related to the
upper box or upper Minkowski dimension of $ K_{ \Omega \cap \RR } $:
\[  \dim^+_M ( U ) := \limsup_{ h \to 0 } \frac{ n ( \sqrt {h}, U  ) }{ \log ( 1/\sqrt{h})} ,\]
where $ n ( \epsilon , U ) $ is the minimal number of boxes of sides $ \epsilon $ needed to 
cover $ U $. This suggests that we should  have an upper bound
\begin{equation}
\label{eq:UBtr}
N_{ P ( h ) } ( [ a, b ] - i h [ 0 , \gamma] ) \leq C h^{ - \frac12 \dim^+_M (K_{[a,b]}) } . 
\end{equation}

This intuition is put to test by the fact that resonant states with $ 
\Im z > - \gamma h $ are {\em not}
localized to the trapped set but rather to the outgoing tails $ \Gamma^+_E $ given in 
\eqref{eq:defGama} -- see Figure~\ref{f:FBI} and \cite[Theorem 4]{NZ1}. 
The method of complex scaling described in \S \ref{rae} changes the exponential
growth we see in Figure \ref{f:FBI} to exponential decay and hence localizes
the resonant state to the interaction region. By a microlocal complex
scaling constructed using suitable low regularity {\em microlocal weights} 
Sj\"ostrand \cite{SjDuke} similarly localized resonant states to 
an $ \sqrt h $ neighbourhood of the trapped set and hence was able to 
prove bounds of the form \eqref{eq:UBtr}. Although we concentrate
here on hyperbolic trapped sets we remark that Dyatlov's asymptotic
formula \cite{dy} in Figure \ref{f:iv} is consistent with \eqref{eq:UBtr}.

Put 
\begin{equation}
\label{eq:dimM}
d_E := \frac{ \dim_M^+ K_E - 1 }{2} , \ \ \ \ 
\delta_E  = \frac{ \dim_H K_E -1 } 2 ,
\end{equation}
where $ \dim_H $ is the Hausdorff dimension (hence $ \delta_E \leq d_E $).
Then the following theorem was proved for 
Euclidean infinities by Sj\"ostrand--Zworski \cite{SZ10}, 
for even asymptotically hyperbolic infinities (see \S \ref{vasy})
by Datchev--Dyatlov \cite{fwl} (generalizing earlier results 
\cite{Z99},\cite{glz}) and for several convex obstacles by 
Nonnenmacher--Sj\"ostrand--Zworski \cite{NSZ}:

\begin{thm}
\label{t:fwl}
Under the assumption \eqref{eq:sectionass} and using the notation 
\eqref{eq:NhP},\eqref{eq:dimM}, for every $ \gamma $ and $ \epsilon > 0 $ there exists
$ C$ such that
\begin{equation}
\label{eq:fwl}
N_{P ( h ) } ( [ E - h , E+h ] - i h [ 0 , \gamma ] ) \leq
C h^{ - d_E - \epsilon } .
\end{equation}
When $ V = 0 $ in \eqref{eq:defPofh} and $ \dim M = 2 $ we can 
replace $ d_E + \epsilon $ by $ \delta_E $ in \eqref{eq:fwl}.
\end{thm}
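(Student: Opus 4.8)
The plan is to prove the fractal Weyl upper bound \eqref{eq:fwl} following the strategy of Sj\"ostrand \cite{SjDuke} and Sj\"ostrand--Zworski \cite{SZ10}, using the method of complex scaling from \S\ref{rae} combined with a microlocal weight localizing to a $\sqrt h$-neighborhood of the trapped set. First I would set up the complex-scaled operator $P_\theta(h)$, so that resonances of $P(h)$ in the region $[E-h,E+h]-ih[0,\gamma]$ coincide with eigenvalues of $P_\theta(h)$ there, and $P_\theta(h)-z$ is elliptic outside a compact set. The key step is then to conjugate $P_\theta(h)$ by an exponential weight $e^{G^w/h}$ where $G=G(x,\xi;h)$ is a carefully chosen symbol of order $\epsilon$-regularity adapted to the escape dynamics: $G$ should be a smoothed-out escape function, roughly $G\sim \log\langle \text{time to escape}\rangle$, built so that $H_p G \leq -c < 0$ off a $\sqrt h$-neighborhood $K_E(\sqrt h)$ of the trapped set. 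Semiclassically, $e^{-G^w/h}(P_\theta(h)-z)e^{G^w/h}$ acquires an imaginary part bounded below by $c/C$ away from $K_E(\sqrt h)$, while near $K_E(\sqrt h)$ the perturbation is $\mathcal O(h)$. This forces any eigenfunction with $\Im z > -\gamma h$ to concentrate microlocally in $K_E(\sqrt h)$.

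Having localized the resonant states, the counting bound follows from a Jensen-type argument applied to a determinant, as in the proof of Theorem \ref{t:3}, but now with the crucial improvement coming from phase-space volume. The number of resonances in $[E-h,E+h]-ih[0,\gamma]$ is bounded by the rank of a suitable spectral projection, which in turn is controlled by the number of $\sqrt h$-boxes needed to cover $K_E$: each such box supports $\mathcal O(1)$ quantum states by the uncertainty principle. Concretely, I would estimate $\operatorname{tr}\Pi$ where $\Pi$ is the projection onto the eigenspaces in the relevant rectangle, writing $\Pi = \frac1{2\pi i}\oint (z-P_\theta(h))^{-1}\,dz$, and bound its trace (or the singular values of a Grushin-reduced effective Hamiltonian) using that the modified operator is invertible with norm $\mathcal O(h^{-1})$ except on a set covered by $n(\sqrt h, K_E)\leq C h^{-d_E-\epsilon}$ boxes. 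A Grushin (Schur complement) reduction replacing the problem on $L^2$ by an effective matrix of size $\sim n(\sqrt h, K_E)$ makes this quantitative; the determinant of the effective Hamiltonian is an entire function of $z$ whose growth in the rectangle gives, via Jensen, the bound $N_{P(h)}\leq C h^{-d_E-\epsilon}$.

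For the improved statement when $V=0$ and $\dim M=2$ (replacing $d_E+\epsilon$ by $\delta_E$), the refinement is that for a hyperbolic trapped set on a surface the trapped set, while fractal, has $\dim_M^+ = \dim_H$ — more precisely one uses that the two-dimensional hyperbolic trapped set has a self-similar Cantor structure (as in \eqref{eq:Cantor}, \eqref{eq:defdel}) so that the box and Hausdorff dimensions agree, and additionally one can exploit the product structure $\Gamma^+_E\times\Gamma^-_E$ locally, so the covering number is governed by $\dim_H$ with no $\epsilon$ loss. This part relies on Bowen's formula \eqref{eq:BowP} and the regularity of the stable/unstable foliations in two dimensions; in the asymptotically hyperbolic constant-curvature case the trapped set is literally the lift of the limit set and $\delta_E = \delta$, the dimension of $\Lambda(\Gamma)$, so the bound reads $h^{-\delta}$, matching \cite{glz},\cite{fwl}.

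The main obstacle I expect is the construction and symbol-calculus handling of the microlocal weight $G$: it cannot be a classical symbol (it has only $\epsilon$-Gevrey or log-type regularity, living in a class like $S_{\frac12-\epsilon}$), so one must develop a symbol calculus for $e^{G^w/h}$ with controlled errors — this is precisely the technical heart of \cite{SjDuke},\cite{SZ10} and is where the $\epsilon$ in the exponent is generated. A secondary difficulty is making the escape function globally well-defined on the (possibly non-Euclidean) manifold $M$ and compatible with complex scaling or Vasy's method near infinity; for the asymptotically hyperbolic case one invokes the framework of \S\ref{vasy} so that infinity is handled as a black box, and the Datchev--Vasy gluing \cite{dv1} lets one transplant the Euclidean microlocal estimates of \cite{SZ10} to the trapped-set region. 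The Hausdorff-dimension improvement in two dimensions additionally requires a delicate covering argument exploiting the Lipschitz (or H\"older) regularity of the unstable foliation, which is special to surfaces.
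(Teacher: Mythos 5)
The paper is a survey and does not itself prove Theorem~\ref{t:fwl}; immediately after the statement it refers the reader to \cite{SZ10}, \cite{fwl} and \cite{NSZ} for proofs, and to \cite{Nonn}, \cite{Naud3} for expository outlines. What the paper does provide, just before the statement, is the same heuristic you reproduce: complex scaling changes exponential growth along $\Gamma^+_E$ into exponential decay; then a microlocal conjugation by an exponential weight built from a low-regularity escape function localizes resonant states with $\Im z>-\gamma h$ to an $\mathcal O(\sqrt h)$-neighbourhood of $K_E$; and the uncertainty principle converts the covering number of that neighbourhood into the counting bound. Your Grushin/Schur-complement reduction to an effective Hamiltonian of size $\asymp n(\sqrt h,K_E)$ followed by Jensen is indeed how \cite{SjDuke} and \cite{SZ10} carry this out, and your identification of the weight as lying in an exotic $S_{1/2}$-type class, with the $\epsilon$ in the exponent generated by the symbol-calculus losses, is also correct. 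So the first half of your proposal is an accurate sketch of the standard route.

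The second half is where there is a genuine gap. You attribute the improvement from $h^{-d_E-\epsilon}$ to $h^{-\delta_E}$ in dimension $2$ with $V\equiv 0$ to the equality of upper Minkowski and Hausdorff dimensions for the two-dimensional hyperbolic trapped set. That equality is plausible (it holds for $C^{1+\alpha}$ conformal repellers by thermodynamic formalism, which covers the surface case), but it only gives you $d_E=\delta_E$, hence $h^{-\delta_E-\epsilon}$; it does not by itself remove the $\epsilon$. The definition of upper box dimension only yields $n(\sqrt h,K_E)\le C_\epsilon h^{-(\dim_M^+K_E)/2-\epsilon}$, so a covering argument keyed to the dimension alone cannot produce a bound without the $\epsilon$. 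What one actually needs is the sharp covering estimate $n(\sqrt h,K_E)\le C\,h^{-\delta_E}$, which is strictly stronger than a dimension statement and requires Ahlfors--David regularity of the trapped set (equivalently, a measure on $K_E$ whose mass on $r$-balls is $\asymp r^{2\delta_E+1}$). In the constant-curvature case of \cite{glz} this is the classical regularity of the limit set of a convex co-compact group with its Patterson--Sullivan measure; in the variable-curvature case of \cite{fwl} Datchev--Dyatlov establish the analogous regularity by a thermodynamic-formalism construction of a Gibbs measure adapted to the unstable Jacobian, and that regularity is then threaded through the second-microlocal/escape-function estimates so that the final count sees $\delta_E$ rather than $d_E+\epsilon$. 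Your mention of Bowen's formula \eqref{eq:BowP} and foliation regularity gestures in the right direction, but without the Ahlfors--David step the argument as you have written it still loses an $\epsilon$, and so does not actually yield the stated improvement.
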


For outlines and ideas of the proofs we refer the reader to general 
reviews \cite[\S\S 4,7]{Nonn},\cite{Naud3} and to 
\cite[\S 2]{SZ10},\cite[\S 1]{fwl},\cite[\S 1]{NSZ}.

We remark that a fractal upper bound for resonances should be valid
for more general quotients  $ \Gamma \backslash \HH^n $ than
the ones considered in \cite{fwl} -- see
Guillarmou--Mazzeo \cite{guma} for results on meromorphic continuation 
in that case. Even the simplest case of surfaces with both funnels and
cusps is not understood (the only results
treat the non-trapping case -- see Datchev \cite{Dat3} and
Datchev--Kang--Kessler \cite{dkk}).

\begin{figure}
\includegraphics[width=13cm]{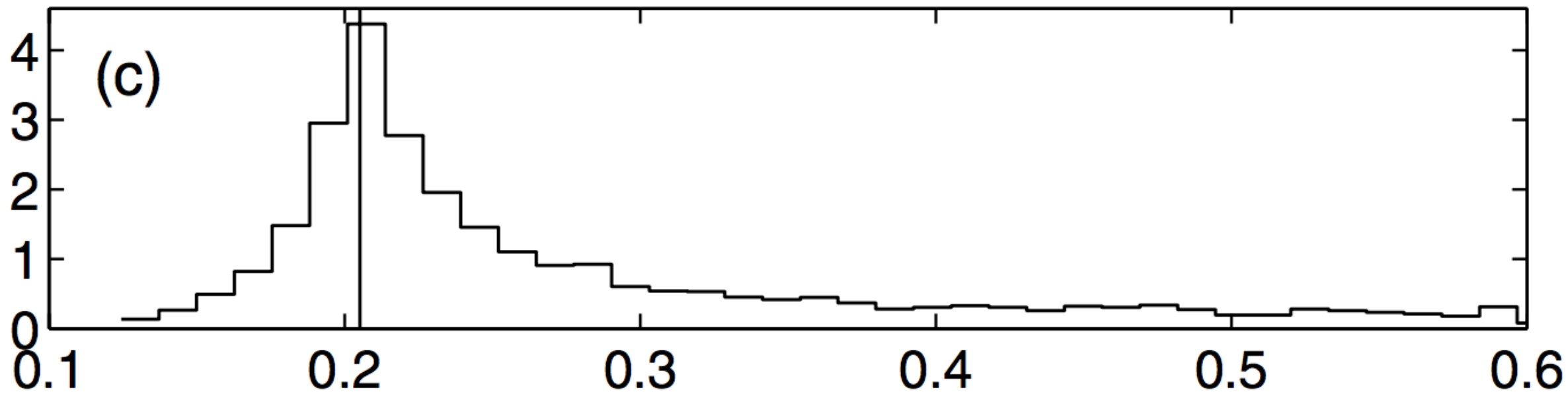}
\includegraphics[width=12.5cm]{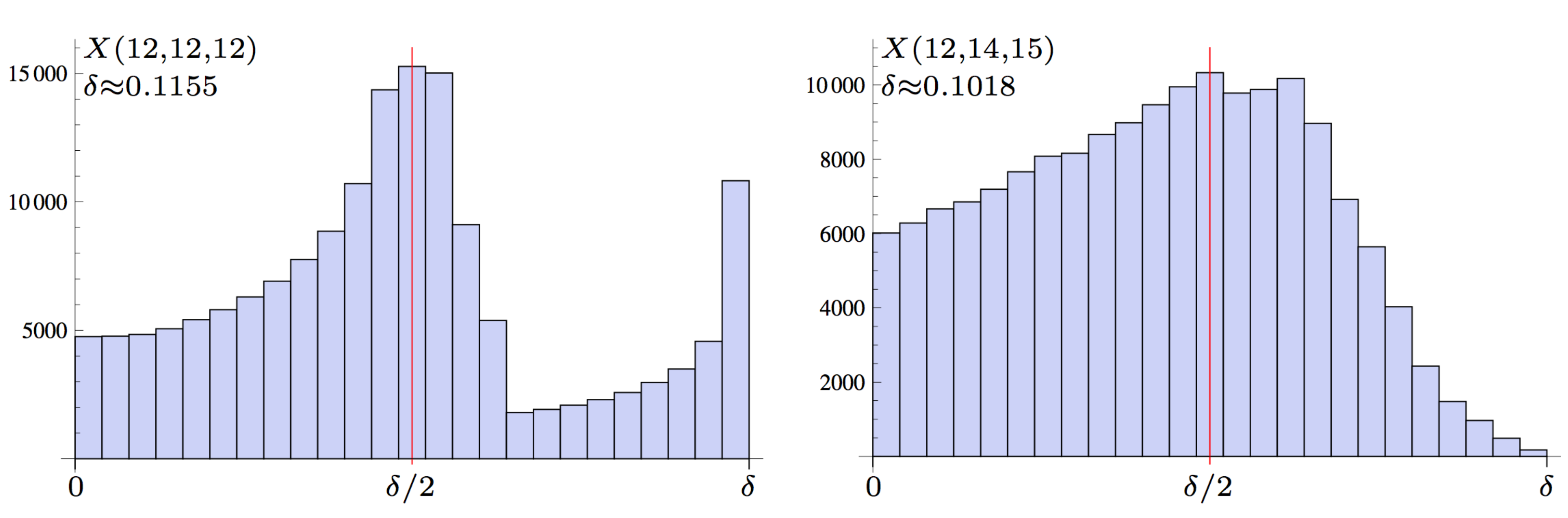}
\caption{Top: histogram of imaginary parts of resonances of a three
discs configuration (see Figure~\ref{f:kuhl}) with real parts
with $ | \Re \lambda | \leq 2 \times 10^3$ computed using a semiclassical zeta
function \cite{LSZ}. The imaginary parts concentrate at 
$ \frac 12\mathcal  P_1 ( 1 ) $, that is at one half of the classical
escape rate. (We should remark here that 
the method of calculation based on the zeta function, although widely accepted
in the physics literature, does not have a rigorous justification
and may well be inaccurate. However, the phenomenon of 
concentration at $ \frac12 \mathcal P_1 ( 1 )  $ has been verified experimentally -- see 
\cite{Bark} and Figure~\ref{f:class}). Bottom: histograms 
from Borthwick \cite{borth2} showing the distribution of 
imaginary parts (the $x$ axis is $   \Im \lambda + \frac12 $) for
hyperbolic surfaces $ X ( \ell_1, \ell_2, \ell_3 )$ (see Figure~\ref{f:borth1})
with $ | \Re \lambda | \leq 2\times 10^4 $.
The concentration at $ \Im \lambda = \frac12 ( \delta - 1) $ is clearly visible.
Recalling \eqref{eq:FtoG} and \eqref{eq:defdel} that is the same as in the top figure.
\label{f:concent}
}
\end{figure}

Existence of fractal lower bounds or asymptotics has been studied numerically,
first for a three bump potential by Lin \cite{Lin} and then using 
semiclassical zeta function calculations \cite{GaRi} for three disc
systems (see Figures \ref{f:kuhl} and \ref{f:class})
by Lu--Sridhar--Zworski \cite{LSZ}. The results 
have been encouraging and led to experimental investigation by 
Potzuweit et al \cite{Potz}: the experimentally observed density 
is higher than linear but does not fit the upper bound \eqref{eq:fwl}.
There are many possible reasons for this, including the limited 
range of frequencies. (Related experiments by Barkhofen et al \cite{Bark}
confirmed the pressure gaps of Theorem \ref{t:8}, see Figures~\ref{f:class} and
\ref{f:concent}).
Fractal Weyl laws have been considered (and numerically
checked) for various open chaotic quantum maps -- see \cite{NZ0},\cite{dylo}
and references given there. 
The have also been proposed in other types of chaotic 
systems, ranging from dielectric cavities to communication and social
networks -- see recent review articles by Cao--Wiersig \cite{CaWi} and 
Ermann--Frahm--Shepelyansky \cite{EFS} respectively. Fractal 
Weyl laws for {\em mixed systems} and localization of resonant
states have been investigated by K\"orber et al \cite{KMBK},\cite{KBK}.

So far, the only rigorous lower bound which agrees with the fractal upper bound was
obtained by Nonnenmacher--Zworski  in a special toy model 
quantum map \cite[Theorem 1]{NZ1}. Nevertheless the evidence seems
encouraging enough to reiterate the following conjecture \cite{LSZ},\cite[(4.7)]{gz}:

\begin{conj}
\label{c:3}
Suppose that $ N ( \Omega ) := N_{P(h) } ( \Omega ) $ is defined in \eqref{eq:NhP}
and $ \delta_E $ is given by \eqref{eq:dimM}. If $ \dim M = 2 $, then
\begin{equation}
\label{eq:fwl2}  \lim_{ h \to 0 } h^{  \delta_E } N( [E-h,E+h] - i h [ 0 , \gamma]) 
= v_E( \gamma ),  \end{equation}
where $ v_E ( \gamma ) > 0 $ for $ \gamma $ large enough.
When $ \dim M  >2 $ then 
\begin{equation}
\label{eq:fwln}  
\limsup_{ h \to 0 } h^{ \delta_E }  N ( [E-h,E+h] - i h [ 0 , \gamma] ) > 0 , 
\end{equation}
when $ \gamma $ is large enough.
\end{conj}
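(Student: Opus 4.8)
\textbf{Plan for Conjecture \ref{c:3}.} The target is a matching lower bound for the fractal Weyl law: Theorem \ref{t:fwl} already supplies the upper bound $N([E-h,E+h]-ih[0,\gamma])\leq Ch^{-\delta_E-\epsilon}$ (and $\leq Ch^{-\delta_E}$ when $\dim M=2$, $V=0$), so what remains is to produce $\gtrsim h^{-\delta_E}$ resonances in a window of fixed height $\gamma h$ once $\gamma$ is large, and in dimension two to upgrade this to an honest limit $v_E(\gamma)>0$. The only place this has been done rigorously is the toy open quantum map of Nonnenmacher--Zworski \cite[Theorem 1]{NZ1}, and the plan is to follow that template while confronting the genuine PDE difficulties. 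The first step is to pass, via the Datchev--Vasy gluing \cite{dv1} and the reduction of \cite{NSZ1}, from the operator $P(h)$ to a quantization $B_N$ (with $N\sim h^{-1}$) of a Poincar\'e return map $\kappa$ on a transversal hypersurface $\Sigma$; then $\Spec(B_N)\cap\{|\lambda|\geq M^{-\gamma}\}$ corresponds to resonances in $[E-h,E+h]-ih[0,\gamma]$ via \eqref{eq:corres}, so it suffices to count eigenvalues of $B_N$ of modulus $\geq M^{-\gamma}$ and show this count is $\gtrsim N^{\delta_E}=h^{-\delta_E}$.

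The core of the argument is a trace/variance estimate for the open quantum propagator. Following \cite{NZ1}, one writes $\tr(B_N^k)$ for $k$ a fixed multiple of $\log(1/h)$ and expands it as a sum over closed orbits of $\kappa$ of length $k$; the hyperbolicity hypothesis \eqref{eq:sectionass} (Definition \ref{d:hyp}) guarantees these orbits are isolated with nondegenerate Poincar\'e matrices, so each contributes an explicit oscillatory amplitude. The key positivity input is a \emph{second-moment} (or $L^2$) lower bound: one shows that $\|B_N^{k}\|_{\mathrm{HS}}^2=\tr((B_N^*B_N)^k)$ (or a smoothed version thereof) is bounded below by a constant times the number of $\sqrt h$-boxes covering the trapped set $K_E$, which by the box-counting definition is $\asymp h^{-\delta_E}$ (using that for these uniformly hyperbolic sets the Hausdorff and Minkowski dimensions coincide, so $\dim_H K_E=\dim_M^+K_E$ and $\delta_E=d_E$). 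Combining the upper bound $\|B_N^k\|_{\mathrm{HS}}^2\le(\#\{|\lambda_j|\ge M^{-\gamma}\})\cdot M^{-2\gamma k}+(\text{small tail from }|\lambda_j|<M^{-\gamma}\text{, with }\|B_N\|\le 1)$ with this lower bound forces $\#\{|\lambda_j(B_N)|\geq M^{-\gamma}\}\gtrsim h^{-\delta_E}$ once $\gamma$ is large enough that $M^{-2\gamma k}$ beats the contribution of the many small eigenvalues --- this is exactly the mechanism of \cite[Theorem 1]{NZ1}. For $\dim M>2$ this yields \eqref{eq:fwln}, since only a positive limsup is claimed.

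For $\dim M=2$ and the sharper claim \eqref{eq:fwl2} --- existence of the limit $v_E(\gamma)$ --- the plan is more delicate and would proceed by a Poisson/trace-formula argument in the spirit of Sj\"ostrand's trace formula \cite{SjX} and the surface-group analysis: one would study $\tr\oint_{\partial\Omega}(P(h)-z)^{-1}\,dz$ as an integral over the scaled resolvent, localize using the microlocal weights of Sj\"ostrand \cite{SjDuke} so that resonant states concentrate in a $\sqrt h$-neighbourhood of $\Gamma^+_E$, and extract a leading term of order $h^{-\delta_E}$ whose coefficient is the Patterson--Sullivan-type measure of the portion of the trapped set that is ``visible'' at decay rate $\gamma$. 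Self-improvement of the limsup to a limit would require quantum-ergodicity-on-the-trapped-set type equidistribution statements for the resonant states, for which at present only partial results exist. \textbf{The main obstacle} is precisely this last point: converting the HS lower bound (a second-moment estimate, which only controls a weighted count and is insensitive to cancellation among eigenvalues) into a genuine asymptotic, or even a clean limsup with an identified positive constant, requires ruling out destructive interference in $\tr(B_N^k)$ over exponentially many closed orbits --- a non-self-adjoint analogue of the pair-correlation / diagonal-approximation problem, where no general method is known and where even in the quantum-map setting \cite{dylo} the behaviour can be model-dependent. That is why the conjecture remains open and why I would expect the $\dim M>2$ limsup statement \eqref{eq:fwln} to be within reach of the trace method while the full dimension-two limit \eqref{eq:fwl2} is genuinely harder.
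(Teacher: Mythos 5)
The statement labeled Conjecture \ref{c:3} is precisely that --- a conjecture --- and the paper does not prove it. The paragraph immediately preceding it records that the only rigorous lower bound matching the fractal upper bound is Nonnenmacher--Zworski's in a special toy-model quantum map \cite[Theorem 1]{NZ1}, and that the conjecture is ``reiterated'' from \cite{LSZ} and \cite{gz}. So there is no proof in the paper for you to match; and you clearly recognise this, since your final paragraph is an honest account of why the problem is open rather than a claimed argument.

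As a programme, your sketch tracks the evidence the paper cites, and your diagnosis of the central obstruction is correct. Two technical points are worth flagging. First, the identity $\dim_H K_E = \dim_M^+ K_E$ (hence $\delta_E = d_E$) is not automatic for a hyperbolic trapped set; it holds for conformal repellers and in the two-dimensional $V\equiv 0$ case singled out in Theorem \ref{t:fwl}, but in general $\delta_E \le d_E$ can be strict, which is one reason the higher-dimensional claim \eqref{eq:fwln} is only a limsup. Second, the identity $\|B_N^k\|_{\mathrm{HS}}^2 = \tr\bigl((B_N^*B_N)^k\bigr)$ that your trace step leans on fails for non-normal $B_N$: one has $\|B_N^k\|_{\mathrm{HS}}^2 = \sum_j s_j(B_N^k)^2$ with singular values rather than eigenvalues, and for a strongly non-normal operator (which $B_N$ is) the passage from $s_j(B_N^k)$ to $|\lambda_j(B_N)|^k$ is exactly where the difficulty lives --- that is why the model of \cite{NZ1} is a toy model, with enough explicit structure (block-diagonal form, separated scales) to control this passage and to beat the contribution of the many small eigenvalues. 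Beyond that, your account is accurate: second-moment estimates only yield weighted counts, and upgrading a limsup to the limit \eqref{eq:fwl2} would require controlling cancellation in $\tr B_N^k$ over exponentially many closed orbits, a non-self-adjoint pair-correlation problem for which no method is known. You have not proved the conjecture (nobody has); what you have written is a fair map of the terrain with the correct identification of where it breaks off, modulo the two inaccuracies noted.
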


Numerical investigation of fractal Weyl laws lead to the observation that
imaginary parts of resonances (that is resonance widths) concentrate at 
$ \Im z \approx \frac12 \mathcal P_E ( 1 ) h $ where $ \mathcal P_E ( s ) $ is defined
by \eqref{eq:pressure}. The value $ | \mathcal P_E ( 1 ) |$ gives the
classical escape rate on the energy surface $ p^{-1} ( E) $ -- see 
\cite[(17)]{Nonn}. The point made by Gaspard--Rice \cite{GaRi} was that
the gap is given by $ |\mathcal P_E ( \frac12 ) |h < \frac 12| \mathcal P_E ( 1 )|h$
(with the latter being a more obvious guess). However most of resonances
want to live near $ \Im z = \frac 12 \mathcal P_E ( 1 ) h $ -- see
also Figure~\ref{f:dy1}. 

The first rigorous result indicating lower density away from $ \Im \lambda = 
\frac12 \mathcal P_E ( 1 ) $ was obtained by Naud \cite{Naud2} who
improved on the fractal bound in Theorem \ref{t:fwl} for 
$ \gamma < |\mathcal P_E ( 1 ) |/2 $ in the case of convex co-compact
surfaces. Dyatlov \cite{dy6} provided an improved bound for 
convex co-compact hyperbolic quotients in all dimensions. We state 
his result in the case of dimension two and in the 
non-semiclassical setting (resonances as poles of $  ( - \Delta_g^2 
- \frac 14 - \lambda^2 )^{-1} $ -- see \eqref{eq:Res4Deltag},\eqref{eq:FtoG}
and \eqref{eq:defdel}):

\begin{thm}
\label{t:dy6}
Let $ M = \Gamma \backslash \HH^2 $ be a convex co-compact quotient
(see Figure \ref{f:borth1}) and define
$  \mathcal N ( r , \beta )  $ to be the number of poles of the continuation of
$ ( - \Delta_g - \frac12 -\lambda^2 )^{-1} $ in 
\[    r <  \Re \lambda  \leq r + 1 , \ \ \Im \lambda > - {\textstyle{\frac12}} + 
( 1 - \beta  ) \delta , \]
where $ \delta $ is defined in \eqref{eq:defdel}. Then for any $ \beta $ and
$ \epsilon > 0 $
there exists $ C $ such that
\begin{equation}
\label{eq:tdy6}
\mathcal N( r , \beta ) \leq C r^{ \min ( 2 \beta \delta + \epsilon, \delta )  } .
\end{equation}
\end{thm}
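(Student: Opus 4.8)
The plan is to combine the resonance-free-strip estimate coming from the fractal uncertainty principle (Theorem \ref{t:dylo} in the model case, or rather its Dyatlov--Zahl analogue for convex co-compact quotients) with the fractal Weyl upper bound of Theorem \ref{t:fwl}. The exponent $\min(2\beta\delta+\epsilon,\delta)$ is visibly the minimum of two separate bounds, so I expect the proof to split into these two regimes, with the crossover at $\beta = \tfrac12$. For $\beta \geq \tfrac12$ one simply invokes \eqref{eq:fwl} directly: the box $r<\Re\lambda\leq r+1$, $\Im\lambda > -\tfrac12+(1-\beta)\delta$ is contained in a rescaled box of the form $[E-h,E+h]-ih[0,\gamma]$ with $h\sim 1/r$ and $\gamma$ bounded, and the two-dimensional geometric refinement of Theorem \ref{t:fwl} gives the count $\leq C h^{-\delta_E}=Cr^{\delta}$ (using $\delta_E=\delta$ here via \eqref{eq:defdel}). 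The content of Theorem \ref{t:dy6} is therefore entirely in the regime $\beta<\tfrac12$, where one must beat $r^\delta$ down to $r^{2\beta\delta+\epsilon}$.

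For the interesting regime $\beta<\tfrac12$, the plan is as follows. First I would set up the semiclassical picture with $h=1/r$, so that counting resonances in $r<\Re\lambda\leq r+1$, $\Im\lambda>-\tfrac12+(1-\beta)\delta$ becomes counting resonances of $-h^2\Delta_g$ in a box $[E-h,E+h]-ih[0,\gamma_\beta]$ with $\gamma_\beta$ proportional to $\beta\delta$ (translating via \eqref{eq:FtoG} and the pressure formula $\mathcal P_E(s)=2E^{-1/2}(\delta-s)$ of \eqref{eq:defdel}). Second, the key microlocal input: the fractal uncertainty principle of Dyatlov--Zahl, applied not at the trapped set itself but at a \emph{dilated} piece of it, shows that a resonant state with decay rate $\Im z > -\gamma_\beta h$ must be concentrated, in the sense of its FBI transform, not on all of $\Gamma^+_E$ up to scale $\sqrt h$ but only on an $h^{\beta'}$-neighborhood of the trapped set for a suitable $\beta'>\tfrac12$ depending on $\beta$. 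The crude covering heuristic leading to \eqref{eq:UBtr} then gives a count $\lesssim h^{-2\beta'\delta}$, and optimizing the relation between $\beta$ and $\beta'$ produces the exponent $2\beta\delta$; the extra $\epsilon$ absorbs the loss in passing from Minkowski to Hausdorff dimension and the loss in the FUP exponent.

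The hard part will be the microlocalization step — showing quantitatively that a resonant state which decays at rate $\gamma_\beta h$ cannot have mass spread out over the full $\sqrt h$-thickened trapped set, but only over an $h^{\beta'}$-thickened one. This is exactly where the fractal uncertainty principle enters in a non-trivial way: one writes the resolvent identity / Grushin problem reduction (as in \cite{NZ1},\cite{SZ10}) expressing the resonant state in terms of a propagator acting between neighborhoods of $\Gamma^+$ and $\Gamma^-$, inserts spectral cutoffs to the Cantor-type structure of the limit set, and uses the FUP norm bound $\|\indic_{\mathcal C_k}\mathcal F_N\indic_{\mathcal C_k}\|\leq CN^{-\gamma}$ (generalized to the curved setting of $\Gamma\backslash\HH^2$, which is the substance of \cite{DyZa}) to gain a factor beyond the trivial $h^{1/2-\delta}$. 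Controlling the errors in iterating this over $\log(1/h)$ time steps, and keeping track of exactly how the gain in the FUP exponent feeds into the Minkowski-covering count, is the technically delicate core; everything else — the deformation of contours, Jensen's inequality to convert resolvent bounds into counting bounds exactly as in the proof of Theorem \ref{t:3}, and the routine rescaling $h\leftrightarrow 1/r$ — is comparatively mechanical.
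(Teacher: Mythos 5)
Your split at $\beta=\tfrac12$ is correct, and reducing the regime $\beta\geq\tfrac12$ to the two-dimensional sharpening $d_E+\epsilon\mapsto\delta_E=\delta$ of Theorem~\ref{t:fwl} is exactly right; all of the content is in $\beta<\tfrac12$. There, however, your covering heuristic does not close. Running the phase-space volume count underlying \eqref{eq:UBtr} on an $h^{\beta'}$-neighbourhood of $K_E$ inside $p^{-1}([E-h,E+h])$ gives $h\cdot(h^{\beta'})^{2-2\delta}/h^{2}=h^{2\beta'(1-\delta)-1}$ states, \emph{not} $h^{-2\beta'\delta}$; the two agree only at $\beta'=\tfrac12$. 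Worse, your exponent $2\beta'\delta$ \emph{grows} with $\beta'$, so for $\beta'>\tfrac12$ the claimed bound $h^{-2\beta'\delta}$ is worse than $h^{-\delta}$ and yields no improvement, while matching $2\beta'\delta$ to $2\beta\delta$ forces $\beta'=\beta<\tfrac12$, contradicting your own premise that $\beta'>\tfrac12$. There is also a structural objection: the uncertainty principle blocks a state from localising in a set thinner than $\sqrt h$ in one direction without spreading in the conjugate direction, so a naive volume count at scale $h^{\beta'}$ with $\beta'>\tfrac12$ is not meaningful as a quantum-state count — this is precisely why the standard heuristic \eqref{eq:UBtr} stops at $\sqrt h$.

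Beyond the arithmetic, the key tool is the wrong one. The fractal uncertainty principle of \cite{DyZa},\cite{dylo},\cite{Body} is the mechanism behind \emph{resonance-free strips}: it forbids simultaneous localisation near $\Gamma^+$ and $\Gamma^-$ at the relevant fractal scales and so produces a gap (Theorem~\ref{t:dylo}), not a Weyl count, and the proof of \eqref{eq:tdy6} in \cite{dy6} does not run through it. That argument instead exploits the explicit Schottky and Patterson--Sullivan structure of $\Gamma\backslash\HH^2$ inside the Vasy-type effective continuation: after a Grushin/Fredholm-determinant reduction, the new ingredient is a depth-dependent singular-value estimate for the resulting compact operator — roughly, only $O(r^{2\beta\delta+\epsilon})$ singular values exceed the threshold $e^{-\gamma_\beta}$ associated to a strip of width $\beta\delta$ — and Jensen's inequality (equivalently the singular-value Weyl inequality, exactly as in the proof of Theorem~\ref{t:3}) converts that to the counting bound. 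So your scaffolding (Grushin, contour, Jensen) is the right shell, but the missing lemma is a strip-adapted singular-value estimate for the Schottky transfer structure, not an FUP microlocalisation of resonant states.
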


The bound $ r^\delta $ comes from \cite{glz} and the improvement 
is in strips $ \Im \lambda > - \gamma > - \frac12 ( 1 - \delta ) $. 
A comparison between \eqref{eq:tdy6} and numerically fitted exponents
was shown in Figure \ref{f:dy1} (for the more robust counting fuction
$ N ( R , \beta ) = \sum_{k=1}^{[R]} \mathcal N ( R - k, \beta ) $). 
These numerical experiments of Borthwick--Dyatlov--Weich \cite[Appendix]{dy6}
again suggest that Conjecture \ref{c:3} (perhaps in a weaker form) holds,
while the optimality of \eqref{eq:tdy6} is unclear. Some lower bounds
in strips have been obtained by Jakobson and Naud \cite{jn1} but they 
are far from the upper bounds. 

Dyatlov--Jin \cite{dylo} proved an analogue of Theorem \ref{t:dy6} for open
quantum maps of the form shown in Figure~\ref{f:dylo}. That paper can 
be consulted for more numerical experiments and interesting conjectures.

Here we make a weak (and hopefully accessible) conjecture which 
in the case of quotients follows from Theorem \ref{t:dy6}:

\begin{conj}
\label{c:vE}
In the notation of \eqref{eq:fwl2} and for general operators satisfying 
\eqref{eq:sectionass}
\[ \gamma <  {\textstyle{\frac12} }
| \mathcal P_E ( 1 ) | \ \Longrightarrow  \ 
v_E ( \gamma ) = 0 .  \]
\end{conj}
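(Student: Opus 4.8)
The plan is to reduce this to the fractal Weyl upper bound of Theorem~\ref{t:fwl} together with a more refined count that tracks the imaginary part of each resonance against the classical escape rate. The starting point is the observation that the trapped set $K_E$, although $\sqrt h$-localizable at scale $h^{-d_E}$, carries a natural \emph{Gibbs-type weight} coming from the unstable Jacobian; this is exactly what the topological pressure $\mathcal P_E(s)$ in \eqref{eq:pressure} measures. The quantity $|\mathcal P_E(1)|$ is the classical escape rate: the volume (with respect to the Liouville measure restricted near the energy surface) of the set of points that remain within distance $O(1)$ of $K_E$ up to time $t$ decays like $e^{-t|\mathcal P_E(1)|}$. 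A resonant state with $\Im z > -\gamma h$ decays in time no faster than $e^{-\gamma t}$, so if $\gamma < \frac12|\mathcal P_E(1)|$ the ``quantum lifetime'' $1/\gamma$ is longer than twice the classical half-life, and a naive phase-space volume count predicts strictly fewer than $h^{-\delta_E}$ such states --- in fact the predicted exponent drops below $\delta_E$. The job is to turn this heuristic into a rigorous bound $N([E-h,E+h]-ih[0,\gamma]) = o(h^{-\delta_E})$ for $\gamma < \frac12|\mathcal P_E(1)|$, which is precisely the statement $v_E(\gamma)=0$ in the notation of \eqref{eq:fwl2}.

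First I would invoke the refined upper bound of Dyatlov~\cite{dy6} (Theorem~\ref{t:dy6}) in the case of convex co-compact hyperbolic quotients: there $\mathcal P_E(1) = 2E^{-1/2}(\delta-1)$ by \eqref{eq:defdel}, so the strip $\Im\lambda > -\frac12(1-\delta)$ is exactly the region $\gamma < \frac12|\mathcal P_E(1)|$ after the translation \eqref{eq:FtoG}, and \eqref{eq:tdy6} gives $\mathcal N(r,\beta) \leq Cr^{2\beta\delta+\epsilon}$ with $2\beta\delta < \delta = \delta_E$ in that range; summing over unit intervals and translating back to the semiclassical normalization yields $N([E-h,E+h]-ih[0,\gamma]) \leq Ch^{-2\beta\delta_E-\epsilon} = o(h^{-\delta_E})$, hence $v_E(\gamma)=0$. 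This handles the hyperbolic-quotient case outright. For the general operator $P(h)$ satisfying \eqref{eq:sectionass}, the strategy is to run the Sj\"ostrand--Zworski microlocal complex scaling argument behind Theorem~\ref{t:fwl} but with a \emph{pressure-adapted} escape function: instead of localizing resonant states to a fixed $\sqrt h$-neighborhood of $K_E$, one builds a microlocal weight $e^{G/h}$ where $G$ is a regularized version of $-\,t\,\varphi^+$ transported along the flow, so that states with $\Im z > -\gamma h$ and $\gamma < \frac12|\mathcal P_E(1)|$ are forced onto the \emph{sublevel sets} of the Birkhoff average of the unstable Jacobian, whose $\sqrt h$-covering number is $o(h^{-\delta_E})$ by the variational characterization of pressure (the number of $\sqrt h$-boxes needed is $\asymp h^{-s_\gamma}$ where $s_\gamma$ solves $\mathcal P_E(s_\gamma) = -2\gamma$, and $s_\gamma < \delta_E$ precisely when $\gamma < \frac12|\mathcal P_E(1)| = -\frac12\mathcal P_E(1)$, using convexity and monotonicity of $s\mapsto\mathcal P_E(s)$ together with $\mathcal P_E(\delta_E) = $ the value at which the count hits $\dim_H K_E$). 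Feeding this covering count into the standard singular-value / determinant argument of \cite{SZ10} then yields the improved bound.

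The main obstacle is the construction and control of the pressure-adapted microlocal weight in the non-analytic, general geometric setting. In \cite{SZ10} the weight only needs to separate the trapped set from its complement at scale $\sqrt h$; here it must additionally encode a \emph{graded} decay matching the unstable Jacobian, which means $G$ must be chosen with finite but large regularity ($S_\delta$-type symbol bounds, $\delta$ close to $\frac12$) while remaining a genuine escape function for $H_p$ away from $K_E$ and simultaneously a good Lyapunov-type function for the cocycle $\varphi^+$ along the flow on $K_E$ --- and $\varphi^+$ is generally only H\"older, so its regularization introduces errors that must be absorbed. Reconciling the symplectic/FBI-transform bookkeeping (which wants $G$ smooth on the $h^{1/2}$ scale) with the dynamical bookkeeping (which only offers H\"older regularity of the unstable foliation) is exactly the technical heart; I expect one can get away with the weaker conclusion $v_E(\gamma)=0$ rather than a sharp asymptotic because only an upper bound is needed, which tolerates polynomially-in-$\log(1/h)$ losses from the roughness. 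A secondary difficulty is that for $\dim M > 2$ the Hausdorff and Minkowski dimensions of $K_E$ may differ, so one should be careful to state the conclusion with $\delta_E$ (Hausdorff) only when $\dim M = 2$, matching the hypothesis under which $v_E(\gamma)$ is even conjectured to be positive in \eqref{eq:fwl2}; in higher dimensions the natural statement is that the $\limsup$ in \eqref{eq:fwln} with the weaker normalization is still suppressed in the sub-escape-rate strip, which follows from the same covering estimate.
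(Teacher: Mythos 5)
This statement is Conjecture~\ref{c:vE}: the paper gives no proof of the general case, and it remains open. The only thing the paper asserts, in the sentence immediately preceding the conjecture, is that the claim follows from Theorem~\ref{t:dy6} for convex co-compact hyperbolic quotients, and your first paragraph reproduces that reduction correctly: at $\beta<\frac12$ the strip $\Im\lambda>-\frac12+(1-\beta)\delta$ stays strictly above the line $\Im\lambda=-\frac12(1-\delta)$ (equivalently $\gamma<\frac12|\mathcal P_E(1)|$ after passing through \eqref{eq:FtoG} and \eqref{eq:defdel}), and the exponent $2\beta\delta$ in \eqref{eq:tdy6} is then strictly less than $\delta=\delta_E$, which after summing over unit intervals gives $v_E(\gamma)=0$ in that regime.

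Your second paragraph, which is what would be needed to establish the conjecture for general operators satisfying \eqref{eq:sectionass}, is a strategy sketch rather than a proof, and the obstruction you flag yourself is precisely what keeps this open. The unstable Jacobian $\varphi^+$ and the distributions $E^\pm$ are generically only H\"older continuous, whereas the microlocal escape functions that drive the singular-value and determinant estimates of \cite{SZ10} need symbol-type control of derivatives at scale $h^{1/2}$. You assert that losses from regularizing $\varphi^+$ ``can be absorbed,'' but the target bound $N([E-h,E+h]-ih[0,\gamma])=o(h^{-\delta_E})$ is an $o(\cdot)$ statement, not an $O(\cdot)$ statement: even a loss of $h^{-\epsilon}$ from smoothing a H\"older function at scale $h^{1/2}$ would destroy the conclusion, and this has to be shown, not stipulated. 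Two further concrete problems: (i) your covering count identifies $\delta_E$ with a zero of the pressure via Bowen's formula \eqref{eq:BowP}, which the paper only states in dimension $2$; this is consistent with the fact that $v_E(\gamma)$ is defined via \eqref{eq:fwl2} only when $\dim M=2$, but it means the heuristic gives no traction on the higher-dimensional analogue \eqref{eq:fwln} which you also claim to address; (ii) even granting a covering bound $O(h^{-s_\gamma})$ with $\mathcal P_E(s_\gamma)=-2\gamma$ and $s_\gamma<\delta_E$, converting it into a bound on the zero count of a Fredholm determinant requires a microlocal weight \emph{graded} along the Birkhoff average of $\varphi^+$, and no estimate of this kind exists in \cite{SZ10}, \cite{NSZ}, or \cite{SjDuke} --- it would be a genuinely new result, not a technical adaptation. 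Dyatlov's proof of Theorem~\ref{t:dy6} for quotients goes through an entirely different mechanism (the fractal uncertainty principle of \cite{DyZa} and the explicit structure of limit sets), so your sketch for the general case is not a generalization of the known argument but a proposal for a new one.
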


Jakobson and Naud \cite{jn2} made a bolder conjecture that for convex co-compact hyperbolic 
surfaces there are only finitely many resonances with $ \Im \lambda > - 
\frac12 ( 1- \delta) $ -- see Figure \ref{f:dy1}.  We make an equally bold 
but perhaps more realistic
\begin{conj}
\label{c:4}
Suppose that $ N ( \Omega ) := N_{P(h) } ( \Omega ) $ is defined in \eqref{eq:NhP}.
Then 
for every $ \gamma > \mathcal P_E ( 1 )/2 $, there exists $ \epsilon_0 >  0 $
such that for every $ \epsilon < \epsilon_0 $, 
\begin{equation}
\label{eq:crazy}
\lim_{ h \to 0 } \frac{ N (  [ E - h , E + h ]  - {\textstyle{\frac 1 2}} i h
[ \mathcal P_E  ( 1 ) - \epsilon , \mathcal P_E ( 1) + \epsilon  ] )}
{N (  [ E - h , E + h ]  - i h [ 0 , \gamma] )}
=  1,\end{equation}
where $ \mathcal P_E ( s )  $ is the topological pressure defined in \eqref{eq:pressure}.
\end{conj}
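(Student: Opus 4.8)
The starting point is the reduction, due to Nonnenmacher--Sj\"ostrand--Zworski \cite{NSZ1},\cite{NSZ}, of the resonance count of $P(h)$ near $E$ to the count (in annuli) of eigenvalues $\lambda_j=\lambda_j(h)$ of a quantized open Poincar\'e map $B_N$, $N\sim 1/h$, attached to a hypersurface transversal to the flow near $K_E$; the construction may be arranged so that $\|B_N\|\le 1+o(1)$ and $B_N$ has effective rank $\sim h^{-\delta_E}$. Under the correspondence $\lambda=e^{-itz/h}\leftrightarrow z\in\Res(P(h))$ of \S\ref{resfree}, writing $w=\Im z/h$ for the rescaled width, the statement \eqref{eq:crazy} is equivalent to the assertion that the empirical measures of the widths, normalised by the total number of resonances in $[E-h,E+h]-ih[0,\gamma]$, converge weakly as $h\to 0$ to the point mass $\delta_{\tfrac12\mathcal P_E(1)}$. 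So the plan has three parts: (i) an upper bound showing that resonances with $w$ strictly above $\tfrac12\mathcal P_E(1)$ form a vanishing fraction; (ii) an upper bound showing the same for $w$ strictly below $\tfrac12\mathcal P_E(1)$; and (iii) a lower bound of the conjectural order $h^{-\delta_E}$ for the count in a thin band around $w=\tfrac12\mathcal P_E(1)$, which pins down the denominator and turns (i)--(ii) into genuine vanishing-fraction statements. Granting $d_E=\delta_E$, the matching upper bound $O(h^{-\delta_E})$ for the full strip is then Theorem \ref{t:fwl}.

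For (i) the available tool is the improved fractal upper bound in strips above half the classical escape rate: Naud \cite{Naud2}, Dyatlov \cite{dy6} (Theorem \ref{t:dy6}) and, via the fractal uncertainty principle, Dyatlov--Zahl \cite{DyZa} and Dyatlov--Jin \cite{dylo} give, for $\gamma'<\tfrac12|\mathcal P_E(1)|$, a count $O(h^{-d_E+c(\gamma')})$ with $c(\gamma')>0$ strictly below $d_E$; granting $d_E=\delta_E$ and part (iii), this is $o(h^{-\delta_E})$. For (ii) the natural input is the Fredholm determinant: $\prod_j|\lambda_j|=|\det B_N|$ and $\|B_N\|\le 1+o(1)$ force $\sum_j\log(1/|\lambda_j|)=-\log|\det B_N|\ge -o(h^{-\delta_E})$, while a transfer-operator / periodic-orbit computation should identify $-\log|\det B_N|$ with $\tfrac12|\mathcal P_E(1)|\log M$ times the effective rank, up to $o$; this already bounds the number of eigenvalues with $|\lambda_j|\le M^{-\gamma'}$ by a fixed fraction of the rank for $\gamma'$ slightly larger than $\tfrac12|\mathcal P_E(1)|$, but to upgrade ``fixed fraction'' to ``vanishing fraction'' one genuinely needs the limiting empirical measure of the widths, not merely its first moment --- so (ii) is essentially as deep as (iii).

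The main obstacle is therefore (iii) together with the point-mass refinement of (ii): a fractal \emph{lower} bound of the right order $h^{-\delta_E}$, which is precisely Conjecture \ref{c:3} and is presently known only in the arithmetic toy model of Nonnenmacher--Zworski \cite{NZ1}. The natural attack is a local trace formula in the spirit of Sj\"ostrand \cite{SjX},\cite{SjWeyl}: for $k\asymp\log N$ expand $\tr(B_N^k)=\sum_j\lambda_j^k$ over closed orbits of the open map, and more generally expand $\sum_j\varphi(\lambda_j)$ against test functions, aiming to show the leading term is $h^{-\delta_E}$ times $\int\varphi\,d\nu$ for a classical measure $\nu$ concentrated at $|\lambda|=M^{\mathcal P_E(1)/2}$ (equivalently $w=\tfrac12\mathcal P_E(1)$). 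The difficulty --- the same one that blocks every known fractal lower bound --- is controlling the oscillatory cancellation among the exponentially many periodic-orbit contributions (the ``sign problem''); without extra arithmetic or self-similar structure there is at present no handle on it, and it seems likely that a genuinely new ingredient is needed, such as a quantitative equidistribution statement for resonant states with widths near $\tfrac12\mathcal P_E(1)h$ (forcing the ``typical'' resonant state to decay at the classical rate), or a transfer of Sj\"ostrand's random-perturbation methods \cite{SjWeyl} to the deterministic $B_N$. Once (iii) and the point-mass strengthening of (ii) are in hand, \eqref{eq:crazy} follows from (i)--(iii) by a squeezing argument.
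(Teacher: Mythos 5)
You have not proved the statement, and indeed you cannot be expected to: Conjecture~\ref{c:4} is presented in the paper as an open problem, not a theorem, so there is no ``paper's own proof'' to compare your argument against. Your write-up is an honest roadmap rather than a proof, and it should be assessed on that footing.

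As a roadmap it is accurate and well-aligned with the surrounding discussion in the paper. Your part (i) is exactly the mechanism the paper points to: the improved fractal upper bounds of Naud~\cite{Naud2}, Dyatlov~\cite{dy6} (Theorem~\ref{t:dy6}), and the fractal uncertainty principle of~\cite{DyZa},\cite{dylo} give a count $O(h^{-d_E+c})$, $c>0$, for resonances with rescaled width strictly above $\tfrac12|\mathcal P_E(1)|$. You correctly identify part (iii) --- the fractal lower bound $\gtrsim h^{-\delta_E}$ in a thin band around $w=\tfrac12|\mathcal P_E(1)|$ --- as precisely Conjecture~\ref{c:3}, known only in the toy model of~\cite{NZ1}, and you correctly observe that part (ii), concentration from below, is essentially as deep as (iii): a determinant/first-moment bound by itself gives only a ``fixed fraction'' statement, not a vanishing-fraction one. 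This is consistent with the paper's own remark that combining Theorem~\ref{t:dy6} with Conjecture~\ref{c:3} would yield only a \emph{weaker} substitute for~\eqref{eq:crazy} (vanishing density strictly above the escape rate, not the full point mass). Your suggestion of a local trace formula \`a la Sj\"ostrand~\cite{SjX},\cite{SjWeyl} and the diagnosis of the oscillatory sign problem as the core obstruction are both reasonable and reflect the current understanding.

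One technical caveat worth flagging in your sketch of (ii): as written, $\prod_j|\lambda_j|=|\det B_N|$ vanishes (most of the spectrum of $B_N$ is microscopically small or zero, since $B_N$ has effective rank $\sim h^{-\delta_E}$ inside a much larger matrix). The intended identity must be formulated via a restricted or regularized product over the $\sim h^{-\delta_E}$ non-negligible eigenvalues, or via a suitable modified determinant, before the periodic-orbit/transfer-operator computation of its logarithm can be meaningfully compared with $\tfrac12|\mathcal P_E(1)|\log M$ times the effective rank. As you say yourself, even with that fixed, it yields only moment information; upgrading to the full limiting measure is where the genuinely new idea is needed.
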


In the case of convex co-compact hyperbolic quotients Theorem~\ref{t:dy6}
and Conjecture~\ref{c:3} imply that, for $ \gamma $ large enough,
\[ \lim_{ h \to 0 } \frac{ N (  [ E - h , E + h ]  - {\textstyle{\frac 1 2}} i h
[ 0,  \mathcal P_E  ( 1 ) - \epsilon , \gamma  ] )}
{N (  [ E - h , E + h ]  - i h [ 0 , \gamma] )}
=  1 . \] 
That could be a weaker but already very interesting substitute for \eqref{eq:crazy}.

\section{Pollicott--Ruelle resonances from a scattering theory viewpoint}
\label{dsPR}

Suppose that $ M $ is a manifold and $ X $ a smooth vector field on  $ M$.
The flow $ \varphi_t := \exp t X $ gives a group of diffeomorphisms
$ \varphi_t : M \to M $ and a large branch of the theory of dynamical
systems is concerned with {\em long time} properties of the evolution 
under $ \varphi_t $. One way to measure this evolution is by considering
{\em correlations}: for $ f, g \in \CIc ( M ) $ we define
\begin{equation}
\label{eq:corrfg} 
\rho_{f,g} ( t ) := \int_M f ( \varphi_{-t } ( x ) ) g ( x ) dm ( x) , 
\end{equation}
with respect to a measure on $ M $. (Sometimes $ ( \int_M f ) \times ( \int_M g ) $
is subtracted from $ \rho_{f,g} $ in the definition; it also interesting to 
consider less regular $ f$ and $ g$). What measure 
one should take is of great interest and invariant measures are the most
natural ones. The particularly interesting {\em Sinai--Ruelle--Bowen} (SRB) measures
(see \cite{kahal})
 have the property that
\[ \rho_{f,g} ( t ) = \lim_{ T\to \infty } \frac{1}{T} \int_0^T 
f ( \varphi_{s-t} ( x ) ) g ( \varphi_s ( x ) ) ds , 
\] 
for almost every $ x\in M $ with respect to a smooth (Lebesgue) measure.
This allows a computation of $ \rho_{f,g} ( t ) $ using one ``randomly chosen"
orbit of the flow.

\begin{figure}
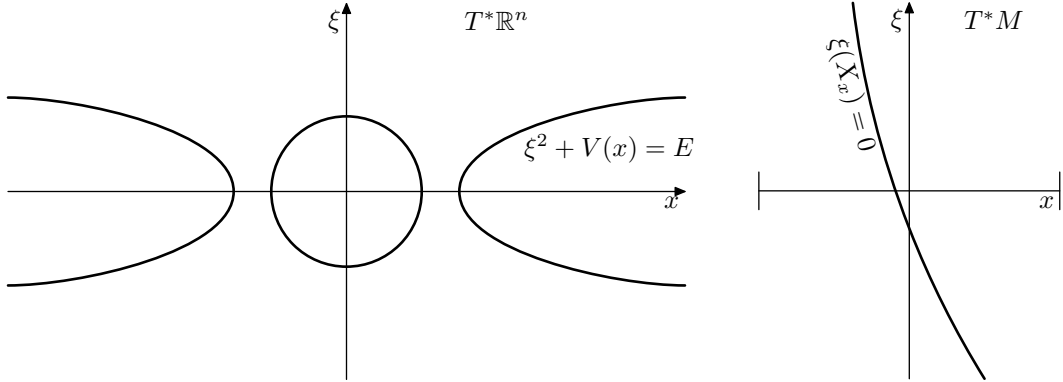

\includegraphics{revres.5} \hspace{0.2in} \includegraphics{revres.6}
\label{f:scat}
\caption{Scattering in $ \RR^n $ and Anosov flows on a compact manifold $M$:
the classical Hamiltonians in phase space (now a ``double phase space"
in case of Anosov flow, $ T^*M $)
are $ p ( x, \xi ) = |\xi|^2 + V ( x ) $, 
$ ( x, \xi ) \in T^* \RR^n $ and $ p( x, \xi ) = \xi ( X_x )  $, 
$ ( x, \xi ) \in T^* M $ (where $ X $ generates the
Anosov flow). In both cases, the energy surfaces $ p = E $ are {\em non-compact}:
in scattering $ x \to \infty $, in the Anosov case, $ \xi \to \infty $. 
Figure \ref{f:Epm} below should be compared to \cite[Figure 2]{mel}: it is not
surprising that Melrose's ``radial estimates" play a role in the analysis
of both cases. For flows the situation is simpler since the flow is
linear in the $ \xi $ variables (see \eqref{eq:Ptop}) and there is no
additional infinity (in the scattering case there is also the $ \xi $ infinity).
When $ M$ is non-compact the situation becomes more complicated as 
both infinities play a role: see \cite[Figure 2]{rnc}.}
\end{figure} 

With $ U ( t ) f := f \circ \varphi_{-t} $ this is the set-up
discussed in Figure~\ref{f:core} and we expect that for chaotic flows
we have a picture shown there. That is, we expect an expansion of
correlations in terms of suitably defined {\em resonances of the 
dynamical systems,} 
\begin{equation}
\label{eq:expcor} \rho_{f,g} ( t ) \sim \sum_{ \Im \lambda_j > - \gamma } 
e^{ - i \lambda_j t } u_j (f) v_j ( g ) + \mathcal O_{f,g} ( e^{-\gamma t } ), 
\end{equation}
where $ u_j , v_j \in \mathcal D' ( M ) $ and $ \lambda_j $ are independent
of the test functions $ f $ and $ g $. If we know that the number of $\lambda_j $'s with 
$ \Im \lambda_j > - \gamma $ is finite and $ \lambda_0 = 0 $ (with 
$ u_0 = v_0 = dm $) is the only real resonance then we have an
{\em exponential decay of correlations}:
\begin{equation}
\label{eq:expdcor}  \rho_{f,g} ( t ) -  \int_M f(x) dm(x )   \int_M g(x) dm( x)   = \mathcal O ( e^{ - \alpha t } ) . 
\end{equation}

Hence we see that the questions encountered here are similar to the questions
asked in the study of scattering resonances: existence of resonance expansions \S\S\ref{expan},\ref{resrel} and of 
 resonance free regions (gaps) \S\S\ref{resfree3},\ref{resfree}. One can also ask general questions about the distribution of $ \lambda_j $'s.

As signatures of a chaotic systems the {\em dynamical resonances} $ \lambda_j $'s were introduced
by Pollicott \cite{Pol} and Ruelle \cite{rue} -- see Figure~\ref{f:PNAS} for 
an example in modeling of physical phenomena. They can also be
studied in the simpler setting of maps, that is for systems with discrete time -- 
see Baladi--Eckmann--Ruelle \cite{bear} for an early study and  
Baladi \cite{ballade},\cite{ballade1} for later developments.

The explicit analogy with scattering theory was emphasized by 
Faure--Sj\"ostrand \cite{fa-sj} following  earlier works by Faure--Roy--Sj\"ostrand
\cite{faros} and Faure--Roy \cite{faro} in the case of maps. Semiclassical methods for the 
study of decay of correlations were also introduced by Tsujii \cite{Ts} who
applied FBI transform techniques to obtain precise bounds on the 
asymptotic gaps for certain flows (see \S\ref{resfreeA} below).
This led to rapid progress some of which is described below -- to see
the extent of this progress one can compare the
current state of affairs  to that in the early review by Eckmann \cite{eck}.
For some other recent developments see also Faure--Tsujii \cite{fatsss}.

In this section we first give a precise definition of chaotic dynamical 
systems and then define {\em Pollicott--Ruelle} resonances. We 
concentrate to the case of $ M$ compact but  give indications what
happens in the non-compact case.
We  then explain the connection
to dynamical zeta functions
and survey results on 
resonance free strips and on ounting of resonances.

\subsection{Anosov dynamical systems}

Let $ M $ be a compact manifold and $ \varphi_t = \exp t X :  M \to M  $ 
a $ C^\infty $ flow  generated by  $ X \in
C^\infty ( M; T M) $. 

In this article the precise meaning of being {\em chaotic} is that
the flow is an {\em Anosov flow}. That means that the tangent space
to $ M $ has a continuous decomposition $ T_x M = E_0 ( x ) 
\oplus E_s (x) \oplus E_u ( x)$ which is invariant, 
$ d \varphi_t ( x ) E_\bullet ( x ) = E_\bullet ( \varphi_t ( x ) ) $,
$E_0(x)=\mathbb R X_x $,
and for some $ C$ and $ \theta > 0 $ fixed 
\begin{equation}
\label{e:anosov}
\begin{split} 
&  | d \varphi_t ( x ) v |_{\varphi_t ( x )}  \leq C e^{ - \theta
  |t| } | v |_x  , \ \ v
\in E_u ( x ) , \ \ 
t < 0 , \\
& | d \varphi_t ( x ) v |_{\varphi_t ( x ) }  \leq C e^{ - \theta |t| } |  v |_x , \ \
v \in E_s ( x ) , \ \ 
t > 0 .
\end{split} 
\end{equation}
where $ | \bullet |_y $ is given by a smooth Riemannian metric on $
X $. This should be compared to Definition~\ref{d:hyp}: in the case
when $ M $ is compact all of $ M$ is ``trapped" and Anosov flows
are hyperbolic everywhere.

Following Faure--Sj\"ostrand \cite{fa-sj} we exploit the analogy between
dynamical systems and quantum scattering, with the fiber $\xi$-infinity 
playing the role of $x$-infinity in scattering theory. The 
pullback map can be written analogously to the Schr\"odinger propagator:
\begin{equation}
\label{eq:phtoP} 
\begin{gathered} \varphi_{-t}^* = e^{ - i t P } : \CI ( M ) \to \CI ( M) ,  \\ 
\varphi_{-t}^* f ( x) := f ( \varphi_{-t} ( x )) , \ \ 
P := {\textstyle{\frac 1 i }} X . 
\end{gathered} \end{equation}
Sometimes it is convenient to make the seemingly trivial semiclassical
modification and write $ P_h = \frac h i X $, $ \varphi_{-t}^* = 
e^{ - itP_h/h} $. In either case 
the symbol of $ P $ and its Hamiltonian flow 
\begin{equation}
\label{eq:Ptop} \sigma ( P ) =: p ( x , \xi ) = \xi( X_x ) , \ \ e^{ t H_{p} }(x,\xi) = 
 ( \varphi_t ( x ) , ( {}^T                          
d\varphi_t ( x) )^{-1} \xi )  .\end{equation}
Here $ H_p $ denotes the Hamilton vector field of $ p$: 
$ \omega ( \bullet , H_p ) = dp $, where $ \omega = d ( \xi dx ) $ 
is the symplectic form on $ T^* M $. This should be compared
to \eqref{eq:Hamf} and \eqref{eq:Hamp}, noting that $ \varphi_t $
mean a different thing in \S \ref{srr}. For us the classical
flow $ \varphi_t $ on $M $ is the ``quantum propagation" $ e^{ - i t P } $ and the 
corresponding Hamiltonian flow $ \exp t H_p $ is the symplectic lift of $ \varphi_t $
to $ T^*X $.

In the study of $ P $ we need the dual decomposition of 
the cotangent space:
\begin{equation}
\label{eq:Estar} T_x^*X=E_0^*(x)\oplus E_s^*(x)\oplus E_u^*(x),
\end{equation}
where
$E_0^*(x),E_s^*(x),E_u^*(x)$ are annhilators of
$ E_s ( x ) \oplus E_u ( x ) $, $ E_0 ( x ) \oplus E_s ( x ) $,
and $ E_0 ( x ) \oplus E_u ( x )$. Hence they are dual to 
to $E_0(x),E_u(x),E_s(x)$, respectively.

An important special class of Anosov flows is given by contact Anosov flows.
In that case $ M $ is a contact manifold, that is a manifold equipped
with a contact $1$-form $ \alpha $: that means that if the dimension of $M$ is 
$ 2k+1 $ then $ (d \alpha)^{\wedge k } \wedge \alpha $ is non-degenerate.
A contact flow is the flow generated by the Reeb vector field $ X$:
\begin{equation}
\label{eq:contact}      \alpha ( X ) = 1 , \ \ d \alpha ( X , \bullet ) = 0  . 
\end{equation}
Natural examples of Anosov contact flows are obtained from
negatively curved Riemannian manifolds $ ( \Sigma , g ) $: 
\begin{equation}
\label{eq:Sigma} M = S^*\Sigma := \{ ( z, \zeta ) \in T^* \Sigma \, : \, |\zeta|_g = 1 \},  \ \ \ \alpha = 
\zeta dz |_{S^* \Sigma } . 
\end{equation}

The scattering theory/microlocal point of view to Anosov flows
has already had applications outside the field: 
by Dang--Rivi\'ere \cite{DR16} 
to the analysis of Morse--Smale gradient flows,
by Guillarmou 
\cite{gu15},\cite{gu16}, Guillarmou--Monard \cite{gumo}, 
Guillarmou--Paternain--Salo--Uhlmann \cite{gypsy} to inverse problems
and by Faure--Tsujii to the study of semiclassical zeta functions
\cite{fatss}.

\subsection{Definition of Pollicott--Ruelle resonances}
\label{defPRres}

The intuitive definition should follow Figure \ref{f:core}. We should 
consider the {\em power spectrum} of correlations:
\begin{equation}
\label{eq:rhohfg} \hat \rho_{f,g} ( \lambda ) := \int_0^\infty e^{ i \lambda t } \rho_{f,g} ( t ) 
dt , \end{equation}
which is well defined for $ \Im \lambda > 0 $. If we show that
\begin{equation}
\label{eq:powersp}
\text{ $ \lambda \longmapsto \hat \rho_{f,g} ( \lambda ) $ continues meromorphically to $ \Im \lambda > - A $,}
\end{equation}
then the poles of the continuation of $ \hat \rho_{f,g} (  \lambda ) $ 
are the complex frequencies we expect to appear in expansions of correlations \eqref{eq:expcor}. 

Let us assume now that the correlations \eqref{eq:corrfg} are 
defined using a smooth (not necessarily invariant) density $ dm ( x ) $.
We denote by  $ \langle \bullet, \bullet \rangle $ the distributional pairing
using this density.
 In the notation of \eqref{eq:phtoP} and for $ \Im \lambda > 0 $, 
\begin{equation}
\label{eq:rhotoP} \hat \rho_{f,g} ( \lambda ) =
\int_0^\infty \langle e^{ - i t P } f ,  g \rangle 
e^{ i \lambda t } dt = 
{\frac 1 i } \langle ( P - \lambda )^{-1} f , g \rangle 
\end{equation}
which means that \eqref{eq:powersp} follows from 
\begin{equation}
\label{eq:contiP}
 \text{ $ ( P - \lambda )^{-1} : \CI ( M ) \to \mathcal D' ( M )  $ continues meromorphically to $ \Im \lambda > - A $,}
\end{equation}
To obtain this meromorphic continuation one wants to find suitable 
spaces on which $ P - \lambda $ is a Fredholm operator. 
A microlocal construction of such Hilbert spaces was provided by 
Faure--Sj\"ostrand \cite{fa-sj} but the origins of the method
lie
in the works on anisotropic Banach spaces by
Baladi--Tsujii~\cite{bats},
Blank--Keller--Liverani~\cite{b-k-l},
Butterley--Liverani~\cite{but-liv},
Gou\"ezel--Liverani~\cite{go-liv}, and
Liverani~\cite{liver,liver2} -- see \cite{ballade1} for a recent
account in the setting of maps. 

Here we will follow a modified approach of \cite[\S\S3.1,3.2]{zeta}
where the spaces are defined using microlocal weights. To describe the
spaces we denote
by $ \Psi^{0+} ( M ) $ the space of pseudodifferential operators
of order $ \epsilon $ for any $ \epsilon > 0 $, with $ \sigma : 
\Psi^{m} ( M ) \to S^m ( T^* M ) / S^{m-1} ( T^*M ) $ denoting 
the {\em symbol map} (see \eqref{eq:Hamf} where we used the 
{\em semiclassical symbol}, $ p = \sigma_h ( P ) $ and 
\cite[Appendix B]{zeta} for definitions and references). 
We then put
\begin{gather}
\label{eq:weighted}
\begin{gathered}
H_{rG } (M ) := \exp ( - rG ( x,  D) ) L^2 ( M ) , \ \  G \in \Psi^{0+} (  M) , \\
\sigma ( G ) = ( 1 - \psi_0 ( x, \xi) ) m_G ( x , \xi ) \log | \xi|_g , 
\end{gathered}
\end{gather}
where $ \psi_0 \in C^\infty_{\rm{c}} ( T^*M, [ 0 , 1 ] ) $ is $ 1 $ near $\{ \xi = 0 \}$, 
$ m_G ( x, \xi ) \in C^\infty ( T^* M \setminus 0 , [ -1, 1 ] ) $ is homogeneous of
degree $ 0 $ and satisfies (see \eqref{eq:Estar})
\begin{equation}
\label{eq:defmG}  m_G ( x , \xi ) = \left\{ \begin{array}{ll}  \ \ 1 & \text{near $ E_s^* $}\\
-1 & \text{near $ E_u^* $ } \end{array} \right. \ \ H_p m_G ( x, \xi ) \leq 0 , \ \
( x, \xi) \in T^* M \setminus 0 . \end{equation}
The existence of such $ m_G $ is shown in \cite[Lemma C.1]{zeta}. 
Intuitively, it is clear that it can be done 
if we look at Figure \ref{f:Epm}: the flow lines
of $ H_p $ go from $ E_s^* $ to $ E_u^* $ or towards the zero section. Hence
we can have a function which is $ 1 $ near $ E_s^* $ and $ -1 $ near 
$ E_u^* $ and decreases along the flow. 

\begin{figure}
\includegraphics[scale=1.2]{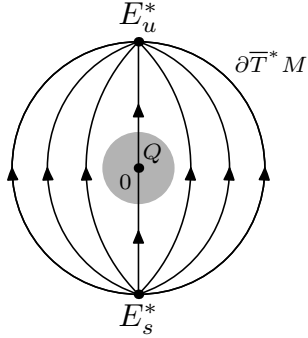}
\caption{A schematic representation of the flow on the compactification of
$ T^* M $, ${\overline T}^* M$ -- by compactification we mean replacing each fiber 
$ T_x^* M $ by a ball $ {\overline T}^*_x M $ with the boundary corresponding to the sphere at infinity. At zero energy, $ p^{-1} ( 0 ) = \{ ( x, \xi) : \xi (X_x ) = 0 \} $
the trapped set of the flow is given by the zero section and the operator
$ X/i - i Q ( x, h D )/h - \lambda $ is invertible on spaces $ H_{ r G  } $ 
(with uniform bounds once the norms are semiclassical modified by 
changing $ G ( x, D )$ to $ G ( x, h D) $) for $ Q \geq 0 $ microlocalized 
to the shaded region and elliptic in a neighbourhood of the trapped set and
$ \Im \lambda > -r/C_0 $.  Since $ Q ( x , h D )$
is a compact operator that, and estimates on the adjoint on dual spaces, 
shows the Fredholm property of $ X/i - \lambda $. }
\label{f:Epm}
\end{figure}

The properties of $ m_G $ and hence of the operator $ G ( x, D ) $ 
show that for $ r\geq 0 $, 
\[  H^r ( M ) \subset H_{rG } ( M ) \subset H^{-r} ( M ) . \]
Moreover, 
{\em microlocally} near $ E_s^* $ and $ E_u^* $ the
space $ H_{rG} $ is equivalent to $ H^{r} $ and $ H^{-r} $ respectively. 
To explain this rigorously we recall the notion of {\em wave front set}, $
\WF ( u ) $,
of a distribution $ u \in \mathcal D' ( \RR^n ) $ (since the notion is
local, or rather {\em microlocal}, the definition works for 
manifolds as well): for $ ( x , \xi ) \in T^* \RR^n \setminus 0 $, 
\begin{equation}
\label{eq:WF} 
\begin{gathered}
 ( x, \xi ) \notin 
\WF ( u )  \\ \Updownarrow \\
\exists \, \varphi \in \CIc ( \RR^n ) ,\, \, \varphi ( x) \neq0 , \, \epsilon > 0  
\ \forall \, N  \ \exists \, C_N \\
 |\widehat{ \varphi u } | \leq C_N  ( 1 + |\eta | )^{-N} , \ \ 
 \eta \in \RR^n \setminus 0 , \, 
\left|{\textstyle{\frac{\eta}{|\eta|} - \frac{\xi}{|\xi|}}} \right| < \epsilon ,
\end{gathered}
\end{equation}
see \cite[Definition 8.1.2]{H1} and \cite[Theorem 18.1.27]{H3} for a useful
characterization.
The definition means that the Fourier transform of a localization of 
$ u $ to a neighbourhood of $ x $ decays rapidly in a conic neighbourhood
of $ \xi $. Rapid decay in all directions would mean that $ u $ is
smooth at $ x $ and hence $ \WF ( u ) $ provides a phase space localized
description of smoothness. We also recall \cite[Definition 8.2.2]{H1}:
for a closed conic subset, $ \Gamma $, of $ T^* M \setminus 0 $ ($0$ denotes the zero section $ \xi = 0 $; conic means the invariance under the multiplicative
action of $ \RR_+ $ on the fibers),
\begin{equation}
\label{eq:defDG}
\mathcal D'_\Gamma ( M ) := \{ u \in \mathcal D'_\Gamma ( M ) \; : \;
\WF ( u ) \subset \Gamma \} .
\end{equation}
With this notation we can say that there exist conic neihbourhoods, 
$ \Gamma_s $ and $ \Gamma_u $ if $ E_u^* $ and $ E_s^* $ respectively 
such that for $ r \geq 0 $,
\begin{equation}
\label{eq:mzation}
\begin{split} H_{rG} ( M ) \cap \mathcal D'_{ \Gamma_{u} } ( M) & = 
H^{-r } ( M ) \cap \mathcal D'_{ \Gamma_u } ( M)  , \\
H_{rG} ( M ) \cap \mathcal D'_{ \Gamma_s } ( M) & = 
H^{r } ( M ) \cap \mathcal D'_{ \Gamma_s } ( M) . 
\end{split} \end{equation}
Hence, our spaces improve regularity along the flow (see Figure \ref{f:Epm})
and that leads to a Fredholm property (roughly, the
inclusion $ H^{r} \hookrightarrow H^{-r} $, $ r > 0 $,  is a compact operator). 
Such spaces appeared in scattering theory long ago: in the
work of Melrose \cite{Mel1} on the Poisson formula for resonances 
and, with an explicit use of microlocal weights (in the analytic setting
via the FBI transform), in the work of Helffer--Sj\"ostrand \cite{He-Sj0} 
on a microlocal version of complex scaling.

The following theorem  was first proved
by Faure--Sj\"ostrand \cite{fa-sj} 
for more specific weights $ G $
and 
by Dyatlov--Zworski \cite{zeta}.
The characterization of resonant
states using a wave front set condition is implicit in 
\cite{fa-sj} and is stated in Dyatlov--Faure--Guillarmou
\cite[Lemma 5.1]{DGF} and \cite[Lemma 2.2]{zazi}. For non-compact
$ M $ with hyperbolic trapped sets for the flow $ \varphi_t $ 
a much more complicated analogue was proved by 
Dyatlov--Guillarmou \cite[Theorem 2]{rnc}.

\begin{thm}
\label{t:Fred}
Suppose that $ H_{rG} ( M )  $ is defined in \eqref{eq:weighted},
$ P = \frac 1 i X $, where $ X $ is the generator of the (Anosov)
flow. Define
$ D_{rG} ( M ) := \{ u \in H_{rG} ( M ) : P u \in H_{rG} (M ) \} $.
There exist constants, $ C_0 ,C_1  $ such that
\begin{equation}
\label{eq:Fred}
P - \lambda : D_{ r G } ( M ) \to H_{ r G }(M)  , \ \ \
 r > C_0 ( \Im \lambda )_- +C_0 , 
\end{equation}
is a Fredholm family of operators, invertible for $ \Im \lambda > C_1 $.
Hence, $ \lambda \mapsto ( P - \lambda )^{-1} :  H_{ r G } \to 
H_{ r G } $ is a meromorphic family of operators in $ \Im \lambda > 
- r/C_0 $ and in particular \eqref{eq:contiP} holds for any $ A $.
In addition,
\begin{equation}
\label{eq:PtoWF}
\frac{1}{ 2\pi i} \tr_{ H_{ r G } ( M ) } \oint_\lambda 
( \zeta - P )^{-1} d \zeta = 
\dim \{ u \in \mathcal D'_{ E_u^* } ( M ) : 
\exists \, \ell \ ( P - \lambda)^\ell u = 0 \} . 
\end{equation}
Here the integral is over a sufficiently small positively oriented
circle centered at $ \lambda $.
\end{thm}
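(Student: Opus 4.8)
\textbf{Proof proposal for Theorem \ref{t:Fred}.}

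The plan is to prove the Fredholm property by establishing a pair of semiclassically microlocal estimates --- an elliptic estimate away from $ p^{-1}(0) $, a propagation estimate along the flow, and crucially a radial-point estimate at the sources/sinks $ E_u^*, E_s^* $ at fiber infinity --- and then dualizing. First I would compactify $ T^*M $ fiberwise to $ \overline T^* M $ (replacing each $ T_x^* M $ by a closed ball) and observe that $ p(x,\xi) = \xi(X_x) $, viewed projectively, has $ E_u^* $ as a radial source and $ E_s^* $ as a radial sink at the boundary sphere bundle, with the flow of $ H_p $ otherwise moving from a neighborhood of $ E_u^* $ to a neighborhood of $ E_s^* $ (see Figure \ref{f:Epm}). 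The key inputs are the radial-point propagation estimates of Melrose \cite{mel} as used by Vasy \cite{vasy1}: at $ E_u^* $ one gets an estimate \emph{above} a regularity threshold, while at $ E_s^* $ one propagates \emph{into} the sink provided the regularity is \emph{below} a dual threshold. The conjugation by $ \exp(-rG(x,D)) $ with the symbol in \eqref{eq:defmG} is precisely designed so that in $ H_{rG}(M) $ the operator $ P $ is microlocally of order $ r $ near $ E_u^* $ (where we win) and of order $ -r $ near $ E_s^* $ (where we need to be below threshold); the monotonicity $ H_p m_G \le 0 $ ensures the subprincipal-type correction from the conjugation has the favorable sign along all other flow lines. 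The threshold at a radial point is shifted by $ \operatorname{Im}\lambda $ through the imaginary part of the subprincipal symbol of $ P - \lambda $, which is where the condition $ r > C_0(\operatorname{Im}\lambda)_- + C_0 $ enters.

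The next step is to assemble these into the two global estimates
\[
\| u \|_{H_{rG}} \le C \| (P-\lambda) u \|_{H_{rG}} + C \| u \|_{H^{-N}}, \qquad
\| v \|_{H_{-rG}} \le C \| (P-\lambda)^* v \|_{H_{-rG}} + C \| v \|_{H^{-N}},
\]
valid for $ r > C_0(\operatorname{Im}\lambda)_- + C_0 $, where $ (P-\lambda)^* = P - \bar\lambda $ acts on the dual space $ H_{-rG}(M) = H_{rG}(M)^* $ (the adjoint flips the roles of $ E_u^* $ and $ E_s^* $, so the same radial estimates apply). Since the inclusions $ H_{rG} \hookrightarrow H^{-N} $ are compact for $ N $ large, a standard functional-analytic argument (as in \cite[Theorem C.4]{res} or the Vasy framework) upgrades these a priori estimates to the statement that $ P - \lambda : D_{rG}(M) \to H_{rG}(M) $ has finite-dimensional kernel, closed range of finite codimension, i.e. is Fredholm of index zero. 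Invertibility for $ \operatorname{Im}\lambda > C_1 $ follows because there $ (P-\lambda)^{-1} = i\int_0^\infty e^{it\lambda} \varphi_{-t}^* dt $ converges on $ L^2 $ (the flow being measure-nice up to a bounded Jacobian, one gets a bound $ \|\varphi_{-t}^*\|_{L^2\to L^2} \le Ce^{C|t|} $), and on $ L^2 \supset H_{rG}|_{\text{microlocally away from infinity}} $ this identifies the resolvent. Analytic Fredholm theory \eqref{eq:AFT} then gives meromorphy of $ \lambda \mapsto (P-\lambda)^{-1} $ on $ \operatorname{Im}\lambda > -r/C_0 $ with finite-rank poles, and since $ r $ is arbitrary this yields \eqref{eq:contiP} for every $ A $. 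Because the spaces $ H_{rG} $ are all equal to $ \mathcal C^\infty $ microlocally near, say, a point where $ f,g $ are tested, and the pairing is independent of $ r $, the poles and the Laurent coefficients do not depend on $ r $, so the resonances are well defined.

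For the trace identity \eqref{eq:PtoWF}, the argument is: the spectral projector $ \Pi_\lambda = \frac{1}{2\pi i}\oint_\lambda (\zeta-P)^{-1}d\zeta $ has finite rank, so it is trace class and its range is $ \{ u \in H_{rG} : (P-\lambda)^\ell u = 0 \text{ some } \ell\} $. The point is to identify this range with the $ r $-independent space $ \{ u \in \mathcal D'_{E_u^*}(M) : (P-\lambda)^\ell u = 0 \text{ some }\ell\} $. One inclusion uses that any generalized resonant state lies in $ H_{rG} $, and by the elliptic estimate away from $ p^{-1}(0) $ together with the radial estimate at $ E_s^* $ (which, being below threshold, forces $ \WF(u) $ to avoid a neighborhood of $ E_s^* $) and propagation, $ \WF(u) \subset E_u^* $; this is the wave-front characterization noted in \cite{fa-sj},\cite[Lemma 5.1]{DGF}. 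The reverse inclusion is immediate since $ \mathcal D'_{E_u^*} \cap \{(P-\lambda)^\ell u = 0\} \subset H_{rG} $ by \eqref{eq:mzation}. Then $ \operatorname{tr} \Pi_\lambda = \dim \operatorname{ran}\Pi_\lambda $ gives \eqref{eq:PtoWF}.

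\textbf{Main obstacle.} The technical heart, and the step I expect to be hardest to get right, is the radial-point estimate at $ E_u^* $ and $ E_s^* $ with the precise threshold condition and its correct interaction with the weight $ \exp(-rG(x,D)) $: one must compute the sign and size of the contribution of $ rG $ to the commutator/positive-commutator argument at fiber infinity, verify that the monotonicity condition $ H_p m_G \le 0 $ in \eqref{eq:defmG} is exactly what makes the bulk propagation term have a good sign, and check that the threshold shift is linear in $ \operatorname{Im}\lambda $ so that the quantitative relation $ r > C_0(\operatorname{Im}\lambda)_- + C_0 $ comes out. Everything else --- the elliptic estimate, real-principal-type propagation, the compactness/Fredholm packaging, and the wave-front identification of resonant states --- is by now standard once that estimate is in hand. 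The existence of an admissible escape function $ m_G $ with all required properties is itself a nontrivial construction but can be quoted from \cite[Lemma C.1]{zeta}.
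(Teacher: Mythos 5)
Your proposal captures the correct overall strategy — escape function $m_G$, anisotropic weighted spaces $H_{rG}$, radial point estimates at fiber infinity \`a la Melrose/Vasy, propagation, compactness of the $H^{-N}$ error term, and analytic Fredholm theory — and this is indeed the route taken by Faure--Sj\"ostrand \cite{fa-sj} and Dyatlov--Zworski \cite{zeta}, which the paper references. The packaging differs slightly from the paper's hinted organization (Figure \ref{f:Epm} suggests inserting a complex absorbing potential $Q$ near the zero section so that $P - iQ/h - \lambda$ is invertible and $P - \lambda$ differs from it by a compact operator), but that is a cosmetic choice: both amount to the same a priori estimate.

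However, there is a genuine error: you have the roles of $E_u^*$ and $E_s^*$ as radial source and sink reversed, and correspondingly the microlocal regularity of $H_{rG}$ near each is also reversed. Under the lifted flow $e^{tH_p}$ in \eqref{eq:Ptop}, covectors in $E_u^*$ (which pair with $E_s$) grow as $t\to+\infty$ and covectors in $E_s^*$ (which pair with $E_u$) shrink, so on the fiber sphere at infinity the $H_p$-flow is repelled from $E_s^*$ and attracted to $E_u^*$: $E_s^*$ is the \emph{source} and $E_u^*$ is the \emph{sink}. The paper says this explicitly in two places: ``the flow lines of $H_p$ go from $E_s^*$ to $E_u^*$,'' and ``near $E_s^*$ we use \dots radial sources \dots and near $E_u^*$, \dots radial sinks.'' Correspondingly, by \eqref{eq:mzation} (note the subscripts: $\Gamma_u$ is a neighbourhood of $E_u^*$, $\Gamma_s$ of $E_s^*$), $H_{rG}$ agrees with $H^{-r}$ near $E_u^*$ and with $H^{r}$ near $E_s^*$ — the opposite of your claim that $P$ is ``of order $r$ near $E_u^*$ (where we win) and of order $-r$ near $E_s^*$.'' With $m_G = -1$ near $E_u^*$, the weight is $e^{-rG}\sim |D|^{r}$ there, which gives $H^{-r}$.

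This swap is not merely a relabelling: it makes your argument for the wavefront characterization \eqref{eq:PtoWF} collapse. You invoke ``the radial estimate at $E_s^*$ (which, being below threshold, forces $\WF(u)$ to avoid a neighborhood of $E_s^*$).'' A below-threshold radial \emph{sink} estimate propagates regularity \emph{into} the sink from the incoming bulk bicharacteristics; on its own it does not exclude $\WF(u)$ from the sink, and it cannot serve as the starting point of the regularity-propagation chain. The correct argument is: near the \emph{source} $E_s^*$ a resonant state $u\in H_{rG}$ lies in $H^r$ with $r$ above the source threshold, so the above-threshold source estimate bootstraps to $u\in C^\infty$ microlocally near $E_s^*$; propagation of singularities along the $H_p$-flow (which carries regularity from $E_s^*$ toward $E_u^*$) then shows $u$ is smooth on $p^{-1}(0)\setminus E_u^*$; near the sink $E_u^*$ one only has $u\in H^{-r}$ (below threshold) and no conclusion is forced, which is exactly why $\WF(u)\subset E_u^*$ is the right answer. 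Your ``main obstacle'' paragraph correctly identifies the radial estimates as the crux; fixing the source/sink assignment and the corresponding regularity of $H_{rG}$ at each is precisely the kind of sign-checking you flag as delicate, and here it has in fact gone wrong.
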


This result remains valid for operators
$  \mathbf P : \CI ( M , \mathcal E) \to \CI ( M , \mathcal E ) $ 
where $ \mathcal  E $ is any smooth complex vector bundle and $ \mathbf P $
is a first order differential system with the principal part given by 
$ P $. That will be important in \S \ref{zeta}. 

Theorem \ref{t:Fred} provides a simple definition of Pollicott--Ruelle
resonances formulated in terms of the wave front set condition and
action on distributions. We state it only in the scalar case:

\begin{defi}
\label{d:PR}
Suppose $ M $ is a compact manifold, $ X \in \CI ( M ; TM ) $
generates an Anosov flow in the sense of \eqref{e:anosov} and 
$ E_u^* $ is defined in \eqref{eq:Estar}.

We say that $ \lambda \in \CC $ is a {\em Pollicott--Ruelle resonance}
if
\begin{equation}
\label{eq:defPR}
\exists \, u \in \mathcal D'_{E_u^* } ( M ) \ \ 
( P - \lambda ) u = 0 , \ \  P:= {\textstyle{ \frac 1 i } } X ,
\end{equation}
where $ \mathcal D'_{E_u^* } ( M ) $ is defined in \eqref{eq:defDG}.
The multiplicity of the resonance is defined by the right hand side of
\eqref{eq:PtoWF}.
\end{defi} 
Our insistence of $ \frac 1 i $ in $ P$ comes from quantum mechanician's
attachment to self-adjoint operators: $ P $ is self-adjoint on 
$ L^2 ( M , dm ) $ if the flow admits a smooth invariant measure $ dm $.
In some conventions, e.g. in \cite{DGF},\cite{g-w}, resonances are given by 
$ s = - i \lambda$. 

From the physical point of view this definition should be stable
when the flow is randomly perturbed, that is if we change
\begin{equation}
\label{eq:brown} \dot x ( t ) = - X_{ x( t ) }, \quad x(0)=x \ \ 
\longrightarrow \ \ \dot x_\epsilon ( t ) = 
- X_{ x_\epsilon ( t ) } + \sqrt{ 2 \epsilon } \dot B( t )  , \quad x_\epsilon 
(0)=x, 
\end{equation}
where where $ B ( t ) $ is the Brownian motion 
corresponding a Riemannian metric $ g$ on $M$ -- see for instance \cite{brown}.
This corresponds to changing the evolution by $ P = X/i $:
\begin{equation}
\label{eq:brown1}
e^{ - i t P } f ( x ) = f ( x ( t ) ) \ \ \longrightarrow 
\ \ e^{ - i t P_\epsilon  } f ( x ) = \mathbb E \left[ f (x_\epsilon (  t)  ) \right] ,
\ \ P_\epsilon := P + i \epsilon \Delta_g . 
\end{equation}
The operator $ P_\epsilon $ is elliptic for $ \epsilon > 0 $ and hence
has a discrete spectrum with $ L^2 $ eigenfunctions. This spectrum 
converges to the Pollicott--Ruelle resonances as was shown in \cite{damped}.
Stability of Pollicott--Ruelle resonances for Anosov maps
has been established by Blank--Keller--Liverani \cite{b-k-l} and Liverani \cite{liver2},
 following a very general argument of Keller--Liverani \cite{kl}. 

\begin{thm}
\label{t:damped}
Suppose that $ \lambda_0 $ is a Pollicott--Ruelle resonance of
multiplicity $ m$ (see Definition \ref{d:PR}). Then there exists
$ r_0 > 0  , \epsilon_0 > 0 $ such that for $ 0 < \epsilon < \epsilon_0 $,
$ P_\epsilon $ (defined in \eqref{eq:brown1}) has exactly 
$ m$ eigenvalues $ \{ \lambda_j ( \epsilon ) \}_{j=1}^m $ in 
$ D ( \lambda_0 , r_0  ) $, and 
\[  \lambda_j ( \epsilon ) \to \lambda_0 , \]
with the convergence uniform for $ \lambda_0 $ in a compact set.

Moreover,  if $ ( M , \alpha ) $ is 
a contact manifold and $ X $ is its Reeb vector field (see \eqref{eq:contact}),
then for any $ \delta > 0 $ there exist $ R > 0 $ such that for all
$ \epsilon > 0 $, 
\begin{equation}
\label{eq:tdamped}   
\begin{gathered}
\Spec ( P_\epsilon ) \cap 
\left( [ R , \infty ) - i [ 0 , \textstyle{\frac12} ( \gamma_0 - \delta ) ] \right)
= \emptyset ,
\end{gathered}
\end{equation}
where 
\begin{equation}
\label{eq:defga0}
\gamma_0 = \liminf_{ t \to \infty } \frac 1 t \inf_{ x \in M  } 
\log \det \left( d\gamma_t |_{ E_u ( x ) } \right) .
\end{equation}
\end{thm}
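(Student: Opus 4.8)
The plan is to prove Theorem \ref{t:damped} in two parts, treating the convergence statement and the spectral gap statement separately, both relying on Theorem \ref{t:Fred} and its microlocal machinery.

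\textbf{Part 1: convergence of eigenvalues.} First I would observe that $P_\epsilon = P + i\epsilon\Delta_g$ is elliptic of order $2$ for $\epsilon>0$, so it has discrete spectrum on $L^2(M)$ with $L^2$-eigenfunctions; the key point is to relate this spectrum to the Pollicott--Ruelle resonances of $P$ uniformly as $\epsilon\to 0$. The natural approach is to work on the anisotropic spaces $H_{rG}(M)$ of \eqref{eq:weighted} and show that $P_\epsilon - \lambda: D_{rG}(M)\to H_{rG}(M)$ is Fredholm of index zero in the region $\Im\lambda > -r/C_0$, with the meromorphic family $(P_\epsilon-\lambda)^{-1}$ converging to $(P-\lambda)^{-1}$ in the appropriate operator topology on any compact subset of that region avoiding the resonances of $P$. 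Here one needs semiclassical-type estimates: treating $\epsilon\Delta_g$ as a subprincipal perturbation that does not destroy the propagation/radial estimates underlying Theorem \ref{t:Fred}, because $\epsilon\Delta_g$ is a \emph{negative} perturbation (the sign $i\epsilon\Delta_g$ gives a favorable imaginary part) and is lower order relative to the weighted conjugation. Once one has uniform-in-$\epsilon$ resolvent bounds away from $\Res(P)$ and a limiting expression $(P_\epsilon-\lambda)^{-1}\to(P-\lambda)^{-1}$, a standard Rouch\'e/Grushin argument (as in the proof of \eqref{eq:mult} and the cited \cite{kl}) pins down exactly $m$ eigenvalues $\lambda_j(\epsilon)$ of $P_\epsilon$ near a resonance $\lambda_0$ of multiplicity $m$, converging to $\lambda_0$ uniformly for $\lambda_0$ in a compact set. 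The one subtlety is identifying the $L^2$-eigenfunctions of the elliptic operator $P_\epsilon$ with elements of $H_{rG}(M)$: since $H^r\subset H_{rG}\subset H^{-r}$ and eigenfunctions of $P_\epsilon$ are smooth by elliptic regularity, they automatically lie in $H_{rG}$, and conversely the finite-rank spectral projectors computed via contour integrals of $(P_\epsilon-\lambda)^{-1}$ on $H_{rG}$ agree with the $L^2$-spectral projectors.

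\textbf{Part 2: the spectral gap for contact Anosov flows.} For the second statement I would invoke the semiclassical version of the setup: writing $P_{\epsilon,h} = \tfrac{h}{i}X + ih\epsilon\Delta_g$ (or rather rescaling so that high-frequency $\Re\lambda\sim 1/h$ corresponds to the semiclassical regime), the claim \eqref{eq:tdamped} becomes a resonance-free strip of width $\tfrac12(\gamma_0-\delta)$ in the semiclassical picture. The essential input is Tsujii's and the Nonnenmacher--Zworski-type estimate (Theorem \ref{t:9}, with the role of the expansion rate played by $\gamma_0$ of \eqref{eq:defga0}): for contact flows the stable/unstable bundles are smooth enough (the contact structure forces $E_u^*\oplus E_s^* = \ker\alpha$ structure) and a microlocal positive-commutator / propagation estimate near the trapped set (the zero section) gives a priori bounds $\|(\tfrac{h}{i}X + ih\epsilon\Delta_g - \lambda)^{-1}\|\leq C h^{-1}$ for $\Im\lambda > -\tfrac12(\gamma_0-\delta)h$ and $|\Re\lambda|\geq Ch$. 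I would run this by constructing an escape function adapted to the contact geometry, using that the damping term $h\epsilon\Delta_g$ only helps (its contribution to the imaginary part of the symbol is $\leq 0$), and then conclude invertibility of $P_{\epsilon}-\lambda$ on $H_{rG}$ in the strip, uniformly in $\epsilon\geq 0$; since $P_\epsilon$ for $\epsilon>0$ has $L^2$-spectrum coinciding with the $H_{rG}$-spectrum, \eqref{eq:tdamped} follows.

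\textbf{Main obstacle.} The hard part will be the uniform-in-$\epsilon$ control: one must check that adding the second-order diffusion $i\epsilon\Delta_g$ does not spoil the delicate radial-source/sink estimates of \cite{fa-sj},\cite{zeta} that make $P-\lambda$ Fredholm on $H_{rG}$, and in particular that all constants ($C_0$, $C_1$, the thresholds, the resolvent bounds) can be taken independent of $\epsilon\in[0,\epsilon_0]$. This requires viewing $\epsilon\Delta_g$ as a "complex absorbing"-type perturbation and carefully tracking it through the symbolic estimates --- formally $i\epsilon\Delta_g$ has symbol $-i\epsilon|\xi|_g^2$, which decays the high-frequency modes and is harmless, but near the trapped set (zero section) it degenerates and one must ensure the escape function argument still closes. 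This is essentially the content of \cite{damped} for the convergence part and of the Tsujii \cite{Ts}/\cite[\S6.3]{res} machinery for the gap; my plan would be to assemble these rather than reprove them from scratch.
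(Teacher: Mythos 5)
The paper itself does not prove Theorem~\ref{t:damped}; it is a survey and cites \cite{damped} for the convergence statement and notes that the method of \cite{NZ3} is the crucial input for the gap \eqref{eq:tdamped}. Your high-level strategy --- anisotropic spaces $H_{rG}$, uniform-in-$\epsilon$ Fredholm property, Grushin problem for the eigenvalue count, and normally-hyperbolic-trapping estimates for the gap --- is the correct reconstruction of the strategy in those references.

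However, one characterization in your Part~1 is wrong in a way that matters. You describe $i\epsilon\Delta_g$ as a ``subprincipal perturbation'' which is ``lower order relative to the weighted conjugation.'' Neither is true: $\Delta_g$ has differential order $2$ while $P=X/i$ has order $1$, so $P_\epsilon$ has a \emph{different} principal symbol from $P$ and this is a singular perturbation, not a subprincipal one. Conjugation by $e^{\pm rG}$ with $G\in\Psi^{0+}$ does not change the order of either term, so no amount of weight shifting makes $\epsilon\Delta_g$ subprincipal. The mechanism in \cite{damped} is different: one exploits that the \emph{sign} of $-\epsilon|\xi|_g^2$ is favorable everywhere, which allows the radial-source/sink commutator estimates of Theorem~\ref{t:Fred} to close with constants uniform in $\epsilon\in[0,\epsilon_0]$; away from the zero section the extra term only helps, and near the zero section it degenerates and the original Anosov structure dominates. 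This is precisely the uniform-in-$\epsilon$ control you flag as ``the main obstacle'' but do not resolve --- and it \emph{is} the proof; without it the Rouch\'e/Grushin step has no input. A further minor inaccuracy: your invocation of \cite{kl} for the Rouch\'e/Grushin step conflates two distinct approaches. Keller--Liverani's abstract perturbation theorem (used in \cite{b-k-l},\cite{liver2} for Anosov \emph{maps}) circumvents lack of norm-resolvent convergence by working with a pair of norms; \cite{damped} instead proceeds by direct microlocal resolvent estimates and a Grushin reduction on $H_{rG}$, and does not pass through \cite{kl}. Your Part~2 sketch is sound in outline, with the identification of $\gamma_0$ in \eqref{eq:defga0} as $\nu_{\min}$ of \eqref{eq:Lyap} for the lifted flow on $T^*M$ being the correct link to \cite{NZ3}; but again the uniformity in $\epsilon$ of the positive-commutator argument near the trapped zero section is the content, not an afterthought.
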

The last statement means that an asymptotic resonance free region for
contact flows (see \S \ref{resfreeA}) is stable under random perturbations.
It also comes with polynomial resolvent bounds
$ ( P_\epsilon  - \lambda )^{-1} = \mathcal ( |\lambda|^{N_0} )_{ H^{s_0} \to 
H^{-s_0} } $. Stability of the gap for certain maps 
was established by Nakano--Wittsten \cite{NaWi}. 

\begin{figure}
\includegraphics[scale=1]{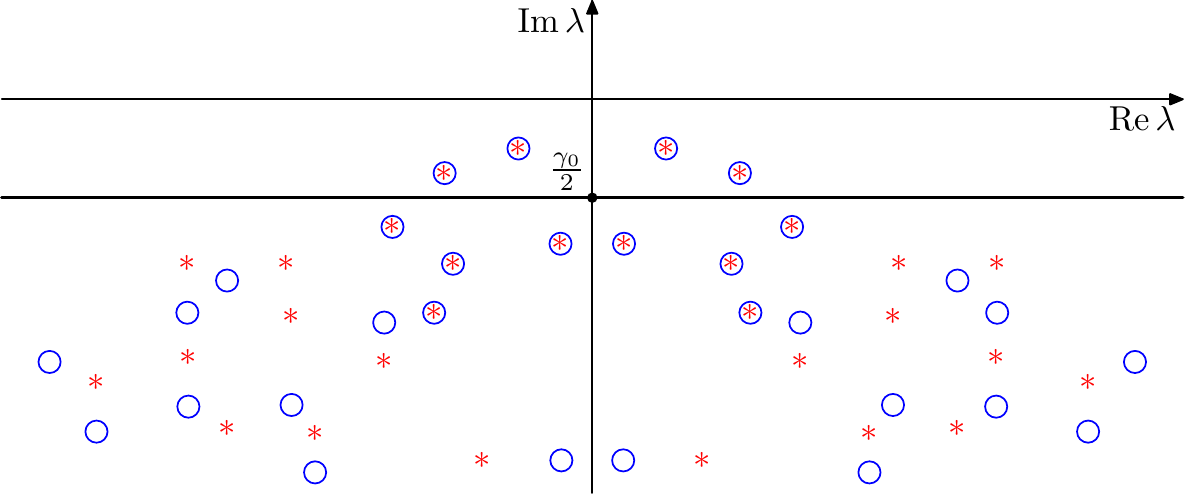}
\caption{A schematic presentation of the results in Theorems~\ref{t:damped}. Pollicott--Ruelle resonances of the generator of
the flow $ X $ (denoted by red asterisks) are approximated by 
the eigenvalues of $ X/i + i \epsilon \Delta_g $ (denoted by blue circles)
uniformly on compact
sets. The asymptotic resonance free strip is uniform with respect to 
$ \epsilon $.}
\label{f:PR}
\end{figure}

An elliptic perturbation $ P_\epsilon = X/i + i \epsilon \Delta_g $
is natural when there is no additional structure. However, for Hamiltonian 
systems, or more specifically, for Anosov geodesic flows 
on Riemannian manifolds $ ( \Sigma, g ) $ -- see \eqref{eq:Sigma} --
 a more natural operator is given by 
\begin{equation}
\label{eq:drou} 
\begin{gathered}
 \widetilde P_\epsilon := {\textstyle{\frac1i}} X   + i \epsilon \widetilde \Delta_{g} , \\
\widetilde \Delta_{g} u ( z, \omega ) := 
 \sum_{ i,j}^n  g_{ij} ( z ) \partial^2_{\xi_i \xi_j} \left[ u ( z, \xi/|\xi|_g)\right] , \\ \omega = \xi/|\xi|_g , \ \ 
u ( z, \bullet ) \in \CI ( S^*_z \Sigma ) .
\end{gathered}
\end{equation}
The operator $ \widetilde P_\epsilon $ is now only {\em hypoelliptic}
and the Brownian motion ``kick" occurs only in the momentum variables
which is physically natural. The analogue of the first
part of Theorem \ref{t:damped} was recently proved  by 
Drouot \cite{Dr2} and that paper can be consulted for
background information and many references. 
The proof combined methods of \cite{damped}
and semiclassical hypoelliptic estimates inspired by the work 
of Lebeau \cite{lef}. An alternative approach to these estimates
was given by Smith \cite{s16} who adapted his earlier paper \cite{s94}
to the semiclassical setting.

We formulate the $ \widetilde P_\epsilon $ analogue of the second part of
Theorem \ref{t:damped} as
\begin{conj}
\label{c:8}
Suppose $ M = S^* \Sigma $, where $ ( \Sigma, g ) $ is a compact Riemannian
manifold whose geodesic flow is an Anosov flow. Then 
the second part of Theorem \ref{t:damped} holds with $ \widetilde P_\epsilon
$ in place of $ P_\epsilon$.
\end{conj}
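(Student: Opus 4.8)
The plan is to combine the recently-established approximation theorem of Drouot (the $\widetilde P_\epsilon$ analogue of the first part of Theorem \ref{t:damped}) with the transfer of the asymptotic resonance-free strip along the approximation, exactly as in the elliptic case \cite{damped}, but now contending with the \emph{hypoelliptic} rather than elliptic character of the perturbation. First I would fix the geometric setup: $M=S^*\Sigma$, $X$ the generator of the geodesic flow, $\widetilde\Delta_g$ the fiberwise (spherical) Laplacian built from the metric $g$ on $\Sigma$, so that $\widetilde P_\epsilon=\tfrac1i X + i\epsilon\widetilde\Delta_g$. The key structural fact is that $X$ and $\widetilde\Delta_g$ together satisfy a H\"ormander bracket condition: the vector fields generating $\widetilde\Delta_g$ span the vertical (fiber) directions, and their commutators with $X$ recover the horizontal directions, so $\widetilde P_\epsilon$ is hypoelliptic of ``step two'' in the sense of \cite{lef}. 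This is the structural input behind Drouot's \cite{Dr2} proof that $\Spec(\widetilde P_\epsilon)\to$ (Pollicott--Ruelle resonances) uniformly on compact subsets as $\epsilon\to0$, and I would take that spectral convergence as given.

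The core of the proof is the \emph{uniform in $\epsilon$} high-frequency resolvent estimate in the strip $\Im\lambda\ge -\tfrac12(\gamma_0-\delta)$, $\Re\lambda\ge R$, where $\gamma_0$ is the contact expansion rate \eqref{eq:defga0}. In the non-perturbed contact case this strip is the content of Tsujii's \cite{Ts} and Nonnenmacher--Zworski's \cite[Theorem 4]{NZ3} results, with a semiclassical proof via the escape-function/weighted-space machinery of \S\ref{resfreeA} and \cite{zeta}: one works semiclassically with $h=1/\Re\lambda$, conjugates $P_h=\tfrac hi X$ by $\exp(\mp G(x,hD))$ with $G$ a logarithmic escape weight adapted to $E_s^*,E_u^*$ as in \eqref{eq:weighted}--\eqref{eq:defmG}, and uses the fact that on a contact manifold the expansion on $E_u$ dominates (no resonances above $-\tfrac12\gamma_0 h$). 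The plan is to add the term $i\epsilon\widetilde\Delta_g$ and show it does not spoil these estimates uniformly in $\epsilon\in(0,\epsilon_0)$. Since $\widetilde\Delta_g\le 0$, the perturbation is dissipative: $\Im\langle \widetilde P_\epsilon u,u\rangle = \Im\langle \tfrac1i Xu,u\rangle - \epsilon\langle -\widetilde\Delta_g u,u\rangle$, so the extra term only pushes the spectrum \emph{down}, which is the ``good'' direction for a resonance-free region. The positive commutator / propagation estimate that produces the gap must be run with the weight $G(x,hD)$ and must absorb the commutator $[\widetilde\Delta_g,\exp(\pm G)]$; because $G$ has symbol homogeneous of degree $0$ (times $\log|\xi|$) while $\widetilde\Delta_g$ acts only in the fibers, this commutator is of lower order and $\mathcal O(\epsilon)$ relative to the main terms, so it can be absorbed into the already-available ellipticity of $\widetilde\Delta_g$ at high fiber frequency. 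I would organize this as: (i) semiclassical rescaling $h=1/\Re\lambda$ and $P_{\epsilon,h}=\tfrac hi X + i\epsilon h^2\widetilde\Delta_g$ (or $i\epsilon\widetilde\Delta_g$ — one must track the natural scaling of $\epsilon$, but the sign argument is scale-free); (ii) conjugation by the escape weight and a radial-estimate analysis at the ``vertical'' fiber-infinity $E_0^*$ à la \cite{mel},\cite{V4D},\cite{rnc}, now with the regularizing fiber-Laplacian only helping; (iii) a propagation/positive-commutator estimate yielding $\|u\|\le C h^{-N_0}\|(\widetilde P_{\epsilon}-\lambda)u\|$ uniformly in $\epsilon$ for $\lambda$ in the claimed strip, with polynomial loss $H^{s_0}\to H^{-s_0}$; (iv) Fredholm theory plus the dissipativity sign to conclude no genuine spectrum of $\widetilde P_\epsilon$ lies in the strip for $\Re\lambda\ge R$, uniformly in $\epsilon$.

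The main obstacle I expect is step (ii)--(iii): running the microlocal weighted estimates and the radial-point analysis with a perturbation that is only \emph{hypoelliptic}, so that one cannot simply treat $i\epsilon\widetilde\Delta_g$ as a smoothing remainder of a fixed order. In the region of phase space near $E_0^*$ (the zero-energy radial set of $p=\xi(X_x)$) the fiber-Laplacian may be \emph{degenerate} in exactly the flow direction, and the subelliptic gain there is only $h^{2/3}$-type rather than $h$-type; one must check that this weaker gain still suffices to close the estimates producing the $\gamma_0$-strip. This is precisely where Drouot's \cite{Dr2} analysis (building on Lebeau \cite{lef} and Smith \cite{s16},\cite{s94}) is needed, and extending it from ``spectral convergence on compact sets'' to ``uniform high-frequency resolvent bound in a strip'' is the real work; in the elliptic case this step is essentially the content of the last paragraph of \cite[\S\,]{damped} and goes through because $i\epsilon\Delta_g$ is elliptic of order $2$ uniformly, but here the uniform-in-$\epsilon$ subelliptic bookkeeping near the degenerate directions is delicate. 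A secondary, more routine issue is the bracket-condition verification for $S^*\Sigma$ and the compatibility of the natural $\widetilde\Delta_g$ scaling with the semiclassical rescaling; I would handle that first since it is purely structural, and then devote the bulk of the argument to the uniform subelliptic propagation estimate. Once (i)--(iv) are in place, combining with Drouot's convergence theorem gives the first part of the $\widetilde P_\epsilon$ analogue of Theorem \ref{t:damped} and the uniform strip gives the second part, i.e.\ Conjecture \ref{c:8}.
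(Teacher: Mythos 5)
The statement you are trying to prove is labeled Conjecture~\ref{c:8} in the paper, and it is genuinely open: there is no proof in the paper to compare against. The paper records only that the first part of the $\widetilde P_\epsilon$ analogue of Theorem~\ref{t:damped} (spectral convergence on compact sets) was established by Drouot~\cite{Dr2}; the second part --- the $\epsilon$-uniform resonance-free strip at high frequency --- is precisely what is being conjectured. So what you have written is a program, not a proof, and you should read it as such.

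Your overall route (Drouot's convergence result plus an $\epsilon$-uniform high-frequency resolvent bound in the strip, patterned on~\cite{damped} and the contact-flow estimates of \cite{Ts},\cite{NZ3}) is a sensible and probably the right plan of attack, but two points in the write-up are off or underpowered. First, $E_0^*$ is \emph{not} the zero-energy radial set: by definition $E_0^*$ annihilates $E_s\oplus E_u$, so any nonzero $\xi\in E_0^*$ pairs nontrivially with $X$ and hence lies in $\{p\neq 0\}$; the radial sets at $p=0$ are $E_u^*$ and $E_s^*$, and it is there, not near $E_0^*$, that the fiber Laplacian's degeneracy in the flow direction must be confronted. Second, the dissipativity observation $\Im\langle\widetilde P_\epsilon u,u\rangle\leq\Im\langle\frac1i Xu,u\rangle$ shows that the perturbation cannot move spectrum up, but it does not by itself give a strip: the strip for $\frac1i X$ is not obtained from a numerical-range bound but from the conjugated positive-commutator estimates with the anisotropic weight, and those must be rerun with the $i\epsilon\widetilde\Delta_g$ term present and absorbed \emph{uniformly in $\epsilon\in(0,\infty)$}, including $\epsilon$ large relative to $h=1/\Re\lambda$. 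In the elliptic case~\cite{damped} the perturbation is uniformly elliptic of order two, so the commutator $[\Delta_g,\exp(\pm G)]$ and the radial-point estimates are controlled by a fixed elliptic gain; in the hypoelliptic case the gain degenerates in the flow direction, and establishing that the degenerate subelliptic gain still closes the estimate in the full strip $\Im\lambda\geq-\tfrac12(\gamma_0-\delta)$, uniformly in $\epsilon$ and in $\Re\lambda\geq R(\delta)$, is the entire content of the conjecture. Your proposal correctly locates this as the hard step, but it does not resolve it; until that uniform subelliptic propagation estimate is actually proved, the argument has a gap at its center.
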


\subsection{Connections to dynamical zeta functions}
\label{zeta}

In addition to expansions of correlations, the study of Pollicott--Ruelle
resonances is also motivated by their appearance as poles and zeros
of dynamical zeta functions. For motivation and history of that rich subject we 
refer to \cite[\S 1]{glp} and \cite{polly}. Here we will consider
the {\em Ruelle zeta function} \cite{Ru1} which is defined by analogy with 
the Riemann zeta function, $ \zeta (s  ) =  \prod_p ( 1 - p^{-s} )^{-1} $,  with primes $ p $ replaced by lengths of primitive
closed geodesics
\begin{equation}
  \label{eq:defRuz}
\zeta_R ( \lambda ) := \prod_{ \gamma \in \mathcal G } ( 1 - e^{ i \lambda  
\ell_\gamma } ) .
\end{equation}
(We switch to our convention $ \lambda = i s $.)
Here $ \mathcal G $ denotes the set of primitive closed trajectories 
(that is trajectories on which one ``goes around only once") of 
$ \varphi_t $ and $ \ell_\gamma $ is the length of the trajectory $ \gamma $.

One question, asked by Smale in \cite[\S II.4]{Sm} was 
if $ \zeta_R $ admits a meromorphic continuation to $ \CC $\footnote{I cannot
resist recalling that Smale referred
to it as a ``wild idea" and wrote
``I must admit a positive answer would be a little shocking!"} or to 
large strips $ \Im \lambda > - A $. When 
$ M = S^* \Sigma $ (see \eqref{eq:Sigma}) where $ ( \Sigma, g ) $ is
a compact Riemannian surface of negative curvature, that had already 
been known thanks to the Selberg trace formula and the meromorphy 
of the Selberg zeta function:
\begin{equation}
\label{eq:Selberg}  \zeta_S ( s ) := \prod_{ \gamma \in \mathcal G } \prod_{ m=0}^\infty
( 1 - e^{- ( m + s ) \ell_\gamma } ) , \quad
\zeta_R ( is ) = \frac{ \zeta_S ( s  ) }{ \zeta_S ( s + 1  ) } ,
\end{equation}
see for instance \cite[Theorem~5]{Mark} for a self-contained presentation. 

The first zero of $ \zeta_R $ is related to topological entropy 
(the value of the pressure at $ 0 $ in the notation of \eqref{eq:pressure})
and the continuation to a small strip past that first zero was achieved by 
Parry--Pollicott \cite{Papo}. To obtain larger strips turns out to be
as difficult as obtaining global meromorphic continuation (which 
proceeds strip by strip; in the case of $ C^k $ flows the meromorphy
only holds in a strip of size depending on $ k $ -- see \cite{glp}; 
our microlocal arguments give that as well but with less precision).
When the manifold $ M$ and the flow are {\em real analytic} that
continuation was obtained by Rugh \cite{Ru} and Fried \cite{Fr2}.
For Anosov flows on smooth compact manifolds it was first established 
by Giulietti--Liverani--Pollicott~\cite{glp} and then by Dyatlov--Zworski~\cite{zeta}.
Dyatlov--Guillarmou \cite{rnc} considered the more complicated non-compact case and essentially settled the original conjecture of Smale. 

We will now explain the proof \cite{zeta} of the meromorphic continuation of
$ \zeta_R $. The first step \cite{Ru1} is a factorization of the zeta function 
valid in the case when the stable and unstable bundles, $ x \mapsto E_s(x)$, 
$ x \mapsto E_u (x) $, respectively, are orientable\footnote{See \cite[Appendix B]{glp} for the modifications needed in the non-orientable
case.}:
\begin{equation}
\label{eq:factze}
\zeta_R ( \lambda ) = \prod_{ j=0}^{n-1} \zeta_j ( \lambda )^{(-1)^{j+ q } } , \ \ \
\zeta_j ( \lambda ) := 
\exp \left( - \sum_{ k=1}^\infty \sum_{\gamma \in \mathcal G}    
\frac{   e^{ i \lambda k \ell_\gamma } 
   \tr \wedge^{ j} \mathcal P_\gamma^k }
 { k |  \det(I - \mathcal P_\gamma^k)  | } \right) , 
 \end{equation}
where $  \dim M = n $, $ q = \dim E_s $ and the Poincar\'e map is defined by 
\begin{equation}
\label{eq:Poinc} \mathcal P_\gamma := d\varphi_{-\ell_\gamma} ( x_\gamma ) |_{ E_u ( x_\gamma ) 
 \oplus E_s ( x_\gamma ) } , \ \ \  x_\gamma = \varphi_{ \ell_\gamma } ( x_\gamma ) \in \gamma. \end{equation}
Since we are taking
determinants and traces, $ \zeta_j $'s do not depend on the choices of 
$ x_\gamma $. The factorization essentially follows from 
$  \det ( I - A ) = \sum_{ j=0}^{ n-1} ( - 1 )^j \tr \wedge^j A $ -- see
\cite[\S 2.2]{zeta}.

The relation of the zeta functions $ \zeta_j $ 
with the flow
by the Atiyah--Bott--Guillemin trace formula (see \cite[Appendix B]{zeta} 
for a proof):
\begin{equation}
\label{eq:ABG}
\begin{gathered}
\tr^\flat e^{ - i t {\mathbf P}_j  } = 
\sum_{ k=1}^\infty \sum_{\gamma \in \mathcal G} 
\frac{ \ell_\gamma \tr \wedge^j \mathcal P_\gamma^k \delta ( t - k \ell_\gamma ) }
{ | \det ( I - \mathcal P_\gamma^k ) |} , \ \ t > 0 , \\ 
{\mathbf P}_j = {\textstyle{\frac 1 i } } 
\mathcal L_X : \CI ( M; \mathcal E^j_0 ) \to \CI ( M; \mathcal E^j_0 ) , 
\end{gathered}
\end{equation}
where
$\mathcal E^j_0$ be the smooth invariant verctor bundle of
all differential $j$-forms $\mathbf u$ satisfying $\iota_V\mathbf u=0$,
where $\iota$ denotes the contraction operator by a vector field.

The flat trace, $ \tr^\flat $, is defined using operations on 
distributions which generalize integration of the Schwartz kernel 
over the diagonal: let $ \jmath : \RR_+ \times M \to 
\RR_+ \times M \times M $ be given by 
$ \jmath  (t, x) = (t, x , x) $ and $ \pi : \RR_+ \times M \to \RR_+ $, by 
$ \pi ( t, x ) = t $. If $ K \in \mathcal D' (\RR_+ \times M \times M ) 
$ is the Schwartz kernel of the operator $ e^{ - i tP } = \varphi_{-t}^* $
then 
\begin{equation}
\label{eq:flatr1}   \tr^\flat e^{ - i t {\mathbf P}_j  } := 
\pi_* \jmath^* K \in \mathcal D' ( \RR_+ ) . \end{equation}
The push forward of a distribution $ \pi_* $ is always well defined
in case compact manifolds but {\em that is not the case for a pullback} by an 
inclusion
$ \jmath^*$ which, if $ K$ were smooth would be 
\begin{equation}
\label{eq:pbj}  \jmath^* K ( t , x ) := K ( t, x, x )  
\end{equation}
 -- see \cite[Theorem 8.2.4, Corollary 8.2.7]{H1}. The condition
which justifies \eqref{eq:pbj} for distributional $ K$ is 
\begin{equation}
\label{eq:WFK}
\begin{gathered}
  \WF ( K ) \cap N^* ( \RR_+ \times \Delta ) = \emptyset, \\ 
  \RR_+ \times \Delta := \{ ( t , x, x ) : ( t, x ) \in\RR_+ \times
\Delta \}, 
\end{gathered}
\end{equation}
where the wave front set $ \WF $ was defined in \eqref{eq:WF} and $ N^* ( \RR_+ \times \Delta ) \subset T^* ( \RR_+ \times M \times M ) 
\setminus 0 $ is the conormal bundle of $ \RR_+ \times \Delta $, that
is, the annihilator of $ T (\RR_+ \times \Delta ) \subset T( 
\RR_+ \times M \times M ) $.

The condition \eqref{eq:WFK} is satisfied for $ K $ given by 
the Schwartz kernel of $ e^{-itP} = \varphi_{-t}^* $, $ t > 0 $, 
because for Anosov flows powers of the Poincar\'e map
\eqref{eq:Poinc}, $ \mathcal P_\gamma^k $, cannot have $ 1 $ as 
an eigenvalue. Hence
\eqref{eq:flatr1} makes sense and \eqref{eq:ABG} holds.

We now observe that for $ \Im \lambda \gg 1 $ and for $ 0 < t_0 < 
\min_{\mathcal G}  \ell_\gamma $,
\begin{equation}
\label{eq:zeta2ph}\frac 1 i \int_{t_0}^\infty \tr^\flat e^{ - i t ( \mathbf P_j - \lambda ) } dt 
= \zeta_j( \lambda)^{-1}  \frac{d}{d \lambda} \zeta_j ( \lambda )  .
\end{equation}
In view of \eqref{eq:factze}, to show that $ \zeta_R $ has a meromorphic
extension, it is enough to show that each $ \zeta_j $ has a holomorphic
extension. That in turn follows from a meromorphic extension of
$ \zeta'_j/\zeta_j$ with simple poles and integral residues. 

Hence we need to show that the left hand side of \eqref{eq:zeta2ph} 
has a meromorphic extension to $ \CC $ with poles of finite rank.
This is clearly related to \eqref{eq:contiP}. A formal manipulation
suggests that the left hand side of 
\eqref{eq:zeta2ph} is given by 
\begin{equation}
\label{eq:ph2P}
- e^{i t_0 \lambda}  \tr^\flat\left(  \varphi_{-t_0}^* ( \mathbf P_j - 
\lambda)^{-1}\right)  = - e^{i t_0 \lambda} \langle \,\jmath^* 
(\varphi_{-t_0} \otimes {\rm{id}})
^* \mathbf K_j  ( \lambda ) 
, 1 \rangle, 
\end{equation}
where $ \mathbf K_j  ( \lambda ) $ is the Schwartz kernel of 
$ ( \mathbf P_j - \lambda)^{-1} $ (which makes sense
for all $ \lambda $ in view of Theorem \ref{t:Fred}), 
$ \jmath ( x ) = ( x , x ) $ and $ \langle \bullet, \bullet \rangle $ is
the distributional pairing $ \mathcal D'( M ) \times \CI ( M) \to \CC $.
Just as in \eqref{eq:WFK} this is justified by showing that for $ 0 < t_0 
< \min_{\gamma \in \mathcal G} \ell_\gamma  $, 
\begin{equation}
\label{eq:WFK0}
\WF ( ( \varphi_{-t_0} \otimes {\rm{id}} )^* \mathbf K_{j} ( \lambda)  ) \cap 
N^* \Delta = \emptyset, \ \ \ \Delta := \{ ( x, x ) : x \in M \}. 
\end{equation}
We used the notation $ ( \varphi_{-t_0} \otimes {\rm{id}} ) ( x , y ) := 
( \varphi_{-t_0} ( x ) , y ) $ and the pullback by that diffeomorphism
of $ M \times M $ is always well defined \cite[Theorem 8.2.4]{H1}.
We have to be careful at the poles of $ \mathbf K_j (\lambda ) $
but a decomposition similar to \eqref{eq:Jordan} shows that we
can separate a holomorphic and singular parts. On the singular part
$ \tr^\flat = \tr_{ H_{rG}} $ which kills all the higher order
poles (nilpotency of Jordan blocks) and gives an integral residue.

Condition \eqref{eq:WFK0} is an immediate
 consequence of \cite[Proposition 3.3]{zeta} (though it takes a moment
 to verify it: the pullback by $ ( \varphi_{-t_0} \otimes {\rm{id}} )$ 
 shifts things away from the diagonal):
\begin{thm}
\label{t:zeta}
Let  $ \mathbf K_j ( \lambda ) \in 
\mathcal D' ( M ) $ be the Schwartz kernel of
$ ( \mathbf P_j - \lambda )^{-1} : \CI ( M , \mathcal E_0^j ) 
\to \mathcal D' ( M , \mathcal E_0^j )  $. Then away from the poles of
$ \lambda \mapsto \mathbf K_j ( \lambda ) $,
\begin{equation}
\label{eq:tzeta}
\begin{gathered}
( x, y ; \xi , -\eta ) \in \WF ( \mathbf K_j ( \lambda ) ) 
\ \Longrightarrow \
( x, \xi, y , \eta ) \in 
\Delta(T^*M)\cup \Omega_+\cup (E_u^*\times E_s^*) , \ \ 
\end{gathered} \end{equation}
where $ \Delta ( T^* M ) := \{ ( \rho, \rho ) : \rho \in T^* M \} \subset 
T^* M \times T^* M  $ and 
\[ \Omega_+:=\{(e^{tH_p}(x,\xi),x,\xi) \; : \; t\geq 0,\ p(x,\xi)=0\}, \ \
p ( x, \xi) := \xi ( X_x ) . \]
\end{thm}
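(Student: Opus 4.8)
\textbf{Proof proposal for Theorem \ref{t:zeta}.}

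The plan is to reduce the wave front set statement about the resolvent kernel $\mathbf K_j(\lambda)$ to a propagation-of-singularities argument for the transport equation $(\mathbf P_j - \lambda)u = f$, exploiting the explicit Hamiltonian flow \eqref{eq:Ptop}. First I would use the identity, valid for $\Im\lambda \gg 1$,
\[
(\mathbf P_j - \lambda)^{-1} = \frac 1 i \int_0^\infty e^{it\lambda} e^{-it\mathbf P_j}\, dt,
\]
whose Schwartz kernel is $\frac 1 i \int_0^\infty e^{it\lambda}(\varphi_{-t}\otimes\mathrm{id})^* K_\Delta\, dt$ where $K_\Delta$ is the (conormal) kernel of the identity, pushed along the flow in the first variable. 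The wave front set of $(\varphi_{-t}\otimes\mathrm{id})^*K_\Delta$ sits on $\{(e^{tH_p}(x,\xi), x, -\xi)\}$ by the transformation law for wave front sets under pullback by a diffeomorphism \cite[Theorem 8.2.4]{H1}. Carrying out the $t$-integral, one sees (as in the stationary-phase/propagation bookkeeping) that singularities of the resolvent kernel can only live on the flowout $\Omega_+$ of the diagonal over $p^{-1}(0)$ together with the diagonal $\Delta(T^*M)$ itself (from the $t=0$ end). For $\Im\lambda$ large this exponential convergence makes everything literal; the content of the theorem is that this structure persists under the meromorphic continuation provided by Theorem \ref{t:Fred}.

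Next I would handle the continuation. Away from poles, $(\mathbf P_j-\lambda)^{-1}: H_{rG}\to H_{rG}$ with $r > C_0(\Im\lambda)_- + C_0$, and the key microlocal input is the radial-source/sink estimate at $E_s^*$ and $E_u^*$ built into the construction of $H_{rG}$ — see \eqref{eq:mzation}. Applying $\mathbf P_j - \lambda$ to the kernel in the $x$ variable, $\mathbf K_j(\lambda)$ solves a transport equation with right-hand side supported on the diagonal, so I would invoke Hörmander's propagation of singularities \cite[Theorem 18.1.27]{H3} for the real principal type operator $\mathbf P_j - \lambda$ away from the radial set: any singularity not on the diagonal propagates forward under $e^{tH_p}$, which forces it onto $\Omega_+$. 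As $t\to+\infty$ along $p^{-1}(0)$ the flowout accumulates on $E_u^*$ in the first factor (by the Anosov property and the dynamics of the flow on the cosphere bundle depicted in Figure~\ref{f:Epm}); correspondingly, the radial sink estimate at $E_s^*$ in the $y$ variable, which is where the source behaviour of the backward operator lives, confines the $y$-component to $E_s^*$. This yields precisely the third alternative $E_u^*\times E_s^*$ in \eqref{eq:tzeta}.

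The main obstacle, and the step I would be most careful about, is the behaviour exactly at the radial sets $E_s^*$ and $E_u^*$, where Hörmander's standard propagation theorem does not apply and one must use the Melrose-type radial estimates (the same estimates underpinning Vasy's method in \S\ref{vasy} — see the discussion around \eqref{eq:regPla}). Concretely, one must show that the threshold regularity built into the anisotropic space $H_{rG}$ is compatible with the wave front set localization at $E_u^*$ in the first variable and $E_s^*$ in the second, and that no singularity can ``turn the corner'' and escape to a forbidden direction; this is where the careful choice of the weight $m_G$ in \eqref{eq:defmG} (monotone along the flow, $+1$ near $E_s^*$, $-1$ near $E_u^*$) does the work. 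A secondary technical point is bookkeeping at the poles of $\lambda\mapsto\mathbf K_j(\lambda)$: one separates off the finite-rank singular part via a Laurent expansion analogous to \eqref{eq:Jordan}, notes that the resonant states lie in $\mathcal D'_{E_u^*}$ by \eqref{eq:PtoWF}, and checks that the holomorphic remainder satisfies the same wave front bound, so \eqref{eq:tzeta} holds uniformly away from the poles. Finally I would record that \eqref{eq:tzeta} gives \eqref{eq:WFK0} after pullback by $\varphi_{-t_0}\otimes\mathrm{id}$ with $0 < t_0 < \min_{\gamma\in\mathcal G}\ell_\gamma$, since the flowout $\Omega_+$ shifted by a short time misses the conormal $N^*\Delta$ precisely because there are no closed orbits of length $\le t_0$, thereby justifying the flat trace in \eqref{eq:ph2P} and completing the meromorphic continuation of $\zeta_R$.
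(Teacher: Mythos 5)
Your proposal follows essentially the same route as the paper's proof (Proposition 3.3 of \cite{zeta}): real-principal-type propagation of singularities away from the radial sets, supplemented by Melrose-type radial source/sink estimates at $E_s^*$ and $E_u^*$, all within the anisotropic spaces $H_{rG}$ whose threshold regularity encoded by $m_G$ in \eqref{eq:defmG} is what makes those estimates close and produces the exceptional set $E_u^*\times E_s^*$. Two small slips worth fixing before this could be executed: \cite[Theorem 18.1.27]{H3} is H\"ormander's pseudodifferential characterization of the wave front set, not the propagation theorem you want here (Duistermaat--H\"ormander, or \cite[Proposition 2.5]{zeta}, is the right reference), and in the clause ``the radial sink estimate at $E_s^*$ in the $y$ variable, which is where the source behaviour of the backward operator lives'' the word ``source'' should be ``sink'' --- $E_s^*$ is the radial sink for the transposed/backward flow acting in $y$, consistent with the first half of your own sentence.
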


Away from the conic sets $ E_u^* \times E_s^* $ this follows from 
a modification of results of Duistermaat--H\"ormander on 
propagation of singulaties \cite[Proposition 2.5]{zeta}. Near 
$ E_s^* $ we use a modification of Melrose's propagation result \cite{mel}
for radial sources \cite[Proposition 2.6]{zeta}, and near $ E_u^* $, 
his propagation result for radial sinks  \cite[Proposition 2.7]{zeta}. 
The property \eqref{eq:mzation} is crucial here and implies the relation 
between $ r $ and $ \Im \lambda $ in Theorem \ref{t:Fred}. 
Except for the fact that $ E_u^* $ and $ E^*_s $ are typically very irregular
as sets, these are the same estimates that Vasy \cite{vasy1} used
to prove Theorem \ref{t:7} presented \S \ref{vasy}.

Retracing our steps through \eqref{eq:WFK0},\eqref{eq:ph2P},\eqref{eq:zeta2ph}
and \eqref{eq:factze} we see that we proved the theorem of
Giulietti--Liverani--Pollicott \cite{glp} settling Smale's conjecture \cite[\S II.4]{Sm}
in the case of compact manifolds. For the full conjecture in the non-compact
case proved by a highly nontrivial elaboration of the above strategy, see Dyatlov--Guillarmou \cite{rnc}:
\begin{thm}
The Ruelle zeta function, $ \zeta_R ( \lambda ) $, defined for $ \Im \lambda
\gg 1 $ by \eqref{eq:defRuz}, continues meromorphically to $ \CC $.
\end{thm}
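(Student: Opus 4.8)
The plan is to reduce the meromorphic continuation of $\zeta_R$ to the meromorphic continuation of the resolvents $(\mathbf P_j-\lambda)^{-1}$ furnished by Theorem \ref{t:Fred}, following the strategy sketched in the excerpt. First I would recall the factorization \eqref{eq:factze} expressing $\zeta_R$ as an alternating product of the dynamical zeta functions $\zeta_j$, $j=0,\dots,n-1$, attached to the twisted operators $\mathbf P_j=\frac1i\mathcal L_X$ acting on the invariant bundles $\mathcal E_0^j$ of $j$-forms annihilated by contraction with $X$. (In the non-orientable case one passes to the orientation double cover, cf.\ \cite[Appendix B]{glp}.) Since a finite alternating product of holomorphic functions is meromorphic, it suffices to show each $\zeta_j$ extends holomorphically to $\CC$; and since $\zeta_j$ is, near any point, an exponential of a convergent series, it is enough to show that the logarithmic derivative $\zeta_j'/\zeta_j$ extends meromorphically with simple poles of integer residue.

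The key identity is \eqref{eq:zeta2ph}: for $\Im\lambda\gg1$ and $0<t_0<\min_{\mathcal G}\ell_\gamma$,
\[
\frac1i\int_{t_0}^\infty \tr^\flat e^{-it(\mathbf P_j-\lambda)}\,dt=\frac{\zeta_j'(\lambda)}{\zeta_j(\lambda)},
\]
which follows from the Atiyah--Bott--Guillemin trace formula \eqref{eq:ABG} together with the fact that for an Anosov flow the Poincar\'e maps $\mathcal P_\gamma^k$ never have eigenvalue $1$, so $|\det(I-\mathcal P_\gamma^k)|\neq0$ and the distributional flat trace in \eqref{eq:flatr1} is well defined. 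Then I would rewrite the left-hand side, via Laplace transform in $t$, as the flat trace $-e^{it_0\lambda}\tr^\flat(\varphi_{-t_0}^*(\mathbf P_j-\lambda)^{-1})$ as in \eqref{eq:ph2P}. Theorem \ref{t:Fred} (in its bundle-valued form, which the excerpt states holds for first-order systems with principal part $P$) gives that $\lambda\mapsto(\mathbf P_j-\lambda)^{-1}$ is a meromorphic family of operators $H_{rG}\to H_{rG}$ on $\Im\lambda>-r/C_0$, with poles of finite rank; letting $r\to\infty$ covers all of $\CC$. To make sense of the flat trace of $\varphi_{-t_0}^*(\mathbf P_j-\lambda)^{-1}$ I would invoke Theorem \ref{t:zeta}, whose wave front bound \eqref{eq:tzeta}, after pulling back the Schwartz kernel by $\varphi_{-t_0}\otimes\mathrm{id}$ (which moves $\Delta(T^*M)$ off the diagonal since $t_0$ is smaller than all periods), yields the transversality condition \eqref{eq:WFK0}, i.e.\ $\WF\cap N^*\Delta=\emptyset$; by \cite[Theorem 8.2.4]{H1} the restriction to the diagonal and subsequent integration are then legitimate, so $\tr^\flat(\varphi_{-t_0}^*(\mathbf P_j-\lambda)^{-1})$ is a well-defined meromorphic function of $\lambda$. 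At a pole $\lambda_0$ of $(\mathbf P_j-\lambda)^{-1}$ one splits off the principal (finite-rank) part as in \eqref{eq:Jordan}; on that finite-rank part $\tr^\flat$ coincides with the operator trace $\tr_{H_{rG}}$, the nilpotent Jordan blocks have zero trace, and the residue of $\zeta_j'/\zeta_j$ is the integer $\frac1{2\pi i}\tr_{H_{rG}}\oint_{\lambda_0}(\zeta-\mathbf P_j)^{-1}d\zeta$. This shows $\zeta_j'/\zeta_j$ is meromorphic with simple integer-residue poles, hence $\zeta_j$ is holomorphic on $\CC$, and then \eqref{eq:factze} gives the meromorphic continuation of $\zeta_R$.

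The main obstacle — and the place where all the real analysis sits — is the verification that the flat trace is meromorphic, i.e.\ the microlocal regularity statement Theorem \ref{t:zeta} and its consequence \eqref{eq:WFK0}. The delicate point is the behaviour of $\WF(\mathbf K_j(\lambda))$ near the radial sets $E_s^*$ and $E_u^*$: away from these one uses Duistermaat--H\"ormander propagation of singularities, but at the radial source $E_s^*$ and radial sink $E_u^*$ one needs Melrose's radial-point propagation estimates \cite{mel}, carefully tracking the threshold relation between the weight exponent $r$ and $\Im\lambda$ coming from \eqref{eq:mzation} — exactly the mechanism underlying Theorem \ref{t:Fred}. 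A secondary subtlety is handling the analytic continuation past poles of $\mathbf K_j(\lambda)$: one must check that the finite-rank principal parts contribute only simple poles with integer residues to $\zeta_j'/\zeta_j$, which requires the identification $\tr^\flat=\tr_{H_{rG}}$ on finite-rank operators and the vanishing of traces of nilpotents. Everything else — the factorization \eqref{eq:factze}, the trace formula \eqref{eq:ABG}, and the Laplace-transform bookkeeping \eqref{eq:zeta2ph}--\eqref{eq:ph2P} — is essentially formal once these microlocal facts are in hand.
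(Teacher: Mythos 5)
Your proposal reproduces the paper's proof in its essential entirety: the factorization \eqref{eq:factze}, the Atiyah--Bott--Guillemin trace formula \eqref{eq:ABG}, the Laplace-transform identity \eqref{eq:zeta2ph} and its rewriting \eqref{eq:ph2P}, the meromorphic continuation of $(\mathbf P_j-\lambda)^{-1}$ from Theorem \ref{t:Fred}, and the wave front set bound of Theorem \ref{t:zeta} needed to make the flat trace legitimate and to push it past poles. You have also correctly located the real analytic content in the radial-point propagation estimates and the threshold relation between $r$ and $\Im\lambda$, so this is the paper's argument, accurately reassembled.
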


What the method does not recover is the order of $ \zeta_R ( \lambda ) $ in the
case when $M $ and $ X $ are real analytic \cite{Ru},\cite{Fr2}. That may be related to issues
around Conjecture \ref{c:hypo}.

Pollicott--Ruelle resonances of $ \mathbf P_j $'s 
are exactly the zeros of the entire functions $ \zeta_j ( \lambda ) $'s. 
The simple Definition \ref{d:PR} using wave front set characterization 
led to a short proof of the following fact \cite{zazi}: suppose
that $ ( \Sigma, g )  $ is an oriented negatively curved Riemannian surface. Then 
\begin{equation}
\label{eq:zazi}   \zeta_R ( \lambda ) = c \lambda^{| \chi ( \Sigma )|} 
( 1 + \mathcal O ( \lambda ) )  , \ \ c \neq 0 , \ \ \lambda \to 0 , \end{equation}
where $ \chi ( \Sigma ) $ is the Euler characteristic of $ \Sigma $. 
In particular this implies that lengths of closed geodesics determine 
the genus of the surface linking dynamics and topology. Previous to \cite{zazi}
that was known only for surfaces of constant curvature. 

Hence \eqref{eq:zazi}
provides evidence that results valid in rigid (that is, locally symmetric) 
geometries may be valid in greater generality.
Thus,  consider a compact oriented negatively curved Riemannian manifold $ ( \Sigma , g ) $. Following Fried \cite{Fr-Anator},\cite{Fr2}, for 
a representation $ \alpha : \pi_1 ( S^* \Sigma ) \to GL(N, \CC )  $, we define
\begin{equation}
\label{eq:azeta}
\zeta_\alpha ( \lambda ) := \prod_{ \gamma \in \mathcal G} 
\det ( I - \alpha ( \gamma ) e^{ i \lambda \ell_\gamma } ) . 
\end{equation}
The proof of meromorphy of $ \zeta_R $ given in \cite{zeta} and sketched above
gives also the meromorphic continuation of $ \zeta_\alpha$: we only need to 
change the vector bundles on whose sections $ \frac 1 i \mathcal L_X $ acts.
The following statement was proved by Fried \cite{Fr-Anator} in the case of 
manifolds of {\em constant} negative curvature and conjectured (in 
an even more precise form) in \cite[p.181]{Fr2} for more general manifolds.
We refer to his papers \cite{Fr-Anator},\cite{Fr2} for precise definitions 
of the objects involved in the statement:
\begin{conj}
\label{c:Fried}
Suppose that $ ( \Sigma, g ) $ is a negatively curved oriented compact 
Riemannian manifold of dimension $ n > 2 $ and that $ \alpha $ is
an acyclic unitary representation of $ \pi_1 ( \Sigma ) $. Then 
\begin{equation}
\label{eq:Fried}
| \zeta_\alpha ( 0 )|^{ (-1)^{n-1} } = | T_\alpha ( \Sigma ) |^2, 
\end{equation}
where $ \zeta_\alpha $ is defined by \eqref{eq:azeta} and 
$ T_\alpha ( \Sigma ) $ is the analytic torsion of $ \Sigma $.
\end{conj}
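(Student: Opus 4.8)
The plan is to run the microlocal machinery of \S\ref{zeta} with the trivial bundle replaced by the flat bundle $\mathcal E_\alpha \to M$, $M := S^*\Sigma$, attached to $\pi^*\alpha$, and to combine it with two deeper inputs: an identification of the Pollicott--Ruelle resonant currents at $\lambda = 0$ with twisted cohomology (in the spirit of \eqref{eq:zazi}), and the Cheeger--M\"uller theorem. First I would observe that the argument behind the meromorphic continuation of $\zeta_R$ (the factorization \eqref{eq:factze}, the trace formula \eqref{eq:ABG}, and Theorems \ref{t:Fred} and \ref{t:zeta}) applies verbatim once the vector bundles are twisted by $\mathcal E_\alpha$, so that
\[
  \zeta_\alpha(\lambda) = \prod_{j=0}^{n-1} \zeta_{\alpha,j}(\lambda)^{(-1)^{j+q}}, \qquad q = \dim E_s,
\]
with each $\zeta_{\alpha,j}$ entire and its zeros equal, with multiplicity, to the resonances of $\mathbf P_{\alpha,j} := \tfrac1i\mathcal L_X$ acting on sections of $\mathcal E_\alpha \otimes \mathcal E_0^j$, counted via \eqref{eq:PtoWF} and the wave front characterization of Definition \ref{d:PR}.

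Second, I would analyze $\zeta_{\alpha,j}$ near $\lambda = 0$. The spaces of resonant $j$-currents of $\mathbf P_{\alpha,j}$ at $0$ carry a differential induced by $d$ and assemble into a finite-dimensional complex, whose cohomology --- by the Dang--Rivi\'ere analysis of the zero resonance of an Anosov flow, twisted by $\mathcal E_\alpha$ --- is $H^*(M;\mathcal E_\alpha)$. Since $\alpha$ is acyclic on $\Sigma$, the Gysin sequence for the sphere bundle $\SP^{n-1}\hookrightarrow M \to \Sigma$ forces $H^*(M;\mathcal E_\alpha) = 0$; hence this complex is acyclic, the $\lambda$-powers in the factorization cancel, and $\zeta_\alpha(0)$ is a well-defined nonzero number, namely the alternating product of the \emph{leading coefficients} of the $\zeta_{\alpha,j}$ at $0$. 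That number is a \emph{dynamical torsion}: the torsion of the acyclic complex of resonant currents, expressed through determinants of $\mathcal L_X$ on those currents. Thus \eqref{eq:Fried} is reduced to $|\,\text{dyn.\ torsion}\,|^{(-1)^{n-1}} = |T_\alpha(\Sigma)|^2$, and by Cheeger--M\"uller (analytic torsion $=$ Reidemeister torsion for unitary $\alpha$) it suffices to identify the dynamical torsion with the combinatorial torsion $\tau_\alpha(\Sigma)$.

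Third, for that identification I see two routes. (i) An anomaly argument: show that $\log|\zeta_\alpha(0)|$, corrected by explicit local terms that cancel in the ratio with $\log|T_\alpha(\Sigma)|$, is independent of the negatively curved metric on $\Sigma$ --- equivalently of the Anosov flow along a homotopy through Anosov flows --- using first-variation formulas for resonances, resonant states and the flat trace; then evaluate at a locally symmetric representative, where Fried's computation via the Selberg trace formula \cite{Fr2},\cite{Fr-Anator} gives the claimed value. (ii) A direct comparison, bypassing any deformation to a symmetric space (impossible in general): show that the complex of resonant currents is chain-homotopy equivalent, with a computable torsion defect, to a triangulation-based Thom--Smale complex computing $\tau_\alpha(\Sigma)$, in the spirit of the Bismut--Zhang comparison of Morse and de~Rham torsions, adapted from gradient to Anosov flows and descended from $S^*\Sigma$ to $\Sigma$.

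The hard part is exactly this last step. Controlling the leading coefficients of the $\zeta_{\alpha,j}$, as opposed to their vanishing orders, means tracking the whole resonance contribution at $\lambda = 0$, and matching the resulting regularized determinant with a topological torsion requires either a robust anomaly formula for $\zeta_\alpha(0)$ under deformations of the flow --- delicate, since the resonance at $0$ need not be semisimple and resonances can collide --- or a Bismut--Zhang-type localization for Anosov flows together with the descent from $S^*\Sigma$ to $\Sigma$, neither of which is presently available in full generality. I therefore expect the argument to be unconditional only for small $n$ (the first genuinely new case being $\dim\Sigma = 3$; for $n$ even one has $T_\alpha(\Sigma) = 1$ and the content is $|\zeta_\alpha(0)| = 1$, the substantive case being $n$ odd) or under a pinching/perturbation hypothesis forcing semisimplicity of the $0$ resonance, with the general case remaining the real obstacle.
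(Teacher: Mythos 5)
This statement is a \emph{conjecture} in the paper, not a theorem: the survey attributes \eqref{eq:Fried} to Fried, records that it was proved by Fried \cite{Fr-Anator} in constant negative curvature and by Shen \cite{Sh} (after Bismut \cite{Bismut} and Moskovici--Stanton \cite{Moskovici}) for locally symmetric manifolds, and offers no proof for the general negatively curved case. So there is no ``paper proof'' to compare against; what you have written is a strategy outline for an open problem.

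As such an outline it is well informed and its self-assessment is honest, but two points deserve sharper treatment. First, your reduction already uses an assertion that is itself not known in this generality: that the vanishing orders of the $\zeta_{\alpha,j}$ at $\lambda=0$ match the dimensions of the resonant cohomology, so that acyclicity of that complex makes $\zeta_\alpha(0)$ finite and nonzero. Even in the untwisted surface case of \cite{zazi} this required establishing a semisimplicity property of the $0$ resonance, and you flag non-semisimplicity only at the very end as a worry for the deformation argument, whereas it already enters in step two. Second, the cases that \emph{are} settled (Fried, Moskovici--Stanton, Bismut, Shen) do not proceed by the perturbative/anomaly route (i) nor by an Anosov analogue of Bismut--Zhang (ii); they exploit the Selberg trace formula, Harish-Chandra's Plancherel theory, and Bismut's hypoelliptic Laplacian, all of which are tied to the locally symmetric structure. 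Route (i) is obstructed by exactly the collision/non-semisimplicity issues you mention, and route (ii) has not been carried out for Anosov flows. So the correct conclusion is the one you reach --- the argument is not unconditional --- but the conditional cases you claim (small $n$, pinching) are also not in the literature, and saying the ``first genuinely new case'' $\dim\Sigma=3$ is accessible overstates what the sketch delivers. A faithful summary is that your plan reduces Fried's conjecture to an identification of dynamical and combinatorial torsion that remains open, which is precisely why the paper states it as Conjecture~\ref{c:Fried}.
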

The analytic torsion $ T_\alpha ( \Sigma ) $ was defined by Ray and Singer 
using eigenvalues of an $ \alpha$-twisted Hodge Laplacian. 
Their conjecture that $ T_\alpha ( \Sigma ) $ is equal to the 
Reidemeister torsion, a topological invariant, was proved independently by 
Cheeger and M\"uller. Hence \eqref{eq:Fried} would link 
dynamical, spectral and topological quantities. 
In the case of locally symmetric manifolds a more precise version 
of the conjecture 
was recently proved by Shen 
\cite[Theorem 4.1]{Sh} following earlier contributions
by Bismut~\cite{Bismut} and Moskovici--Stanton~\cite{Moskovici}.

\subsection{Distribution of 
Pollicott--Ruelle resonances}
\label{resfreeA}

Exponential decay of correlations \eqref{eq:expdcor} 
was established  by Dolgopyat \cite{dole} 
 for systems which include geodesic flows on 
negatively curved compact surfaces and by Liverani  \cite{liver}
 for all contact Anosov flows, hence in particular for 
 geodesics flows on negatively curved compact manifolds -- see
 \eqref{eq:contact},\eqref{eq:Sigma}. These two papers
 can also be consulted for the history of the subject. 
 See also Baladi--Demers--Liverani \cite{badeli} for some very 
 recent progress in the case of hyperbolic billiards.

Comparing \eqref{eq:expdcor} and \eqref{eq:rhotoP} shows that
if we know a resonance free strip with a polynomial bound on 
$ ( P - \lambda)^{-1} $ in that strip then we obtain
exponential decay of correlations. Hence we define
the following two {\em spectral} (though really ``scattering")
quantities: the spectral gap (for simplicity we consider only the
scalar case):
\begin{equation}
\label{eq:nu1}    \nu_1 = \sup \{ \nu \, : \, \Res ( P ) \cap \{ \Im \lambda > - \nu \} = 
\{ 0 \} \} , \end{equation}
and the {\em essential} spectral gap:
\begin{equation}
\label{eq:nu0} \nu_0 = \sup \{ \nu \, : \,  | \Res ( P ) \cap \{ \Im \lambda > - \nu \} | 
< \infty \} . 
\end{equation}
Here $ \Res ( P ) $ denotes the set of resonances of $ P$. Obviously if $ \nu_0 > 0 $ and if we know that there are no non-zero real resonances, then $ \nu_1 > 0 $. 

We have the following result which implies exponential decay 
of correlations for contact Anosov flows:
\begin{thm}
\label{t:tsuji}
Suppose that $ ( M , \alpha )  $ is a compact contact manifold
and that the Reeb vector field \eqref{eq:contact} generates 
an Anosov flow. Then in the notation of \eqref{eq:nu0} and \eqref{eq:defga0},
\begin{equation}
\label{eq:tsuji}
{\textstyle{\frac12}} \gamma_0 \leq \nu_0 < \infty .
\end{equation}
Moreover, for some $ s $
\begin{equation}
\label{eq:tsuji1} 
\forall \, \delta > 0 \ \exists \, C, N \ \ 
\begin{array}{l} 
\| ( P - \lambda)^{-1} \|_{ H^{s} \to H^{-s} } \leq C ( 1 + |\lambda |)^{N}, \\ 
\ \ \text{for } \ | \Re \lambda | > C , \ \Im \lambda > - \nu_0 + \delta . 
\end{array} 
\end{equation}
\end{thm}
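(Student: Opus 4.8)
\textbf{Proof proposal for Theorem \ref{t:tsuji}.}

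The plan is to establish \eqref{eq:tsuji} and \eqref{eq:tsuji1} by combining the Fredholm framework of Theorem \ref{t:Fred} with a microlocal positive-commutator argument near the zero section, exploiting the extra geometric structure of a contact flow. First I would recall from Theorem \ref{t:Fred} that $ P - \lambda : D_{rG}(M) \to H_{rG}(M) $ is Fredholm and invertible for $ \Im \lambda \gg 1 $, so for each $ \delta > 0 $ there are only finitely many resonances with $ \Im \lambda > - r/C_0 $ once $ r $ is chosen large enough; this already shows $ \nu_0 < \infty $ is not automatic, but finiteness in a fixed strip follows once we localize in $ \Re \lambda $. So the content is the lower bound $ \nu_0 \geq \frac12 \gamma_0 $ together with the polynomial estimate \eqref{eq:tsuji1}. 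Both will come from a high-frequency a priori estimate: we want to show that for $ | \Re \lambda | > C $ and $ \Im \lambda > - \frac12 \gamma_0 + \delta $, any $ u \in D_{rG} $ with $ (P - \lambda) u = f $ satisfies $ \| u \|_{H^{-s}} \leq C(1+|\lambda|)^N \| f \|_{H^s} $, which simultaneously rules out resonances and gives the resolvent bound.

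The key step is the semiclassical reduction. Writing $ \lambda = \Re \lambda \,(1 + i \Im\lambda/\Re\lambda) $ and $ h = 1/|\Re \lambda| $, the equation becomes $ (P_h - z) u = h f $ with $ P_h = \frac h i X $ and $ \Im z = h \,\Im \lambda $ bounded by $ h(-\frac12\gamma_0 + \delta) $. Near the zero section (the full trapped set of the flow on $ p^{-1}(0) $, since $ M $ is compact) one uses the escape function / microlocal weight $ G $ from \eqref{eq:weighted}, \eqref{eq:defmG} to conjugate $ P_h $ to an operator which is elliptic away from a neighborhood of $ E_0^* \cap p^{-1}(0) $ and on which standard propagation of singularities applies; the radial-source estimate at $ E_s^* $ and radial-sink estimate at $ E_u^* $ (as in \cite{mel}, used in \cite[Prop. 2.6, 2.7]{zeta}) handle the behavior at fiber infinity. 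The surviving region is a neighborhood of the zero section, and here the contact structure enters decisively: the argument of Tsujii \cite{Ts} and Faure--Tsujii \cite{fatss}, or equivalently the FBI-transform/microlocal-lift argument, shows that because the unstable Jacobian expands by at least $ e^{\gamma_0 t} $ and the contact form provides a transverse symplectic pairing between $ E_u^* $ and $ E_s^* $, the transfer operator restricted to wave packets concentrated near the trapped set has norm controlled by $ e^{-\frac12\gamma_0 t} $ up to subexponential errors. This is exactly the "fractal uncertainty"-type gain; quantitatively one iterates the propagator for a time $ T \sim \log(1/h) $ and the contraction $ e^{-\frac12\gamma_0 T} $ beats the growth $ e^{|\Im z| T / h} = e^{(\frac12\gamma_0 - \delta) T} $, leaving a net decay. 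Combined with the elliptic and propagation estimates this yields
\begin{equation}
\label{eq:aprioriestimate}
\| u \|_{H_{rG}} \leq C h^{-N} \| (P_h - z) u \|_{H_{rG}} + C h^{N'} \| u \|_{H^{-N''}},
\end{equation}
and absorbing the last term (using that $ P_h - z $ is invertible for $ \Im z > 0 $ and a continuity/connectedness argument in $ z $) gives the polynomial bound \eqref{eq:tsuji1}, hence no resonances in that strip except finitely many, so $ \nu_0 \geq \frac12\gamma_0 $.

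The main obstacle, and where the real work lies, is the estimate near the zero section under the weight $ G $: one must show that the combination of the flow-decreasing property $ H_p m_G \leq 0 $ of the escape function and the contact-induced hyperbolicity produces a genuine gain of $ \frac12\gamma_0 $ rather than merely $ \gamma_0 $ of the essential spectral radius naively — the factor $ \frac12 $ is the subtle point and requires either Tsujii's transfer-operator-on-the-Grassmannian analysis or a careful second-microlocalization. I would carry this out by importing Tsujii's theorem \cite{Ts} (for the bound $ \frac12\gamma_0 \leq \nu_0 $) and Faure--Sj\"ostrand \cite{fa-sj} together with the propagation estimates of \cite{zeta} (for the Fredholm setup and the polynomial bound \eqref{eq:tsuji1}), so that the proof becomes an assembly: Theorem \ref{t:Fred} gives the meromorphic framework and finiteness, \cite{Ts} gives the essential gap, and the semiclassical propagation estimates upgrade this to the resolvent bound with a loss of $ N $ derivatives. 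The finiteness $ \nu_0 < \infty $ itself follows from the existence of at least one nonzero resonance, e.g. from the Atiyah--Bott--Guillemin trace formula \eqref{eq:ABG} and the growth of the number of closed orbits (topological entropy positive), which forces $ \zeta_R $ or some $ \zeta_j $ to have zeros off the imaginary axis.
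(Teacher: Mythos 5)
Your overall strategy for the lower bound $\tfrac12\gamma_0 \le \nu_0$ and the polynomial resolvent bound is the right one, and matches what the paper points to: the Fredholm machinery of Theorem \ref{t:Fred}, radial-point estimates at $E_s^*$, $E_u^*$, and then a normally-hyperbolic-trapping iteration near the trapped set (which for an Anosov flow embedded in $T^*M$ is exactly the zero section, a smooth compact symplectic submanifold) in the spirit of \cite{Ts} and \cite{NZ3}. One terminological caution: this is \emph{not} a fractal-uncertainty gain. The FUP mechanism of \cite{DyZa}, \cite{dylo} applies to fractal hyperbolic trapped sets, whereas the $\tfrac12\gamma_0$ gap here is the smooth, normally hyperbolic mechanism of Theorem \ref{t:9}; the scare quotes suggest you know they are not the same, but the reference is apt to mislead. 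The mechanism you actually describe (iterate the propagator for time $T\sim\log(1/h)$ and let $e^{-\gamma_0 T/2}$ beat $e^{|\Im z|T/h}$) is the correct one.

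The genuine gap is in your argument for $\nu_0 < \infty$. Recall the definition: $\nu_0 = \sup\{\nu : |\Res(P)\cap\{\Im\lambda > -\nu\}| < \infty\}$. Hence $\nu_0 < \infty$ means that \emph{some horizontal strip contains infinitely many Pollicott--Ruelle resonances}. Your argument produces, at best, the existence of a single nonzero resonance (or a growing but possibly escaping family of zeros of $\zeta_R$). That does not bound $\nu_0$: the resonances could in principle all drift off to $\Im\lambda = -\infty$ as $|\Re\lambda| \to \infty$, in which case every strip would contain only finitely many and $\nu_0$ would be $\infty$. In fact ruling this out is the content of the local trace formula of Jin--Zworski \cite{loz}, which the paper cites for precisely this step: they show that for \emph{any} Anosov flow there is a strip $\{\Im\lambda > -A\}$ with infinitely many resonances. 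The Atiyah--Bott--Guillemin trace formula and positivity of topological entropy alone do not give accumulation in a strip; the passage from a Tauberian-type lower bound on the number of resonances to a strip accumulation requires the additional structure exploited in \cite{loz}. You should replace your last sentence with a citation to that result, or reproduce its argument, rather than inferring finiteness of $\nu_0$ from existence of a resonance.
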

One could estimate $ s $ more precisely depending on the width of the strip
which would give better decay of correlations results.

The lower bound in \eqref{eq:tsuji} was proved by Tsujii \cite{Ts} who
improved the result of Liverani at high energies. It also follows from the
very general results of \cite{NZ3} which apply also in quantum scattering.
The method of \cite{NZ3} is crucial in obtaining the second part of Theorem \ref{t:damped}
-- see also Conjecture \ref{c:8}. 

The finiteness of 
$ \nu_0 $ follows from a stronger statement in Jin--Zworski \cite{loz}:
for {\em any} Anosov flow there exist strips with infinitely many Pollicott-Ruelle
resonances. 

One immediate consequence of Theorem \ref{t:tsuji} is the analogue of
Theorems \ref{t:3} and \ref{t:rel} in the setting of Anosov flows:  
suppose we enumerate the non-zero resonances so that
$  \Im \lambda_{j+1} \leq \Im \lambda_{j} < 0 $\footnote{There are no non-zero
real resonances for Anosov flows with smooth invariant measures and in particular
for contact Anosov flows -- for a direct microlocal 
argument for that see \cite[Lemma 2.3]{zazi}.}. There exist distributions 
$ u_{j,k}, v_{j,k} \in {\mathscr D}' ( M ) $, $ 0 \leq k
\leq K_j $, 
such that, for any $ \delta > 0 $, there exists $J(\delta)\in\NN$ such that  
for any $ f , g \in \CI ( M ) $
\begin{equation}
\label{eq:exprfg}
\begin{split}
 \rho_{f,g} ( t ) & = 
\int_M f   dm  \int_M g
  dm 
 + \sum_{ j=1}^{ J ( \delta ) }  \sum_{ k=1}^{K_j } t^{k}
e^{ - i t \mu_{j} } \langle u_{j,k},  f \rangle 
\langle v_{j,k} , g \rangle + \mathcal O_{f,g} ( e^{ - t(
  \nu_0 - \delta )} ), 
\end{split} 
\end{equation}
for $ t > 0 $. Here $ dm $ is the invariant density coming from the contact
form and normalized to have $ \int_M dm = 1 $ and $ \langle \bullet, \bullet
\rangle $ denotes distributional pairing. This refines \eqref{eq:expdcor} and provides a rigorous
version of \eqref{eq:expcor}.

We conclude with a review of counting results for resonances. We put
\[  N ( \Omega ) := |\Res ( P ) \cap \Omega |  , \ \  \Omega \Subset \CC , \]
and denote $ n:= \dim M $. 

The first bound was proved by Faure--Sj\"ostrand \cite{fa-sj} 
and it holds for general Anosov flows on compact manifolds: for any $ \gamma > 0 $, 
\begin{equation}
\label{eq:fas}
N \left( [ r -  r^{\frac12} , r + r^{\frac12} ] - i [ 0 , \gamma ] \right) = o ( r^{ n - \frac12} ) . 
\end{equation}
For Anosov {\em contact} flows a sharp bound was proved by Datchev--Dyatlov--Zworski
and it says that
\begin{equation}
\label{eq:ddz}
N \left( [ r - 1, r + 1 ] - i [ 0 , \gamma ] \right) = \mathcal O ( r^{\frac{n-1}2} ) , 
\end{equation}
which improves \eqref{eq:fas}, giving 
$ N ( [ r - r^{\frac12} , r +  r^{\frac12} ] - i [ 0 , \gamma ] ) = O ( r^{ \frac n 2 } ) $, 
in the contact case. These bounds are sharp in all dimensions as shown
by the very precise analysis of Pollicott--Ruelle resonances on 
compact hyperbolic quotients by Dyatlov--Faure--Guillarmou \cite{DGF}. 
Although these resonances have been known for a long time in the case
of surfaces, in higher dimensions some new structure was discovered in \cite{DGF},
see also Guillarmou--Hilgert--Weich \cite{g-w}.

Under a pinching condition on minimal and maximal expansion rates $ \nu_{\max} < 
2 \nu_{\min} $,  
Faure--Tsujii \cite{fats} proved a sharp lower bound in the contact case: 
for any $ \epsilon > 0 $ and any $ \delta > 0 $ there exists $ c > 0 $ such that
\begin{equation}
\label{eq:fats}
N \left( [ r - r^\epsilon , r + r^\epsilon ] - i {\textstyle{\frac12}} [ 0 , \nu_{\max} + \delta ]\right)
\geq c r^{ \frac {n-1} 2 + \epsilon } .
\end{equation}
This should be compared to Dyatlov's Weyl asymptotics shown in Figure \ref{f:iv}
-- see \cite[\S 1]{dy}. The only, and very weak, bound in the general Anosov
case follows from a local trace formula \cite[Theorem 1]{loz} (similar in spirit to 
Sj\"ostrand's local trace formula for resonances \cite{SjL}): for every 
$ 0 < \delta < 1 $ there exists $ A = A_\delta $ such that 
in the Hardy--Littlewood notation\footnote{$ \Omega ( f ) = g $ if it is 
{\em not} true that $ f = o(|g|) $},
\begin{equation}
\label{eq:loz}
N \left( [ 0 , r ] - i [ 0 , A ] \right) = \Omega ( r^\delta ) . 
\end{equation}
However, Naud observed \cite[Appendix B]{loz} that for suspensions of 
certain Anosov maps,   the bound $ \mathcal O (r ) $ holds, 
in which case \eqref{eq:loz} is not too far off.

This state of affairs shows that many problems remain and the non-compact
case seems to be completely open.


\medskip

\noindent
{\sc Acknowledgements.} I would like thank my collaborator on 
\cite{res}, Semyon Dyatlov, for allowing me to use some of the
expository material from our book, including figures, in the preparation of this survey
and for his constructive criticism of earlier versions. Many thanks go also to 
David Borthwick, Alexis Drouot, Jeff Galkowski, Peter Hintz, Jian Wang and Steve Zelditch for their comments and corrections. 
I am also grateful to Hari Manoharan, Ulrich Kuhl, Plamen Stefanov, David Bindel, Mickael Chekroun, Eric Heller and David Borthwick for allowing me to use Figures \ref{f:hm},\ref{f:kuhl},\ref{f:sphere},\ref{f:3},\ref{f:PNAS},\ref{f:galk},\ref{f:borth1},\ref{f:dy1} and \ref{f:concent}, respectively.
The work on this survey was supported in part by the National Science Foundation 
grant DMS-1500852. 

\def\arXiv#1{\href{http://arxiv.org/abs/#1}{arXiv:#1}}

\end{document}